\documentclass[onefignum,onetabnum]{siamart251216}
\headers{SFFT}
{W. Hou, K. Jiang, J. Tang}

\title{Mathematical foundation of symmetric fast Fourier transform}

\author{
Weifeng Hou\thanks{
Hunan Key Laboratory for Computation and Simulation in Science and Engineering,
Key Laboratory of Intelligent Computing and Information Processing of Ministry of Education,
School of Mathematics and Computational Science,
Xiangtan University, Xiangtan, Hunan, 411105, China
(\email{houwf@smail.xtu.edu.cn},
\email{tangjf@smail.xtu.edu.cn}).
}
\and
Kai Jiang\thanks{
Wuhan Institute for Math \& AI,
Wuhan University, Wuhan, Hubei, 430072, China
(\email{kaijiang@whu.edu.cn}).
}
\and
Jingfei Tang\footnotemark[1]
}
\usepackage{mathrsfs,amsmath,amssymb,bm}
\usepackage{lipsum}
\usepackage{amsfonts}
\usepackage{graphicx, subfigure}
\usepackage{epstopdf}
\usepackage{makecell, rotating, bbding}
\usepackage{multirow}
\usepackage{algorithmic, algorithm}
\usepackage{enumerate}
\usepackage{booktabs}
\usepackage[misc]{ifsym}
\usepackage{mathtools}
\usepackage[draft]{changes}
\usepackage{amsopn}
\usepackage{aligned-overset}
\usepackage{nicematrix}
\usepackage{tikz}
\usepackage{dashbox}
\usetikzlibrary{calc}
\usepackage{hyperref}
\usepackage{enumitem}
\usepackage{cases}
\usepackage{longtable}

\newtheorem{thm}{Theorem}[section]

\newtheorem{remark}[thm]{Remark}

\usepackage[draft]{changes}
\usepackage{lipsum}% <- For dummy text

\newsiamremark{hypothesis}{Hypothesis}
\crefname{hypothesis}{Hypothesis}{Hypotheses}
\newsiamthm{claim}{Claim}

\usepackage{amsopn}

\makeatletter
\newcommand*{\addFileDependency}[1]{% argument=file name and extension
  \typeout{(#1)}% latexmk will find this if $recorder=0 (however, in that case, it will ignore #1 if it is a .aux or .pdf file etc and it exists! if it doesn't exist, it will appear in the list of dependents regardless)
  \@addtofilelist{#1}% if you want it to appear in \listfiles, not really necessary and latexmk doesn't use this
  \IfFileExists{#1}{}{\typeout{No file #1.}}% latexmk will find this message if #1 doesn't exist (yet)
}
\makeatother

\usepackage{xr}

\begin{document}

\maketitle

% arXiv submission source
	
\begin{abstract}
    In this work, we study the fast Fourier transform (FFT) of discrete data with crystallographic symmetry, for which standard FFT algorithms incur substantial redundant computation. We establish a rigorous mathematical theory of the symmetric fast Fourier transform (SFFT) for $n$-dimensional crystallographic groups and propose the SFFT and symmetric inverse fast Fourier transform (SIFFT) algorithms. We derive symmetry-reduction formulas for both linear and translational symmetries and unify them into a recursive framework. We analyze the arithmetic complexity to obtain theoretical speedup factors relative to the standard FFT, and we confirm the accuracy and efficiency of the proposed methods by numerical experiments on representative three-dimensional space groups.
\end{abstract}

\begin{keywords}
	crystallographic fast Fourier transform, symmetric fast Fourier transform, symmetry reduction, discrete group action, recursive algorithm, space groups
\end{keywords}

\begin{AMS}
	65T50, 20H15, 65Y20, 65T40
\end{AMS}

\section{Introduction}
    \label{sec:intr}
    The discrete Fourier transform (DFT) is a fundamental linear transform that connects discrete data in physical space with their spectral representations. As a core tool in modern scientific computing, the DFT is widely used in digital signal processing, crystallographic diffraction-data processing, and Fourier pseudospectral methods for periodic partial differential equations \cite{oppenheim1999discrete,gonzalez2008digital,bricogne2001fourier,shen2011fourier}. For data with total grid size $N$, a direct evaluation of the DFT has computational complexity $O(N^2)$, whereas Cooley--Tukey-type fast Fourier transform (FFT) algorithms reduce this cost to $O(N\log N)$ by decomposing the underlying indices \cite{cooley1965algorithm}. Together with highly optimized implementations such as FFTW, these algorithms have become standard tools in scientific computing, image processing, machine learning, and data analysis \cite{frigo1998fftw,gonzalez2008digital,mathieu2014fast,brillinger2001time}. Nevertheless, the essential mechanism of the standard Cooley--Tukey FFT is to exploit algebraic decompositions of the underlying cyclic index groups, and it does not explicitly make use of possible geometric symmetries in the input data. Consequently, in highly symmetric settings such as crystal-structure computations, standard FFT algorithms still operate on the full grid, including many degrees of freedom that are not independent but constrained by group actions. This leads to substantial redundant computation and storage overhead.
    
    In crystallography, fast algorithms that exploit space-group symmetry—i.e., the symmetry of three-dimensional crystallographic groups (\cref{def:crystallographic_group})—to accelerate discrete Fourier transforms are usually referred to as crystallographic fast Fourier transforms (CFFTs). Ten Eyck was among the first to incorporate crystallographic symmetry into the algebraic decomposition of Cooley--Tukey-type FFTs, thereby achieving significant complexity reductions for certain space groups \cite{ten1973crystallographic}. Later, Bricogne gave a systematic formulation of the crystallographic discrete Fourier transform in the \emph{International Tables for Crystallography} in terms of group actions on finite real-space and reciprocal-space index sets, clarifying the coupling between space-group actions and multidimensional Cooley--Tukey decompositions \cite{bricogne2001fourier}. Building on this foundation, Rowicka, Kudlicki, and Otwinowski further developed algorithms based on one-step symmetry reduction, centering-translation reduction, and recursive reduction at special positions, ultimately producing a complete CFFT framework applicable to all 230 three-dimensional space groups \cite{rowicka2002crystallographicI,rowicka2003crystallographicII,rowicka2003crystallographicIII,kudlicki2007crystallographic}. More recently, CFFTs have also been applied to pseudospectral computations in self-consistent field theory for block copolymers, where they have demonstrated substantial savings in both time and memory \cite{qiang2020accelerated}.
    
    Although existing work has established a mature CFFT methodology for three-dimensional space groups, there remains room for further abstraction and systematization from the viewpoint of a unified mathematical formulation. Existing constructions typically depend rather strongly on the classification of specific space groups, on particular factorizations of the computational grid, and on choices of asymmetric units. When a single reduction step is insufficient to eliminate all symmetry redundancy, or when centering translations and special positions occur on the grid, additional dedicated reduction procedures and recursive strategies are often required. It is therefore natural to ask whether the constraints induced by the linear point-group part and by the fractional translations can be treated within a single algebraic framework, and whether that framework can be coupled with the recursive structure of Cooley--Tukey decompositions.
    
    In this paper, we propose a unified SFFT/SIFFT framework based on discrete group actions for periodic discrete data with finite affine symmetries. The framework applies to general $n$-dimensional crystallographic groups and combines the hierarchical construction of finite group actions with symmetry-reduction formulas. By introducing $G$-admissible grids, we induce the affine action of a crystallographic group as a finite permutation action on a discrete index group $X_{\bm A}$. This action is then combined with the subgrid decomposition and orbit--stabilizer structure arising from multidimensional Cooley--Tukey factorizations, thereby reducing the global DFT/IDFT to local DFTs/IDFTs on orbit representatives together with finite-dimensional matrix recombinations. Compared with existing CFFT methods, which are mainly organized around the classification of three-dimensional space groups, explicit grid constructions, and the selection of asymmetric units, the present formulation unifies a broad class of geometric symmetry constraints as actions of finite quotient groups on discrete index sets. It therefore provides a compact algebraic framework for layer-by-layer symmetry reduction for general $n$-dimensional crystallographic groups.

    The main contributions of this paper are summarized as follows:
    \begin{itemize}
        \item \textbf{An arithmetic representation of crystallographic groups and a discrete computational framework.}
        We prove that, in suitable coordinates, a crystallographic group can be represented arithmetically with an integral linear part, a rational translational part, and a finite quotient group, and we establish the invariance of this representation under conventional bases. Using the composition series of the finite quotient and the one- or two-generator structure of its simple factors, we prove that lifted generators construct all symmetry operations and recover the complete orbit of a representative point. For three-dimensional crystallographic groups, solvability reduces this construction to one lifted generator per factor. We then introduce $G$-admissible grids together with algebraic admissibility criteria, providing a unified foundation for discretizing crystallographic symmetries on finite sampling grids.
    
        \item \textbf{Two fundamental symmetry-reduction formulas.}
        For the cases of nontrivial linear actions and pure translational actions, respectively, we derive SFFT/SIFFT reduction formulas, give their corresponding matrix representations, and prove the invertibility of the core constant matrices as well as the effective frequency-selection mechanism in the pure translation case.
    
        \item \textbf{A unified recursive framework for symmetry reduction.}
        We formally define well-formed SFFT problems and prove that the subproblems generated by nontrivial linear reductions inherit the space-group structure and the corresponding discrete symmetries. This allows linear reductions, translation reductions, and standard FFT calls to be organized into a single recursive procedure. A concrete construction is presented for the space group $Im\bar{3}m$.
    
        \item \textbf{Complexity analysis and numerical validation.}
        We estimate the arithmetic complexity and data-access complexity of SFFT-LIN/SIFFT-LIN and SFFT-TRN/SIFFT-TRN, derive theoretical speedup factors relative to standard FFT/IFFT algorithms, and validate the accuracy and acceleration of the proposed method for the three space groups $Fm\bar{3}m$, $Im\bar{3}m$, and $Ia\bar{3}d$.
    \end{itemize}

    The remainder of this paper is organized as follows. \Cref{sec:arit} introduces the geometric definition of crystallographic groups, their arithmetic representations, invariance under conventional bases, and the quotient-group computational framework. \Cref{sec:solv} reviews composition series and solvability, proves the hierarchical reconstruction of finite group actions, and presents composition-series examples for representative three-dimensional crystallographic groups. \Cref{sec:math} develops the mathematical foundations of SFFT, including torus representations, discrete index groups, $G$-admissible grids, induced discrete actions, and multidimensional Cooley--Tukey decompositions. \Cref{sec:algor} presents the core algorithmic framework, derives the SFFT/SIFFT reduction formulas for both nontrivial linear actions and pure translations, and constructs the unified recursive symmetry-reduction framework. \Cref{sec:complexity} analyzes the algorithmic complexity. \Cref{sec:algorithm_example} uses the space group $Im\bar{3}m$ to illustrate the concrete implementation of the recursive SFFT. \Cref{sec:numerical_results} reports numerical experiments validating the accuracy and efficiency of the proposed algorithms. The conclusion and outlook of this paper are given in \Cref{sec:conclusion}. The appendix provides explicit expressions for the constant matrices and phase matrices used in the recursive example.

\section*{Notations and conventions}
	The main notations and conventions used in this paper are summarized in the following table.

	\begin{longtable}{l >{\raggedright\arraybackslash}p{0.75\textwidth}}
		\caption{Summary of notations and conventions used in this paper}\label{tab:notation}\\
		\toprule
		\textbf{Symbol}                                  & \textbf{Description}                                         \\
		\midrule
		\endfirsthead
		\toprule
		\textbf{Symbol}                                  & \textbf{Description}                                         \\
		\midrule
		\endhead
		\midrule
		\multicolumn{2}{r}{Continued on next page}                                                                     \\
		\midrule
		\endfoot
		\bottomrule
		\endlastfoot

		\multicolumn{2}{l}{\textbf{Basic objects and parameters}} \\
		\midrule
		$n$ & Spatial dimension\\
	$\mathbb{R}^n, \mathbb{Q}^n$   & $n$-dimensional real and rational vector spaces\\
	$\mathbb{Z}^n$                 & Set of $n$-dimensional integer vectors\\
	$\mathbb{C}$                   & Field of complex numbers\\
	$\mathbb{Z}_{\geq 1}$          & Set of positive integers \\
	$\mathbb{T}^n$                 & $n$-dimensional torus $\mathbb{R}^n / \mathbb{Z}^n$                \\
	$\bm I_n$                      & Identity matrix of order $n$                                             \\
	$D$                            & Total grid size $D=\det(\bm A)=|X_{\bm A}|=\prod_{j=1}^n N_j$                 \\
	$N_j$                               & Diagonal entries of $\bm A=\operatorname{diag}(N_1,\dots,N_n)$ \\
	$D_0, D_1$                     & Subgrid sizes after Cooley--Tukey decomposition (satisfying $D=D_0D_1$)    \\
	$N_{0,j},N_{1,j}$ & Diagonal entries of the factors $\bm A_0,\bm A_1$, with $N_j=N_{0,j}N_{1,j}$ \\
	$M$                            & Order of the group $M\coloneqq |G_{\bm A}|$                               \\
	$d$                            & Number of effective frequencies $|X_{\bm A_T^1}|/M$                                \\
		\midrule

		\multicolumn{2}{l}{\textbf{Affine groups, matrix groups, and crystallographic groups}} \\
		\midrule
	$\operatorname{Aff}(\mathbb{R}^n)$             & $n$-dimensional affine transformation group                                           \\
	$\operatorname{Isom}(\mathbb{R}^n)$            & Isometry group of $n$-dimensional Euclidean space                                     \\
	$O(n)$                                         & $n$-dimensional orthogonal group                                              \\
	$\operatorname{GL}^{\pm 1}(n,\mathbb{Z})$      & $n$-dimensional unimodular matrix group     \\
	$\Gamma$                                       & $n$-dimensional crystallographic group \\
	$\mathcal{G}$                                  & Arithmetic representation of an $n$-dimensional crystallographic group                                         \\
	$T$                                            & Integer translation subgroup $\{(\bm I_n,\bm z):\bm z\in\mathbb Z^n\}$ in $\mathcal{G}$                                 \\
	$T_{\mathcal G}$                     & Full translation subgroup $\mathcal G\cap\mathbb R^n$ \\
	$G$                                            & Quotient group $\mathcal{G}/T$                                  \\
	$P$                                            & Point group                                      \\
		\midrule

		\multicolumn{2}{l}{\textbf{Group actions and related notations}} \\
		\midrule
	$g = (\bm R_g, \bm t_g)$                       & Space group element, comprising a linear part $\bm R_g$ and a translation part $\bm t_g$               \\
	$r$                                            & Homomorphism, mapping $g=(\bm R_g,\bm t_g)$ to $r(g)=\bm R_g$                          \\
	$\odot$                                        & Induced action of $G$ on $\mathbb{T}^n$ \\
	$\Psi_{\bar g}$                                & Discrete action on $X_{\bm A}$ induced by $\bar g\in G$        \\
	$\bm{M}_{\bar g}, \bm{z}_{\bar g}$             & Linear and translation parts satisfying $\Psi_{\bar g}([\bm x])=[\bm M_{\bar g}\bm x] + \bm z_{\bar g}$                    \\
	$G_{\bm A}$                                    & Discrete symmetry group $\{\Psi_{\bar g} : \bar g \in G\}$                  \\
	$\operatorname{Sym}(S)$                        & Permutation group on the set $S$                                     \\
	$O_{[\bm y]}$                                  & Orbit of the set $X^{[\bm y]}$ under the group action                                 \\
	$G_{[\bm y]}$ & Stabilizer subgroup of the set $X^{[\bm y]}$                                  \\
	$\mathcal{R}([\bm y])$                         & Set of coset representatives for the stabilizer subgroup                                           \\
	$Y_{\mathrm{ref}}$                             & Index set of orbit representatives                                              \\
		\midrule

		\multicolumn{2}{l}{\textbf{Lattices and discretized objects}} \\
		\midrule
	$L, \bm{b}_i$                                  & Lattice and its primitive basis vectors                                             \\
	$\Omega$                                       & Compact fundamental domain          \\
	$\bm A$                                        & Diagonal discretization matrix                                           \\
	$\bm s$                                        & Grid shift vector, $\bm s\in [0,1)^n$                                              \\
	$\mathbb{T}^n_{\bm A,\bm s}$                   & Shifted grid $\{[\bm A^{-1}(\bm k+\bm s)]:\bm k\in\mathbb{Z}^n\}\subset\mathbb{T}^n$ \\
	$\Lambda_{\bm A}$                              & Finite discrete subgroup $\bm A^{-1}\mathbb{Z}^n/\mathbb{Z}^n\subset\mathbb{T}^n$ \\
	$X_{\bm A}$                                    & Discrete index group $\mathbb{Z}^n / \bm A \mathbb{Z}^n$         \\
	$X_R^{[\bm y]},X_T^{[\bm y]}$        & Layer of the grid indexed by the class $[\bm y]$, in the nontrivial linear-action and the pure translation case, respectively \\
	$\mathcal{D}_{\bm A}$                          & Set of standard integer representatives of $X_{\bm A}$                                \\
	$\kappa_j$                           & Least common multiple of the denominators of the $j$-th coordinates of the pure translation vectors; it provides the canonical choice $\bm A_T^1=\operatorname{diag}(\kappa_1,\dots,\kappa_n)$ \\
	$\iota_{\bm A,\bm s}$                          & Parameterized bijection (from $X_{\bm A}$ to $\mathbb{T}^n_{\bm A,\bm s}$): $[\bm x]\mapsto[\bm A^{-1}(\bm x+\bm s)]$ \\
	$\mathcal I^{\mathrm{lex}}$          & Lexicographically ordered index sequence \\
	$[\,\cdot\,]$                        & Congruence class in the relevant quotient group \\
		\midrule

		\multicolumn{2}{l}{\textbf{SFFT/SIFFT related notations}} \\
		\midrule
	$\mathcal{P}$                                  & SFFT problem instance                                           \\
	$\langle[\bm h],[\bm x]\rangle$                & Exponential kernel $e^{2\pi i\,\bm h^\mathsf{T}\bm A^{-1}\bm x}$\\
	$f_{\bm A, \bm s}$                             & Discretized function defined on $X_{\bm A}$                                \\
	$\hat{f}_{\bm A, \bm s}$                       & Fourier coefficients of $f_{\bm A, \bm s}$                                         \\
	$\hat{V}_{\bm y}$                              & Local Fourier coefficients in the nontrivial linear-action reduction                                      \\
	$\hat{\bm V}$ & Vector composed of $\hat{V}_{\bm y}$ \\
	$V_{\bm h_1}$                     & Local function in the pure translation reduction             \\
	$\bm F$                            & Vector of function values in the pure translation reduction        \\
	$\bm V$                            & Local transform vector in the pure translation reduction                                       \\
	$\hat{\bm F}$                    & Global Fourier coefficient vector                           \\
	$\bm A_R^0, \bm A_R^1$                         & Cooley--Tukey decomposition factors in the nontrivial linear-action case                             \\
	$\bm A_T^0, \bm A_T^1$                         & Cooley--Tukey decomposition factors in the pure translation case                             \\
	$\Theta_R, \Theta_T$                           & Families of subsets induced by the decomposition                       \\
	$\bm T_{c,r}, \bm T_{s,r}$                     & Constant matrix and phase matrix in the nontrivial linear-action reduction                                       \\
	$\bm T_{c,t}, \bm T_{s,t}$                     & Constant matrix and phase matrix in the pure translation reduction                                       \\
	$H_{\chi}$                                     & Effective frequency index set under the pure translation reduction                                        \\
	$\chi$                                         & Sum of phase factors \\
	superscript $(k)$                    & A quantity carrying the superscript $(k)$ refers to the $k$-th level of the recursive reduction, e.g.\ $\bm A^{(k)},\bm s^{(k)},\mathcal G^{(k)},G_{\bm A^{(k)}},Y_{\mathrm{ref}}^{(k)}$ \\
	\end{longtable}
	
	\section{Arithmetic representation of crystallographic groups and computational framework for quotient groups}
	\label{sec:arit}

	\subsection{Geometric definition and arithmetic representation of crystallographic groups}

In this section we review the geometric definition of crystallographic groups and their standard arithmetic representation, which underpins the computational framework developed here. The classical arithmetic representation reviewed here originates from the Bieberbach theorems \cite{Bieberbach1911,Bieberbach1912,leonard1986bieberbach} and the arithmetic formulation by Zassenhaus \cite{zassenhaus1948} (later recast in algorithmic form by Brown \cite{brown1969algorithm}). Namely, a crystallographic group is affinely equivalent to a subgroup of $\operatorname{Aff}(\mathbb{R}^n)$ with unimodular integer linear parts $\bm{R}_g\in\operatorname{GL}^{\pm 1}(n,\mathbb{Z})$ and a full-rank integer translation lattice $T\cong\mathbb{Z}^n$.

Let $\operatorname{Aff}(\mathbb{R}^n)$ denote the affine transformation group on $\mathbb{R}^n$.
Any $g \in \operatorname{Aff}(\mathbb{R}^n)$ can be uniquely represented as $g = (\bm{R}_g, \bm{t}_g)$, where $\bm{R}_g \in \operatorname{GL}(n, \mathbb{R})$ is the linear part, and $\bm{t}_g \in \mathbb{R}^n$ is the translation part.
This paper adopts the left action convention, meaning that for any $\bm{x}\in\mathbb{R}^n$,
\[
	g \cdot \bm{x} = \bm{R}_g \bm{x} + \bm{t}_g.
\]
The corresponding group multiplication rule is
\[
	g_1 g_2=(\bm{R}_{g_1},\bm{t}_{g_1})(\bm{R}_{g_2},\bm{t}_{g_2}) =(\bm{R}_{g_1}\bm{R}_{g_2},\,\bm{R}_{g_1}\bm{t}_{g_2}+\bm{t}_{g_1}).
\]
Its identity element is $e=(\bm{I}_n,\bm{0})$. The inverse of $g$ is given by
\[
	g^{-1}=(\bm{R}_g^{-1},\,-\bm{R}_g^{-1}\bm{t}_g).
\]
To simplify the notation, we identify the additive group $\mathbb{R}^n$ as the pure translation subgroup of $\operatorname{Aff}(\mathbb{R}^n)$ via the embedding $\bm{t}\mapsto (\bm{I}_n,\bm{t})$, and we directly regard $\bm{t}\in\mathbb{R}^n$ as the corresponding translation $(\bm{I}_n,\bm{t})$ when no confusion arises.

In classical crystallography, a crystallographic group is typically defined as a specific subgroup of the isometry group of a Euclidean space. Let $\operatorname{Isom}(\mathbb{R}^n) \cong O(n) \ltimes \mathbb{R}^n$ denote the isometry group of the $n$-dimensional Euclidean space.

\begin{definition}[$n$-dimensional crystallographic group, {\cite[p.~4]{leonard1986bieberbach}}]\label{def:crystallographic_group}
	A subgroup $\Gamma$ of $\operatorname{Isom}(\mathbb{R}^n)$ is called an $n$-dimensional crystallographic group if it satisfies the following conditions.
	\begin{enumerate}
		\item Discreteness. $\Gamma$ is a discrete subgroup of $\operatorname{Isom}(\mathbb{R}^n)$. Equivalently, there exists a neighborhood $U$ of the identity $e$ in $\operatorname{Isom}(\mathbb{R}^n)$ such that $U\cap\Gamma=\{e\}$.
		\item Cocompactness. The orbit space $\mathbb{R}^n/\Gamma$ is compact. This implies that the fundamental domain has a finite volume, namely, there exists a bounded domain $\Omega \subset \mathbb{R}^n$ such that the regions obtained by the action of distinct elements of $\Gamma$ on $\Omega$ do not overlap and collectively cover the entire space.
	\end{enumerate}
\end{definition}

\begin{remark}
	Cocompactness means that the orbit space $\mathbb{R}^n / \Gamma$ is compact as a topological space. Equivalently, there exists a compact fundamental domain $\Omega$ such that
	\[
		\mathbb{R}^n = \bigcup_{\gamma \in \Gamma} \gamma \cdot \Omega,
	\]
	and these domains are pairwise disjoint except at their boundaries.
\end{remark}

\begin{theorem}[Bieberbach's first theorem, {\cite[p.~17]{leonard1986bieberbach}}]
	Let $\Gamma$ be a crystallographic subgroup of $\operatorname{Isom}(\mathbb{R}^n)$. Define the homomorphism $r: \operatorname{Isom}(\mathbb{R}^n) \to O(n)$ by $r(g) = \bm{R}_g$. Then
	\begin{enumerate}
		\item The point group $r(\Gamma)$ is a finite group.
		\item The translation subgroup $L = \Gamma \cap \mathbb{R}^n$ is a full-rank lattice, i.e., $L \cong \mathbb{Z}^n$ and it spans the entire $\mathbb{R}^n$.
	\end{enumerate}
\end{theorem}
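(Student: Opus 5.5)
The plan is the classical Bieberbach argument, following Buser's streamlined version. Its engine is a commutator estimate in $\operatorname{Isom}(\mathbb{R}^n)$ combined with discreteness and cocompactness. The key identity is the following. For $a=(\bm A,\bm a)$ and $b=(\bm B,\bm b)$ the commutator $[a,b]=aba^{-1}b^{-1}$ has linear part $\bm A\bm B\bm A^{-1}\bm B^{-1}$. Writing $\|\cdot\|$ for the operator norm, one obtains
\[
\|\bm A\bm B\bm A^{-1}\bm B^{-1}-\bm I_n\|\le 2\|\bm A-\bm I_n\|\,\|\bm B-\bm I_n\|.
\]
If $\bm a$ and $\bm b$ are small, the translation part of $[a,b]$ is bounded by a comparable product-type expression. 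Hence commutators of elements near the identity are much nearer the identity. Together with discreteness (the neighborhood $U$ in \cref{def:crystallographic_group}), this forces "almost translations" to commute.

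\textbf{Step 1 (point group finite modulo a lattice statement).} Since $O(n)$ is compact, fix $\varepsilon>0$ small, say $\varepsilon<1/4$. Cover $O(n)$ by finitely many balls of radius $\varepsilon$. Let $\Gamma_\varepsilon$ denote the elements $g\in\Gamma$ with $\|\bm R_g-\bm I_n\|<\varepsilon$. Pigeonhole then shows that every $g\in\Gamma$ lies in one of finitely many cosets $h\Gamma_\varepsilon$, where $h$ ranges over a finite set. It therefore suffices to show that $r(\Gamma_\varepsilon)=\{\bm I_n\}$, i.e.\ that $\Gamma_\varepsilon\subset L$.

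\textbf{Step 2 (elements with small rotation part are translations).} First I would use cocompactness. Choose a compact fundamental domain $\Omega$ of diameter $\delta$. Every point of $\mathbb{R}^n$ is then within $\delta$ of every orbit. Combined with discreteness, a standard argument produces $n$ elements of $\Gamma_\varepsilon$ whose translation parts are linearly independent. One also needs the companion fact that elements of $\Gamma_\varepsilon$ with very small rotation part but huge translation exist in every direction; this comes from taking powers and compactness. Next, let $g\in\Gamma_\varepsilon$. Iterate commutators: form $g_0=t$, $g_{k+1}=[g,g_k]$. The commutator estimate shows that the rotation parts of the $g_k$ tend to $\bm I_n$ geometrically, and that their translation parts stay bounded. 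Discreteness then forces $g_k=e$ for large $k$. Running this backwards, one shows that $\bm R_g$ commutes with, and in fact fixes, the translation vectors of enough elements. Since these translation vectors span $\mathbb{R}^n$, this gives $\bm R_g=\bm I_n$.

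\textbf{Step 3 (conclusions).} With $\Gamma_\varepsilon\subset L$, Step 1 gives $|r(\Gamma)|<\infty$. The group $L$ is a discrete subgroup of $\mathbb{R}^n$, hence free abelian of rank at most $n$. It is a discrete subgroup because $L\cap U=\{e\}$. Moreover, $L$ has finite index in $\Gamma$, since $\Gamma/L\cong r(\Gamma)$ is finite. Therefore $\mathbb{R}^n/L$ is a finite cover of the compact space $\mathbb{R}^n/\Gamma$, which makes it compact. A discrete subgroup of $\mathbb{R}^n$ with compact quotient spans $\mathbb{R}^n$, so $L\cong\mathbb{Z}^n$.

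\textbf{Main obstacle.} The main obstacle is Step 2. The delicate part is producing enough elements with small rotation part and linearly independent translations, and making the iterated-commutator argument rigorous with explicit constants. I would try Buser's approach first: choose such elements of minimal translation length in successive directions, and use the bound $\|\bm R-\bm I_n\|<1/2$ to show that their rotation parts must be trivial. Since the paper cites \cite{leonard1986bieberbach} for this result, the proof can alternatively be deferred to that reference.
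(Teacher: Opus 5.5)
The paper gives no proof of this theorem; it quotes it from \cite{leonard1986bieberbach}, so deferring to that reference is exactly what the paper does. As a standalone proof, however, your outline has a genuine gap in Step 2, which is where the whole content of the theorem lies; Steps 1 and 3 are standard.

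\textbf{The backward step in Step 2 fails.} Start the iteration $g_{k+1}=[g,g_k]$ from an almost-translation $g_0=(\bm B,\bm w)\in\Gamma_\varepsilon$. The estimates do give $g_k\to e$: the translation part of $[(\bm A,\bm a),(\bm B,\bm b)]$ has length at most $\|\bm A-\bm I_n\|\,|\bm b|+\|\bm B-\bm I_n\|\,|\bm a|$, so the translation parts actually tend to $\bm 0$. Hence $g_K=e$ for some $K$. But this only says that $g$ commutes with $g_{K-1}$, and there is no valid way to ``run this backwards''. For affine isometries, $g$ commuting with $[g,h]$ does not imply that $g$ commutes with $h$. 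Take $g=(\bm I_n,\bm v)$ and $h=(\bm B,\bm 0)$ with $\bm B$ a small rotation and $\bm B\bm v\neq\bm v$. Then $[g,h]=(\bm I_n,\bm v-\bm B\bm v)$ commutes with $g$, yet $gh=(\bm B,\bm v)\neq(\bm B,\bm B\bm v)=hg$. Moreover, even if $g=(\bm A,\bm a)$ did commute with $(\bm B,\bm w)$, comparing translation parts gives only $(\bm A-\bm I_n)\bm w=(\bm B-\bm I_n)\bm a$, not $\bm A\bm w=\bm w$.

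\textbf{The pure-translation version works but is circular.} If $g_0=(\bm I_n,\bm w)$ is a pure translation, then $g_k=(\bm I_n,(\bm A-\bm I_n)^k\bm w)$. Discreteness forces $(\bm A-\bm I_n)^k\bm w=\bm 0$, and normality of $\bm A-\bm I_n$ gives $\bm A\bm w=\bm w$. But to conclude $\bm A=\bm I_n$ you then need pure translations spanning $\mathbb R^n$, which is statement~2. You only obtain statement~2 in Step 3, from the output of Step 2.

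\textbf{What is missing.} You need an argument, using only discreteness and cocompactness, that upgrades almost-translations to genuine translations, or equivalently shows directly that every $g$ with $\|\bm R_g-\bm I_n\|$ small is a translation. The minimal-length selection you mention in your last paragraph is exactly where this must be carried out, so it cannot be treated as a detail.

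\textbf{Smaller repairs.}
Elements of $\Gamma_\varepsilon$ with spanning translation parts do not come from taking powers: if $g$ fixes a point, so does every $g^m$, and their translation parts stay bounded. They come instead from your Step 1 decomposition $g=h\eta$. Use a net of radius $\varepsilon/2$, so that $\eta=h^{-1}g\in\Gamma_\varepsilon$. If all $\bm t_\eta$ lay in a proper subspace $W$, the orbit $\Gamma\cdot\bm 0$ would lie in finitely many proper affine subspaces $\bm R_hW+\bm t_h$, contradicting cocompactness.

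In Step 3, compactness of $\mathbb R^n/L$ is cleanest from $\mathbb R^n=\Gamma\cdot\Omega=L\cdot\bigl(\bigcup_i g_i\Omega\bigr)$, with $g_i$ coset representatives of $L$ in $\Gamma$. A covering-space argument is less suitable, since the action of $\Gamma$ need not be free.
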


\begin{remark}
	Here, the lattice $L$ is a discrete, full-rank additive subgroup of $\mathbb{R}^n$, which can be generated by a set of linearly independent primitive translation vectors $\{\bm{b}_1, \dots, \bm{b}_n\}$, namely,
	\[
		L = \left\{ \sum_{i=1}^n z_i \bm{b}_i : z_i \in \mathbb{Z} \right\}.
	\]
	Note that the lattice $L$ itself is independent of the choice of coordinate basis, but its generators and corresponding coordinate representations will change with the basis.
\end{remark}
\begin{theorem}[Bieberbach's second theorem, {\cite[p.~19]{leonard1986bieberbach}}]
	Let $\Gamma$ and $\Gamma'$ be two crystallographic subgroups of $\operatorname{Isom}(\mathbb{R}^n)$. If there exists a group isomorphism $f: \Gamma \to \Gamma'$, then there exists an $\alpha \in \operatorname{Aff}(\mathbb{R}^n)$ such that for any $\beta \in \Gamma$,
	\[
		f(\beta) = \alpha \beta \alpha^{-1}.
	\]
	In other words, if two crystallographic groups are isomorphic, they must be conjugate under an affine transformation.
\end{theorem}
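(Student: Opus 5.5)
The proof I would give is the classical argument via the translation lattices, following the route in Charlap's book. It has three steps: extract lattice data from $f$, build the linear part of $\alpha$ from it, and then find the translation part of $\alpha$ by a cohomological averaging argument.

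\textbf{Step 1 (the isomorphism preserves the lattices).} Let $L=\Gamma\cap\mathbb{R}^n$ and $L'=\Gamma'\cap\mathbb{R}^n$. By Bieberbach's first theorem these are full-rank lattices. I would show that $L$ is the unique maximal normal abelian subgroup of $\Gamma$, so it is characteristic and hence $f(L)=L'$.

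To see this, take an abelian normal subgroup $N\ni\beta=(\bm R,\bm t)$. Since $N$ is normal, conjugating $\beta$ by translations $\bm z\in L$ keeps it in $N$. A direct computation gives
\[
\bm z\beta\bm z^{-1}\beta^{-1}=(\bm I_n-\bm R)\bm z,
\]
and $N$ must commute with these elements. Hence $\bm R$ fixes $(\bm I_n-\bm R)L$. Because $\bm R$ is orthogonal, $\ker(\bm I_n-\bm R)$ and $\operatorname{im}(\bm I_n-\bm R)$ are orthogonal complements. Since $L$ spans $\mathbb{R}^n$, this forces $(\bm I_n-\bm R)\mathbb{R}^n=0$, so $\bm R=\bm I_n$. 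Therefore $N$ consists of translations and $N\subset L$.

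\textbf{Step 2 (linear part of $\alpha$).} Restrict $f$ to get a group isomorphism $L\to L'$. Since $L$ and $L'$ are both $\cong\mathbb{Z}^n$ and span $\mathbb{R}^n$, this restriction extends uniquely to some $\bm B\in\operatorname{GL}(n,\mathbb{R})$.

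Next I would check compatibility with conjugation. For $\beta=(\bm R,\bm t)\in\Gamma$ we have $\beta\bm z\beta^{-1}=\bm R\bm z$. Applying $f$ gives
\[
\bm B\bm R\bm z=\bm R_{f(\beta)}\bm B\bm z .
\]
Hence $\bm R_{f(\beta)}=\bm B\bm R_\beta\bm B^{-1}$ on the lattice, and therefore on all of $\mathbb{R}^n$.

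Now set $\alpha_0=(\bm B,\bm 0)$ and replace $\Gamma'$ by $\alpha_0^{-1}\Gamma'\alpha_0$, which is a subgroup of $\operatorname{Aff}(\mathbb{R}^n)$ and need no longer consist of isometries. This reduces us to the case $\bm B=\bm I_n$. In that case $f$ is the identity on $L$ and preserves linear parts, so
\[
f(\bm R,\bm t)=(\bm R,\bm t+\delta(\bm R)),
\]
where $\delta:P\to\mathbb{R}^n$ is well defined, since elements with the same linear part differ by an element of $L$, on which $f$ is the identity.

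\textbf{Step 3 (translation part of $\alpha$).} This is the main obstacle. The homomorphism property of $f$ shows that $\delta$ is a $1$-cocycle on the finite point group $P$ with values in $\mathbb{R}^n$:
\[
\delta(\bm R_1\bm R_2)=\delta(\bm R_1)+\bm R_1\delta(\bm R_2).
\]
Since $|P|$ is invertible in $\mathbb{R}$, $H^1(P,\mathbb{R}^n)=0$. Concretely, put
\[
\bm v=-\tfrac1{|P|}\sum_{\bm S\in P}\delta(\bm S).
\]
Then $\delta(\bm R)=\bm v-\bm R\bm v$.

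Conjugating by the translation $\bm v$ then realizes $f$: using the paper's multiplication rule, $(\bm I_n,\bm v)(\bm R,\bm t)(\bm I_n,-\bm v)=(\bm R,\bm t+\bm v-\bm R\bm v)$. Taking $\alpha=\alpha_0(\bm I_n,\bm v)\in\operatorname{Aff}(\mathbb{R}^n)$ gives $f(\beta)=\alpha\beta\alpha^{-1}$ for all $\beta\in\Gamma$.

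The delicate points are the bookkeeping of $\delta$, namely its well-definedness modulo $L$ and the sign convention in conjugation, and the averaging identity. Both are routine once the cocycle identity is written down.
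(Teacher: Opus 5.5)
The paper gives no proof of this statement; it only quotes it from Charlap's book. Your argument is the classical proof from that source, and Steps 1 and 2, the well-definedness of $\delta$, and the cocycle identity are all correct. Those steps are: $L$ is the largest abelian normal subgroup, $f|_L$ extends to $\bm B\in\operatorname{GL}(n,\mathbb R)$ with $\bm R_{f(\beta)}=\bm B\bm R_\beta\bm B^{-1}$, and after conjugating by $(\bm B,\bm 0)$ the discrepancy is a $1$-cocycle $\delta\colon P\to\mathbb R^n$, which is a coboundary because $|P|$ is invertible. One small wording point: what you use is not that $L$ is characteristic, but that being the largest abelian normal subgroup is preserved by any isomorphism. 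That gives $f(L)\subseteq L'$ and $f^{-1}(L')\subseteq L$, hence $f(L)=L'$.

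The one actual mistake is a sign in the explicit coboundary in Step~3. Put $\bm u\coloneqq\sum_{\bm S\in P}\delta(\bm S)$ and sum the cocycle identity $\delta(\bm R\bm S)=\delta(\bm R)+\bm R\delta(\bm S)$ over $\bm S\in P$. This gives $\bm u=|P|\delta(\bm R)+\bm R\bm u$, so
\[
\delta(\bm R)=\tfrac{1}{|P|}(\bm I_n-\bm R)\bm u .
\]
With your choice $\bm v=-\bm u/|P|$, this reads $\delta(\bm R)=\bm R\bm v-\bm v$, not $\bm v-\bm R\bm v$. A quick check: for $n=1$ and $P=\{\pm1\}$, the value $\delta(-1)=c$ is arbitrary, and your formula would return $-c$.

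You computed correctly that conjugation by $(\bm I_n,\bm w)$ adds $(\bm I_n-\bm R)\bm w$ to the translation part. So the correct conjugator is $(\bm I_n,-\bm v)=(\bm I_n,\bm u/|P|)$, and $\alpha=\alpha_0(\bm I_n,-\bm v)$. Equivalently, take $\bm v=+\bm u/|P|$ and leave the rest of Step~3 as written. With that fix the proof is complete.
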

\begin{remark}
	$\Gamma$ and $\Gamma'$ are said to be affinely conjugate. The map $g\mapsto\alpha g\alpha^{-1}$ is a topological automorphism of $\operatorname{Aff}(\mathbb{R}^n)$, and its restriction $f:\Gamma\to\Gamma'$ is a topological group isomorphism for the subspace topologies. Moreover, the affine homeomorphism $\alpha:\mathbb{R}^n\to\mathbb{R}^n$ induces a homeomorphism $\bar\alpha:\mathbb{R}^n/\Gamma\to\mathbb{R}^n/\Gamma'$ by $\bar\alpha([\bm x]_\Gamma)=[\alpha\cdot\bm x]_{\Gamma'}$. Consequently, affine conjugation preserves discreteness and cocompactness.
\end{remark}
\begin{definition}
	An $n\times n$ matrix $\bm{U}=(u_{ij})$ is called a unimodular matrix if its elements $u_{ij}$ are all integers and satisfy $\det(\bm{U})=\pm 1$.
	The set of all $n\times n$ unimodular matrices forms a group under matrix multiplication, denoted by $\operatorname{GL}^{\pm 1}(n,\mathbb{Z})$.
\end{definition}

\begin{definition}[Arithmetic representation of an $n$-dimensional crystallographic group]\label{def:space_group}
	Let $\mathcal{G} \le \operatorname{Aff}(\mathbb{R}^n)$. $\mathcal{G}$ is said to be an arithmetic representation of an $n$-dimensional crystallographic group if it satisfies the following conditions.
	\begin{enumerate}
		\item For any $g = (\bm{R}_g, \bm{t}_g) \in \mathcal{G}$, $\bm{R}_g \in \operatorname{GL}^{\pm 1}(n,\mathbb{Z})$ and $\bm{t}_g \in \mathbb{Q}^n$.
		\item $\mathcal{G}$ contains the integer translation subgroup
		      \[
			      T = \{(\bm{I}_n, \bm{t}) : \bm{t} \in \mathbb{Z}^n\}.
		      \]
		\item The quotient group $\mathcal{G}/T$ is a finite group.
	\end{enumerate}
\end{definition}

\begin{remark}
	For any $g\in\mathcal{G}$ and $(\bm{I}_n,\bm{z})\in T$, we have
	\[
		g(\bm{I}_n,\bm{z})g^{-1}=(\bm{I}_n,\bm{R}_g\bm{z}).
	\]
	Since $\bm{R}_g\in\operatorname{GL}^{\pm 1}(n,\mathbb{Z})$, it follows that $\bm{R}_g\bm{z}\in\mathbb{Z}^n$, hence $T\triangleleft\mathcal{G}$, making $\mathcal{G}/T$ well-defined.

	From conditions 2 and 3, $\mathcal{G}$ is a union of finitely many $T$-cosets. For any $g\in\mathcal G$, we have
	\[
		gT=\{(\bm R_g,\bm t_g+\bm R_g\bm z):\bm z\in\mathbb Z^n\},
	\]
	thus each coset is discrete in $\operatorname{Aff}(\mathbb R^n)$. 
	
	To see that the finite union is discrete, let $K\coloneqq\mathcal G\cap\mathbb R^n\supseteq T$ denote the pure-translation subgroup of $\mathcal G$. Since $K/T$ embeds into the finite group $\mathcal G/T$, the quotient $K/T$ is finite; as $T=\mathbb Z^n$ is a full-rank lattice, $K$ is a finite union of $T$-cosets and hence a full-rank lattice as well. Moreover $R(K)=K$ for every linear part $R\in P$, and writing $\mathcal G=\bigcup_{R\in P}\{\bm R\}\times(\bm t_R+R(K))$, the finitely many fibers are pairwise separated in their linear-part coordinates, each fiber being discrete; hence $\mathcal G$ is discrete. 
	
	Furthermore, condition 3 implies that the point group $P = \{\bm{R}_g : g \in \mathcal{G}\}$ is finite.
\end{remark}

\begin{theorem}[Geometric-arithmetic equivalence of crystallographic groups]\label{thm:main_theorem}
	Let $\Gamma\leq\operatorname{Isom}(\mathbb{R}^n)$. Then the following are equivalent.
	\begin{enumerate}
		\item $\Gamma$ is an $n$-dimensional crystallographic group.
		\item There exists $\alpha\in\operatorname{Aff}(\mathbb{R}^n)$ such that $\mathcal{G}=\alpha\Gamma\alpha^{-1}$ satisfies all conditions in \cref{def:space_group}.
	\end{enumerate}
	In particular, $\Gamma\cong\mathcal{G}$. Therefore, an $n$-dimensional crystallographic group is isomorphic to its arithmetic representation in a group-theoretic sense.
\end{theorem}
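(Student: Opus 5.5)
The proof splits into the two implications, with the isomorphism $\Gamma\cong\mathcal G$ coming for free because conjugation $g\mapsto\alpha g\alpha^{-1}$ is an injective group homomorphism of $\operatorname{Aff}(\mathbb{R}^n)$.

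\textbf{(1)$\Rightarrow$(2).} By Bieberbach's first theorem, $L=\Gamma\cap\mathbb{R}^n$ is a full-rank lattice with primitive basis $\bm b_1,\dots,\bm b_n$, and $P=r(\Gamma)$ is finite. Let $\bm B=[\bm b_1,\dots,\bm b_n]$ and $\alpha=(\bm B^{-1},\bm 0)$. Then $\mathcal G=\alpha\Gamma\alpha^{-1}$ consists of the elements $(\bm B^{-1}\bm R_g\bm B,\ \bm B^{-1}\bm t_g)$, and its translation subgroup is $\bm B^{-1}L=\mathbb{Z}^n$. This gives condition 2, with $T=\alpha L\alpha^{-1}$.

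Next I use normality of $L$ in $\Gamma$. Conjugating a translation gives $g(\bm I_n,\bm\ell)g^{-1}=(\bm I_n,\bm R_g\bm\ell)$, so $\bm R_g L=L$. Hence $\bm B^{-1}\bm R_g\bm B$ maps $\mathbb{Z}^n$ onto itself, so it is an integer matrix with integer inverse, i.e. it lies in $\operatorname{GL}^{\pm1}(n,\mathbb{Z})$.

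For condition 3, I show the kernel of $r|_\Gamma$ is exactly $L$, since that kernel is precisely the set of pure translations in $\Gamma$. Therefore $\Gamma/L\cong r(\Gamma)$, which is finite. Since conjugation carries $L$ to $T$, we get $\mathcal G/T\cong\Gamma/L$, which is finite.

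For rationality of translations, I take $m=|\mathcal G/T|$ and any $g\in\mathcal G$. Then $g^m\in T$, and
\[
g^m=\Bigl(\bm R^m,\ \textstyle\sum_{k=0}^{m-1}\bm R^k\bm t\Bigr)=(\bm I_n,\bm z),\qquad \bm z\in\mathbb{Z}^n .
\]
This alone does not force $\bm t\in\mathbb{Q}^n$, because $\sum_k\bm R^k$ may be singular. This is the main obstacle. I would handle it with an extra conjugation by a translation, i.e. a change of origin, which is the Zassenhaus/Brown step.

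The cohomological version goes as follows. The map $\bar g\mapsto\bm t_g \bmod \mathbb{Z}^n$ is a 1-cocycle $P\to\mathbb{R}^n/\mathbb{Z}^n$. The exact sequence $\mathbb{Q}^n/\mathbb{Z}^n\to\mathbb{R}^n/\mathbb{Z}^n\to\mathbb{R}^n/\mathbb{Q}^n$ reduces the question to the image class in $H^1(P,\mathbb{R}^n/\mathbb{Q}^n)$. That group vanishes because $\mathbb{R}^n/\mathbb{Q}^n$ is a $\mathbb{Q}$-vector space, on which multiplication by $|P|$ is invertible. The elementary form of the same argument is more concrete:
\begin{itemize}
\item Pick coset representatives $g_1,\dots,g_m$ of $\mathcal G/T$ and set $\bm v=\frac1m\sum_j\bm t_{g_j}$.
\item Conjugate $\mathcal G$ by the translation $-\bm v$; this is the origin shift.
\item Check that, modulo $\mathbb{Z}^n$ and $\frac1m\mathbb{Z}^n$ terms produced by the cocycle identity $\bm t_{gh}\equiv\bm R_g\bm t_h+\bm t_g$, every new translation part lies in $\frac1m\mathbb{Z}^n$.
\end{itemize}
Conjugating by a translation leaves the linear parts and $T$ unchanged, so conditions 1 and 2 survive this step.

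\textbf{(2)$\Rightarrow$(1).} Suppose $\mathcal G=\alpha\Gamma\alpha^{-1}$ satisfies \cref{def:space_group}. The remark after that definition shows $\mathcal G$ is discrete. For cocompactness, $[0,1]^n$ surjects onto $\mathbb{R}^n/T$, and $\mathbb{R}^n/\mathcal G$ is a quotient of $\mathbb{R}^n/T$, so it is compact. The remark after Bieberbach's second theorem says affine conjugation preserves discreteness and cocompactness, so $\Gamma=\alpha^{-1}\mathcal G\alpha$ has both properties. Since $\Gamma\le\operatorname{Isom}(\mathbb{R}^n)$ by hypothesis, $\Gamma$ is an $n$-dimensional crystallographic group.
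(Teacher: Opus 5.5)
Your proposal is correct and follows essentially the same route as the paper. For (1)$\Rightarrow$(2) you conjugate linearly to send $L$ to $\mathbb{Z}^n$, use normality of $L$ to get unimodular linear parts and $\Gamma/L\cong r(\Gamma)$ for finiteness, and then shift the origin by the averaged translation $-\frac1m\sum_j \bm t_{g_j}$ (the paper's $\bm u$), so that the cocycle identity forces every translation part into $\frac1m\mathbb{Z}^n$. Your converse (finitely many discrete $T$-cosets, compact image of $[0,1]^n$) also matches the paper's, and the cohomological remark $H^1(P,\mathbb{R}^n/\mathbb{Q}^n)=0$ is just the conceptual form of that same averaging computation.
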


\begin{remark}
	The isomorphism in \cref{thm:main_theorem} is given by the conjugation map $\varphi_\alpha: \Gamma\to\mathcal{G}$, where $\varphi_\alpha(g)=\alpha g\alpha^{-1}$.
\end{remark}

\subsection{Proof of the geometric-arithmetic equivalence theorem}

We now prove \cref{thm:main_theorem}. The proof is divided into two parts.

\textbf{(1)$\Rightarrow$(2)}. Let $\Gamma \subset \operatorname{Isom}(\mathbb{R}^n)$ be a crystallographic group. By Bieberbach's first theorem, the translation subgroup $L = \Gamma \cap \mathbb{R}^n$ of $\Gamma$ is a full-rank lattice.

\begin{lemma}\label{lma:2.1}
	Let $L \subset \mathbb{R}^n$ be a full-rank lattice in $\mathbb{R}^n$. Then there exists an invertible linear transformation $\bm{A} \in \operatorname{GL}(n,\mathbb{R})$ that maps $L$ to the standard integer lattice, i.e.,
	\[
		\bm{A} L = \mathbb{Z}^n.
	\]
\end{lemma}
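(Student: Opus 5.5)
The plan is to use the lattice basis directly: the lemma follows once we build the linear map that sends a basis of $L$ to the standard basis. By the remark following Bieberbach's first theorem, the full-rank lattice $L$ has a set of primitive basis vectors $\{\bm b_1,\dots,\bm b_n\}$. These vectors are linearly independent, and
\[
	L=\Bigl\{\sum_{i=1}^n z_i\bm b_i : z_i\in\mathbb Z\Bigr\}.
\]
If one prefers not to rely on that remark, the existence of such a basis follows from the standard fact that a discrete subgroup of $\mathbb R^n$ is free abelian of rank at most $n$, generated by linearly independent vectors. That fact is proved by induction on dimension: choose a shortest nonzero vector, project onto the orthogonal complement, and use discreteness. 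Full rank then forces exactly $n$ generators.

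The construction is as follows. Let $\bm B=[\bm b_1,\dots,\bm b_n]\in\mathbb R^{n\times n}$ be the matrix whose columns are the basis vectors. Linear independence gives $\det\bm B\neq 0$, so $\bm B\in\operatorname{GL}(n,\mathbb R)$. Set $\bm A\coloneqq\bm B^{-1}$.

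The verification is a direct computation in both directions. For any $\bm v=\sum_i z_i\bm b_i=\bm B\bm z\in L$ with $\bm z\in\mathbb Z^n$, we get $\bm A\bm v=\bm z\in\mathbb Z^n$. Hence $\bm A L\subseteq\mathbb Z^n$. Conversely, for any $\bm z\in\mathbb Z^n$, the vector $\bm B\bm z$ lies in $L$ and $\bm A(\bm B\bm z)=\bm z$. Hence $\mathbb Z^n\subseteq \bm A L$, and therefore $\bm A L=\mathbb Z^n$.

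The only step with real content is the existence of a $\mathbb Z$-basis of $L$ consisting of linearly independent vectors. Here I would simply invoke the cited remark, or Bieberbach's first theorem itself, which states $L\cong\mathbb Z^n$ and that $L$ spans $\mathbb R^n$. From these two facts one can also argue directly: the images $\bm b_1,\dots,\bm b_n$ of the standard generators under $\mathbb Z^n\cong L$ span $\mathbb R^n$, and any $n$ vectors spanning $\mathbb R^n$ are linearly independent.
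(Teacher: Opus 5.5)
Your proof is correct: taking linearly independent generators $\bm b_1,\dots,\bm b_n$ of $L$ (which the remark after Bieberbach's first theorem builds into the notion of a lattice) and setting $\bm A=[\bm b_1\ \cdots\ \bm b_n]^{-1}$ is the standard argument. The paper gives no separate proof of this lemma and treats it as standard, but it uses exactly this construction in the proof of \cref{prop:arith-conventional}, where it places a $\mathbb Z$-basis of $L$ in the columns of a matrix and conjugates by that matrix's inverse.
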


By \cref{lma:2.1}, we can choose $\bm{A} \in \operatorname{GL}(n,\mathbb{R})$ such that $\bm{A} L = \mathbb{Z}^n$. Denote
\[
	\mathcal{G}_0 \coloneqq \bm{A} \Gamma \bm{A}^{-1} \subset \operatorname{Aff}(\mathbb{R}^n),
	\qquad
	T \coloneqq \{(\bm{I}_n,\bm{z}) : \bm{z}\in\mathbb{Z}^n\}.
\]
Then $T$ is precisely the pure translation subgroup of $\mathcal{G}_0$. Since $L\triangleleft \Gamma$, it follows that $T\triangleleft \mathcal{G}_0$. For any $g=(\bm{R}_g,\bm{t}_g)\in\mathcal{G}_0$ and any $\bm{z}\in\mathbb{Z}^n$, we have
\[
	g(\bm{I}_n,\bm{z})g^{-1}=(\bm{I}_n,\bm{R}_g\bm{z})\in T.
\]
Thus, $\bm{R}_g\mathbb{Z}^n\subseteq\mathbb{Z}^n$. Applying the same argument to $g^{-1}=(\bm{R}_g^{-1},-\bm{R}_g^{-1}\bm{t}_g)\in\mathcal{G}_0$ yields
\[
	\bm{R}_g^{-1}\mathbb{Z}^n\subseteq\mathbb{Z}^n.
\]
Therefore,
\[
	\mathbb{Z}^n=\bm{R}_g\bm{R}_g^{-1}\mathbb{Z}^n\subseteq \bm{R}_g\mathbb{Z}^n\subseteq\mathbb{Z}^n,
\]
which implies $\bm{R}_g\mathbb{Z}^n=\mathbb{Z}^n$. Hence, $\bm{R}_g$ is an automorphism of the additive group $\mathbb{Z}^n$, corresponding to a unimodular integer matrix under the standard basis, i.e., $\bm{R}_g\in\operatorname{GL}^{\pm 1}(n,\mathbb{Z})$. Denote
\[
	P\coloneqq\{\bm{R}_g:g\in\mathcal{G}_0\}.
\]
Since $P=\bm{A}\,r(\Gamma)\,\bm{A}^{-1}$, and $r(\Gamma)$ is a finite group by Bieberbach's first theorem, $P$ is also a finite group. Furthermore, the kernel of the map
\[
	\pi:\mathcal{G}_0\to P,\qquad (\bm{R},\bm{t})\mapsto \bm{R}
\]
is exactly $T$, thus $\mathcal{G}_0/T\cong P$ is a finite group.

It remains to show that, after an affine conjugation, the translation parts can all be reduced to rational vectors. For each $\bm{R}\in P$, choose a representative
\[
	\gamma_{\bm{R}}=(\bm{R},\bm{t}_{\bm{R}})\in\mathcal{G}_0,
\]
with the convention that $\gamma_{\bm{I}_n}=e$. Then any $g\in\mathcal{G}_0$ can be uniquely written as
\[
	g=(\bm{I}_n,\bm{z})\gamma_{\bm{R}},\qquad \bm{z}\in\mathbb{Z}^n,\ \bm{R}\in P.
\]
For any $\bm{R},\bm{S}\in P$, since $T=\ker\pi$, there exists a unique $\bm{z}(\bm{R},\bm{S})\in\mathbb{Z}^n$ such that
\[
	\gamma_{\bm{R}}\gamma_{\bm{S}}=(\bm{I}_n,\bm{z}(\bm{R},\bm{S}))\gamma_{\bm{R}\bm{S}}.
\]
Comparing the translation parts yields
\[
	\bm{t}_{\bm{R}}+\bm{R}\bm{t}_{\bm{S}}-\bm{t}_{\bm{R}\bm{S}}=\bm{z}(\bm{R},\bm{S})\in\mathbb{Z}^n.
\]

Now, we conjugate by a translation to simultaneously transform the translation parts of these representatives into rational vectors. Denote
\[
	\bm{u}\coloneqq -\frac{1}{|P|}\sum_{\bm{S}\in P}\bm{t}_{\bm{S}}\in\mathbb{R}^n,
	\qquad
	\tau\coloneqq(\bm{I}_n,\bm{u}).
\]
For a fixed $\bm{R}\in P$, summing the above equation over all $\bm{S}\in P$, and noting that as $\bm{S}$ runs through $P$, $\bm{R}\bm{S}$ also runs through $P$, we obtain
\[
	\sum_{\bm{S}\in P}\bm{z}(\bm{R},\bm{S})
	= |P|\bm{t}_{\bm{R}} + \bm{R}\sum_{\bm{S}\in P}\bm{t}_{\bm{S}} - \sum_{\bm{S}\in P}\bm{t}_{\bm{S}}.
\]
Rearranging the terms gives
\[
	\bm{t}_{\bm{R}}+\bm{u}-\bm{R}\bm{u}
	= \frac{1}{|P|}\sum_{\bm{S}\in P}\bm{z}(\bm{R},\bm{S})\in\mathbb{Q}^n.
\]
Hence, for each $\bm{R}\in P$,
\[
	\tau\gamma_{\bm{R}}\tau^{-1}
	= (\bm{R},\bm{t}_{\bm{R}}+\bm{u}-\bm{R}\bm{u})
	\in \operatorname{GL}^{\pm 1}(n,\mathbb{Z})\ltimes\mathbb{Q}^n.
\]

Let
\[
	\mathcal{G}\coloneqq \tau\mathcal{G}_0\tau^{-1}.
\]
Since $\tau$ is a pure translation, it commutes with all integer translations, which implies $\tau T\tau^{-1}=T$. Moreover, the linear part of $\mathcal{G}$ is identical to that of $\mathcal{G}_0$, so it remains $P$. Any $g\in\mathcal{G}$ can be uniquely written as
\[
	g=(\bm{I}_n,\bm{z})\,\tau\gamma_{\bm{R}}\tau^{-1},
	\qquad \bm{z}\in\mathbb{Z}^n,\ \bm{R}\in P.
\]
Therefore, for any $g=(\bm{R},\bm{t})\in\mathcal{G}$, we have $\bm{R}\in\operatorname{GL}^{\pm 1}(n,\mathbb{Z})$ and $\bm{t}\in\mathbb{Q}^n$. Additionally, since $T\subset\mathcal{G}$ and
\[
	\mathcal{G}/T\cong \mathcal{G}_0/T\cong P
\]
is a finite group, $\mathcal{G}$ satisfies all the conditions in \cref{def:space_group}.

Finally, by construction,
\[
	\mathcal{G}=\tau\mathcal{G}_0\tau^{-1}=(\tau\bm{A})\Gamma(\tau\bm{A})^{-1}.
\]
Setting $\alpha\coloneqq\tau\bm{A}\in\operatorname{Aff}(\mathbb{R}^n)$, we obtain
\[
	\mathcal{G}=\alpha\Gamma\alpha^{-1},
\]
which establishes (1)$\Rightarrow$(2).

\textbf{(2)$\Rightarrow$(1)}. Suppose there exists $\alpha\in\operatorname{Aff}(\mathbb{R}^n)$ such that $\mathcal{G}=\alpha\Gamma\alpha^{-1}$ satisfies \cref{def:space_group}. Denote
\[
	T=\{(\bm{I}_n,\bm{k}):\bm{k}\in\mathbb{Z}^n\}.
\]
By definition, $T\triangleleft\mathcal{G}$ and $\mathcal{G}/T$ is finite. Thus, $\mathcal{G}$ can be written as the union of finitely many $T$-cosets,
\[
	\mathcal{G}=\bigcup_{i=1}^m g_iT,
\]
where $g_i=(\bm{R}_i,\bm{t}_i)\in\mathcal{G}$. Since $\bm{R}_i\in\operatorname{GL}^{\pm 1}(n,\mathbb{Z})$, every element in each coset takes the form
\[
	g_i(\bm{I}_n,\bm{k})=(\bm{R}_i,\bm{t}_i+\bm{R}_i\bm{k}),
	\qquad \bm{k}\in\mathbb{Z}^n,
\]
and since $\bm{R}_i\in\operatorname{GL}^{\pm 1}(n,\mathbb{Z})$ implies $\bm{R}_i\mathbb{Z}^n=\mathbb{Z}^n$, we have
\[
	g_iT=\{(\bm{R}_i,\bm{t}_i+\bm{z}) : \bm{z}\in\mathbb{Z}^n\}.
\]
This indicates that the linear part of each coset $g_iT$ is fixed to $\bm{R}_i$, while the translation part is a translate of the integer lattice $\mathbb{Z}^n$. The coordinate map $g=(\bm R_g,\bm t_g)\mapsto(\bm R_g,\bm t_g)$ identifies the topological space $\operatorname{Aff}(\mathbb{R}^n)$ with
\[
    \operatorname{GL}(n,\mathbb{R})\times\mathbb{R}^n.
\]
The group law is that of the semidirect product $\operatorname{GL}(n,\mathbb{R})\ltimes\mathbb{R}^n$. Under this coordinate map, each coset $g_iT$ corresponds to
\[
    \{\bm{R}_i\}\times(\bm{t}_i+\mathbb{Z}^n),
\]
which is a closed discrete subset of the product space. A finite union of closed discrete subsets is again closed and discrete. Therefore, $\mathcal{G}$ is a discrete subgroup of $\operatorname{Aff}(\mathbb{R}^n)$.

Next, let $\Omega=[0,1]^n$. For any $\bm{x}\in\mathbb{R}^n$, there always exists $\bm{k}\in\mathbb{Z}^n$ such that $\bm{x}-\bm{k}\in\Omega$, hence $T\cdot\Omega=\mathbb{R}^n$. Since $T\subset\mathcal{G}$, we also have $\mathcal{G}\cdot\Omega=\mathbb{R}^n$. Let
\[
	q:\mathbb{R}^n\to \mathbb{R}^n/\mathcal{G}
\]
be the quotient map. From $\mathcal{G}\cdot\Omega=\mathbb{R}^n$, it follows that $q(\Omega)=\mathbb{R}^n/\mathcal{G}$. Since $q$ is continuous and $\Omega$ is a compact set, its image $q(\Omega)$ is also compact, meaning that $\mathbb{R}^n/\mathcal{G}$ is compact. Thus, $\mathcal{G}$ is cocompact.

Because $\mathcal{G}=\alpha\Gamma\alpha^{-1}$ and affine conjugation preserves compactness, $\mathbb{R}^n/\Gamma$ is also compact. On the other hand, the conjugation map $g\mapsto \alpha g\alpha^{-1}$ does not alter the discreteness of subgroups; therefore, the discreteness of $\mathcal{G}$ in $\operatorname{Aff}(\mathbb{R}^n)$ implies that $\Gamma$ is also discrete in $\operatorname{Aff}(\mathbb{R}^n)$. Noting further that $\Gamma\leq\operatorname{Isom}(\mathbb{R}^n)$, $\Gamma$ remains discrete in $\operatorname{Isom}(\mathbb{R}^n)$. Therefore, $\Gamma$ satisfies both discreteness and cocompactness, making it an $n$-dimensional crystallographic group. This completes the proof of (2)$\Rightarrow$(1).

\subsection{Arithmetic representation in conventional coordinate systems}

\begin{definition}[Conventional basis and conventional lattice]
	Let $\Gamma$ be an $n$-dimensional crystallographic group, and denote its translation lattice by $L = \Gamma \cap \mathbb{R}^n$. Let $(\bm{a}_1, \dots, \bm{a}_n)$ be a basis for $\mathbb{R}^n$, and let the lattice generated by it be
	\[
		L_{\mathrm{conv}} \coloneqq \left\{ \sum_{i=1}^n m_i \bm{a}_i : m_i \in \mathbb{Z} \right\} \subset \mathbb{R}^n.
	\]
	This basis is termed a conventional basis, and $L_{\mathrm{conv}}$ is called the corresponding conventional lattice, if the following conditions are satisfied.
	\begin{enumerate}
		\item Let $P \coloneqq \{\bm{R}_\gamma : \gamma \in \Gamma\}$ be the point group of $\Gamma$. Then for any $\bm{R} \in P$, we have $\bm{R}(L_{\mathrm{conv}}) = L_{\mathrm{conv}}$.
		\item $L_{\mathrm{conv}} \subseteq L$, and the index $[L : L_{\mathrm{conv}}] < \infty$.
	\end{enumerate}
\end{definition}

\begin{remark}
	The conventional basis corresponds to the coordinate basis chosen for the conventional cell in crystallography, where the conventional basis vectors are exactly the basis vectors of this cell, typically selected according to the standard conventions for Bravais lattices. In the classical two- and three-dimensional setting, if the Bravais lattice is centered, then the volume of the conventional cell is strictly greater than that of the primitive cell; equivalently, $L_{\mathrm{conv}}$ is a proper finite-index sublattice of $L$.
\end{remark}

\begin{proposition}\label{prop:arith-conventional}
	Let $\Gamma$ be an $n$-dimensional crystallographic group. If its coordinate representation under some basis satisfies the conditions of \cref{def:space_group}, then its coordinate representation with respect to any conventional basis also satisfies \cref{def:space_group}.
\end{proposition}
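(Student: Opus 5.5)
The plan is to compare the two coordinate systems through an explicit change-of-basis matrix and to check the three conditions of \cref{def:space_group} one at a time. Let $\mathcal{G}_B$ denote the coordinate representation of $\Gamma$ with respect to the basis $B$ for which \cref{def:space_group} holds. Let $(\bm a_1,\dots,\bm a_n)$ be a conventional basis, and let $\bm C\in\operatorname{GL}(n,\mathbb R)$ be the matrix converting $B$-coordinates into conventional coordinates, $\bm x_{\mathrm{conv}}=\bm C\bm x_B$. I keep the same origin, so that only the linear coordinates change. The conventional representation is then $\mathcal G_{\mathrm{conv}}=\bm C\mathcal G_B\bm C^{-1}$, with elements
\[
(\bm R,\bm t)\mapsto(\bm C\bm R\bm C^{-1},\,\bm C\bm t).
\]

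\textbf{Step 1: $\bm C$ is rational.} Let $K_B=\mathcal G_B\cap\mathbb R^n$ be the full translation subgroup in $B$-coordinates; it is the image of $L$. As noted in the remark after \cref{def:space_group}, $K_B\supseteq\mathbb Z^n$ and $K_B/\mathbb Z^n$ embeds in the finite group $\mathcal G_B/T$. Hence $mK_B\subseteq\mathbb Z^n$ with $m=[K_B:\mathbb Z^n]$, so $K_B\subseteq\frac1m\mathbb Z^n\subset\mathbb Q^n$. Each conventional basis vector lies in $L_{\mathrm{conv}}\subseteq L$, so its $B$-coordinates are rational. These coordinate vectors are the columns of $\bm C^{-1}$. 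Therefore $\bm C^{-1}$, and hence $\bm C$, has rational entries. It follows immediately that every translation part $\bm C\bm t$ is rational, which gives the rationality half of condition 1.

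\textbf{Step 2: integral unimodular linear parts.} In conventional coordinates the lattice $L_{\mathrm{conv}}$ is exactly $\mathbb Z^n$. By condition 1 of the definition of a conventional basis, every point-group element satisfies $\bm R(L_{\mathrm{conv}})=L_{\mathrm{conv}}$. Hence $\bm C\bm R\bm C^{-1}$ maps $\mathbb Z^n$ onto $\mathbb Z^n$. Applying the same argument to $\bm R^{-1}$, exactly as in the proof of \cref{thm:main_theorem}, shows that $\bm C\bm R\bm C^{-1}$ is an automorphism of $\mathbb Z^n$, i.e.\ it lies in $\operatorname{GL}^{\pm1}(n,\mathbb Z)$.

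\textbf{Step 3: conditions 2 and 3.} Since $L_{\mathrm{conv}}\subseteq L$, the integer translations $\{(\bm I_n,\bm z):\bm z\in\mathbb Z^n\}$ in conventional coordinates belong to $\mathcal G_{\mathrm{conv}}$, which is condition 2. For condition 3, let $K_{\mathrm{conv}}=\bm C K_B$ be the full translation subgroup in conventional coordinates. The chain $\mathbb Z^n\le K_{\mathrm{conv}}\le\mathcal G_{\mathrm{conv}}$ gives
\[
|\mathcal G_{\mathrm{conv}}/\mathbb Z^n|=|\mathcal G_{\mathrm{conv}}/K_{\mathrm{conv}}|\cdot[K_{\mathrm{conv}}:\mathbb Z^n]=|P|\cdot[L:L_{\mathrm{conv}}].
\]
Here $\mathcal G_{\mathrm{conv}}/K_{\mathrm{conv}}\cong P$, the finite point group, and $[L:L_{\mathrm{conv}}]<\infty$ by condition 2 of the definition of a conventional basis. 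So the quotient is finite.

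The main subtlety is Step 1: rationality of the translation parts is not preserved by an arbitrary real change of basis. The key point is that the hypothesis forces the whole lattice $L$ to have rational coordinates in the original system. One must also keep the origin fixed, since changing it could destroy rationality; the statement concerns only a change of basis, so this is legitimate.
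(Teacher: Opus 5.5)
Your proof is correct and follows essentially the same argument as the paper: the change of basis is rational because $L$ has rational coordinates, the new linear parts are unimodular because $\bm R(L_{\mathrm{conv}})=L_{\mathrm{conv}}$, and the quotient is finite by multiplicativity of indices. The only difference is organizational. The paper first conjugates to an intermediate primitive system, where $L=\mathbb Z^n$ and the conventional basis matrix is integral, and then conjugates to conventional coordinates, whereas you make a single rational change of basis directly; also, rationality of $L$ already follows from condition 1 of \cref{def:space_group} applied to pure translations, so your bound $mK_B\subseteq\mathbb Z^n$ is correct but not needed.
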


\begin{remark}
	Note that in the conventional basis, the integer translation subgroup $T$ corresponds to the conventional lattice $L_{\mathrm{conv}}$, which may be a proper sublattice of the full translation lattice. The arithmetic representation requires $T\subseteq\mathcal G$ but does not require equality with the full translation subgroup. For example, in the body-centred cubic case, the body-centring translation $(1/2,1/2,1/2)$ belongs to the full translation lattice but not to $T$. The arithmetic representation remains valid because translation parts are allowed to lie in $\mathbb Q^n$.
\end{remark}

\begin{proof}
	First, choose a basis such that the coordinate representation of $\Gamma$ satisfies \cref{def:space_group}, and denote its pure translation subgroup by
	\[
		L\coloneqq\Gamma\cap\mathbb{R}^n.
	\]
	Since $\bm t\in\mathbb{Q}^n$ for any $(\bm I_n,\bm t)\in\Gamma$, we have $L\subset\mathbb{Q}^n$. Take a $\mathbb{Z}$-basis $\{\bm\ell_1,\dots,\bm\ell_n\}$ of $L$, and let
	\[
		\bm A\coloneqq[\bm\ell_1\ \cdots\ \bm\ell_n]\in\operatorname{GL}(n,\mathbb{Q}),
		\qquad
		\bm B_0\coloneqq\bm A^{-1}.
	\]
	Then $L=\bm A\mathbb{Z}^n$. Conjugating $\Gamma$ by $(\bm B_0,\bm 0)$, we denote the resulting group by
	\[
		\Gamma_0\coloneqq(\bm B_0,\bm 0)\Gamma(\bm B_0,\bm 0)^{-1}.
	\]
	Its pure translation subgroup is denoted by
	\[
		L_0\coloneqq\Gamma_0\cap\mathbb R^n.
	\]
	For any $g=(\bm R_g,\bm t_g)\in\Gamma$, its corresponding element in $\Gamma_0$ is
	\[
		g_0=(\bm R_{g,0},\bm t_{g,0})
		=\bigl(\bm B_0\bm R_g\bm B_0^{-1},\,\bm B_0\bm t_g\bigr).
	\]
	Moreover, for any $\bm \ell\in L$, since $(\bm I_n,\bm \ell)\in\Gamma$, we have
	\[
		g(\bm I_n,\bm \ell)g^{-1}=(\bm I_n,\bm R_g\bm \ell)\in\Gamma\cap\mathbb R^n=L.
	\]
	Thus $\bm R_g(L)\subseteq L$. Applying the same argument to $g^{-1}$ yields $\bm R_g^{-1}(L)\subseteq L$, implying $\bm R_g(L)=L$. Consequently,
	\[
		\bm R_{g,0}\mathbb Z^n
		=\bm B_0\bm R_g(L)
		=\bm B_0L
		=\mathbb Z^n,
	\]
	hence $\bm R_{g,0}\in\operatorname{GL}^{\pm 1}(n,\mathbb Z)$. Also, since $\bm B_0\in\operatorname{GL}(n,\mathbb Q)$ and $\bm t_g\in\mathbb Q^n$, it follows that $\bm t_{g,0}\in\mathbb Q^n$. In addition,
	\[
		L_0=\bm B_0L=\mathbb Z^n.
	\]
	From the remark following \cref{def:space_group}, the point group $P=\{\bm R_g:g\in\Gamma\}$ is a finite group, so $P_0\coloneqq\bm B_0P\bm B_0^{-1}$ is also finite. The kernel of the linear part map
	\[
		r_0:\Gamma_0\to P_0,
		\qquad
		(\bm R,\bm t)\mapsto\bm R
	\]
	is exactly $L_0$, and thus $\Gamma_0/L_0\cong P_0$ is a finite group. This shows that $\Gamma_0$ still satisfies \cref{def:space_group}. Therefore, it suffices to proceed with the remainder of the proof in this intermediate coordinate system.

	Now we introduce the conventional basis in the coordinate system of $\Gamma_0$. Let the coordinate column vectors of the conventional basis vectors in this system be $\bm a_1,\dots,\bm a_n$, and let
	\[
		\bm C\coloneqq[\bm a_1\ \cdots\ \bm a_n].
	\]
	By definition, the conventional lattice in this coordinate system is expressed as
	\[
		\Lambda\coloneqq L_{\mathrm{conv}}=\bm C\mathbb Z^n.
	\]
	Since $L_0=\mathbb Z^n$ here, and the conventional basis satisfies $L_{\mathrm{conv}}\subseteq L_0$ and $[L_0:L_{\mathrm{conv}}]<\infty$, we have
	\[
		\Lambda\subseteq\mathbb Z^n,
		\qquad
		[\mathbb Z^n:\Lambda]<\infty.
	\]
	Taking
	\[
		\bm B\coloneqq\bm C^{-1}\in\operatorname{GL}(n,\mathbb Q),
	\]
	we get $\bm B\Lambda=\mathbb Z^n$. Conjugating $\Gamma_0$ by $(\bm B,\bm 0)$ yields the group representation under the conventional basis
	\[
		\Gamma_{\mathrm{conv}}\coloneqq(\bm B,\bm 0)\Gamma_0(\bm B,\bm 0)^{-1}.
	\]
	For any $g_0=(\bm R_{g,0},\bm t_{g,0})\in\Gamma_0$, its counterpart under the conventional basis is
	\[
		g_{\mathrm{conv}}=(\bm R_{g,\mathrm{conv}},\bm t_{g,\mathrm{conv}})
		=\bigl(\bm B\bm R_{g,0}\bm B^{-1},\,\bm B\bm t_{g,0}\bigr).
	\]
	We now verify that $\Gamma_{\mathrm{conv}}$ satisfies \cref{def:space_group}.

	First, by the definition of the conventional basis, $\bm R_{g,0}(\Lambda)=\Lambda$ for any $g_0\in\Gamma_0$. Thus
	\[
		\bm R_{g,\mathrm{conv}}\mathbb Z^n
		=(\bm B\bm R_{g,0}\bm B^{-1})\mathbb Z^n
		=\bm B\bm R_{g,0}(\Lambda)
		=\bm B\Lambda
		=\mathbb Z^n,
	\]
	so $\bm R_{g,\mathrm{conv}}\in\operatorname{GL}^{\pm 1}(n,\mathbb Z)$.

	Second, since $\bm B\in\operatorname{GL}(n,\mathbb Q)$ and $\bm t_{g,0}\in\mathbb Q^n$, we have
	\[
		\bm t_{g,\mathrm{conv}}=\bm B\bm t_{g,0}\in\mathbb Q^n.
	\]

	Third, let
	\[
		T_\Lambda\coloneqq\{(\bm I_n,\bm v):\bm v\in\Lambda\}.
	\]
	Since $\Lambda\subseteq L_0$ and $L_0$ is the pure translation subgroup of $\Gamma_0$, we have $T_\Lambda\subseteq\Gamma_0$. Moreover, for any $\bm z\in\mathbb Z^n$, setting $\bm v=\bm C\bm z\in\Lambda$, we obtain
	\[
		(\bm B,\bm 0)(\bm I_n,\bm v)(\bm B^{-1},\bm 0)
		=(\bm I_n,\bm B\bm v)
		=(\bm I_n,\bm z).
	\]
	Thus
	\[
		T\coloneqq\{(\bm I_n,\bm z):\bm z\in\mathbb Z^n\}\subseteq\Gamma_{\mathrm{conv}}.
	\]

	Finally, from $[\mathbb Z^n:\Lambda]<\infty$ and the finiteness of $\Gamma_0/L_0$, we obtain
	\[
		[\Gamma_0:T_\Lambda]=[\Gamma_0:L_0]\,[L_0:T_\Lambda]<\infty.
	\]
	Since the conjugation map sends $T_\Lambda$ to $T$, it follows that
	\[
		[\Gamma_{\mathrm{conv}}:T]=[\Gamma_0:T_\Lambda]<\infty.
	\]
	Hence $\Gamma_{\mathrm{conv}}/T$ is a finite group.

	In summary, the linear parts of $\Gamma_{\mathrm{conv}}$ belong to $\operatorname{GL}^{\pm 1}(n,\mathbb Z)$, the translation parts belong to $\mathbb Q^n$, and $\Gamma_{\mathrm{conv}}$ contains the standard integer translation subgroup while satisfying that $\Gamma_{\mathrm{conv}}/T$ is finite. 	Therefore, the coordinate representation of $\Gamma$ under a conventional basis still satisfies \cref{def:space_group}.
\end{proof}

\paragraph{Convention for the rest of the paper}
Based on the geometric--arithmetic equivalence theorem and the preservation of the arithmetic representation under conventional bases, we work with arithmetic crystallographic groups for the remainder of this paper. This involves no loss of generality: every isomorphism class of $n$-dimensional crystallographic groups admits an arithmetic representative, and affine conjugacy in this framework corresponds to abstract group isomorphism.

\subsection{Quotient group structure and discrete computational framework}

For compatibility with the standard discrete Fourier transform, the computational framework employs a coordinate system defined by the basis vectors of the conventional crystal cell. Unless otherwise specified, all subsequent matrices and vectors are expressed in this coordinate system.

By the remark following \cref{def:space_group}, we know that $T\triangleleft\mathcal{G}$. We thus define the quotient group
\[
	G\coloneqq\mathcal{G}/T,
\]
whose elements are denoted by $\bar g=gT$.

\begin{remark}
	The relationship between the quotient group $G$ and the point group $P$ depends on the integer translation subgroup $T$, whose precise form is dictated by the choice of the coordinate basis. Define the map $\phi:G\to P$ by
	\[
		\phi(\bar g)=\bm{R}_g.
	\]
	This map is well-defined and surjective. Its kernel consists of cosets whose linear part is the identity matrix,
	\[
		\ker(\phi)=\{\bar g\in G:\bm{R}_g=\bm{I}_n\}\cong T_{\mathcal{G}}/T,
	\]
	where $T_{\mathcal{G}}\coloneqq\{(\bm{I}_n,\bm{t})\in\mathcal{G}\}$ is the pure translation subgroup of $\mathcal{G}$.

	\textbf{Primitive basis case.} Under a primitive basis, the full translation lattice has integer coordinates, so $T_{\mathcal G}=T$. Thus, $\ker(\phi)$ is trivial and $G\cong P$.

	\textbf{Conventional basis case.} Under a conventional basis, $T$ is given by the integer translations generated by the conventional basis vectors. In the classical two- and three-dimensional setting, a centered Bravais lattice satisfies $[L:L_{\mathrm{conv}}] > 1$. More generally, the same conclusion holds whenever the chosen conventional lattice is a proper finite-index sublattice of $L$. In either case, there exist pure translation elements whose coordinates are non-integral in the conventional basis, leading to $T \subsetneq T_{\mathcal{G}}$. Therefore, $\ker(\phi)\cong T_{\mathcal{G}}/T$ is a nontrivial finite group.
\end{remark}

\section{Composition series and reconstruction of symmetry orbits}
\label{sec:solv}
Let us first review several basic concepts in group theory.

\begin{definition}[Normal Subgroup]
	A subgroup $N$ of a group $G$ is called a normal subgroup, denoted as $N \trianglelefteq G$, if $gN=Ng$ for all $g\in G$.
\end{definition}

\begin{definition}[Simple Group]
	A nontrivial group $G$ is called a simple group if its only normal subgroups are $\{e\}$ and $G$.
\end{definition}

\begin{definition}[Index]
	The index of a subgroup $H$ in a group $G$, denoted by $[G:H]$, is defined as the number of distinct cosets of $H$ in $G$.
\end{definition}

\begin{theorem}[Lagrange's Theorem]
	For any subgroup $H$ of a finite group $G$, we have $|G|=|H|[G:H]$.
\end{theorem}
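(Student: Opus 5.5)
The plan is the standard coset-counting argument. Write the set of left cosets of $H$ in $G$ as $G/H=\{gH: g\in G\}$. By definition of the index this set has exactly $[G:H]$ elements, and it is finite because $G$ is finite. The proof then rests on two facts: the left cosets partition $G$, and each of them has exactly $|H|$ elements. Together these give $|G|=\sum_{gH\in G/H}|gH|=[G:H]\,|H|$.

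\textbf{Step 1: the cosets partition $G$.} Define a relation on $G$ by $a\sim b$ if and only if $a^{-1}b\in H$.
\begin{itemize}
\item Reflexivity holds because $a^{-1}a=e\in H$.
\item Symmetry holds because $H$ is closed under inverses: $(a^{-1}b)^{-1}=b^{-1}a$.
\item Transitivity holds because $H$ is closed under products: $a^{-1}c=(a^{-1}b)(b^{-1}c)$.
\end{itemize}
So $\sim$ is an equivalence relation. The equivalence class of $a$ is $\{b: a^{-1}b\in H\}=aH$. Hence the distinct left cosets are pairwise disjoint, and their union is $G$, since $a\in aH$ for every $a\in G$.

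\textbf{Step 2: every coset has $|H|$ elements.} For fixed $g\in G$, consider the map $\lambda_g:H\to gH$, $h\mapsto gh$.
\begin{itemize}
\item It is surjective by the definition of $gH$.
\item It is injective by left cancellation: $gh_1=gh_2$ implies $h_1=h_2$.
\end{itemize}
Thus $|gH|=|H|$.

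\textbf{Conclusion.} Summing the sizes of the disjoint cosets from Step 1, each of size $|H|$ by Step 2, gives $|G|=|H|[G:H]$.

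There is no real obstacle here. The only point needing care is Step 1: the cosets must be shown to be genuinely disjoint (two cosets either coincide or do not meet), not merely to cover $G$. Phrasing the cosets as the equivalence classes of $\sim$ handles this automatically.
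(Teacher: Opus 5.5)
Your coset-counting proof is correct and complete: the equivalence relation $a\sim b \iff a^{-1}b\in H$ partitions $G$ into the left cosets, and $h\mapsto gh$ shows each coset has $|H|$ elements. The paper states Lagrange's theorem as a standard background fact without proof, so there is no competing argument; yours is the standard one, with $[G:H]$ read as the number of left cosets, which matches the paper's own orbit--stabilizer discussion.
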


For a finite group action, the map from the left cosets of a point stabilizer to the corresponding orbit is a bijection. Together with Lagrange's theorem, this gives the orbit--stabilizer formula, which will be used below to determine the number of distinct points recovered from a representative.

\begin{definition}[Subnormal Series]
	A subnormal series of a group $G$ is a subgroup chain $G=G_0\rhd G_1\rhd \cdots\rhd G_n$, where $G_{j+1}$ is a normal subgroup of $G_j$ \textup{(}$0\leq j<n$\textup{)}. The quotient groups $G_j/G_{j+1}$ are called the factors. If all $G_j$ are normal subgroups of $G$, it is called a normal series.
\end{definition}

\begin{definition}[Composition Series]
	A composition series is a subnormal series $G=G_0\rhd G_1\rhd \cdots\rhd G_n=\{ e \}$ in which every factor $G_j/G_{j+1}$ is a simple group.
\end{definition}

\begin{definition}[Solvable Series]
	A solvable series is a subnormal series $G=G_0\rhd G_1\rhd \cdots\rhd G_n=\{ e \}$ in which every factor $G_j/G_{j+1}$ is an abelian group. If $G$ possesses such a series, $G$ is called a solvable group.
\end{definition}

\begin{theorem}
	A finite group $G$ is a solvable group if and only if all factors of its composition series are cyclic groups of prime order.
\end{theorem}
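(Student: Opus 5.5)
Both directions go through the Jordan--Hölder theorem, so I will first fix a composition series $G=G_0\rhd G_1\rhd\cdots\rhd G_m=\{e\}$ of the finite group $G$; one exists by induction on $|G|$, taking a maximal proper normal subgroup at each step. Every factor $G_j/G_{j+1}$ is then simple. Two standard facts will be used throughout. A simple abelian group is cyclic of prime order, since every subgroup of an abelian group is normal. Conversely, a cyclic group of prime order is abelian and, by Lagrange's theorem, simple.

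\textbf{($\Leftarrow$)} Suppose every composition factor is cyclic of prime order. Then the composition series itself is a subnormal series ending at $\{e\}$ with abelian factors. This is precisely a solvable series, so $G$ is solvable, and this direction is immediate.

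\textbf{($\Rightarrow$)} Suppose $G$ is solvable, with solvable series $G=H_0\rhd H_1\rhd\cdots\rhd H_k=\{e\}$ in which each $H_i/H_{i+1}$ is abelian. I will refine this series to a composition series in which all factors are abelian. For each $i$, the quotient $H_i/H_{i+1}$ is a finite abelian group, so all of its subgroups are normal. Any nontrivial finite abelian group has a maximal proper subgroup, and repeating this choice gives a chain
\[
H_i/H_{i+1}=Q_0\rhd Q_1\rhd\cdots\rhd Q_r=\{e\}
\]
whose successive quotients are simple abelian, hence cyclic of prime order. By the correspondence theorem this chain pulls back to a chain of subgroups strictly between $H_i$ and $H_{i+1}$. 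Each term is normal in the previous one, since normality is preserved under preimages, and the successive factors are isomorphic to $Q_t/Q_{t+1}$ by the third isomorphism theorem. Concatenating these refinements over all $i$ yields a composition series of $G$ whose factors are all cyclic of prime order.

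Finally I must show that this holds for \emph{every} composition series of $G$, which is how the statement reads. For this I will invoke the Jordan--Hölder theorem: any two composition series of a finite group have the same length and the same multiset of factors up to isomorphism and permutation. Hence the factors of the originally fixed series are also cyclic of prime order. The main obstacle is exactly this uniqueness step, since the refinement argument only produces \emph{some} good composition series. If Jordan--Hölder is not to be cited, I would prove it by induction on $|G|$, comparing two series through $G_1\cap G_1'$ and applying the second isomorphism theorem. A shortcut would be to argue directly that every subgroup and quotient of a solvable group is solvable. Then each composition factor $G_j/G_{j+1}$ is a simple solvable group, and a simple solvable group has derived subgroup either trivial or the whole group; the latter is impossible since its derived series must terminate at $\{e\}$. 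So the factor is abelian, hence cyclic of prime order. I would likely present this second route, because it avoids Jordan--Hölder entirely.
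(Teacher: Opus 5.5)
Your proposal is correct. The ($\Leftarrow$) direction is the paper's. So is the ($\Rightarrow$) argument you write out in full: refine each abelian factor $H_i/H_{i+1}$ through maximal subgroups, pull back by the correspondence theorem, and invoke Jordan--H\"older to pass to an arbitrary composition series.

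The shortcut you say you would actually present is a genuinely different route. Instead of building one good composition series and transporting its factors by Jordan--H\"older, you take an arbitrary composition series. Each $G_j/G_{j+1}$ is a quotient of a subgroup of $G$, hence a simple solvable group, hence abelian. This removes the only substantial theorem the paper's proof relies on, and it makes the ``every composition series'' clause automatic.

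The cost is that you must supply closure of solvability under subgroups and quotients, which you currently only assert. With the paper's series-based definition, this follows from the series $K\cap H_i$ and $H_iN/N$ via the second and third isomorphism theorems. The former is essentially the intersection argument the paper itself uses in the proof of \cref{cor:three_dimensional_reconstruction}.

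Your appeal to the derived series is a further equivalence not contained in the paper's definition, but you do not need it. Suppose $S$ is simple and $S=S_0\triangleright S_1\triangleright\cdots\triangleright S_k=\{e\}$ has abelian factors. The first $S_i\neq S$ is normal in $S_{i-1}=S$, hence trivial, so $S\cong S_{i-1}/S_i$ is abelian.

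Comparing the two: the paper's version is shorter if Jordan--H\"older is taken as known, and it shows concretely how a solvable series refines to a composition series. Yours is more elementary and self-contained, resting only on closure properties that the paper already uses elsewhere.
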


\begin{proof}
If $G$ is solvable, by definition it has a subnormal series
\[
    G=G_0\triangleright G_1\triangleright\cdots
    \triangleright G_n=\{e\}
\]
with abelian factors. Since $G$ is finite, each factor admits a composition series. Taking inverse images of these series refines the original series into a composition series of $G$. Its factors are finite abelian simple groups, and hence cyclic groups of prime order. The Jordan--H\"older theorem then shows that every composition series of $G$ has the same factors up to isomorphism and order.

Conversely, if a composition series of $G$ has cyclic prime-order factors, all its factors are abelian. This series is therefore a solvable series, and $G$ is solvable.
\end{proof}

We write $\mathrm{Sym}_m$ for the symmetric group on $m$ symbols,
$\mathrm{Alt}_m$ for its alternating subgroup, and
$\mathrm{Cyc}_m\cong\mathbb Z/m\mathbb Z$ for the cyclic group of order $m$.
The Klein four-group is denoted by
$V_4\cong\mathrm{Cyc}_2\times\mathrm{Cyc}_2$.

\begin{theorem}\label{thm:three_dimensional_solvability}
    Every three-dimensional crystallographic group is solvable. In particular,
    its finite symmetry quotient $G=\mathcal G/T$ is solvable.
\end{theorem}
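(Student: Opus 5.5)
The plan is to reduce everything to the point group and then use the classification of finite subgroups of $O(3)$. The crystallographic group $\Gamma$ has the translation lattice $L=\Gamma\cap\mathbb R^n$ as a normal abelian subgroup, and the quotient $\Gamma/L\cong r(\Gamma)$ is the point group $P$. This follows from Bieberbach's first theorem together with the fact that the kernel of $r|_\Gamma$ is exactly $L$. Solvable groups are closed under extensions. Concretely, if $P$ has a solvable series, its preimage in $\Gamma$ followed by $L\rhd\{e\}$ gives a solvable series of $\Gamma$, since $L\cong\mathbb Z^n$ is abelian. So it suffices to show that every finite point group $P\le O(3)$ is solvable.

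For the point group I would proceed as follows. The subgroup $P^+=P\cap SO(3)$ has index at most $2$ in $P$, so it suffices to show $P^+$ is solvable. Finite subgroups of $SO(3)$ are cyclic $\mathrm{Cyc}_m$, dihedral, $\mathrm{Alt}_4$, $\mathrm{Sym}_4$ or $\mathrm{Alt}_5$. The crystallographic restriction excludes $\mathrm{Alt}_5$: since $P$ preserves the lattice $L$, each rotation has integer trace $1+2\cos\theta$, which forces rotation orders in $\{1,2,3,4,6\}$. $\mathrm{Alt}_5$ contains elements of order $5$, so it cannot occur. The remaining groups are solvable, with explicit series:
\[
\mathrm{Sym}_4\rhd\mathrm{Alt}_4\rhd V_4\rhd \mathrm{Cyc}_2\rhd\{e\},
\]
while dihedral and cyclic groups are metabelian. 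Alternatively, one can avoid the full classification: $|P|$ divides $48$ (the order of the maximal crystallographic point groups in dimension three), so $|P|=2^a3^b$, and Burnside's $p^aq^b$ theorem gives solvability immediately. I would present the classification route and mention Burnside as a shortcut.

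For the quotient $G=\mathcal G/T$, note that $T\le T_{\mathcal G}\le\mathcal G$. The map $\phi:G\to P$ from the earlier remark is surjective with kernel $T_{\mathcal G}/T$, which is a finite abelian group. Hence $G$ is an extension of an abelian group by a solvable group, and is therefore solvable. Alternatively, $G$ is a quotient of the solvable group $\mathcal G\cong\Gamma$, by \cref{thm:main_theorem}, and quotients of solvable groups are solvable.

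The main obstacle is justifying the exclusion of $\mathrm{Alt}_5$ (equivalently, the bound $|P|\mid 48$) rigorously. I would do this via the integral-trace argument. In the arithmetic basis, $\bm R_g\in\operatorname{GL}^{\pm1}(3,\mathbb Z)$ has integer trace. It is also conjugate to an orthogonal matrix, so for a rotation the trace is $1+2\cos\theta\in\mathbb Z$, and $\theta$ has order in $\{1,2,3,4,6\}$. For an improper element, apply the same argument to $-\bm R_g$.
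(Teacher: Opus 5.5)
Your proof is correct. Its outer structure matches the paper's, and the difference lies in how you show the point group is solvable.

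\textbf{Shared steps.} Both proofs show $P$ is solvable, then extend by the abelian translation lattice. The paper uses the derived series of $\mathcal G$ and $T_{\mathcal G}\cong\mathbb Z^3$; you use preimages of a solvable series of $P$ followed by $L\rhd\{e\}$. Both then pass to the quotient $\mathcal G/T$.

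\textbf{The point-group step.}
\begin{itemize}
\item The paper cites the classification of three-dimensional crystallographic point groups: every $P$ embeds in $m\bar3m\cong\mathrm{Sym}_4\times\mathrm{Cyc}_2$ or in $6/mmm\cong\mathrm{Sym}_3\times\mathrm{Cyc}_2\times\mathrm{Cyc}_2$. Both are visibly solvable, and solvability passes to subgroups.
\item You use only Klein's list of finite subgroups of $SO(3)$ together with the crystallographic restriction. You derive that restriction directly from the integrality of $\bm R_g$ in the arithmetic representation: a rotation has trace $1+2\cos\theta\in\mathbb Z$. Your reduction to $P\cap SO(3)$ (a normal subgroup of index at most $2$) is fine, and so is applying the trace argument to $-\bm R_g$ for improper elements.
\end{itemize}

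\textbf{What each route buys.} Your argument is more self-contained. It also isolates the real mechanism: the icosahedral group $\mathrm{Alt}_5$ is the only non-solvable finite subgroup of $SO(3)$, and its order-$5$ elements are excluded by the trace condition. The paper's route instead produces explicit solvable ambient groups, which it reuses in the composition-series examples of Section~\ref{sec:Solvable-Series}.

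\textbf{Your Burnside shortcut.} Justifying $|P|\mid 48$ by the orders of the maximal crystallographic point groups already relies on the classification you meant to avoid. The self-contained version is this. The crystallographic restriction gives proper elements orders in $\{1,2,3,4,6\}$. For an improper element, $-\bm R_g$ is such a rotation, so its order divides $\operatorname{lcm}(2,\cdot)$ of one of these. Hence $P$ has no element of prime order $\ge 5$. Cauchy's theorem then gives $|P|=2^a3^b$, and Burnside's theorem applies.
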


\begin{proof}
    We first consider the point group $P$ of $\mathcal G$.
    By the classification of three-dimensional crystallographic point groups,
    $P$ is isomorphic to a subgroup of either the full octahedral point group
    $m\bar3m$ or the full hexagonal point group $6/mmm$
    \cite{Aroyo2016ITC_A}. These groups satisfy
    \[
        m\bar3m\cong\mathrm{Sym}_4\times\mathrm{Cyc}_2,
        \qquad
        6/mmm\cong\mathrm{Sym}_3\times\mathrm{Cyc}_2\times\mathrm{Cyc}_2.
    \]
    For the first isomorphism, the orientation-preserving symmetries of the
    cube act faithfully on its four body diagonals. Since this group has
    order $24$, the action identifies it with $\mathrm{Sym}_4$. Central inversion supplies the additional
    $\mathrm{Cyc}_2$ factor. For the second isomorphism, the
    orientation-preserving hexagonal symmetries form the dihedral group of
    order $12$. If $r$ is a sixfold rotation and $s$ is a twofold rotation
    about an axis in the basal plane, then
    $\langle r^2,s\rangle\cong\mathrm{Sym}_3$ and
    $\langle r^3\rangle\cong\mathrm{Cyc}_2$ form a direct product;
    central inversion supplies the remaining $\mathrm{Cyc}_2$ factor.

    The series
    \[
        \{e\}\triangleleft\mathrm{Cyc}_2\triangleleft V_4
        \triangleleft\mathrm{Alt}_4\triangleleft\mathrm{Sym}_4,
        \qquad
        \{e\}\triangleleft\mathrm{Alt}_3\triangleleft\mathrm{Sym}_3
    \]
    have cyclic prime-order factors. Thus $\mathrm{Sym}_4$ and
    $\mathrm{Sym}_3$ are solvable. Since solvability is preserved under
    direct products and passage to subgroups, $P$ is solvable.

    Let $T_{\mathcal G}$ denote the full translation subgroup of
    $\mathcal G$. The linear-part homomorphism gives the exact sequence
    \[
        1\longrightarrow T_{\mathcal G}\longrightarrow\mathcal G
        \longrightarrow P\longrightarrow1,
    \]
    where $T_{\mathcal G}\cong\mathbb Z^3$ is abelian. Since $P$ is
    solvable, some term of the derived series of $\mathcal G$ lies in
    $T_{\mathcal G}$, and its next derived subgroup is trivial.
    Hence $\mathcal G$ is solvable. Its quotient $\mathcal G/T$ is
    therefore also solvable. Here $T$ is the integer translation subgroup
    selected in the arithmetic representation; it may be properly
    contained in $T_{\mathcal G}$ when conventional centred coordinates
    are used.
\end{proof}

Let $\mathcal G$ be an $n$-dimensional crystallographic group, and let $T$ be the translation subgroup fixed in the preceding section. Although $\mathcal G$ is generally infinite, the quotient $G=\mathcal G/T$ is finite. Hence, the following standard result applies to $G$:

\begin{theorem}\cite[Theorem 8.4]{Hungerford1974Algebra}
	Every finite group has a composition series.
\end{theorem}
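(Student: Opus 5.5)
We argue by strong induction on the order $|G|$ of the finite group. If $|G|=1$, the trivial series $G=G_0=\{e\}$ with no factors is, vacuously, a composition series. If $G$ is simple, then $G=G_0\rhd G_1=\{e\}$ is a composition series with the single simple factor $G$. Since every nontrivial group has at least $\{e\}$ as a proper normal subgroup, we may otherwise assume that $G$ is nontrivial and not simple.

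For the inductive step we choose a \emph{maximal} proper normal subgroup $N\triangleleft G$. This means $N\neq G$, and no normal subgroup $K$ of $G$ satisfies $N\subsetneq K\subsetneq G$. Such an $N$ exists because $G$ is finite: the set of proper normal subgroups is nonempty, since it contains $\{e\}$, and it is finite, so it has an element maximal under inclusion. We then show that $G/N$ is simple. By the correspondence theorem, normal subgroups of $G/N$ are exactly the quotients $K/N$ with $N\le K\trianglelefteq G$. Maximality of $N$ forces $K=N$ or $K=G$, so $G/N$ has only the trivial normal subgroups. Moreover $G/N$ is nontrivial because $N\neq G$.

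Since $|N|<|G|$, the induction hypothesis gives a composition series
\[
N=N_0\rhd N_1\rhd\cdots\rhd N_m=\{e\}
\]
with simple factors $N_j/N_{j+1}$. Prepending $G$ produces
\[
G\rhd N_0\rhd N_1\rhd\cdots\rhd N_m=\{e\}.
\]
This is a subnormal series, because $N_0=N$ is normal in $G$ and each $N_{j+1}$ is normal in $N_j$. Its first factor $G/N$ is simple by the previous step, and the remaining factors are simple by the induction hypothesis. It is therefore a composition series of $G$, which completes the induction.

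The only point requiring care is the existence of a maximal proper normal subgroup and the simplicity of the corresponding quotient. Both follow directly from finiteness and the correspondence theorem, so we expect no genuine obstacle. In the present setting, the result applies to $G=\mathcal G/T$, whose finiteness is condition 3 of \cref{def:space_group}.
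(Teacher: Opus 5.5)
Your proof is correct, and it is the standard argument behind the result the paper cites from Hungerford; the paper gives no proof of its own. That argument repeatedly passes to a maximal proper normal subgroup, whose quotient is simple by the correspondence theorem, with finiteness guaranteeing both existence and termination, and your strong induction on $|G|$ is a clean formalization of this iteration (the separate simple case is just the instance $N=\{e\}$ of your general step).
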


Thus, $G$ admits a finite hierarchy
\[
	G=G_0\triangleright G_1\triangleright\cdots\triangleright G_m=\{e\},
\]
whose factors $G_j/G_{j+1}$ are finite simple groups. Each factor has a generating set of cardinality at most two:

\begin{theorem}
	Every finite simple group can be generated by at most two elements.
\end{theorem}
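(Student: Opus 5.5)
The plan is to argue by the Classification of Finite Simple Groups (CFSG): every finite simple group is isomorphic to a cyclic group of prime order, an alternating group $\mathrm{Alt}_m$ with $m\ge 5$, a finite simple group of Lie type, or one of the $26$ sporadic groups. I will exhibit or cite a generating pair in each family separately. For the purposes of this paper the statement is only quoted, and it is used in an essentially trivial way in the three-dimensional case. There \cref{thm:three_dimensional_solvability} shows that all composition factors of $G=\mathcal G/T$ are of the form $\mathrm{Cyc}_p$, and these are generated by a single element. I will therefore give the cyclic case in full and treat the remaining families by citation.

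\textbf{Cyclic and alternating cases.} If $G\cong\mathrm{Cyc}_p$, then any non-identity element generates $G$ by Lagrange's theorem, so one generator suffices. For $\mathrm{Alt}_m$ with $m\ge5$ I will use the classical pairs. For odd $m$, take $(1\,2\,3)$ and $(1\,2\,\cdots\,m)$. For even $m$, take $(1\,2\,3)$ and $(2\,3\,\cdots\,m)$. In each case both generators are even permutations. To show they generate $\mathrm{Alt}_m$, I will conjugate the $3$-cycle by powers of the long cycle; this produces all $3$-cycles of the form $(i\,i{+}1\,i{+}2)$ (suitably indexed). Consecutive $3$-cycles of this kind are known to generate $\mathrm{Alt}_m$, which gives the claim.

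\textbf{Lie type and sporadic cases.} For the finite simple groups of Lie type (Chevalley, twisted Chevalley, Suzuki and Ree groups), I will cite Steinberg's theorem (1962). It shows that each such group is generated by two elements, built from products of root elements: roughly, a regular unipotent element together with a suitable element of a torus twisted by a Coxeter-type Weyl element. The Suzuki and Ree families were completed in subsequent work. For the $26$ sporadic groups, I will cite Aschbacher--Guralnick (1984), where explicit standard generating pairs are verified case by case; these pairs are also tabulated in the ATLAS.

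The main obstacle is that no classification-free proof is known: the argument genuinely depends on CFSG. Within the classification, the hardest uniform step is the Lie-type case, where Steinberg's construction requires the Bruhat decomposition and the structure of root subgroups. I would not reprove it here. I would also stress that the algorithmic framework of this paper only needs the solvable case, which is the one-line cyclic argument above, in dimension three and whenever $G$ is solvable.
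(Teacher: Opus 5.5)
Your proposal is correct, but it reaches the nonabelian case by a different route from the paper. The paper handles the abelian case exactly as you do: a cyclic group of prime order has one generator. It then disposes of every nonabelian finite simple group at once by citing Guralnick--Malle (2012, Theorem 1.1(1)), which supplies a generating triple $(x_1,x_2,x_3)$ with $x_1x_2x_3=1$. Since $x_3=(x_1x_2)^{-1}$, already $\langle x_1,x_2\rangle=G$.

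You instead split along the classification. You prove the alternating case by hand, cite Steinberg (1962) for groups of Lie type, and cite Aschbacher--Guralnick (1984) for the sporadic groups. Both arguments rest on CFSG, so neither is more elementary in substance. The paper's single citation is shorter and uniform, and it gives a slightly stronger conclusion (a product-one generating triple). Your route is the classical one and is self-contained where that is possible.

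One step in your alternating case needs a small repair. For even $m$, the long cycle $(2\,3\,\cdots\,m)$ fixes $1$. So conjugating $(1\,2\,3)$ by its powers yields the $3$-cycles $(1\,j\,j{+}1)$ for $2\le j\le m-1$, not consecutive triples $(i\,i{+}1\,i{+}2)$. These still generate $\mathrm{Alt}_m$. Conjugating $(1\,j\,j{+}1)$ by $(1\,2\,j)$ gives $(1\,j{+}1\,2)=(1\,2\,j{+}1)^{-1}$, so by induction you obtain every $(1\,2\,j)$, and these generate $\mathrm{Alt}_m$.

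Your remark that the three-dimensional application needs only the cyclic case matches \cref{cor:three_dimensional_reconstruction}.
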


For every nonabelian finite simple group, \cite[Theorem 1.1(1)]{GuralnickMalle2012} gives a generating triple whose product is the identity, so its first two elements generate the group. Every abelian finite simple group is cyclic of prime order and hence has one generator.

The composition series and the generating sets of its factors provide a
hierarchical construction of the symmetry operations. The following theorem
connects this group-theoretic structure with orbit reconstruction, which is
used in the subsequent symmetry reductions.

\begin{theorem}[Hierarchical reconstruction of finite group actions]
\label{thm:hierarchical_group_action}
    Let $G$ be a finite group acting on a set $X$, and let
    \[
        \{e\}=H_0\triangleleft H_1\triangleleft\cdots
        \triangleleft H_m=G
    \]
    be a composition series. For each $i$, choose generators
    $\bar a_{i,1},\ldots,\bar a_{i,d_i}$ of $Q_i=H_i/H_{i-1}$,
    where $d_i\leq2$, and choose lifts
    $a_{i,1},\ldots,a_{i,d_i}\in H_i$. Then
    \[
        H_i=\langle H_{i-1},a_{i,1},\ldots,a_{i,d_i}\rangle.
    \]
    Words in these lifts and their inverses give a set $R_i$ of left coset
    representatives of $H_{i-1}$ in $H_i$, with $|R_i|=|Q_i|$.
    Every $g\in G$ has a unique representation
    \begin{equation}\label{eq:hierarchical_group_normal_form}
        g=r_mr_{m-1}\cdots r_1,\qquad r_i\in R_i,
        \qquad |G|=\prod_{i=1}^m|Q_i|.
    \end{equation}
    For $x\in X$, define
    \begin{equation}\label{eq:hierarchical_orbit_reconstruction}
        O_0(x)=\{x\},\qquad
        O_i(x)=\bigcup_{r\in R_i}r\cdot O_{i-1}(x).
    \end{equation}
    Then $O_i(x)=H_i\cdot x$ for every $i$, and in particular
    $O_m(x)=G\cdot x$. If $G$ is solvable, each factor $Q_i$ is cyclic
    of prime order $p_i$, and one may take
    \begin{equation}\label{eq:cyclic_factor_transversal}
        R_i=\{e,a_i,\ldots,a_i^{p_i-1}\},
    \end{equation}
    where $a_iH_{i-1}$ generates $Q_i$.
\end{theorem}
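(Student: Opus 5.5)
The plan is to argue by induction on $i$ along the composition series, proving at each level three facts: the generation statement, the transversal statement, and the orbit identity.

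\textbf{Generation.} Let $K=\langle H_{i-1},a_{i,1},\ldots,a_{i,d_i}\rangle\le H_i$. The natural projection $\pi_i:H_i\to Q_i$ is a homomorphism, and $\pi_i(K)$ contains the generators $\bar a_{i,j}$. Hence $\pi_i(K)=Q_i$. Since $K\supseteq H_{i-1}=\ker\pi_i$, we get $K=\pi_i^{-1}(Q_i)=H_i$.

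\textbf{Transversal.} Every element of the finite group $Q_i$ is a word in the $\bar a_{i,j}^{\pm1}$. For each element of $Q_i$ we fix one such word and evaluate the same word on the lifts. This gives a set $R_i\subseteq H_i$ mapping bijectively onto $Q_i=H_i/H_{i-1}$. So $R_i$ is a full set of left coset representatives with $|R_i|=|Q_i|$.

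\textbf{Normal form.} We induct on $i$. Suppose every $h\in H_{i-1}$ is uniquely $r_{i-1}\cdots r_1$. Any $g\in H_i$ lies in a unique coset $rH_{i-1}$ with $r\in R_i$, so $g=rh$ with $h\in H_{i-1}$ determined uniquely. Applying the hypothesis to $h$ gives existence and uniqueness of $g=r_i\cdots r_1$. Counting then gives $|H_i|=|Q_i|\,|H_{i-1}|$, which is also Lagrange's theorem. Taking $i=m$ yields \eqref{eq:hierarchical_group_normal_form}.

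\textbf{Orbits.} Again we induct. The base case is $O_0(x)=\{x\}=H_0\cdot x$. If $O_{i-1}(x)=H_{i-1}\cdot x$, then
\[
O_i(x)=\bigcup_{r\in R_i}rH_{i-1}\cdot x=\Bigl(\bigcup_{r\in R_i}rH_{i-1}\Bigr)\cdot x=H_i\cdot x,
\]
because the cosets $rH_{i-1}$, $r\in R_i$, exhaust $H_i$. The union may contain repetitions, since stabilizers may be nontrivial. This is harmless, because only set equality is claimed.

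\textbf{Solvable case.} Each $Q_i$ is simple and abelian, hence cyclic of prime order $p_i$, by the theorem above on composition factors of solvable groups. Take $d_i=1$ and a lift $a_i$ of a generator. The classes $a_i^kH_{i-1}$ for $0\le k<p_i$ are the $p_i$ distinct elements of $Q_i$, which gives \eqref{eq:cyclic_factor_transversal}.

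There is no serious obstacle. The point needing care is the choice of words: different words for the same coset must not both be admitted. The transversal therefore has to be formed by selecting exactly one word per element of $Q_i$, and this selection is what makes the normal form unique.
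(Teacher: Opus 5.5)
Your proposal is correct and follows essentially the same route as the paper. Generation comes from the subgroup containing $\ker\pi_i$ and surjecting onto $Q_i$, and the transversal is obtained by fixing one word per element of $Q_i$. The normal form and the orbit identity $H_i\cdot x=\bigcup_{r\in R_i} r\cdot(H_{i-1}\cdot x)$ follow from successive coset decompositions, and the solvable case uses the prime-order cyclic composition factors. Your explicit inductions only spell out what the paper compresses into ``applying these decompositions successively.''
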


\begin{proof}
    Let $\pi_i:H_i\to Q_i$ be the quotient homomorphism. The subgroup
    generated by $H_{i-1}$ and the chosen lifts contains $\ker\pi_i$
    and maps onto $Q_i$, so it equals $H_i$. For each element of $Q_i$,
    choose one word in its generators and their inverses, and evaluate
    this word in the lifts. This selects one representative of each left
    coset and gives the disjoint decomposition
    \[
        H_i=\bigsqcup_{r\in R_i}rH_{i-1}.
    \]
    Applying these decompositions successively proves the existence and
    uniqueness of the representation in
    \eqref{eq:hierarchical_group_normal_form}. Lagrange's theorem gives
    $|H_i|=|Q_i||H_{i-1}|$, and therefore the stated order formula.
    Applying the same decompositions to $x$ yields
    \[
        H_i\cdot x
        =\bigcup_{r\in R_i}r\cdot(H_{i-1}\cdot x).
    \]
    Induction now proves \eqref{eq:hierarchical_orbit_reconstruction}.
    Finally, if $G$ is solvable, its composition factors are cyclic of
    prime order. The cosets of the powers in
    \eqref{eq:cyclic_factor_transversal} are distinct and exhaust $Q_i$,
    which proves the last assertion.
\end{proof}

\begin{remark}\label{rem:factor_action_and_lifts}
    The factor $Q_i$ acts naturally on the set of $H_{i-1}$-orbits in $X$
    by
    \[
        (gH_{i-1})\cdot(H_{i-1}\cdot x)=H_{i-1}\cdot(g\cdot x).
    \]
    This action is well-defined because $H_{i-1}$ is normal in $H_i$.
    The lifts specify the corresponding action on individual points.
    No splitting of the group extension is required: $R_i$ need not be a
    subgroup, and a lift $a_i$ may have order larger than $p_i$;
    only $a_i^{p_i}\in H_{i-1}$ is required. Distinct group operations
    can also produce the same point. The number of distinct points
    recovered from $x$ is
    \[
        |G\cdot x|=\frac{|G|}{|G_x|},\qquad
        G_x=\{g\in G:g\cdot x=x\}.
    \]
    For $G$-invariant data, one value at $x$ determines the values on
    this entire orbit.
\end{remark}

\begin{corollary}[One-generator reconstruction in three dimensions]
\label{cor:three_dimensional_reconstruction}
    Let $G=\mathcal G/T$ be the finite symmetry quotient of a
    three-dimensional crystallographic group. Every subgroup and every
    homomorphic image of $G$ admits a composition series with cyclic
    prime-order factors. Consequently, its operations and symmetry
    orbits can be reconstructed using one lifted generator at each
    composition step.
\end{corollary}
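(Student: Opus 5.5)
The plan is to combine \cref{thm:three_dimensional_solvability} with the standard closure properties of solvable groups, and then feed the result into \cref{thm:hierarchical_group_action}. By \cref{thm:three_dimensional_solvability}, the quotient $G=\mathcal G/T$ is a finite solvable group. The first step is to check that every subgroup $H\le G$ and every homomorphic image $\psi(G)$ is again solvable. Take a solvable series
\[
G=G_0\triangleright G_1\triangleright\cdots\triangleright G_k=\{e\}.
\]
For a subgroup, intersect the series with $H$. Then $H\cap G_{j+1}\trianglelefteq H\cap G_j$, and the factor $(H\cap G_j)/(H\cap G_{j+1})$ embeds into $G_j/G_{j+1}$, so it is abelian. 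For an image, apply $\psi$ to the series. Then $\psi(G_{j+1})\trianglelefteq\psi(G_j)$, and the factor $\psi(G_j)/\psi(G_{j+1})$ is a quotient of $G_j/G_{j+1}$, so it is again abelian. Equivalently, one may argue with derived series, since $H^{(j)}\le G^{(j)}$ and $\psi(G)^{(j)}=\psi(G^{(j)})$.

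Each such group $K$ is finite, being a subgroup or image of a finite group. So by the theorem stating that finite solvable groups are exactly those whose composition factors are cyclic of prime order, $K$ has a composition series (which exists by the cited result of Hungerford) all of whose factors are $\mathrm{Cyc}_{p_i}$. This proves the first assertion.

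For the reconstruction claim, apply \cref{thm:hierarchical_group_action} to $K$ acting on whatever set is under consideration, for instance $\mathbb T^n$ or the discrete index set later. If $K$ is an image of $G$, it acts through the quotient action. Each factor $Q_i\cong\mathrm{Cyc}_{p_i}$ is generated by a single coset $a_iH_{i-1}$, so $d_i=1$. The transversal \eqref{eq:cyclic_factor_transversal} then yields the normal form \eqref{eq:hierarchical_group_normal_form} and the orbit recursion \eqref{eq:hierarchical_orbit_reconstruction} using one lift $a_i$ per step.

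I expect no real obstacle. The only point needing care is stating precisely how a homomorphic image acts on the relevant sets. This is handled by noting that the corollary only needs the abstract composition structure: once an action of $K$ is given, \cref{thm:hierarchical_group_action} applies verbatim. As \cref{rem:factor_action_and_lifts} notes, no splitting of the extension is required for the lifts.
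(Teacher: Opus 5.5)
Your proposal is correct and follows essentially the same route as the paper: solvability of $G$ from \cref{thm:three_dimensional_solvability}, closure of solvability under subgroups and homomorphic images, and then \cref{thm:hierarchical_group_action} with a single lift for each cyclic prime-order factor. The one small difference is in the subgroup case. The paper intersects a composition series of $G$ directly with $K$, so each factor embeds in a prime-order group and is either trivial or of that prime order, and deleting repeated terms gives the series. You instead pass through a solvable series and then use the characterization of finite solvable groups.
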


\begin{proof}
    By \cref{thm:three_dimensional_solvability}, $G$ is solvable, and
    solvability is preserved under passage to subgroups and homomorphic
    images. The conclusion follows from
    \cref{thm:hierarchical_group_action}. More explicitly, for a subgroup
    $K\leq G$ and a composition series $(H_i)$ of $G$, set
    $K_i=K\cap H_i$. Then $K_{i-1}\trianglelefteq K_i$, and the map
    $kK_{i-1}\mapsto kH_{i-1}$ embeds $K_i/K_{i-1}$ into
    $H_i/H_{i-1}$. Since the latter has prime order, each nontrivial
    factor of the intersection series has the same prime order.
    Removing repeated terms therefore gives a composition series of
    $K$ with one generator per factor.
\end{proof}

This construction supplies the symmetry operations and orbit representatives
needed by SFFT. A Fourier reduction additionally uses a compatible grid
decomposition and the reduction formulas developed in
\cref{sec:math,sec:algor}. Thus the composition steps construct the group
action, whereas the Fourier recursion is organized by the orbits and
stabilizers of compatible grid layers. These two hierarchies need not have
the same number of steps.

\subsection{Composition series in three-dimensional crystallographic groups}
\label{sec:Solvable-Series}

We next illustrate composition series in the seven crystal systems. In each example, $G$ denotes the finite quotient $\mathcal G/T$. Group elements are represented by affine pairs $(\bm R,\bm t)$ modulo the integer translations of the stated conventional cell. Lagrange's theorem determines the orders of successive subgroups.

\noindent\textbf{Triclinic system.}
In the standard primitive conventional cell, $T=T_{\mathcal G}$.
For space group No.~1 ($P1$), $G=\mathcal G/T$ is trivial.
For No.~2 ($P\bar1$), $G\cong\mathrm{Cyc}_2$, with composition series
$\{e\}\triangleleft G$.

\noindent \textbf{Monoclinic system.} Consider space group No. 12 ($C12/m1$) as an example.
\begin{itemize}
	\item Let $G=\mathcal G/T$ for $\mathcal G=C12/m1$, so that $|G|=8$.
	\item Define $\bm R_\alpha=\begin{bmatrix} -1&0&0\\0&-1&0\\0&0&-1\end{bmatrix}$, $\bm R_\beta=\begin{bmatrix} 1&0&0\\0&-1&0\\0&0&1\end{bmatrix}$, and $\bm R_e=\bm I_3$.
	\item Let $M_1=\{( \bm I_3,\bm 0),( \bm R_\alpha,\bm 0)\}$, $M_2=\{( \bm I_3,\bm 0),( \bm R_\beta,\bm 0)\}$, $M_3=\{( \bm I_3,\bm 0),( \bm I_3,(\frac{1}{2},\frac{1}{2},0))\}$.
	\item Set $H_0=\{e\}$ and $H_j=\langle M_1,\ldots,M_j\rangle$ for $1\leq j\leq3$. Then $|H_j|=2^j$ and $H_3=G$. Since each inclusion has index two, $H_{j-1}\trianglelefteq H_j$. Hence
	\[
		\{e\}=H_0\triangleleft H_1\triangleleft H_2\triangleleft H_3=G
	\]
	is a solvable series, with all factors isomorphic to $\mathrm{Cyc}_2$.
\end{itemize}

\noindent \textbf{Orthorhombic system.} Consider space group No. 69 ($Fmmm$) as an example.
\begin{itemize}
	\item Let $G=\mathcal G/T$ for $\mathcal G=Fmmm$, so that $|G|=32$.
	\item Define $\bm R_\alpha=\begin{bmatrix} -1&0&0\\0&1&0\\0&0&1\end{bmatrix}$, $\bm R_\beta=\begin{bmatrix} 1&0&0\\0&-1&0\\0&0&1\end{bmatrix}$, $\bm R_\omega=\begin{bmatrix} 1&0&0\\0&1&0\\0&0&-1\end{bmatrix}$, and $\bm R_e=\bm I_3$.
	\item Let $M_1=\{( \bm I_3,\bm 0),( \bm R_\alpha,\bm 0)\}$, $M_2=\{( \bm I_3,\bm 0),( \bm R_\beta,\bm 0)\}$, $M_3=\{( \bm I_3,\bm 0),( \bm R_\omega,\bm 0)\}$, $M_4=\{( \bm I_3,\bm 0),( \bm I_3,(0,\frac{1}{2},\frac{1}{2}))\}$, $M_5=\{( \bm I_3,\bm 0),( \bm I_3,(\frac{1}{2},0,\frac{1}{2}))\}$.
	\item Set $H_0=\{e\}$ and $H_j=\langle M_1,\ldots,M_j\rangle$ for $1\leq j\leq5$. Then $|H_j|=2^j$ and $H_5=G$. Each inclusion has index two, and therefore
	\[
		\{e\}=H_0\triangleleft H_1\triangleleft H_2\triangleleft H_3\triangleleft H_4\triangleleft H_5=G
	\]
	is a solvable series whose five factors are isomorphic to $\mathrm{Cyc}_2$.
\end{itemize}

\noindent \textbf{Tetragonal system.} Consider space group No. 123 ($P4/mmm$) as an example.
\begin{itemize}
	\item Let $G=\mathcal G/T$ for $\mathcal G=P4/mmm$, so that $|G|=16$.
	\item Define $\bm R_\alpha=\begin{bmatrix} 1&0&0\\0&1&0\\0&0&-1\end{bmatrix}$, $\bm R_\beta=\begin{bmatrix} 0&1&0\\1&0&0\\0&0&1\end{bmatrix}$, $\bm R_\omega=\begin{bmatrix} -1 &0&0\\0&-1&0\\0&0&1\end{bmatrix}$, $\bm R_\theta=\begin{bmatrix} -1&0&0\\0&1&0\\0&0&1\end{bmatrix}$, and $\bm R_e=\bm I_3$.
	\item Let $M_1=\{( \bm I_3,\bm 0),( \bm R_\alpha,\bm 0)\}$, $M_2=\{( \bm I_3,\bm 0),( \bm R_\beta,\bm 0)\}$, $M_3=\{( \bm I_3,\bm 0),( \bm R_\omega,\bm 0)\}$, $M_4=\{( \bm I_3,\bm 0),( \bm R_\theta,\bm 0)\}$.
	\item Set $H_0=\{e\}$ and $H_j=\langle M_1,\ldots,M_j\rangle$ for $1\leq j\leq4$. Direct calculation gives $|H_j|=2^j$ and $H_4=G$. Thus every inclusion has index two, so
	\[
		\{e\}=H_0\triangleleft H_1\triangleleft H_2\triangleleft H_3\triangleleft H_4=G
	\]
	is a composition series with factors $H_j/H_{j-1}\cong\mathrm{Cyc}_2$.
\end{itemize}

\noindent\textbf{Trigonal system.}
Consider space group No.~160 ($R3m$) in the hexagonal-axis, $R$-obverse setting \cite{Aroyo2016ITC_A}.
\begin{itemize}
    \item Let $T$ be the integer translations of the hexagonal conventional cell. Then $G=\mathcal G/T$ has order $18$.
    \item Define
    \[
        \bm R_r=\begin{bmatrix}0&-1&0\\1&-1&0\\0&0&1\end{bmatrix},
        \qquad
        \bm R_s=\begin{bmatrix}0&-1&0\\-1&0&0\\0&0&1\end{bmatrix},
        \qquad
        \bm c=\begin{bmatrix}2/3\\1/3\\1/3\end{bmatrix}.
    \]
    Let $r=(\bm R_r,\bm0)$, $s=(\bm R_s,\bm0)$, and
    $\tau=(\bm I_3,\bm c)$, with each pair understood modulo $T$.
    These elements satisfy $r^3=s^2=\tau^3=e$ and $srs^{-1}=r^{-1}$.
    \item Since
    \[
        \bm R_r\bm c-\bm c=(-1,0,0)^{\mathsf T},\qquad
        \bm R_s\bm c-\bm c=(-1,-1,0)^{\mathsf T},
    \]
    $\tau$ commutes with $r$ and $s$ modulo $T$.
    Thus $G\cong\mathrm{Cyc}_3\times\mathrm{Sym}_3$.
    \item Set $C=\langle\tau\rangle$ and $H=\langle\tau,r\rangle$.
    Both are normal in $G$, and
    \[
        \{e\}\triangleleft C\triangleleft H\triangleleft G
    \]
    is a composition series. The subgroup orders are $1,3,9,18$,
    and the factors are $\mathrm{Cyc}_3$, $\mathrm{Cyc}_3$, and
    $\mathrm{Cyc}_2$, respectively.
\end{itemize}

\noindent \textbf{Hexagonal system.} Consider space group No. 191 ($P6/mmm$) as an example.
\begin{itemize}
	\item Let $G=\mathcal G/T$ for $\mathcal G=P6/mmm$, so that $|G|=24$.
	\item Define $\bm R_\alpha=\begin{bmatrix} -1&0&0\\0&-1&0\\0&0&-1\end{bmatrix}$, $\bm R_\beta=\begin{bmatrix} 0&-1&0\\-1&0&0\\0&0&1\end{bmatrix}$, $\bm R_\omega=\begin{bmatrix} 0&1&0\\1&0&0\\0&0&1\end{bmatrix}$, $\bm R_\theta=\begin{bmatrix} 0&-1&0\\1&-1&0\\0&0&1\end{bmatrix}$, and $\bm R_e=\bm I_3$.
	\item Define
	\[
		H_1=\langle(\bm R_\theta,\bm0)\rangle,
		\quad H_2=\langle(\bm R_\theta,\bm0),(\bm R_\beta,\bm0)\rangle,
		\quad H_3=\langle(\bm R_\theta,\bm0),(\bm R_\beta,\bm0),(\bm R_\omega,\bm0)\rangle.
	\]
	These groups have orders $3$, $6$, and $12$, respectively, and $G=\langle H_3,(\bm R_\alpha,\bm0)\rangle$ has order $24$. Here $H_1\trianglelefteq H_2$, while the remaining inclusions have index two. Therefore,
	\[
		\{e\}\triangleleft H_1\triangleleft H_2\triangleleft H_3\triangleleft G
	\]
	is a composition series. In ascending order, its factors have orders $3,2,2,$ and $2$, and are therefore cyclic.
\end{itemize}

\noindent \textbf{Cubic system.} Consider space group No. 229 ($Im\bar{3}m$) as an example.
\begin{itemize}
	\item Let $G=\mathcal G/T$ for $\mathcal G=Im\bar{3}m$, so that $|G|=96$.
	\item Define $\bm R_\alpha=\begin{bmatrix} -1&0&0\\0&-1&0\\0&0&-1\end{bmatrix}$, $\bm R_\beta=\begin{bmatrix} 1&0&0\\0&-1&0\\0&0&-1\end{bmatrix}$, $\bm R_\omega=\begin{bmatrix} -1&0&0\\0&-1&0\\0&0&1\end{bmatrix}$, $\bm R_\theta=\begin{bmatrix} 0&0&1\\1&0&0\\0&1&0\end{bmatrix}$, $\bm R_\gamma=\begin{bmatrix} -1&0&0\\0&0&1\\0&1&0\end{bmatrix}$, and $\bm R_e=\bm I_3$.
	\item Let $O_h=\langle\bm R_\alpha,\bm R_\beta,\bm R_\omega,\bm R_\theta,\bm R_\gamma\rangle$ denote the full octahedral point group, let $O$ be its orientation-preserving subgroup, and let $\mathrm{Alt}_4\triangleright V_4\triangleright\mathrm{Cyc}_2\triangleright\{e\}$ be the standard subgroup chain in $O\cong\mathrm{Sym}_4$. For a point subgroup $K$, write $\widetilde K=\{(\bm R,\bm0):\bm R\in K\}\leq G$. The body-centering translation $\tau=(\bm I_3,(\frac12,\frac12,\frac12))$ is central modulo $T$, and
	\[
		G=\widetilde{O_h}\times\langle\tau\rangle.
	\]
	Consequently,
	\[
		\{e\}\triangleleft\widetilde{\mathrm{Cyc}_2}
		\triangleleft\widetilde{V_4}
		\triangleleft\widetilde{\mathrm{Alt}_4}
		\triangleleft\widetilde O
		\triangleleft\widetilde{O_h}
		\triangleleft G
	\]
	is a composition series. Its factors, in ascending order, have orders $2,2,3,2,2,$ and $2$.
\end{itemize}

For this series, the successive subgroup orders are $1,2,4,12,24,48,96$.
Using the matrices above, choose the lifts
\[
    a_1=(\bm R_\beta,\bm0),\quad a_2=(\bm R_\omega,\bm0),\quad
    a_3=(\bm R_\theta,\bm0),\quad a_4=(\bm R_\gamma,\bm0),
\]
\[
    a_5=(\bm R_\alpha,\bm0),\qquad a_6=\tau.
\]
The subgroup at step $i$ is generated by $a_1,\ldots,a_i$.
By \cref{thm:hierarchical_group_action}, every element of $G$ has the unique form
\[
    a_6^{k_6}a_5^{k_5}a_4^{k_4}a_3^{k_3}a_2^{k_2}a_1^{k_1},
    \qquad 0\leq k_i<p_i,\qquad
    (p_1,\ldots,p_6)=(2,2,3,2,2,2).
\]
Thus one lifted generator at each step constructs all $96$ affine symmetry
operations. Applying them to a point with trivial stabilizer recovers $96$
distinct orbit points; for a special position, the number is
$96/|G_x|$. For $G$-invariant data, one sample value determines all values on
this orbit. The linear point group $O_h$ has order $48$, while $G$ also
contains the body-centring translation modulo $T$.

For a concrete example, take the torus point
$[\bm x]=[(1,3,5)^{\mathsf T}/32]$.
Its coordinates have distinct positive magnitudes below $1/4$, so only
the identity signed coordinate permutation fixes it. Composing any signed
coordinate permutation with $\tau$ places all coordinates in $(1/4,3/4)$
modulo one and therefore cannot fix this point. Thus its stabilizer is
trivial and its orbit has $96$ points. This point belongs to the
$G$-admissible grid with $\bm A=16\bm I_3$ and
$\bm s=(1/2,1/2,1/2)^{\mathsf T}$, where its index is $(0,1,2)^{\mathsf T}$.
The same six-step construction therefore recovers $96$ distinct indices
from this single discrete representative.

\section{Mathematical setup for SFFT}
\label{sec:math}

\subsection{Torus representation and continuous-domain symmetry}
\label{sec:space_group_induced_action}

Let $\mathbb{T}^n\coloneqq\mathbb{R}^n/\mathbb{Z}^n$ be the $n$-dimensional torus, where elements are denoted by $[\bm{x}]=\bm{x}+\mathbb{Z}^n$. Since the functions considered here are periodic under the integer translations $\mathbb Z^n$, continuous objects exhibiting this periodicity can be naturally viewed as being defined on the torus $\mathbb{T}^n$. The affine transformation action of the group $\mathcal{G}$ on $\mathbb{R}^n$ naturally induces an action of the quotient group $G$ on the torus $\mathbb{T}^n$.

\begin{definition}\label{def:group-action-on-torus}
    Define the binary map $\odot: G\times\mathbb{T}^n\to\mathbb{T}^n$ as
    \[
      \bar g\odot[\bm{x}] \coloneqq [g\cdot\bm{x}].
    \]
    It is straightforward to verify that this map is well-defined and constitutes a group action of $G$ on $\mathbb{T}^n$. 
\end{definition}

\begin{definition}\label{def:G-symmetry}
	A function $f:\mathbb{T}^n\to\mathbb{C}$ is said to possess $G$-symmetry if for any $\bar g\in G$ and $[\bm{x}]\in\mathbb{T}^n$, it holds that
	\[
		f(\bar g \odot [\bm{x}]) = f([\bm{x}]).
	\]
\end{definition}

\subsection{Spatial discretization and discrete index groups}
\label{subsec:grid_definition}

Consider a function $f:\mathbb{T}^n\to\mathbb{C}$. To discretize $f$, we introduce a positive integer diagonal matrix
\[
	\bm{A}=\operatorname{diag}(N_1,\ldots,N_n),\qquad N_j\in\mathbb{Z}_{\ge 1}.
\]
Define the finite discrete subgroup
\begin{equation}
	\Lambda_{\bm{A}}
	\coloneqq
	\bm{A}^{-1}\mathbb{Z}^n/\mathbb{Z}^n
	=\{[\bm{A}^{-1}\bm{k}]:\bm{k}\in\mathbb{Z}^n\}
	\subset \mathbb{T}^n.
\end{equation}
For a given shift vector $\bm{s}\in[0,1)^n$, the corresponding shifted grid is defined as a coset of $\Lambda_{\bm{A}}$
\begin{equation}
	\mathbb{T}^n_{\bm{A},\bm{s}}
	\coloneqq
	[\bm{A}^{-1}\bm{s}]+\Lambda_{\bm{A}}
	\subset \mathbb{T}^n.
\end{equation}
Denote the discrete index group as $X_{\bm{A}} \coloneqq \mathbb{Z}^n/\bm{A}\mathbb{Z}^n$, and define the following parameterization map
\begin{equation}
	\iota_{\bm{A},\bm{s}}:X_{\bm{A}}\to\mathbb{T}^n_{\bm{A},\bm{s}},
	\qquad
	[\bm{x}]\longmapsto [\bm{A}^{-1}(\bm{x}+\bm{s})].
\end{equation}
It is readily seen that this map is well-defined and establishes a bijection between the discrete index group $X_{\bm{A}}$ and the shifted grid $\mathbb{T}^n_{\bm{A},\bm{s}}$.
\begin{remark}
    For the positive integer diagonal matrix $\bm{A}=\operatorname{diag}(N_1,\dots,N_n)$, we consistently adopt the standard fundamental domain of $X_{\bm{A}}$ as
    \[
        \mathcal{D}_{\bm{A}} \coloneqq \{\bm{x}\in\mathbb{Z}^n : 0\le x_j < N_j,\ j=1,\dots,n\}.
    \]
    Consequently, each congruence class $[\bm{x}]\in X_{\bm{A}}$ has a unique representative in $\mathcal{D}_{\bm{A}}$.

    Unless otherwise specified, in what follows, for any positive integer diagonal matrix $\bm{M}$, the representative vector $\bm{v}$ of a congruence class $[\bm{v}] \in X_{\bm{M}}$ is by default assumed to be its unique representative in the corresponding standard fundamental domain $\mathcal{D}_{\bm{M}}$.
\end{remark}

\subsection{\texorpdfstring{$G$}{G}-admissibility of grids and algebraic criteria}
\label{subsec:admissibility}

\begin{definition}\label{def:admissible_grid}
	The grid parameters $(\bm{A},\bm{s})$ are said to be $G$-admissible if, for any $\bar g\in G$, it holds that
	\begin{equation}
		\bar g \odot \mathbb{T}^n_{\bm{A},\bm{s}} = \mathbb{T}^n_{\bm{A},\bm{s}}.
	\end{equation}
\end{definition}

To transform the aforementioned geometric admissibility condition into easily verifiable algebraic criteria, we define the following associated affine matrix and vector for any $g\in\mathcal{G}$
\begin{equation}
	\bm M_g \coloneqq \bm A\bm R_g\bm A^{-1},
	\qquad
	\bm z_g \coloneqq (\bm M_g-\bm I_n)\bm s+\bm A\bm t_g.
\end{equation}

\begin{lemma}\label{lem:rep-independence}
Let $\bar g\in G$. For any two chosen representatives $g_1,g_2\in\bar g$, we have
\begin{enumerate}
	\item $\bm R_{g_1}=\bm R_{g_2}$, which implies $\bm M_{g_1}=\bm M_{g_2}$.
	\item $\bm z_{g_1}-\bm z_{g_2}\in \bm A\mathbb Z^n$.
\end{enumerate}
\end{lemma}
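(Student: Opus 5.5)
The plan is to use the fact that two representatives of the same coset $\bar g = g_1T$ differ by an integer translation, and then compute directly. Since $g_1,g_2\in\bar g$, we have $g_1^{-1}g_2\in T$. Equivalently, $g_2 = g_1(\bm I_n,\bm z)$ for some $\bm z\in\mathbb Z^n$. By the multiplication rule for affine pairs,
\[
	g_2=(\bm R_{g_1},\,\bm t_{g_1}+\bm R_{g_1}\bm z).
\]
This gives immediately $\bm R_{g_2}=\bm R_{g_1}$, and hence $\bm M_{g_2}=\bm A\bm R_{g_2}\bm A^{-1}=\bm M_{g_1}$, which is part 1.

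For part 2, write $\bm M\coloneqq \bm M_{g_1}=\bm M_{g_2}$. The shift terms $(\bm M-\bm I_n)\bm s$ coincide for the two representatives, so they cancel in the difference:
\[
	\bm z_{g_2}-\bm z_{g_1}=\bm A(\bm t_{g_2}-\bm t_{g_1})=\bm A\bm R_{g_1}\bm z.
\]
The remark following \cref{def:space_group} gives $\bm R_{g_1}\in\operatorname{GL}^{\pm1}(n,\mathbb Z)$. Therefore $\bm R_{g_1}\bm z\in\mathbb Z^n$, and the difference lies in $\bm A\mathbb Z^n$.

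There is no real obstacle here. The only point needing care is the order of composition: with $g_2=g_1(\bm I_n,\bm z)$ the translation picks up $\bm R_{g_1}\bm z$ rather than $\bm z$. This is harmless precisely because the linear parts are unimodular integer matrices. Alternatively, one can write $g_2=(\bm I_n,\bm z')g_1$, which gives $\bm t_{g_2}=\bm t_{g_1}+\bm z'$ directly; this is legitimate since $T$ is normal in $\mathcal G$, so left and right cosets agree.
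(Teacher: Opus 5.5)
Your proof is correct and follows the paper's argument: the paper writes $g_1=g_2h$ with $h=(\bm I_n,\bm k)\in T$, reads off $\bm R_{g_1}=\bm R_{g_2}$, and then gets $\bm t_{g_1}-\bm t_{g_2}=\bm R_{g_2}\bm k\in\mathbb Z^n$, exactly as you do with the roles of $g_1,g_2$ swapped. One small correction to your citation: integrality of $\bm R_{g_1}$ is condition 1 of \cref{def:space_group} itself, not something supplied by the remark that follows it.
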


\begin{proof}
    Since $g_1, g_2 \in \bar{g}$, there exists $h = (\bm{I}_n, \bm{k}) \in T$ with $\bm{k} \in \mathbb{Z}^n$ such that $g_1 = g_2 h$. By the multiplication rule for affine transformations,
    \[
        g_1 = g_2 h = (\bm{R}_{g_2}, \bm{t}_{g_2})(\bm{I}_n, \bm{k})
            = (\bm{R}_{g_2}\bm{I}_n,\, \bm{R}_{g_2}\bm{k} + \bm{t}_{g_2})
            = (\bm{R}_{g_2},\, \bm{R}_{g_2}\bm{k} + \bm{t}_{g_2}),
    \]
    so $\bm{R}_{g_1} = \bm{R}_{g_2}$, which implies $\bm{M}_{g_1} = \bm{M}_{g_2}$, and $\bm{t}_{g_1} = \bm{R}_{g_2}\bm{k} + \bm{t}_{g_2}$. Since $\bm{R}_{g_2} \in \operatorname{GL}^{\pm 1}(n, \mathbb{Z})$ and $\bm{k} \in \mathbb{Z}^n$, it follows that $\bm{t}_{g_1} - \bm{t}_{g_2} = \bm{R}_{g_2}\bm{k} \in \mathbb{Z}^n$. Therefore, using $\bm{M}_{g_1} = \bm{M}_{g_2}$,
    \[
        \bm{z}_{g_1} - \bm{z}_{g_2} = \bm{A}(\bm{t}_{g_1} - \bm{t}_{g_2}) \in \bm{A}\mathbb{Z}^n.
    \]
\end{proof}

\begin{remark}
    By \cref{lem:rep-independence}, for any $\bar g\in G$ and any representative $g\in\bar g$, the values of $\bm R_g$ and $\bm M_g$ do not depend on the choice of $g$. Thus, they are unambiguously determined by $\bar g$ and can be defined as
    \[
        \bm{R}_{\bar g} \coloneqq \bm{R}_g, \qquad 
        \bm{M}_{\bar g} \coloneqq \bm{M}_g.
    \]
    Furthermore, by \cref{lem:rep-independence} the congruence class of the associated affine vector $\bm z_g$ modulo $\bm A\mathbb Z^n$ is independent of the choice of representative. When $\bm z_g\in\mathbb Z^n$ (as holds for $G$-admissible grids), this class defines an element of $X_{\bm A}$, which we denote by
    \[
        \bm{z}_{\bar g} \coloneqq [\bm{z}_g] \in X_{\bm A}.
    \]
\end{remark}

\begin{proposition}\label{prop:grid_invariance}
Let $(\bm A,\bm s)$ be grid parameters. The following two conditions are equivalent.
    \begin{enumerate}
      \item $(\bm A,\bm s)$ is $G$-admissible.
      \item For any $\bar{g}\in G$ and a chosen representative $g\in \bar{g}$, it holds that
        \[
            \bm{M}_g \in \operatorname{GL}^{\pm 1}(n,\mathbb{Z}) \quad \text{and} \quad \bm{z}_g \in \mathbb{Z}^n.
        \]
    \end{enumerate}
\end{proposition}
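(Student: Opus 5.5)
The plan is to first express the action of a single element on grid points in index coordinates, and then read off both implications from that formula. Fix $\bar g\in G$ and a representative $g=(\bm R_g,\bm t_g)$. For $\bm x\in\mathbb Z^n$ the grid point $\iota_{\bm A,\bm s}([\bm x])=[\bm A^{-1}(\bm x+\bm s)]$ is sent to
\[
	\bar g\odot[\bm A^{-1}(\bm x+\bm s)]
	=[\bm R_g\bm A^{-1}(\bm x+\bm s)+\bm t_g]
	=[\bm A^{-1}(\bm M_g\bm x+\bm z_g+\bm s)].
\]
The second equality follows by inserting $\bm A\bm A^{-1}$ and using $\bm z_g=(\bm M_g-\bm I_n)\bm s+\bm A\bm t_g$. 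Thus a grid point with index $\bm x$ goes to the point $[\bm A^{-1}(\bm y+\bm s)]$ with $\bm y=\bm M_g\bm x+\bm z_g\in\mathbb R^n$. Such a point lies in $\mathbb T^n_{\bm A,\bm s}$ iff $\bm A^{-1}(\bm y-\bm k)\in\mathbb Z^n$ for some $\bm k\in\mathbb Z^n$, i.e.\ iff $\bm y\in\mathbb Z^n+\bm A\mathbb Z^n=\mathbb Z^n$. So the point is on the grid exactly when $\bm y$ is an integer vector. By \cref{lem:rep-independence}, this criterion does not depend on the representative $g$.

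\textbf{(2)$\Rightarrow$(1).} If $\bm M_g$ is integral and $\bm z_g\in\mathbb Z^n$, then $\bm y\in\mathbb Z^n$ for every $\bm x$, so $\bar g\odot\mathbb T^n_{\bm A,\bm s}\subseteq\mathbb T^n_{\bm A,\bm s}$. The action map of $\bar g$ on $\mathbb T^n$ is a bijection with inverse given by $\bar g^{-1}$. Applying the same inclusion to $\bar g^{-1}$ gives the reverse inclusion, hence equality. Alternatively, injectivity on a finite set also yields equality.

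\textbf{(1)$\Rightarrow$(2).} Assume admissibility. Taking $\bm x=\bm 0$ gives $\bm z_g\in\mathbb Z^n$. Taking $\bm x=\bm e_j$ gives $\bm M_g\bm e_j+\bm z_g\in\mathbb Z^n$, so every column of $\bm M_g$ is integral and $\bm M_g\in\mathbb Z^{n\times n}$. Moreover $\det\bm M_g=\det\bm R_g=\pm1$, since $\bm R_g\in\operatorname{GL}^{\pm 1}(n,\mathbb Z)$. Therefore $\bm M_g\in\operatorname{GL}^{\pm1}(n,\mathbb Z)$.

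No step is a real obstacle. The only points needing care are the membership criterion for $[\bm A^{-1}(\bm y+\bm s)]$, in particular that $\bm y$ may a priori be real rather than integral, and the use of the inverse element to upgrade the inclusion to an equality.
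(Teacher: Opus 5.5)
Your proof is correct and follows essentially the same route as the paper. You derive the index-coordinate formula $\bar g\odot[\bm A^{-1}(\bm x+\bm s)]=[\bm A^{-1}(\bm M_g\bm x+\bm z_g+\bm s)]$, test it at $\bm x=\bm 0$ and $\bm x=\bm e_j$ together with $\det\bm M_g=\det\bm R_g$ for (1)$\Rightarrow$(2), and obtain (2)$\Rightarrow$(1) from the inclusion applied to both $\bar g$ and $\bar g^{-1}$; isolating the membership criterion $\bm y\in\mathbb Z^n+\bm A\mathbb Z^n=\mathbb Z^n$ up front is only a cleaner packaging of the paper's explicit equation involving $\bm k'$ and $\bm p$.
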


\begin{proof}
    First, note that by \cref{lem:rep-independence}, the conditions $\bm{M}_g \in \operatorname{GL}^{\pm 1}(n,\mathbb{Z})$ and $\bm{z}_g \in \mathbb{Z}^n$ in condition 2 of the proposition are independent of the choice of the representative $g \in \bar{g}$; hence, it is well-defined.

  $\text{1}\Rightarrow\text{2}$. Fix $\bar{g}\in G$ and any representative $g=(\bm R_g,\bm t_g)\in \bar g$. By the $G$-admissibility, for any $\bm k\in\mathbb Z^n$, we have
	\[
		\bar g\odot[\bm A^{-1}(\bm k+\bm s)]\in\mathbb T^n_{\bm A,\bm s}.
	\]
	Therefore, there exists $\bm k'\in\mathbb Z^n$ such that
	\[
		\bar g\odot[\bm A^{-1}(\bm k+\bm s)]=[\bm A^{-1}(\bm k'+\bm s)].
	\]
	Equivalently, there exists $\bm p\in\mathbb Z^n$ such that
	\[
		\bm A\bm R_g\bm A^{-1}(\bm k+\bm s)+\bm A\bm t_g=\bm k'+\bm s+\bm A\bm p.
	\]
	Utilizing $\bm M_g=\bm A\bm R_g\bm A^{-1}$ and $\bm z_g=(\bm M_g-\bm I_n)\bm s+\bm A\bm t_g$, the above equation is equivalent to
	\[
		\bm M_g\bm k+\bm z_g=\bm k'+\bm A\bm p\in\mathbb Z^n,\qquad \forall \bm k\in\mathbb Z^n.
	\]
	Setting $\bm k=\bm 0$ yields $\bm z_g\in\mathbb Z^n$. Setting $\bm k=\bm e_j$ ($j=1,\ldots,n$ as standard basis vectors) gives $\bm M_g\bm e_j\in\mathbb Z^n$, implying $\bm M_g\in\mathbb Z^{n\times n}$. On the other hand,
	\[
		\det(\bm M_g)=\det(\bm A\bm R_g\bm A^{-1}) =\det(\bm R_g) = \pm 1.
	\]
	Thus, $\bm M_g$ is an integer matrix with a determinant of $\pm1$, i.e., $\bm M_g\in\operatorname{GL}^{\pm 1}(n,\mathbb Z)$.

	$\text{2}\Rightarrow\text{1}$. Fix $\bar{g}\in G$ and $[\bm x]\in\mathbb T^n_{\bm A,\bm s}$. By definition, we can take $\bm k\in\mathbb Z^n$ such that $[\bm x]=[\bm A^{-1}(\bm k+\bm s)]$. For any representative $g\in \bar{g}$, direct calculation yields
	\[
		\bar{g}\odot[\bm x]=[g\cdot\bm x]=\Big[\bm A^{-1}\big(\bm M_g\bm k+\bm z_g+\bm s\big)\Big].
	\]
	From condition 2 of the proposition, we know $\bm M_g\bm k+\bm z_g\in\mathbb Z^n$, thus $\bar{g}\odot[\bm x]\in\mathbb T^n_{\bm A,\bm s}$, which implies $\bar{g}\odot\mathbb T^n_{\bm A,\bm s}\subseteq\mathbb T^n_{\bm A,\bm s}$. Since condition (2) holds for every $\bar g\in G$, it also holds for $\bar g^{-1}$; thus, similarly, $\bar{g}^{-1}\odot\mathbb T^n_{\bm A,\bm s}\subseteq\mathbb T^n_{\bm A,\bm s}$. Applying $\bar{g}$ to both sides yields $\mathbb T^n_{\bm A,\bm s}\subseteq \bar{g}\odot\mathbb T^n_{\bm A,\bm s}$. Consequently, $\bar{g}\odot\mathbb T^n_{\bm A,\bm s}=\mathbb T^n_{\bm A,\bm s}$, indicating that $(\bm A, \bm s)$ is $G$-admissible.
\end{proof}

\begin{remark}
	If the point group contains a coordinate permutation $\bm R_g$, then $G$-admissibility requires $\bm M_g=\bm A\bm R_g\bm A^{-1}\in\operatorname{GL}^{\pm 1}(n,\mathbb Z)$; consequently, the corresponding diagonal entries $N_i$ of $\bm A$ within each cycle of that permutation must be equal. 
\end{remark}

\begin{proposition}\label{prop:existence_admissible}
For any arithmetic representation $\mathcal{G}$ of an $n$-dimensional space group, there exist $G$-admissible grid parameters $(\bm A, \bm s)$.
\end{proposition}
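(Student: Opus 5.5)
We will construct the grid explicitly and then verify the two conditions of \cref{prop:grid_invariance}. The simplest choice is a scalar discretization matrix with zero shift. We take $\bm s=\bm 0$ and $\bm A=N\bm I_n$ for a positive integer $N$ still to be fixed.

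\textbf{The linear part.} With a scalar $\bm A$, we have $\bm M_g=\bm A\bm R_g\bm A^{-1}=\bm R_g$ for every $g\in\mathcal G$. By condition 1 of \cref{def:space_group}, $\bm R_g\in\operatorname{GL}^{\pm 1}(n,\mathbb Z)$, so the first requirement of \cref{prop:grid_invariance} holds for every $N$. This also avoids the obstruction noted in the preceding remark, where diagonal entries must agree along permutation cycles.

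\textbf{The translation part.} With $\bm s=\bm 0$, we have $\bm z_g=\bm A\bm t_g=N\bm t_g$. By \cref{lem:rep-independence}, it suffices to check this for one representative of each coset in $G$. Since $G=\mathcal G/T$ is finite by condition 3 of \cref{def:space_group}, we choose representatives $g_1,\dots,g_M$ with $M=|G|$. By condition 1, each $\bm t_{g_i}\in\mathbb Q^n$. We let $N$ be the least common multiple of the denominators of all coordinates of $\bm t_{g_1},\dots,\bm t_{g_M}$. Then $N\bm t_{g_i}\in\mathbb Z^n$ for every $i$.

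\textbf{Conclusion and main point.} For any other representative $g\in\bar g_i$, \cref{lem:rep-independence} gives $\bm z_g-\bm z_{g_i}\in\bm A\mathbb Z^n\subset\mathbb Z^n$, so $\bm z_g\in\mathbb Z^n$ as well. Both conditions of \cref{prop:grid_invariance} therefore hold, and $(N\bm I_n,\bm 0)$ is $G$-admissible. The only substantive point is the reduction to finitely many translation vectors, which uses the finiteness of $G$ together with the representative independence in \cref{lem:rep-independence}. The rest is direct. If desired, one can add that any multiple $kN$ also works, so admissible grids can be taken arbitrarily fine.
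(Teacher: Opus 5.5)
Your proposal is correct and takes essentially the same route as the paper. Both choose $\bm A=N\bm I_n$ and $\bm s=\bm 0$, with $N$ a common denominator of the translation parts of finitely many coset representatives, and then apply \cref{prop:grid_invariance}. The only cosmetic difference is that you extend to arbitrary representatives by citing \cref{lem:rep-independence}, whereas the paper redoes that computation, $\bm t_g=\bm R_{g_j}\bm k+\bm t_{g_j}$, inline.
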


\begin{proof}
	Let $\bm A = N \bm I_n$ ($N \in \mathbb{Z}_{\geq 1}$) and $\bm s = \bm 0$. 
	Then, for any $g\in\mathcal G$, we have
	\[
		\bm M_g=\bm A\bm R_g\bm A^{-1}=\bm R_g\in\operatorname{GL}^{\pm 1}(n,\mathbb Z),
	\]
	and on the other hand, 
	\[
		\bm z_g=(\bm M_g-\bm I_n)\bm s+\bm A\bm t_g=N\bm t_g.
	\]
	Since $G$ is a finite group, we can choose a set of coset representatives $\{g_1,\dots,g_{|G|}\}$. Let $L_0$ be the smallest positive integer such that $L_0\bm t_{g_j}\in\mathbb Z^n$ for every $j$. By choosing $N$ to be an integer multiple of $L_0$, we obtain $N\bm t_{g_j}\in\mathbb Z^n$ for all $j$. 
	Fix an arbitrary $\bar g\in G$ and a representative $g\in\bar g$. Since $\{g_j\}$ is a complete set of coset representatives, $\bar g=g_jT$ for some $j$; hence there exists an integer translation $h = (\bm I_n, \bm k)\in T$ such that $g=g_j h$, 
	which yields $\bm t_g=\bm R_{g_j}\bm k+\bm t_{g_j}$, and consequently $N\bm t_g\in\mathbb Z^n$.
	
	In summary, for any $\bar g\in G$ (with any chosen representative $g$), it holds that $\bm M_g\in\operatorname{GL}^{\pm 1}(n,\mathbb Z)$ and $\bm z_g\in\mathbb Z^n$, satisfying condition 2 of \cref{prop:grid_invariance}. By \cref{prop:grid_invariance}, condition 1 holds, which shows that the grid parameters $(\bm A,\bm 0)$ are $G$-admissible. 
\end{proof}

\subsection{Discrete induced action and symmetry inheritance}

Define the discretized function
\begin{equation}
	f_{\bm A,\bm s}:X_{\bm A}\to\mathbb C,
	\qquad
	f_{\bm A,\bm s}([\bm x]) \coloneqq f(\iota_{\bm A,\bm s}([\bm x])),\qquad \forall [\bm x]\in X_{\bm A}.
\end{equation}

\begin{definition}\label{def:discrete_action}
	Let $(\bm A,\bm s)$ be $G$-admissible grid parameters. Since $(\bm A,\bm s)$ is $G$-admissible, $\bar g\odot\mathbb T^n_{\bm A,\bm s}=\mathbb T^n_{\bm A,\bm s}$ for every $\bar g\in G$; hence the composition below is a well-defined map $X_{\bm A}\to X_{\bm A}$. For any $\bar g\in G$, define $\Psi_{\bar g}:X_{\bm A}\to X_{\bm A}$ as
	\begin{equation}\label{eq:discrete_action_def}
		\Psi_{\bar g} \coloneqq \iota_{\bm A,\bm s}^{-1}\circ(\bar g\odot)\circ\iota_{\bm A,\bm s}.
	\end{equation}
\end{definition}

\begin{proposition}\label{prop:Psi_is_permutation}
    Let $(\bm{A},\bm{s})$ be $G$-admissible grid parameters. For any $\bar{g} \in G$, the map $\Psi_{\bar{g}}$ is a permutation on $X_{\bm{A}}$, i.e.,
    \[
        \Psi_{\bar{g}} \in \operatorname{Sym}(X_{\bm{A}}).
    \]
\end{proposition}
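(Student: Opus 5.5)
The plan is to show that $\Psi_{\bar g}$ is a bijection of $X_{\bm A}$ by writing it as a composition of bijections. First I check that each factor in \eqref{eq:discrete_action_def} is a bijection between the appropriate sets. The parameterization $\iota_{\bm A,\bm s}:X_{\bm A}\to\mathbb T^n_{\bm A,\bm s}$ is a bijection, as noted in \cref{subsec:grid_definition}, so $\iota_{\bm A,\bm s}^{-1}$ is also a bijection. For the middle factor, \cref{def:group-action-on-torus} says that $\odot$ is a group action of $G$ on $\mathbb T^n$. Hence the map $[\bm x]\mapsto\bar g\odot[\bm x]$ is a bijection of $\mathbb T^n$, with inverse $[\bm x]\mapsto\bar g^{-1}\odot[\bm x]$.

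Next I restrict this map to the grid. By $G$-admissibility (\cref{def:admissible_grid}), $\bar g\odot\mathbb T^n_{\bm A,\bm s}=\mathbb T^n_{\bm A,\bm s}$, so the restriction maps $\mathbb T^n_{\bm A,\bm s}$ onto itself. The restriction is injective because the map is injective on all of $\mathbb T^n$. Therefore $(\bar g\odot)|_{\mathbb T^n_{\bm A,\bm s}}$ is a bijection of the finite set $\mathbb T^n_{\bm A,\bm s}$, and its inverse is the restriction of $\bar g^{-1}\odot$, which preserves the grid for the same reason.

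Then the composition $\Psi_{\bar g}=\iota^{-1}\circ(\bar g\odot)\circ\iota$ is a bijection $X_{\bm A}\to X_{\bm A}$, with explicit inverse $\Psi_{\bar g^{-1}}$. This places $\Psi_{\bar g}$ in $\operatorname{Sym}(X_{\bm A})$.

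As an alternative route and sanity check, I would give the formula from the proof of \cref{prop:grid_invariance}:
\[
\Psi_{\bar g}([\bm k])=[\bm M_g\bm k+\bm z_g].
\]
Here $\bm M_g\in\operatorname{GL}^{\pm1}(n,\mathbb Z)$ and $\bm z_g\in\mathbb Z^n$. To see this is well defined on $X_{\bm A}$, note that $\bm M_g\bm A\mathbb Z^n=\bm A\bm R_g\mathbb Z^n=\bm A\mathbb Z^n$. The map $[\bm k]\mapsto[\bm M_g\bm k+\bm z_g]$ is then an affine automorphism of the finite group $X_{\bm A}$, and hence a permutation.

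There is no real obstacle in this argument. The only point needing care is surjectivity onto the grid, and that is exactly what $G$-admissibility supplies.
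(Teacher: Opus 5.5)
Your proof is correct and follows essentially the same route as the paper: $\Psi_{\bar g}$ is the composition of the bijection $\iota_{\bm A,\bm s}$, the restriction of $\bar g\odot$ to $\mathbb T^n_{\bm A,\bm s}$ (a bijection by $G$-admissibility, with inverse the restriction of $\bar g^{-1}\odot$), and $\iota_{\bm A,\bm s}^{-1}$. Your alternative check via $[\bm k]\mapsto[\bm M_g\bm k+\bm z_g]$ is also valid, since its inverse is induced by $\bm M_g^{-1}$, which likewise preserves $\bm A\mathbb Z^n$; it anticipates \cref{prop:discrete_action_formula}, which the paper proves only afterwards.
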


\begin{proof}
    Fix $\bar g\in G$. By \cref{def:group-action-on-torus}, the map $\bar g\odot(\cdot):\mathbb T^n\to\mathbb T^n$ is a well-defined bijection (being a group action). 
    By \cref{def:admissible_grid}, we have
    \[
        \bar g\odot\mathbb{T}^n_{\bm A,\bm s}=\mathbb{T}^n_{\bm A,\bm s},
    \]
    thus its restriction map $(\bar g\odot)|_{\mathbb{T}^n_{\bm A,\bm s}}: \mathbb{T}^n_{\bm A,\bm s}\to\mathbb{T}^n_{\bm A,\bm s}$ is a well-defined self-map. Furthermore, this self-map is a bijection, since its inverse is precisely the restricted action $(\bar g^{-1}\odot)|_{\mathbb{T}^n_{\bm A,\bm s}}$. 
    
    On the other hand, the parameterization map $\iota_{\bm A,\bm s}:X_{\bm A}\to\mathbb{T}^n_{\bm A,\bm s}$ itself is a bijection; hence, its inverse map $\iota_{\bm A,\bm s}^{-1}:\mathbb{T}^n_{\bm A,\bm s}\to X_{\bm A}$ is also well-defined and bijective. 
    
    Therefore, the map
    \[
        \Psi_{\bar g}=\iota_{\bm A,\bm s}^{-1}\circ(\bar g\odot)\circ\iota_{\bm A,\bm s}
    \]
    as a composition of three well-defined bijections, must constitute a bijection on $X_{\bm A}$. That is, $\Psi_{\bar g}\in\operatorname{Sym}(X_{\bm A})$. 
\end{proof}

\begin{proposition}\label{prop:G_A_group}
    Let $(\bm{A},\bm{s})$ be $G$-admissible grid parameters. Define the map
    \[
        \Psi \colon G \to \operatorname{Sym}(X_{\bm{A}}),\qquad \bar{g} \mapsto \Psi_{\bar{g}}.
    \]
    Then $\Psi$ is a group homomorphism, and its image
    \[
        G_{\bm{A}} \coloneqq \operatorname{Im}(\Psi) = \{\Psi_{\bar{g}} : \bar{g} \in G \} \leq \operatorname{Sym}(X_{\bm{A}})
    \]
    is called the discrete symmetry group compatible with the index group $X_{\bm{A}}$. 
\end{proposition}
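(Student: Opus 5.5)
The plan is to verify the homomorphism property $\Psi_{\bar g_1\bar g_2}=\Psi_{\bar g_1}\circ\Psi_{\bar g_2}$ directly from \eqref{eq:discrete_action_def}, using that $\odot$ is a group action of $G$ on $\mathbb T^n$ (\cref{def:group-action-on-torus}). Well-definedness of each $\Psi_{\bar g}$ as an element of $\operatorname{Sym}(X_{\bm A})$ is already given by \cref{prop:Psi_is_permutation}, so $\Psi$ is a well-defined map $G\to\operatorname{Sym}(X_{\bm A})$. Once the homomorphism property is established, the fact that $G_{\bm A}=\operatorname{Im}(\Psi)$ is a subgroup of $\operatorname{Sym}(X_{\bm A})$ follows from the general fact that the image of a group homomorphism is a subgroup.

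The key computation is the following. Fix $\bar g_1,\bar g_2\in G$. By $G$-admissibility, $(\bar g_2\odot)$ maps $\mathbb T^n_{\bm A,\bm s}$ into itself. Hence $\iota_{\bm A,\bm s}\circ\iota_{\bm A,\bm s}^{-1}$ acts as the identity on the image of $(\bar g_2\odot)\circ\iota_{\bm A,\bm s}$, and we can write
\[
\Psi_{\bar g_1}\circ\Psi_{\bar g_2}
=\iota_{\bm A,\bm s}^{-1}\circ(\bar g_1\odot)\circ(\bar g_2\odot)\circ\iota_{\bm A,\bm s}
=\iota_{\bm A,\bm s}^{-1}\circ\bigl((\bar g_1\bar g_2)\odot\bigr)\circ\iota_{\bm A,\bm s}
=\Psi_{\bar g_1\bar g_2}.
\]
The middle equality is the compatibility axiom of the action $\odot$. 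It holds because $\bar g_1\odot(\bar g_2\odot[\bm x])=[g_1\cdot(g_2\cdot\bm x)]=[(g_1g_2)\cdot\bm x]$ for any representatives $g_1\in\bar g_1$ and $g_2\in\bar g_2$, together with the fact that $g_1g_2$ represents $\bar g_1\bar g_2$. Similarly, $\Psi_{\bar e}=\iota_{\bm A,\bm s}^{-1}\circ\mathrm{id}\circ\iota_{\bm A,\bm s}=\mathrm{id}_{X_{\bm A}}$, although this also follows from multiplicativity.

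I do not expect a genuine obstacle. The only point needing care is that the inner $\iota_{\bm A,\bm s}^{-1}$ is only defined on $\mathbb T^n_{\bm A,\bm s}$, so the cancellation $\iota\circ\iota^{-1}$ is valid only on the grid. That is exactly where the admissibility condition of \cref{def:admissible_grid} is used. As a cross-check, I would also record the explicit formula $\Psi_{\bar g}([\bm x])=[\bm M_{\bar g}\bm x+\bm z_{\bar g}]$ from the proof of \cref{prop:grid_invariance}. It is not needed for the argument, but it confirms consistency with the notation table.
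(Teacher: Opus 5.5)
Your proposal is correct and matches the paper's proof: you conjugate the composed torus actions by $\iota_{\bm A,\bm s}$, cancel the inner $\iota_{\bm A,\bm s}\circ\iota_{\bm A,\bm s}^{-1}$ on the grid (which admissibility permits), apply the compatibility law of $\odot$, and note that the image of a homomorphism is a subgroup. One minor point: the explicit formula you mention as a cross-check is stated as \cref{prop:discrete_action_formula}, although the argument does not need it.
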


\begin{proof}
    For any $\bar g_1,\bar g_2\in G$, we have
    \begin{align*}
        \Psi_{\bar g_1}\circ\Psi_{\bar g_2}
        &=\iota^{-1}\circ(\bar g_1\odot)\circ\iota\circ\iota^{-1}\circ(\bar g_2\odot)\circ\iota\\
        &=\iota^{-1}\circ(\bar g_1\odot)\circ(\bar g_2\odot)\circ\iota\\
        &        =\iota^{-1}\circ((\bar g_1\bar g_2)\odot)\circ\iota
        =\Psi_{\bar g_1\bar g_2},
    \end{align*}
    where $\iota=\iota_{\bm A,\bm s}$. The merging of the two actions uses the group-action compatibility law $(\bar g_1\bar g_2)\odot[\bm x]=\bar g_1\odot(\bar g_2\odot[\bm x])$ from \cref{def:group-action-on-torus}.
	Thus, $\Psi:G\to\operatorname{Sym}(X_{\bm A})$ is a group homomorphism. Hence $\operatorname{Im}(\Psi)\leq\operatorname{Sym}(X_{\bm A})$, i.e., $G_{\bm A}$ is a subgroup. This completes the proof. 
\end{proof}

\begin{proposition}\label{prop:discrete_action_formula} 
        Let $(\bm A,\bm s)$ be $G$-admissible grid parameters. For any $\bar g\in G$ and $[\bm x]\in X_{\bm A}$, we have
        \begin{equation}
                \Psi_{\bar g}([\bm x]) = [\bm M_{\bar g}\bm x] + \bm z_{\bar g},  
        \end{equation}
\end{proposition}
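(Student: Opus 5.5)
The formula follows by unwinding \cref{def:discrete_action} on a single representative and then checking that everything is independent of the choices made.

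Fix $\bar g\in G$, a representative $g=(\bm R_g,\bm t_g)\in\bar g$, and a class $[\bm x]\in X_{\bm A}$ with some integer representative $\bm x$. By definition,
\[
\iota_{\bm A,\bm s}([\bm x])=[\bm A^{-1}(\bm x+\bm s)].
\]
Applying $\bar g\odot$ and using \cref{def:group-action-on-torus} gives $[\bm R_g\bm A^{-1}(\bm x+\bm s)+\bm t_g]$. Insert $\bm A\bm A^{-1}$ and use $\bm M_g=\bm A\bm R_g\bm A^{-1}$ to rewrite the argument as
\[
\bm A^{-1}\bigl(\bm M_g\bm x+\bm M_g\bm s+\bm A\bm t_g\bigr)=\bm A^{-1}\bigl(\bm M_g\bm x+\bm z_g+\bm s\bigr),
\]
since $\bm z_g=(\bm M_g-\bm I_n)\bm s+\bm A\bm t_g$. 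This is exactly the computation already carried out in the proof of \cref{prop:grid_invariance}, $2\Rightarrow1$.

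Next I would invoke \cref{prop:grid_invariance}, $1\Rightarrow2$. Admissibility gives $\bm M_g\in\operatorname{GL}^{\pm1}(n,\mathbb Z)$ and $\bm z_g\in\mathbb Z^n$, so $\bm y\coloneqq\bm M_g\bm x+\bm z_g\in\mathbb Z^n$. The point $[\bm A^{-1}(\bm y+\bm s)]$ is then $\iota_{\bm A,\bm s}([\bm y])$. Since $\iota_{\bm A,\bm s}$ is a bijection, applying its inverse yields $\Psi_{\bar g}([\bm x])=[\bm y]=[\bm M_{\bar g}\bm x]+\bm z_{\bar g}$ in $X_{\bm A}$.

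The only real care point is well-definedness of the right-hand side, which I would check in two ways.

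\emph{Independence of the representative $\bm x$.} Replacing $\bm x$ by $\bm x+\bm A\bm k$ changes $\bm M_g\bm x$ by $\bm A\bm R_g\bm k$. This lies in $\bm A\mathbb Z^n$ because $\bm R_g$ is an integer matrix. Hence $[\bm M_{\bar g}\bm x]$ is a well-defined element of $X_{\bm A}$, and $\bm M_{\bar g}$ induces a map on $X_{\bm A}$.

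\emph{Independence of the representative $g$.} Changing $g$ within $\bar g$ leaves $\bm M_g$ unchanged and alters $\bm z_g$ only modulo $\bm A\mathbb Z^n$, by \cref{lem:rep-independence}. Therefore $\bm z_{\bar g}\in X_{\bm A}$ is well defined, as recorded in the remark following that lemma.

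With these two checks the identity holds at the level of classes, and the proof is complete.
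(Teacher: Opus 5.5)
Your proof is correct and follows essentially the same route as the paper. Both unwind $\Psi_{\bar g}=\iota_{\bm A,\bm s}^{-1}\circ(\bar g\odot)\circ\iota_{\bm A,\bm s}$ on a representative, rewrite the torus point as $\bigl[\bm A^{-1}(\bm M_g\bm x+\bm z_g+\bm s)\bigr]$, use admissibility (\cref{prop:grid_invariance}) to get integrality, and then check independence of the choice of $g$ via \cref{lem:rep-independence} and of the choice of $\bm x$ via $\bm M_{\bar g}\bm A\bm k=\bm A\bm R_{\bar g}\bm k\in\bm A\mathbb Z^n$.
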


\begin{proof}
	Arbitrarily choose $\bar g\in G$ and $[\bm x]\in X_{\bm A}$, and take any representative $g\in\bar g$. 
	Due to the well-definedness of \cref{def:group-action-on-torus}, the following expression is independent of the choice of representative
	\[
		\bar g\odot\iota_{\bm A,\bm s}([\bm x])
		=\bar g\odot[\bm A^{-1}(\bm x+\bm s)]
		=[g\cdot(\bm A^{-1}(\bm x+\bm s))]
		=[\bm R_g\bm A^{-1}(\bm x+\bm s)+\bm t_g].
	\]
	Since $\bm M_g=\bm A\bm R_g\bm A^{-1}$, we obtain
	\[
		\bm A\big(\bm R_g\bm A^{-1}(\bm x+\bm s)+\bm t_g\big)
		=\bm M_g\bm x+\bm M_g\bm s+\bm A\bm t_g.
	\]
	Therefore,
	\[
		\bar g\odot\iota_{\bm A,\bm s}([\bm x])
		=\Big[\bm A^{-1}\big(\bm M_g\bm x+\bm M_g\bm s+\bm A\bm t_g\big)\Big].
	\]
	Furthermore, from
	\[
		\bm z_g=(\bm M_g-\bm I_n)\bm s+\bm A\bm t_g
		=\bm M_g\bm s+\bm A\bm t_g-\bm s
	\]
	it follows that
	\[
		\bm M_g\bm x+\bm M_g\bm s+\bm A\bm t_g
		=\bm M_g\bm x+\bm z_g+\bm s.
	\]
	Consequently,
	\[
		\Psi_{\bar g}([\bm x])
			=\iota_{\bm A,\bm s}^{-1}\!\left(\Big[\bm A^{-1}\big(\bm M_g\bm x+\bm z_g+\bm s\big)\Big]\right)
		=[\bm M_g\bm x]+[\bm z_g],
	\]
	where the result lies in $X_{\bm A}$ thanks to $G$-admissibility, by condition 2 of \cref{prop:grid_invariance} we have $\bm z_g\in\mathbb Z^n$, and the admissibility also ensures $\bar g\odot\iota_{\bm A,\bm s}([\bm x])\in\mathbb T^n_{\bm A,\bm s}$ so that $\iota_{\bm A,\bm s}^{-1}$ is applicable.
	Finally, by \cref{lem:rep-independence}, we have $\bm M_g=\bm M_{\bar g}$ and $[\bm z_g]=\bm z_{\bar g}$, which implies
	\[
		\Psi_{\bar g}([\bm x])=[\bm M_{\bar g}\bm x]+\bm z_{\bar g}.
	\]
	This value is moreover independent of the chosen representative $\bm x$ of $[\bm x]$, if $\bm x'=\bm x+\bm A\bm k$ with $\bm k\in\mathbb Z^n$, then $\bm M_{\bar g}\bm x'=\bm M_{\bar g}\bm x+\bm M_{\bar g}\bm A\bm k=\bm M_{\bar g}\bm x+\bm A\bm R_{\bar g}\bm k$, and $\bm A\bm R_{\bar g}\bm k\in\bm A\mathbb Z^n$, so $[\bm M_{\bar g}\bm x']=[\bm M_{\bar g}\bm x]$ in $X_{\bm A}$. This completes the proof. 
\end{proof}

From \cref{def:discrete_action} and \cref{prop:G_A_group}, it is evident that the action of $G$ on $\mathbb{T}^n_{\bm A,\bm s}$ is transported via conjugation by the bijection $\iota_{\bm A,\bm s}$ into a group action on $X_{\bm A}$. Therefore, if $f$ possesses $G$-symmetry, $f_{\bm A,\bm s}$ remain invariant under this discrete action. 

\begin{proposition}\label{prop:symmetry_inheritance}
	Suppose $f$ possesses $G$-symmetry, and let $(\bm A,\bm s)$ be $G$-admissible grid parameters. Then for any $\bar g\in G$ and $[\bm x]\in X_{\bm A}$, it holds that
	\begin{align}\label{eq:discrete_symmetry}
		f_{\bm A,\bm s}(\Psi_{\bar g}([\bm x])) = f_{\bm A,\bm s}([\bm x]).
	\end{align}
\end{proposition}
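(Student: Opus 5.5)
The plan is to unwind the definitions and use the conjugation structure of $\Psi_{\bar g}$ from \cref{def:discrete_action}. Fix $\bar g\in G$ and $[\bm x]\in X_{\bm A}$. By the definition of the discretized function,
\[
	f_{\bm A,\bm s}(\Psi_{\bar g}([\bm x]))=f\bigl(\iota_{\bm A,\bm s}(\Psi_{\bar g}([\bm x]))\bigr).
\]
Substituting $\Psi_{\bar g}=\iota_{\bm A,\bm s}^{-1}\circ(\bar g\odot)\circ\iota_{\bm A,\bm s}$, the composition $\iota_{\bm A,\bm s}\circ\iota_{\bm A,\bm s}^{-1}$ is the identity on $\mathbb T^n_{\bm A,\bm s}$. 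This gives
\[
	f_{\bm A,\bm s}(\Psi_{\bar g}([\bm x]))=f\bigl(\bar g\odot\iota_{\bm A,\bm s}([\bm x])\bigr).
\]

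The cancellation requires $\bar g\odot\iota_{\bm A,\bm s}([\bm x])$ to lie in $\mathbb T^n_{\bm A,\bm s}$, so that $\iota_{\bm A,\bm s}^{-1}$ is applied within its domain. I will justify this step explicitly by citing $G$-admissibility (\cref{def:admissible_grid}), exactly as in the well-definedness argument of \cref{def:discrete_action} and \cref{prop:Psi_is_permutation}. It is the only point where admissibility enters, and it is the one step that needs care rather than real difficulty.

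Next I apply the $G$-symmetry of $f$ (\cref{def:G-symmetry}) at the torus point $\iota_{\bm A,\bm s}([\bm x])\in\mathbb T^n$:
\[
	f\bigl(\bar g\odot\iota_{\bm A,\bm s}([\bm x])\bigr)=f\bigl(\iota_{\bm A,\bm s}([\bm x])\bigr)=f_{\bm A,\bm s}([\bm x]).
\]
This closes the chain of equalities.

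As an optional remark, \cref{prop:discrete_action_formula} lets one restate the conclusion as $f_{\bm A,\bm s}([\bm M_{\bar g}\bm x]+\bm z_{\bar g})=f_{\bm A,\bm s}([\bm x])$, which is the form used later in the algorithms. I do not expect any step to be a genuine obstacle.
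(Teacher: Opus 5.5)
Your proposal is correct and matches the paper's proof: the paper also uses the identity $\iota_{\bm A,\bm s}(\Psi_{\bar g}([\bm x]))=\bar g\odot\iota_{\bm A,\bm s}([\bm x])$ from the definition of $\Psi_{\bar g}$, then applies the $G$-symmetry of $f$ at $\iota_{\bm A,\bm s}([\bm x])$. Your explicit note that $G$-admissibility is what lets $\iota_{\bm A,\bm s}^{-1}$ act within its domain is a small added clarification; the paper leaves this point implicit in \cref{def:discrete_action}.
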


\begin{proof}
	Arbitrarily choose $\bar g\in G$ and $[\bm x]\in X_{\bm A}$. From \cref{def:discrete_action}, we have $\iota_{\bm A,\bm s}(\Psi_{\bar g}([\bm x])) = \bar g\odot\iota_{\bm A,\bm s}([\bm x])$. 
	Combining this with the definition of $f_{\bm A,\bm s}$ and the $G$-symmetry of $f$, we can directly deduce the following
	\begin{align*}
		f_{\bm A,\bm s}(\Psi_{\bar g}([\bm x]))
		&= f\big(\iota_{\bm A,\bm s}(\Psi_{\bar g}([\bm x]))\big) \\
		&= f\big(\bar g\odot\iota_{\bm A,\bm s}([\bm x])\big) \\
		&= f\big(\iota_{\bm A,\bm s}([\bm x])\big) \qquad\text{(by the $G$-symmetry of $f$, \cref{def:G-symmetry})} \\
		&= f_{\bm A,\bm s}([\bm x]).
	\end{align*}
	This completes the proof. 
\end{proof}

\subsection{Multidimensional Cooley--Tukey decomposition}\label{subsec:dft_convention}
The core idea behind the fast Fourier transform (FFT) stems from the seminal work of Cooley and Tukey \cite{cooley1965algorithm}. For the multidimensional Cooley--Tukey decomposition, we refer the reader to \cite{Shmueli2008ITC_B}. In this subsection, we first establish the notation for the discrete Fourier transform (DFT) and its inverse (IDFT) adopted in this paper, and then present the fundamental derivation of the multidimensional Cooley--Tukey decomposition. 

Define
\begin{equation}\label{eq:pairing}
		\langle [\bm h], [\bm x] \rangle \coloneqq e^{2\pi i \, \bm h^\mathsf{T} \bm A^{-1} \bm x}, \qquad [\bm h]\in X_{\bm A^\mathsf{T}},\ [\bm x]\in X_{\bm A}.
\end{equation}
It is straightforward to verify that the above expression is well-defined. 

\begin{remark}
	Since the discretization matrices in this paper are all positive integer diagonal matrices, we have $X_{\bm A^{\mathsf{T}}}=X_{\bm A}$, and consequently the superscript $\mathsf{T}$ on them may be omitted. 
\end{remark}

For a discrete function $f_{\bm A,\bm s}$ defined on $X_{\bm A}$, its DFT is defined as
\begin{equation}\label{eq:dft_convention}
	\hat f_{\bm A,\bm s}([\bm h]) \coloneqq \sum_{[\bm x]\in X_{\bm A}} f_{\bm A,\bm s}([\bm x])\, e^{-2\pi i\,\bm h^\mathsf{T}\bm A^{-1}\bm x},\qquad [\bm h]\in X_{\bm A},
\end{equation}
and the corresponding IDFT is defined as
\begin{equation}\label{eq:idft_convention}
	f_{\bm A,\bm s}([\bm x]) = \frac{1}{|X_{\bm A}|}\sum_{[\bm h]\in X_{\bm A}} \hat f_{\bm A,\bm s}([\bm h])\, e^{2\pi i\,\bm h^\mathsf{T}\bm A^{-1}\bm x},\qquad [\bm x]\in X_{\bm A},
\end{equation}
where $|X_{\bm A}|=\det\bm A=\prod_{j=1}^n N_j$. 

\begin{remark}
	Defining the DFT on the shifted grid $\mathbb{T}^n_{\bm A,\bm s}$ as
	\[
		\hat f^{\mathrm{phys}}_{\bm A,\bm s}([\bm h])
		\coloneqq\sum_{[\bm x]\in X_{\bm A}} f_{\bm A,\bm s}([\bm x])\,
		e^{-2\pi i\,\bm h^\mathsf{T}\bm A^{-1}(\bm x+\bm s)},
	\]
	it yields
	\[
		\hat f^{\mathrm{phys}}_{\bm A,\bm s}([\bm h])
		= e^{-2\pi i\,\bm h^\mathsf{T}\bm A^{-1}\bm s}\,\hat f_{\bm A,\bm s}([\bm h]).
	\]
	The phase factor $e^{-2\pi i\,\bm h^\mathsf{T}\bm A^{-1}\bm s}$ is independent of the spatial index $[\bm x]$ and depends solely on the frequency index $[\bm h]$. This paper investigates the standard DFT without the phase factor; to obtain the Fourier coefficients in a physical sense, one simply multiplies $\hat{f}_{\bm A,\bm s}([\bm h])$ pointwise by this phase factor. 
\end{remark}

Suppose the discretization matrix $\bm A$ can be factored as
$\bm A=\bm A_0\bm A_1$, where $\bm A_0=\operatorname{diag}(N_{0,1},\dots,N_{0,n})$ and 
$\bm A_1=\operatorname{diag}(N_{1,1},\dots,N_{1,n})$ are both positive integer diagonal matrices, satisfying 
$N_j=N_{0,j}N_{1,j}$ for each $j$. In what follows, $[\cdot]$ denotes the congruence class in the corresponding quotient group. Any spatial index $[\bm x]\in X_{\bm A}$ and frequency index $[\bm h]\in X_{\bm A}$ can be uniquely decomposed as
\begin{align}
	[\bm x] & = [\bm A_0 \bm x_1 + \bm x_0], \quad [\bm x_0]\in X_{\bm A_0},\ [\bm x_1]\in X_{\bm A_1}, \label{eq:x_decomp} \\
	[\bm h] & = [\bm A_1 \bm h_0 + \bm h_1], \quad [\bm h_0]\in X_{\bm A_0},\ [\bm h_1]\in X_{\bm A_1}. \label{eq:h_decomp}
\end{align}        
Substituting \eqref{eq:x_decomp}--\eqref{eq:h_decomp} into the definition of the DFT yields the multidimensional Cooley--Tukey decomposition formula
\begin{equation}\label{eq:cooley-tukey-decomp}
	\hat f_{\bm A,\bm s}([\bm h])=
	\sum_{[\bm x_0]\in X_{\bm A_0}}e^{-2\pi i\bm h_0^\mathsf{T}\bm A_0^{-1}\bm x_0}
	\left[
	e^{-2\pi i\bm h_1^\mathsf{T}\bm A^{-1}\bm x_0}\sum_{[\bm x_1]\in X_{\bm A_1}}f_{\bm A,\bm s}([\bm A_0\bm x_1+\bm x_0])\,
	e^{-2\pi i\bm h_1^\mathsf{T}\bm A_1^{-1}\bm x_1}
	\right].
\end{equation}
This decomposition constitutes the general algebraic framework for the subsequent symmetry reduction algorithms.

\section{Algorithmic framework and symmetry-reduction derivations for SFFT}
\label{sec:algor}
\subsection{Grid decomposition and partition strategy under group action}

For grid parameters $(\bm A,\bm s)$ satisfying the $G$-admissibility condition, this section investigates how to incorporate the discrete symmetry group $G_{\bm A}$ into the Cooley--Tukey decomposition framework and construct a compatible subgrid partition. Specifically, the Cooley--Tukey decomposition provides a factorization method for the matrix $\bm A$, while the discrete symmetry group $G_{\bm A}$ imposes algebraic constraints on this factorization, thereby laying the foundation for subsequent symmetry reduction. 

If $G_{\bm A}$ is a trivial group, this framework reduces to the standard Cooley--Tukey decomposition. Therefore, hereafter we only consider the case where $G_{\bm A}$ is nontrivial. In this scenario, depending on whether the linear part $\bm M_{\bar g}$ of the element $\Psi_{\bar g}$ in the discrete symmetry group is consistently equal to the identity matrix, the problem can be categorized into the following two mutually exclusive and exhaustive cases.
\[
\text{Case I. }\exists\,\Psi_{\bar g}\in G_{\bm A}\ \text{such that}\ \bm M_{\bar g}\neq\bm I_n,
\qquad
\text{Case II. }\forall\,\Psi_{\bar g}\in G_{\bm A},\ \bm M_{\bar g}=\bm I_n.
\]
In what follows, we will analyze Case I and Case II separately, and subsequently unify them within the framework of orbit decomposition. 

\subsubsection{Case I: nontrivial linear-action case}
\label{case:nontrivial_linear}
If there exists $\Psi_{\bar g}\in G_{\bm A}$ such that the linear part $\bm M_{\bar g} \neq \bm I_n$, this is referred to as the nontrivial linear-action case. In this scenario, the discretization matrix is factored as
\begin{equation}\label{eq:AR_factorization}
	\bm A = \bm A_R^0 \bm A_R^1,
\end{equation}
where both $\bm A_R^0$ and $\bm A_R^1$ are positive integer diagonal matrices. Unlike the standard Cooley--Tukey decomposition, the $\bm A_R^0$ chosen here must satisfy the following $G_{\bm A}$-compatibility condition.

\begin{definition}\label{def:linear_compatibility}
	A positive integer diagonal matrix $\bm A_R^0$ is said to be $G_{\bm A}$-compatible if, for any $\Psi_{\bar g} \in G_{\bm A}$, it holds that
	\begin{equation}
		\bm M_{\bar g}(\bm A_R^0\mathbb Z^n)=\bm A_R^0\mathbb Z^n.
	\end{equation}
\end{definition}

\begin{proposition}\label{prop:linear_compatibility}
	Let $\bm A_R^0$ be a positive integer diagonal matrix. Then the following two conditions are equivalent.
	\begin{enumerate}
		\item $\bm A_R^0$ is $G_{\bm A}$-compatible.
		\item For any $\Psi_{\bar g} \in G_{\bm A}$, we have
	\begin{align}
		(\bm A_R^0)^{-1}\bm M_{\bar g}\bm A_R^0 \in \operatorname{GL}^{\pm 1}(n,\mathbb Z).
	\end{align}
	\end{enumerate}
\end{proposition}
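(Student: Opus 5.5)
The plan is to translate both conditions into statements about the matrix $\bm B_{\bar g}\coloneqq(\bm A_R^0)^{-1}\bm M_{\bar g}\bm A_R^0$. Since $\bm A_R^0$ is invertible, we have $\bm M_{\bar g}(\bm A_R^0\mathbb Z^n)=\bm A_R^0\mathbb Z^n$ if and only if $\bm B_{\bar g}\mathbb Z^n=\mathbb Z^n$, obtained by applying $(\bm A_R^0)^{-1}$ to both sides. So it suffices to show that, for a real matrix $\bm B$ with $\det\bm B=\pm1$, the equality $\bm B\mathbb Z^n=\mathbb Z^n$ holds exactly when $\bm B\in\operatorname{GL}^{\pm 1}(n,\mathbb Z)$. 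The determinant is indeed $\pm1$, because $\det\bm B_{\bar g}=\det\bm M_{\bar g}=\det\bm R_{\bar g}=\pm1$ by similarity and since $\bm R_{\bar g}\in\operatorname{GL}^{\pm 1}(n,\mathbb Z)$.

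For $1\Rightarrow2$, I would test $\bm B_{\bar g}\mathbb Z^n=\mathbb Z^n$ on the standard basis vectors $\bm e_j$. The columns $\bm B_{\bar g}\bm e_j$ then lie in $\mathbb Z^n$, so $\bm B_{\bar g}$ is an integer matrix. Together with $\det\bm B_{\bar g}=\pm1$, this gives $\bm B_{\bar g}\in\operatorname{GL}^{\pm 1}(n,\mathbb Z)$, exactly as in the $1\Rightarrow2$ step of \cref{prop:grid_invariance}.

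For $2\Rightarrow1$, if $\bm B_{\bar g}$ is unimodular then its inverse is also an integer matrix, by the adjugate formula or the group property in the definition of $\operatorname{GL}^{\pm 1}(n,\mathbb Z)$. Hence $\bm B_{\bar g}\mathbb Z^n\subseteq\mathbb Z^n$ and $\bm B_{\bar g}^{-1}\mathbb Z^n\subseteq\mathbb Z^n$. Applying $\bm B_{\bar g}$ to the second inclusion yields the reverse containment, so $\bm B_{\bar g}\mathbb Z^n=\mathbb Z^n$; this is the same sandwich argument used in the proof of \cref{thm:main_theorem}. Multiplying on the left by $\bm A_R^0$ then recovers $\bm M_{\bar g}(\bm A_R^0\mathbb Z^n)=\bm A_R^0\mathbb Z^n$.

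There is no real obstacle here. The only point needing care is that the determinant computation relies on $\bm M_{\bar g}$ being well defined from $\bar g$, which holds by \cref{lem:rep-independence}, and that the argument must be carried out for every $\Psi_{\bar g}\in G_{\bm A}$ uniformly.
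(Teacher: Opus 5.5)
Your proof is correct and is essentially the paper's argument. The paper also conjugates to $\bm K_{\bar g}=(\bm A_R^0)^{-1}\bm M_{\bar g}\bm A_R^0$ and, for $1\Rightarrow 2$, gets integrality from the standard basis vectors and $\det\bm K_{\bar g}=\det\bm M_{\bar g}=\pm1$; for $2\Rightarrow 1$ it simply asserts $\bm K_{\bar g}\mathbb Z^n=\mathbb Z^n$, which you additionally justify via the integrality of $\bm K_{\bar g}^{-1}$.
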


\begin{proof}
	$\text{1}\Rightarrow\text{2}$.
	Suppose $\bm A_R^0$ is $G_{\bm A}$-compatible. Then for any $\Psi_{\bar g}\in G_{\bm A}$,
	\[
		\bm M_{\bar g}(\bm A_R^0\mathbb Z^n)=\bm A_R^0\mathbb Z^n.
	\]
	Since $\bm A_R^0$ is invertible, the above equation is equivalent to
	\[
		(\bm A_R^0)^{-1}\bm M_{\bar g}\bm A_R^0\,\mathbb Z^n=\mathbb Z^n.
	\]
	Let $\bm K_{\bar g}\coloneqq(\bm A_R^0)^{-1}\bm M_{\bar g}\bm A_R^0$. Then $\bm K_{\bar g}\mathbb Z^n=\mathbb Z^n$. Consequently, $\bm K_{\bar g}$ maps the standard orthonormal basis to integer vectors, implying that $\bm K_{\bar g}$ is an integer matrix. On the other hand,
	\[
		\det(\bm K_{\bar g})=\det((\bm A_R^0)^{-1}\bm M_{\bar g}\bm A_R^0) =\det(\bm M_{\bar g}) = \pm 1.
	\]
	Therefore, $\bm K_{\bar g}\in\operatorname{GL}^{\pm 1}(n,\mathbb Z)$.

	$\text{2}\Rightarrow\text{1}$.
	Suppose for any $\Psi_{\bar g}\in G_{\bm A}$, it holds that
	\[
		\bm K_{\bar g}=(\bm A_R^0)^{-1}\bm M_{\bar g}\bm A_R^0\in\operatorname{GL}^{\pm 1}(n,\mathbb Z).
	\]
	Then $\bm K_{\bar g}\mathbb Z^n=\mathbb Z^n$, and further
	\[
		\bm M_{\bar g}(\bm A_R^0\mathbb Z^n)=\bm A_R^0\bm K_{\bar g}\mathbb Z^n=\bm A_R^0\mathbb Z^n.
	\]
	Thus, $\bm A_R^0$ is $G_{\bm A}$-compatible.
\end{proof}

\begin{remark}
	When $\bm A_R^0=k\bm I_n$ ($k\in\mathbb{Z}_{\ge1}$),
	\[
		(\bm A_R^0)^{-1}\bm M_{\bar g}\bm A_R^0
		=\bm M_{\bar g}\in\operatorname{GL}^{\pm 1}(n,\mathbb{Z}).
	\]
	By \cref{prop:linear_compatibility}, $\bm A_R^0$ is always $G_{\bm A}$-compatible in this case. Therefore, there exists at least one class of simple, explicitly constructible choices for $\bm A_R^0$, namely $\bm A_R^0=k\bm I_n$ for any $k\in\mathbb Z_{\ge1}$ (and, when deployed as a Cooley--Tukey factor in Case I, provided $k$ divides every diagonal entry $N_j$ of $\bm A$). However, in general, $\bm A_R^0$ is not uniquely determined by the group structure, but rather needs to be selected subject to the $G_{\bm A}$-compatibility constraint. 
\end{remark}

At this point, any $[\bm x]\in X_{\bm A}$ can be uniquely decomposed as
\begin{equation}\label{eq:x_AR_decomp}
	[\bm x]=[\bm A_R^0\bm x_1+\bm x_0],
	\quad
	[\bm x_0]\in X_{\bm A_R^0},\;[\bm x_1]\in X_{\bm A_R^1},
\end{equation}
which defines the family of subsets
\[
	\Theta_R \coloneqq \{X_R^{[\bm y]}:[\bm y]\in X_{\bm A_R^0}\}, 
\]
where $X_R^{[\bm y]} \coloneqq \{[\bm A_R^0\bm x_1+\bm y]:[\bm x_1]\in X_{\bm A_R^1}\}$.

\begin{proposition}\label{prop:layer_permutation}
	Let $\bm A=\bm A_R^0\bm A_R^1$ be the aforementioned fixed factorization, and suppose $\bm A_R^0$ is $G_{\bm A}$-compatible. Then for any $\Psi_{\bar g} \in G_{\bm A}$ and any $[\bm y]\in X_{\bm A_R^0}$, there exists a unique $[\bm y']\in X_{\bm A_R^0}$ such that
	\[
		\Psi_{\bar g}\bigl(X_R^{[\bm y]}\bigr)=X_R^{[\bm y']}.
	\]
	Specifically, $[\bm y']=[\bm M_{\bar g}\bm y+\bm z_g]\in X_{\bm A_R^0}$, where $g$ is any representative of $\bar g$ and $\bm y$ is any integer representative of $[\bm y]$. The class is independent of both choices: replacing $g$ by another representative $g'$ of $\bar g$ changes the class by $\bm z_{g'}-\bm z_g\in\bm A\mathbb Z^n\subseteq\bm A_R^0\mathbb Z^n$ (\cref{lem:rep-independence}), whereas replacing $\bm y$ by $\bm y+\bm A_R^0\bm\ell$ ($\bm\ell\in\mathbb Z^n$) changes it by $\bm M_{\bar g}\bm A_R^0\bm\ell\in\bm M_{\bar g}(\bm A_R^0\mathbb Z^n)=\bm A_R^0\mathbb Z^n$, where the last equality holds by the $G_{\bm A}$-compatibility of $\bm A_R^0$ (\cref{def:linear_compatibility}).
\end{proposition}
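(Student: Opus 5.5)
The plan is to compute the image of a single layer element directly with the explicit formula of \cref{prop:discrete_action_formula}, and then use bijectivity of $\Psi_{\bar g}$ to upgrade an inclusion to an equality. Fix $\Psi_{\bar g}\in G_{\bm A}$, a representative $g\in\bar g$, and an integer representative $\bm y$ of $[\bm y]\in X_{\bm A_R^0}$. Well-definedness of the target class $[\bm y']=[\bm M_{\bar g}\bm y+\bm z_g]\in X_{\bm A_R^0}$ is already argued in the statement. It uses two facts: $\bm A\mathbb Z^n=\bm A_R^0\bm A_R^1\mathbb Z^n\subseteq\bm A_R^0\mathbb Z^n$ together with \cref{lem:rep-independence}, and the $G_{\bm A}$-compatibility $\bm M_{\bar g}(\bm A_R^0\mathbb Z^n)=\bm A_R^0\mathbb Z^n$. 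I will take it as given.

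\textbf{Step 1 (inclusion).} Take an arbitrary element $[\bm A_R^0\bm x_1+\bm y]\in X_R^{[\bm y]}$. By \cref{prop:discrete_action_formula},
\[
\Psi_{\bar g}([\bm A_R^0\bm x_1+\bm y])=[\bm M_{\bar g}\bm A_R^0\bm x_1+\bm M_{\bar g}\bm y+\bm z_g].
\]
By \cref{prop:linear_compatibility}, $\bm M_{\bar g}\bm A_R^0=\bm A_R^0\bm K_{\bar g}$ with $\bm K_{\bar g}\in\operatorname{GL}^{\pm1}(n,\mathbb Z)$. Write $\bm M_{\bar g}\bm y+\bm z_g=\bm A_R^0\bm q+\bm y'$, where $\bm y'$ is the standard representative in $\mathcal D_{\bm A_R^0}$ and $\bm q\in\mathbb Z^n$. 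This is legitimate because $\bm z_g\in\mathbb Z^n$ by \cref{prop:grid_invariance} and $\bm M_{\bar g}$ is integral. The image then becomes $[\bm A_R^0(\bm K_{\bar g}\bm x_1+\bm q)+\bm y']$, which lies in $X_R^{[\bm y']}$. Reducing $\bm K_{\bar g}\bm x_1+\bm q$ modulo $\bm A_R^1$ is harmless, since $\bm A_R^0\bm A_R^1\mathbb Z^n=\bm A\mathbb Z^n$. Hence $\Psi_{\bar g}(X_R^{[\bm y]})\subseteq X_R^{[\bm y']}$.

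\textbf{Step 2 (equality).} The layers in $\Theta_R$ partition $X_{\bm A}$, by the uniqueness of the decomposition \eqref{eq:x_AR_decomp}, and each layer has cardinality $|X_{\bm A_R^1}|$. The map $\Psi_{\bar g}$ is injective by \cref{prop:Psi_is_permutation}, so the image of $X_R^{[\bm y]}$ has $|X_{\bm A_R^1}|=|X_R^{[\bm y']}|$ elements. The inclusion from Step 1 is therefore an equality. As an alternative, one could note that $\bm x_1\mapsto\bm K_{\bar g}\bm x_1+\bm q$ is a bijection of $X_{\bm A_R^1}$; that would additionally require $\bm K_{\bar g}$ to preserve $\bm A_R^1\mathbb Z^n$, so the counting argument is cleaner.

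\textbf{Step 3 (uniqueness).} The layers $X_R^{[\bm y]}$ for distinct $[\bm y]$ are disjoint and nonempty. So if $\Psi_{\bar g}(X_R^{[\bm y]})=X_R^{[\bm y'']}$ also held, then $X_R^{[\bm y']}=X_R^{[\bm y'']}$, which forces $[\bm y']=[\bm y'']$.

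The only delicate point is Step 1: one must make sure that the coset-part shift $\bm A_R^0\bm q$ is absorbed into the $\bm x_1$ coordinate. This works precisely because of compatibility and because $\bm A$ is divisible by $\bm A_R^0$, so that reduction modulo $\bm A$ is consistent with reduction modulo $\bm A_R^0$. Everything else is bookkeeping.
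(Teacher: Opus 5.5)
Your proof is correct and follows essentially the same route as the paper. The paper also gets the inclusion $\Psi_{\bar g}(X_R^{[\bm y]})\subseteq X_R^{[\bm y']}$ from the explicit action formula and $\bm M_{\bar g}\bm A_R^0=\bm A_R^0\bm K_{\bar g}$, upgrades it to equality by counting with the bijectivity of $\Psi_{\bar g}$, and deduces uniqueness from the disjointness of the layers. (Your alternative in Step~2 needs no extra hypothesis, since $\bm K_{\bar g}=\bm A_R^1\bm R_{\bar g}(\bm A_R^1)^{-1}$ automatically preserves $\bm A_R^1\mathbb Z^n$.)
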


\begin{proof}
	Take any $[\bm y]\in X_{\bm A_R^0}$ and any $[\bm x_1]\in X_{\bm A_R^1}$, and let
	\[
		[\bm x]=[\bm A_R^0\bm x_1+\bm y]\in X_{\bm A}.
	\]
	By \cref{prop:discrete_action_formula}, for any $\bar g\in G$, we have
	\[
		\Psi_{\bar g}([\bm x])=[\bm M_{\bar g}(\bm A_R^0\bm x_1+\bm y)+\bm z_g].
	\]
	Since $\bm A_R^0$ is $G_{\bm A}$-compatible, there exists a unimodular matrix $\bm K_{\bar g}$ such that
	\[
		\bm M_{\bar g}\bm A_R^0=\bm A_R^0\bm K_{\bar g}.
	\]
	Thus,
	\[
		\Psi_{\bar g}([\bm x])
		=[\bm A_R^0\bm K_{\bar g}\bm x_1+\bm M_{\bar g}\bm y+\bm z_g].
	\]
	Denote $[\bm y']=[\bm M_{\bar g}\bm y+\bm z_g]\in X_{\bm A_R^0}$. If we take $\bm y'$ as the standard representative of this congruence class, then there exists $\bm k\in\mathbb Z^n$ such that $\bm M_{\bar g}\bm y+\bm z_g = \bm y'+\bm A_R^0\bm k$. Substituting this into the above equation yields
	\[
		\Psi_{\bar g}([\bm x])
		=[\bm A_R^0(\bm K_{\bar g}\bm x_1+\bm k)+\bm y'].
	\]
	Because $\bm K_{\bar g}\bm x_1+\bm k$ is an integer vector, its corresponding congruence class belongs to $X_{\bm A_R^1}$, which indicates that for any $[\bm x_1]\in X_{\bm A_R^1}$, we have
	\[
		\Psi_{\bar g}([\bm A_R^0\bm x_1+\bm y])\in X_R^{[\bm y']}.
	\]
	Consequently,
	\[
		\Psi_{\bar g}\bigl(X_R^{[\bm y]}\bigr)\subseteq X_R^{[\bm y']}.
	\]
	On the other hand, since $\Psi_{\bar g}$ is a bijection, and both $X_R^{[\bm y]}$ and $X_R^{[\bm y']}$ are equinumerous to $X_{\bm A_R^1}$, their cardinalities satisfy
	\[
		|\Psi_{\bar g}\bigl(X_R^{[\bm y]}\bigr)|=|X_R^{[\bm y]}|=|X_R^{[\bm y']}|.
	\]
	Combining this with the inclusion relation, we obtain
	\[
		\Psi_{\bar g}\bigl(X_R^{[\bm y]}\bigr)=X_R^{[\bm y']}.
	\]
	Since the sets within the family of subsets $\Theta_R=\{X_R^{[\bm y]}:[\bm y]\in X_{\bm A_R^0}\}$ are pairwise disjoint, the $[\bm y']$ satisfying the above equation is unique. 
\end{proof}

\subsubsection{Case II: pure translation case}
\label{case:pure_translation}
If $\bm M_{\bar g}=\bm I_n$ for all $\Psi_{\bar g}\in G_{\bm A}$, this scenario is referred to as the pure translation case. In this case, 
\begin{align}
	\Psi_{\bar g}([\bm x]) = [\bm x] + \bm z_{\bar g},\quad [\bm x]\in X_{\bm A},
\end{align}
where $\bm z_{\bar g} = [\bm z_g] = [\bm A\bm t_g]$. The discretization matrix is factored as
\begin{equation}\label{eq:AT_factorization}
	\bm A = \bm A_T^0 \bm A_T^1,
\end{equation}
where both $\bm A_T^0$ and $\bm A_T^1$ are positive integer diagonal matrices. Similar to Case I, the $\bm A_T^1$ here must ensure that all translation vectors $\bm t_g$, after being acted upon by $\bm A_T^1$, fall onto the integer lattice. 

\begin{definition}
	\label{def:translation_compatibility}
	A positive integer diagonal matrix $\bm A_T^1$ is said to be translation-compatible if, for any $\bar g \in G$ and a representative $g \in \bar g$, it holds that
	\begin{align}
		\bm A_T^1\bm t_g\in\mathbb Z^n.
	\end{align}
\end{definition}

\begin{remark}
	Since the difference between the translation vectors of any two representatives within the same coset is an integer vector, and $\bm A_T^1$ is an integer matrix, the property $\bm A_T^1\bm t_g \in \mathbb Z^n$ is independent of the specific choice of the representative. 
\end{remark}

\begin{remark}
	Because the quotient group $G$ is finite, there are only finitely many pure translation operations, hence only finitely many rational translation vectors $\bm t_g$; moreover, since $\bm A\bm t_g=\bm z_g\in\mathbb Z^n$, each coordinate $(\bm t_g)_j$ has denominator dividing $N_j$, so if $\kappa_j$ denotes the least common multiple of the denominators of the $j$-th coordinates, then $\kappa_j\mid N_j$. Consequently $\bm A_T^1=\operatorname{diag}(\kappa_1,\dots,\kappa_n)$ satisfies $\bm A_T^1\bm t_g\in\mathbb Z^n$ for every pure translation and is a valid factor in the decomposition $\bm A=\bm A_T^0\bm A_T^1$, with $\det\bm A_T^1=\prod_{j=1}^n\kappa_j$. Thus there exists at least one constructible class of translation-compatible matrices $\bm A_T^1$. For example, when the group includes the body-centered translation $\bm t = (1/2, \dots, 1/2)^\mathsf{T}$ (which forces every $N_j$ to be even), one can choose $\bm A_T^1 = 2\bm I_n$.
\end{remark}

Consequently, any $[\bm x]\in X_{\bm A}$ can be uniquely decomposed as
\begin{equation}\label{eq:x_AT_decomp}
	[\bm x]=[\bm A_T^0\bm x_1+\bm x_0],
	\quad
	[\bm x_0]\in X_{\bm A_T^0},\;[\bm x_1]\in X_{\bm A_T^1},
\end{equation}
which defines the family of subsets
\[
	\Theta_T \coloneqq \{X_T^{[\bm y]}:[\bm y]\in X_{\bm A_T^1}\},
\]
where $X_T^{[\bm y]} \coloneqq \{[\bm A_T^0\bm y+\bm x_0]:[\bm x_0]\in X_{\bm A_T^0}\}$, with $\bm y$ and $\bm x_0$ denoting the standard representatives of their classes.

\begin{proposition}
	\label{prop:layer_permutation_translation}
	Let $\bm A=\bm A_T^0\bm A_T^1$ be the fixed factorization as above, and suppose $\bm A_T^1$ is translation-compatible. Then for any $\Psi_{\bar g}\in G_{\bm A}$ and any $[\bm y]\in X_{\bm A_T^1}$, there exists a unique $[\bm y']\in X_{\bm A_T^1}$ such that
	\[
		\Psi_{\bar g}\bigl(X_T^{[\bm y]}\bigr)=X_T^{[\bm y']}.
	\]
	Specifically, $[\bm y']=[\bm y + \bm A_T^1\bm t_g]\in X_{\bm A_T^1}$, where $g$ is any representative of $\bar g$; this class is independent of the choice of $g$, since for any two representatives $g,g'$ of the same coset we have $\bm t_{g'}-\bm t_g\in\mathbb Z^n$, hence $\bm A_T^1(\bm t_{g'}-\bm t_g)\in\bm A_T^1\mathbb Z^n$ and the two classes coincide in $X_{\bm A_T^1}$.
\end{proposition}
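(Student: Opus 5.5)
The argument follows the proof of \cref{prop:layer_permutation}, with the roles of the two factors exchanged. Fix $\Psi_{\bar g}\in G_{\bm A}$ and a representative $g\in\bar g$. In Case II we have $\bm M_{\bar g}=\bm I_n$, so by \cref{prop:discrete_action_formula} the action is $\Psi_{\bar g}([\bm x])=[\bm x+\bm z_g]$. Here $\bm z_g=\bm A\bm t_g=\bm A_T^0\bm A_T^1\bm t_g$, using that the matrices involved are diagonal and commute. By translation-compatibility (\cref{def:translation_compatibility}), $\bm w\coloneqq\bm A_T^1\bm t_g\in\mathbb Z^n$, and therefore $\bm z_g=\bm A_T^0\bm w$.

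Take an element $[\bm A_T^0\bm y+\bm x_0]\in X_T^{[\bm y]}$. Its image is
\[
[\bm A_T^0(\bm y+\bm w)+\bm x_0].
\]
Let $\bm y'$ be the standard representative of $[\bm y+\bm w]\in X_{\bm A_T^1}$, and write $\bm y+\bm w=\bm y'+\bm A_T^1\bm k$ with $\bm k\in\mathbb Z^n$. Then $\bm A_T^0\bm A_T^1\bm k=\bm A\bm k$ vanishes in $X_{\bm A}$, so the image equals $[\bm A_T^0\bm y'+\bm x_0]\in X_T^{[\bm y']}$. This gives the inclusion $\Psi_{\bar g}(X_T^{[\bm y]})\subseteq X_T^{[\bm y']}$.

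For equality, note that $\Psi_{\bar g}$ is a bijection (\cref{prop:Psi_is_permutation}) and that both layers have cardinality $|X_{\bm A_T^0}|$. The cardinality claim uses the uniqueness of the decomposition \eqref{eq:x_AT_decomp}. Alternatively, one can see directly that the map $[\bm x_0]\mapsto[\bm A_T^0\bm y'+\bm x_0]$ is already surjective onto the layer.

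Uniqueness of $[\bm y']$ follows because the layers in $\Theta_T$ are pairwise disjoint and nonempty; this again comes from the uniqueness of \eqref{eq:x_AT_decomp}. Independence of the choice of representative $g$ is the remark already included in the statement: for two representatives, $\bm t_{g'}-\bm t_g\in\mathbb Z^n$, so the classes $[\bm y+\bm A_T^1\bm t_g]$ and $[\bm y+\bm A_T^1\bm t_{g'}]$ agree modulo $\bm A_T^1\mathbb Z^n$.

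The only delicate point is the bookkeeping of the factor order. The layers are indexed by the \emph{high} digit $\bm y$, with multiplier $\bm A_T^0$, which is the reverse of Case I. One must check that the translation $\bm A\bm t_g$ factors as $\bm A_T^0$ times an integer vector, so that it shifts only the high digit and never causes a carry into $\bm x_0$. This holds because $\bm A$, $\bm A_T^0$ and $\bm A_T^1$ are diagonal and $\bm A_T^1\bm t_g$ is integral, and any overflow in the high digit is absorbed by $\bm A\mathbb Z^n$.
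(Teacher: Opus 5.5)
Your proposal is correct and follows essentially the same route as the paper. Both arguments write $\bm z_g=\bm A_T^0(\bm A_T^1\bm t_g)$ using translation-compatibility, shift only the high digit to the standard representative $\bm y'$ to get the inclusion, obtain equality from bijectivity of $\Psi_{\bar g}$ and equal layer cardinalities, and obtain uniqueness from the disjointness of the layers in $\Theta_T$. Your observation that the shift never carries into the standard low digit $\bm x_0$ is exactly what makes the layers, which are defined via standard representatives, map onto one another.
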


\begin{proof}
	Take any $[\bm y]\in X_{\bm A_T^1}$ and any $[\bm x_0]\in X_{\bm A_T^0}$, and let
	\[
		[\bm x]=[\bm A_T^0\bm y+\bm x_0]\in X_{\bm A}.
	\]
	In the pure translation case, $\bm M_{\bar g}=\bm I_n$. By \cref{prop:discrete_action_formula}, we have
	\[
		\Psi_{\bar g}([\bm x])=[\bm A_T^0\bm y+\bm x_0+\bm z_g].
	\]
	The translation compatibility (\cref{def:translation_compatibility}) implies that $\bm A_T^1\bm t_g\in\mathbb Z^n$, yielding
	\[
		\bm z_g=\bm A\bm t_g=\bm A_T^0\bm A_T^1\bm t_g\in\bm A_T^0\mathbb Z^n.
	\]
	Thus, we can write
	\[
		\Psi_{\bar g}([\bm x])=[\bm A_T^0(\bm y+\bm A_T^1\bm t_g)+\bm x_0].
	\]
	Denote $[\bm y']=[\bm y+\bm A_T^1\bm t_g]\in X_{\bm A_T^1}$. If we take $\bm y'$ as the standard representative of this congruence class, then there exists $\bm k\in\mathbb Z^n$ such that $\bm y+\bm A_T^1\bm t_g = \bm y'+\bm A_T^1\bm k$. Substituting this into the above equation gives
	\[
		\Psi_{\bar g}([\bm x])=[\bm A_T^0(\bm y'+\bm A_T^1\bm k)+\bm x_0]=[\bm A_T^0\bm y'+\bm x_0+\bm A\bm k]=[\bm A_T^0\bm y'+\bm x_0].
	\]
	This indicates that for any $[\bm x_0]\in X_{\bm A_T^0}$, we have
	\[
		\Psi_{\bar g}([\bm A_T^0\bm y+\bm x_0])\in X_T^{[\bm y']}.
	\]
	Consequently,
	\[
		\Psi_{\bar g}\bigl(X_T^{[\bm y]}\bigr)\subseteq X_T^{[\bm y']}.
	\]
	On the other hand, since $\Psi_{\bar g}$ is a bijection, and both $X_T^{[\bm y]}$ and $X_T^{[\bm y']}$ are equinumerous to $X_{\bm A_T^0}$, their cardinalities satisfy
	\[
		\bigl|\Psi_{\bar g}\bigl(X_T^{[\bm y]}\bigr)\bigr|=\bigl|X_T^{[\bm y]}\bigr|=\bigl|X_T^{[\bm y']}\bigr|.
	\]
	Combining this with the inclusion relation, we obtain
	\[
		\Psi_{\bar g}\bigl(X_T^{[\bm y]}\bigr)=X_T^{[\bm y']}.
	\]
	Since the sets within the family of subsets $\Theta_T=\{X_T^{[\bm y]}:[\bm y]\in X_{\bm A_T^1}\}$ are pairwise disjoint, the $[\bm y']$ satisfying the above equation is unique. 
\end{proof}

\subsection{Discrete group action and orbit decomposition}
\label{subsec:orbit_decomposition}

To unify the notation, let $\Theta$ denote the family of subsets $\Theta_R$ or $\Theta_T$, 
and let $\mathcal I_{\Theta}$ denote its index set, corresponding to $X_{\bm A_R^0}$ or $X_{\bm A_T^1}$. 
For any $[\bm y]\in \mathcal I_{\Theta}$, let $X^{[\bm y]}$ denote the subset in $\Theta$ indexed by $[\bm y]$. 
By construction, $\Theta$ forms a partition of $X_{\bm A}$, and
\[
	X_{\bm A}=\bigsqcup_{[\bm y]\in\mathcal I_{\Theta}} X^{[\bm y]}.
\]

For any $[\bm y] \in \mathcal I_{\Theta}$ and its corresponding subset $X^{[\bm y]}$, its \textbf{stabilizer} $G_{[\bm y]} {\leqslant} G_{\bm A}$ is defined as
\begin{align}
	G_{[\bm y]} = \{ \Psi_{\bar g} \in G_{\bm A} : \Psi_{\bar g}(X^{[\bm y]}) = X^{[\bm y]} \}.
\end{align}
Accordingly, the \textbf{orbit} $O_{[\bm{y}]} {\subseteq} \Theta$ of the subset $X^{[\bm y]}$ is defined as
\begin{align}
	O_{[\bm{y}]} = \{ \Psi_{\bar g}(X^{[\bm y]}) : \Psi_{\bar g} \in G_{\bm A} \}.
\end{align}
According to the orbit-stabilizer theorem in group theory (recall that $G_{\bm A}$ is finite), the cardinality of the orbit and the order of the stabilizer satisfy
\[
	|O_{[\bm y]}|=\frac{|G_{\bm A}|}{|G_{[\bm y]}|}.
\]

\begin{proposition}\label{prop:translation_free}
	In the pure translation case (with the factorization $\bm A=\bm A_T^0\bm A_T^1$ and $\bm A_T^1$ translation-compatible, as in \cref{prop:layer_permutation_translation}), for any $[\bm y] \in X_{\bm A_T^1}$, its stabilizer is the trivial group, i.e.,
	\[
		G_{[\bm y]}=\{\mathrm{id}\}.
	\]
\end{proposition}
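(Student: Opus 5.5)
By \cref{prop:layer_permutation_translation}, the action of $G_{\bm A}$ on the layers $X_T^{[\bm y]}$ is given explicitly by $[\bm y]\mapsto[\bm y+\bm A_T^1\bm t_g]$ in $X_{\bm A_T^1}$. The plan is to show that this layer action is free. Any $\Psi_{\bar g}$ that fixes one layer will turn out to come from an integer translation, and hence to be the identity.

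First, fix $[\bm y]\in X_{\bm A_T^1}$ and take $\Psi_{\bar g}\in G_{[\bm y]}$ with a representative $g\in\bar g$. By \cref{prop:layer_permutation_translation} and the uniqueness of the image layer, the condition $\Psi_{\bar g}(X_T^{[\bm y]})=X_T^{[\bm y]}$ is equivalent to $[\bm y+\bm A_T^1\bm t_g]=[\bm y]$ in $X_{\bm A_T^1}$. This means
\[
\bm A_T^1\bm t_g\in\bm A_T^1\mathbb Z^n .
\]
Since $\bm A_T^1$ is an invertible diagonal matrix, this is the same as $\bm t_g\in\mathbb Z^n$.

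Second, I will identify the linear part of $g$. We are in the pure translation case, so $\bm M_{\bar g}=\bm I_n$. Since $\bm M_{\bar g}=\bm A\bm R_g\bm A^{-1}$ with $\bm A$ invertible, this gives $\bm R_g=\bm I_n$. Hence $g=(\bm I_n,\bm t_g)$ with $\bm t_g\in\mathbb Z^n$, so $g\in T$ and $\bar g=gT=T$ is the identity of $G$. Because $\Psi$ is a homomorphism (\cref{prop:G_A_group}), it follows that $\Psi_{\bar g}=\Psi_{e}=\mathrm{id}$. Conversely, $\mathrm{id}$ obviously lies in $G_{[\bm y]}$, so $G_{[\bm y]}=\{\mathrm{id}\}$.

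The only step that needs care is the first one: translating the set equality into the congruence. One should not merely assume it. By the proof of \cref{prop:layer_permutation_translation}, $\Psi_{\bar g}$ maps $X_T^{[\bm y]}$ onto $X_T^{[\bm y']}$ with $[\bm y']=[\bm y+\bm A_T^1\bm t_g]$, and this class does not depend on the choice of representative $g$. Since the layers in $\Theta_T$ are pairwise disjoint and nonempty, $X_T^{[\bm y']}=X_T^{[\bm y]}$ forces $[\bm y']=[\bm y]$. The rest of the argument is immediate.
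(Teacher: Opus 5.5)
Your proof is correct and follows the paper's argument: both use \cref{prop:layer_permutation_translation} and the disjointness of the layers to reduce membership in $G_{[\bm y]}$ to $\bm A_T^1\bm t_g\in\bm A_T^1\mathbb Z^n$, i.e.\ $\bm t_g\in\mathbb Z^n$. The only difference is cosmetic and lies in the last step: the paper notes that $\bm z_g=\bm A\bm t_g\in\bm A\mathbb Z^n$, so the translation $[\bm x]\mapsto[\bm x+\bm z_g]$ is already the identity on $X_{\bm A}$, whereas you use $\bm M_{\bar g}=\bm I_n\Rightarrow\bm R_g=\bm I_n$ to get $g\in T$ and then apply the homomorphism $\Psi$; both routes are valid.
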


\begin{proof}
	Fix an arbitrary $[\bm y]\in X_{\bm A_T^1}$. By \cref{prop:layer_permutation_translation}, for each $\Psi_{\bar g}\in G_{\bm A}$, there exists a unique $[\bm y']\in X_{\bm A_T^1}$ such that
	\[
		\Psi_{\bar g}\bigl(X_T^{[\bm y]}\bigr)=X_T^{[\bm y']},
		\qquad
	[\bm y^{\prime}]=[\bm y+\bm A_T^1\bm t_g].
	\]
	Therefore, $\Psi_{\bar g}\in G_{[\bm y]}$ if and only if $[\bm y']=[\bm y]$, which means
	\[
		\bm A_T^1\bm t_g\in \bm A_T^1\mathbb Z^n.
	\]
	Since $\bm A_T^1$ is invertible, this is equivalent to $\bm t_g \in \mathbb Z^n$, whence $\bm z_g=\bm A\bm t_g\in\bm A\mathbb Z^n$ and $\Psi_{\bar g}$ acts as the identity on $X_{\bm A}$, i.e., $\Psi_{\bar g}=\mathrm{id}$. Therefore,
	\[
		G_{[\bm y]}=\{\mathrm{id}\}.
	\]
\end{proof}

Define the \textbf{orbit representative index set} $Y_{\mathrm{ref}} \subset \mathcal I_{\Theta}$, satisfying the following conditions.
\begin{enumerate}
	\item For any distinct $[\bm y_1], [\bm y_2] \in Y_{\mathrm{ref}}$, we have
	\[
	O_{[\bm y_1]} \cap O_{[\bm y_2]} = \emptyset.
	\]
	\item $\Theta$ can be decomposed into the disjoint union of these orbits, i.e.,
	\[
	\Theta = \bigsqcup_{[\bm y] \in Y_{\mathrm{ref}}} O_{[\bm y]}.
	\]
\end{enumerate}

For each $[\bm y] \in Y_{\mathrm{ref}}$, select a system of left coset representatives $\mathcal{R}([\bm y]) \subset G_{\bm A}$ of $G_{\bm A}$ modulo the stabilizer $G_{[\bm y]}$. Although the choice of the left coset representatives is not unique, once $\mathcal{R}([\bm y])$ is fixed, each element in the orbit $O_{[\bm y]}$ can be generated by the action of a unique coset representative $\Psi_{\bar g} \in \mathcal{R}([\bm y])$ on $X^{[\bm y]}$. Therefore,
\begin{align}
	O_{[\bm y]} = \{ \Psi_{\bar g}(X^{[\bm y]}) : \Psi_{\bar g} \in \mathcal{R}([\bm y]) \}.
\end{align}
By the orbit-stabilizer theorem, the number of coset representatives equals the orbit size, i.e., $|\mathcal{R}([\bm y])|=[G_{\bm A}:G_{[\bm y]}] = |O_{[\bm y]}|$. Based on the above construction, $X_{\bm A}$ can be decomposed into the following disjoint union induced by symmetry
\begin{equation}\label{eq:symmetric-partition}
	X_{\bm A} = \bigsqcup_{[\bm y] \in Y_{\mathrm{ref}}} \bigsqcup_{\Psi_{\bar g} \in \mathcal{R}([\bm y])} \Psi_{\bar g} (X^{[\bm y]}).
\end{equation}

The representatives in this partition can be constructed using
\cref{thm:hierarchical_group_action} for the finite permutation group
$G_{\bm A}$. For each representative layer $X^{[\bm y]}$, enumerate its
distinct images under the operations in
\eqref{eq:hierarchical_group_normal_form} and retain one operation for each
image. Two operations give the same image exactly when they belong to the
same left coset of $G_{[\bm y]}$. Hence the retained operations form the
required set $\mathcal R([\bm y])$. This procedure also constructs the orbit
partition and $Y_{\mathrm{ref}}$ by selecting a layer outside the previously
constructed orbits until all layers are covered. In three dimensions,
\cref{cor:three_dimensional_reconstruction} reduces the construction to one
lifted generator per composition step.

\subsection{Algorithm derivation for the nontrivial linear-action case}
\label{sec:nontrivial_linear_derivation}

This section derives the DFT symmetry reduction formula for the SFFT under the setting of \cref{case:nontrivial_linear}. The derivation proceeds in three steps: (i) employing the Cooley--Tukey decomposition to rewrite the global DFT as a layered sum over layer indices; (ii) utilizing orbit decomposition to recast the outer sum into a double sum over orbit representatives and coset representatives, followed by simplifying the outer phase factor; and (iii) applying a change of variables to the inner sum and exploiting symmetry inheritance to reduce it to a local DFT with corresponding phase corrections.

Let the decomposition of the discretization matrix be $\bm A = \bm A_R^0 \bm A_R^1$. Based on this decomposition, any frequency index $[\bm h] \in X_{\bm A}$ can be uniquely decomposed as
\[
	[\bm h] = [\bm A_R^1 \bm h_0 + \bm h_1],\qquad [\bm h_0]\in X_{\bm A_R^0},\quad [\bm h_1]\in X_{\bm A_R^1}.
\]

\paragraph{Step 1: Cooley--Tukey layered expansion}
Substituting the factorization $\bm A=\bm A_R^0\bm A_R^1$ from \cref{case:nontrivial_linear} into the multidimensional Cooley--Tukey decomposition formula \cref{eq:cooley-tukey-decomp} (with the identifications $\bm A_R^0\leftrightarrow\bm A_0$, $\bm A_R^1\leftrightarrow\bm A_1$, and the outer index $[\bm y]\leftrightarrow[\bm x_0]$) yields
\begin{equation}
	\begin{aligned}
		\label{eq:linear_action_reduce}
		\hat{f}_{\bm A,\bm s}([\bm{h}])
		 & =\sum_{[\bm{y}]\in X_{\bm A_R^0}}e^{-2\pi i\bm{h}_0^\mathsf{T}(\bm{A}_R^0)^{-1}\bm{y}}\left[\sum_{[\bm{x}_1]\in X_{\bm A_R^1}}f_{\bm A,\bm s}([\bm A_R^0 \bm x_1 + \bm{y}])e^{-2\pi i\bm{h}_1^\mathsf{T}\bm{A}^{-1}(\bm A_R^0 \bm x_1 + \bm{y})}\right] \\
		 & =\sum_{[\bm{y}]\in X_{\bm A_R^0}}e^{-2\pi i\bm{h}_0^\mathsf{T}(\bm{A}_R^0)^{-1}\bm{y}}\left[\sum_{[\bm{x}]\in X_{R}^{[\bm y]}}f_{\bm A,\bm s}([\bm{x}])e^{-2\pi i\bm{h}_1^\mathsf{T}\bm{A}^{-1}\bm{x}}\right],
	\end{aligned}
\end{equation}
where the second equality utilizes the bijection between $X_{\bm A_R^1}$ and $X_R^{[\bm y]}$ given by the mapping $[\bm x_1] \mapsto [\bm x] = [\bm A_R^0 \bm x_1 + \bm y]$.

\paragraph{Step 2: orbit decomposition and outer phase simplification}
Combining \cref{subsec:orbit_decomposition} and \cref{prop:layer_permutation}, for any $[\bm y]\in X_{\bm A_R^0}$, there exist a unique $[\bm y_{\mathrm{ref}}]\in Y_{\mathrm{ref}}$ and  a unique $\Psi_{\bar g}\in\mathcal{R}([\bm y_{\mathrm{ref}}])$ such that
\begin{align}
	X_R^{[\bm y]} =\Psi_{\bar g}\left(X_R^{[\bm y_{\mathrm{ref}}]}\right),
	\quad 
	[\bm y] = [\bm M_{\bar g} \bm y_{\mathrm{ref}} + \bm z_{g}] \in X_{\bm A_R^0}.
\end{align}
Thus, the inner summation domain can be rewritten as $[\bm x]\in \Psi_{\bar g}(X_R^{[\bm y_{\mathrm{ref}}]})$.
Furthermore, there exists $\bm k \in \mathbb{Z}^n$ such that
\begin{align}
	\bm y = \bm M_{\bar g}\bm y_{\mathrm{ref}} + \bm z_g + \bm A_R^0 \bm k,
\end{align}
where $g\in\bar g$ is a fixed representative, so that $\bm y$ and $\bm y_{\mathrm{ref}}$ are fixed integer representatives of their respective classes. Substituting this into the outer phase factor gives
\begin{align}
	e^{-2\pi i\bm{h}_0^\mathsf{T}(\bm{A}_R^0)^{-1}\bm{y}}
	&= e^{-2\pi i\bm{h}_0^\mathsf{T}(\bm{A}_R^0)^{-1}(\bm{M}_{\bar g}\bm{y}_{\mathrm{ref}}+\bm{z}_{g}+\bm A_R^0\bm k)} \notag \\
	&= e^{-2\pi i\bm{h}_0^\mathsf{T}(\bm{A}_R^0)^{-1}(\bm{M}_{\bar g}\bm{y}_{\mathrm{ref}}+\bm{z}_{g})} \, e^{-2\pi i\bm{h}_0^\mathsf{T}\bm k} \notag \\
	&= e^{-2\pi i\bm{h}_0^\mathsf{T}(\bm{A}_R^0)^{-1}(\bm{M}_{\bar g}\bm{y}_{\mathrm{ref}}+\bm{z}_{g})},
\end{align}
where the middle factor equals $1$ since $\bm h_0\in\mathbb Z^n$ and $\bm k\in\mathbb Z^n$, so $\bm h_0^\mathsf T\bm k\in\mathbb Z$.
Therefore, \cref{eq:linear_action_reduce} can be rewritten as
\begin{equation} \label{eq:linear_action_outer_reduced}
	\hat{f}_{\bm A,\bm s}([\bm h]) = \sum_{[\bm{y}_{\mathrm{ref}}]\in Y_{\mathrm{ref}}} \sum_{\Psi_{\bar g} \in \mathcal{R}([\bm y_{\mathrm{ref}}])} e^{-2\pi i\bm{h}_0^\mathsf{T}(\bm{A}_R^0)^{-1}(\bm{M}_{\bar g}\bm{y}_{\mathrm{ref}}+\bm{z}_{g})} \cdot W_{(\Psi_{\bar g}, \bm{y}_{\mathrm{ref}})}([\bm{h}_1]),
\end{equation}
where
\begin{align}
	\label{eq:W_definition}
	W_{(\Psi_{\bar g},\bm y_{\mathrm{ref}})}([\bm h_1]) \coloneqq \sum_{[\bm{x}]\in \Psi_{\bar g}(X_{R}^{[\bm{y}_{\mathrm{ref}}]})}f_{\bm A,\bm s}([\bm{x}])e^{-2\pi i\bm{h}_1^\mathsf{T}\bm{A}^{-1}\bm{x}}.
\end{align}

\paragraph{Step 3: simplification of the inner sum}
Apply the change of variables $[\bm x] = \Psi_{\bar g}([\bm x_{\mathrm{ref}}])$ to $W_{(\Psi_{\bar g},\bm y_{\mathrm{ref}})}([\bm h_1])$, where $[\bm x_{\mathrm{ref}}]\in X_R^{[\bm y_{\mathrm{ref}}]}$. From \cref{prop:symmetry_inheritance}, we have $f_{\bm A,\bm s}(\Psi_{\bar g}([\bm x_{\mathrm{ref}}])) = f_{\bm A,\bm s}([\bm x_{\mathrm{ref}}])$. By the definition of the discrete group action, there exists a unique $\bm k\in\mathbb{Z}^n$ such that
\[
	\bm x = \bm M_{\bar g} \bm x_{\mathrm{ref}} + \bm z_g + \bm A \bm k.
\]
Substituting this into the phase factor yields
\begin{align}
	e^{-2\pi i\bm{h}_1^\mathsf{T}\bm{A}^{-1}\bm x}
	 & = e^{-2\pi i\bm{h}_1^\mathsf{T}\bm{A}^{-1}(\bm M_{\bar g}\bm x_{\mathrm{ref}} + \bm z_{g})}.
\end{align}
Utilizing the representative $\bm x_{\mathrm{ref}} = \bm A_R^0 \bm x_1 + \bm y_{\mathrm{ref}} + \bm A \bm k'$ of the class $[\bm x_{\mathrm{ref}}] = [\bm A_R^0 \bm x_1 + \bm y_{\mathrm{ref}}]$ (with $\bm k'\in\mathbb Z^n$), the exponent expands to
\begin{align}
	&\bm{h}_1^\mathsf{T}\bm{A}^{-1}(\bm M_{\bar g}(\bm A_R^0 \bm x_1 + \bm y_{\mathrm{ref}} + \bm A\bm k') + \bm z_{g})\\
	 =&  \bm{h}_1^\mathsf{T}\bm{A}^{-1}(\bm M_{\bar g}\bm y_{\mathrm{ref}} + \bm z_{g}) + \bm{h}_1^\mathsf{T}\bm{A}^{-1}\bm M_{\bar g}\bm A_R^0\bm x_1 + \bm{h}_1^\mathsf{T}\bm{A}^{-1}\bm M_{\bar g}\bm A\bm k' \notag \\
	 =&  \bm{h}_1^\mathsf{T}\bm{A}^{-1}(\bm M_{\bar g}\bm y_{\mathrm{ref}} + \bm z_{g}) + \bm{h}_1^\mathsf{T}\bm{A}^{-1}\bm M_{\bar g}\bm A_R^0\bm x_1 + \bm{h}_1^\mathsf{T}\bm R_{\bar g}\bm k',
\end{align}
where the last term is an integer, $\bm h_1^\mathsf T\bm R_{\bar g}\bm k'\in\mathbb Z$, so its contribution to the overall phase factor is $e^{-2\pi i\bm h_1^\mathsf T\bm R_{\bar g}\bm k'}=1$ and may be dropped.
The second term can be further simplified as
\begin{align}
	\bm{h}_1^\mathsf{T}\bm{A}^{-1}\bm M_{\bar g}\bm A_R^0\bm x_1
	 & = \bm{h}_1^\mathsf{T} \bm R_{\bar g} (\bm A^{-1} \bm A_R^0) \bm x_1 \notag                  \\
	 & = \bm{h}_1^\mathsf{T} \bm R_{\bar g} (\bm A_R^1)^{-1} \bm x_1 \notag                        \\
	 & = (\bm R_{\bar g}^\mathsf{T} \bm h_1)^\mathsf{T} (\bm A_R^1)^{-1} \bm x_1.
\end{align}
Consequently,
\begin{align}
	\label{eq:W_decomposition}
	W_{(\Psi_{\bar g},\bm y_{\mathrm{ref}})}([\bm h_1])
	 & = e^{-2\pi i\bm{h}_1^\mathsf{T}\bm{A}^{-1}(\bm M_{\bar g}\bm y_{\mathrm{ref}} + \bm z_{g})} \hat{V}_{\bm y_{\mathrm{ref}}}([\bm R_{\bar g}^\mathsf{T}\bm h_1]),
\end{align}
where 
\begin{align}
	\label{eq:local_dft_definition}
	\hat{V}_{\bm{y}_{\mathrm{ref}}}([\bm h_1]) \coloneqq \sum_{[\bm{x}_1] \in X_{\bm A_R^1}} f_{\bm A,\bm s}([\bm{A}_R^0 \bm{x}_1 + \bm y_{\mathrm{ref}}]) e^{-2\pi i\bm h_1^\mathsf{T}(\bm{A}_R^1)^{-1}\bm{x}_1}.
\end{align}

Combining the three derivation steps above, we obtain the DFT symmetry reduction formula for the nontrivial linear-action case
\begin{equation} \label{eq:linear_action_final}
	\begin{aligned}
		\hat{f}_{\bm A,\bm s}([\bm h]) = \sum_{[\bm{y}_{\mathrm{ref}}]\in Y_{\mathrm{ref}}} \sum_{\Psi_{\bar g} \in \mathcal{R}([\bm y_{\mathrm{ref}}])}
		 & e^{-2\pi i\bm{h}_0^\mathsf{T}\left(\bm{A}_R^0\right)^{-1}\left(\bm{M}_{\bar g}\bm{y}_{\mathrm{ref}}+\bm{z}_{g}\right)}                                                             \\
		 & \cdot e^{-2\pi i\bm{h}_1^\mathsf{T}\bm{A}^{-1}\left(\bm M_{\bar g}\bm y_{\mathrm{ref}} + \bm z_{g}\right)} \cdot \hat{V}_{\bm y_{\mathrm{ref}}}\left([\bm R_{\bar g}^\mathsf{T}\bm h_1]\right).
	\end{aligned}
\end{equation}

\subsubsection{Matrix representation for the nontrivial linear-action case}
\label{sec:algorithm_matrix}

To facilitate the algorithmic implementation, this section first unifies the indexing conventions.

\begin{definition}[Lexicographical order]
	Let $\bm x=(x_1,\dots,x_n),\bm y=(y_1,\dots,y_n)\in\mathbb Z^n$. The lexicographical order is defined as
	\[
		\bm x\prec_{\mathrm{lex}}\bm y
		\iff
		\exists j\in\{1,\dots,n\},\ x_1=y_1,\dots,x_{j-1}=y_{j-1},\ x_j<y_j.
	\]
	For a finite set $S\subset\mathbb Z^n$, let $S^{\mathrm{lex}}=(\bm s_0,\dots,\bm s_{|S|-1})$ denote the unique sequence obtained by arranging its elements in increasing order according to $\prec_{\mathrm{lex}}$ (which is a total order on $\mathbb Z^n$).
\end{definition}

Based on this, we establish the following conventions.
\begin{enumerate}
	\item \textbf{Ordering of orbit representatives}. Let $Y_{\mathrm{ref}}^{\mathrm{lex}} = (\bm y_{0}, \dots, \bm y_{\ell - 1})$ be the lexicographical arrangement of the set $\{\bm y\in \mathcal D_{\bm A_R^0}: [\bm y]\in Y_{\mathrm{ref}}\}$, where $\ell = |Y_{\mathrm{ref}}|$.
	\item \textbf{Indexing of coset representatives}. For each $j =0,\dots,\ell-1$, we fix a system of coset representatives
    \[
      \mathcal{R}([\bm y_j]) = (\Psi_{\bar g_{j0}}, \dots, \Psi_{\bar g_{j,r_j-1}}),\qquad \bar g_{jc}\in G,
    \]
    where $r_j = |\mathcal{R}([\bm y_j])|$, with the convention that $\Psi_{\bar g_{j0}} = \mathrm{id}$.
	\item \textbf{Ordering of double indices}. Define
    \[
        \mathcal I \coloneqq \{(j,c): 0 \le j < \ell,\ 0 \le c < r_j\}\subset \mathbb{Z}^2,
    \] 
    and arrange it according to the lexicographical order on $\mathbb{Z}^2$ as
    \[
        \mathcal I^{\mathrm{lex}} = ((j_0,c_0), \ldots, (j_{D_0-1},c_{D_0-1})),
    \]
    where $D_0 = |X_{\bm A_R^0}|$. Hereafter, $v$ denotes the positional index of the sequence $\mathcal I^{\mathrm{lex}}$, i.e., the $v$-th double index is denoted by $(j_v,c_v)$.
\end{enumerate}

We first present two key lemmas.

\begin{lemma}\label{lem:phase_invariance}
	For any $g_1,g_2\in \bar g$, and any integer representatives $\bm y,\bm h_0\in\mathbb Z^n$ of classes in $X_{\bm A_R^{0}}$, and $\bm h_1\in\mathbb Z^n$ of a class in $X_{\bm A_R^{1}}$, it holds that
	\begin{align}
		e^{-2\pi i \bm h_0^\mathsf{T} (\bm A_R^0)^{-1} (\bm M_{\bar g_1} \bm y + \bm z_{g_1})} & = e^{-2\pi i \bm h_0^\mathsf{T} (\bm A_R^0)^{-1} (\bm M_{\bar g_2} \bm y + \bm z_{g_2})}, \\
		e^{-2\pi i\bm h_1^\mathsf{T}\bm A^{-1} (\bm M_{\bar g_1} \bm y + \bm z_{g_1})}         & = e^{-2\pi i\,\bm h_1^\mathsf{T}\bm A^{-1} (\bm M_{\bar g_2} \bm y + \bm z_{g_2})}.
	\end{align}
\end{lemma}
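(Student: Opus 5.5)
The plan is to reduce both identities to a statement about the difference of the exponents, using \cref{lem:rep-independence}. By part~1 of that lemma, $\bm M_{\bar g_1}=\bm M_{\bar g_2}=\bm M_{\bar g}$, so the linear terms $\bm M_{\bar g}\bm y$ agree in both exponents. The two sides of each identity therefore differ only through $\bm z_{g_1}$ versus $\bm z_{g_2}$. By part~2, there is $\bm k\in\mathbb Z^n$ with $\bm z_{g_1}-\bm z_{g_2}=\bm A\bm k$. Concretely, writing $g_1=g_2(\bm I_n,\bm k_0)$, one gets $\bm k=\bm R_{g_2}\bm k_0\in\mathbb Z^n$.

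For the first identity, I will write the ratio of the two sides as
\[
e^{-2\pi i\,\bm h_0^\mathsf{T}(\bm A_R^0)^{-1}\bm A\bm k}=e^{-2\pi i\,\bm h_0^\mathsf{T}\bm A_R^1\bm k}.
\]
Here I use $\bm A=\bm A_R^0\bm A_R^1$, and diagonal matrices commute, so $(\bm A_R^0)^{-1}\bm A=\bm A_R^1$. Since $\bm h_0$, $\bm A_R^1$ and $\bm k$ are all integral, the exponent is $2\pi i$ times an integer, and the ratio equals $1$.

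For the second identity, the ratio is
\[
e^{-2\pi i\,\bm h_1^\mathsf{T}\bm A^{-1}\bm A\bm k}=e^{-2\pi i\,\bm h_1^\mathsf{T}\bm k}=1,
\]
again because $\bm h_1,\bm k\in\mathbb Z^n$.

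There is no genuine obstacle; the only point needing care is the well-definedness bookkeeping. I will remark that the identities are stated for fixed integer representatives $\bm y,\bm h_0,\bm h_1$, so only independence from $g$ is at issue. Independence from the choice of $\bm y$ or $\bm h$ representatives is not required here, and in fact may fail for the second factor taken in isolation.
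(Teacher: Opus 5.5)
Your proposal is correct and follows the paper's proof essentially step for step. Both use \cref{lem:rep-independence} to obtain $\bm M_{\bar g_1}=\bm M_{\bar g_2}$ and $\bm z_{g_1}-\bm z_{g_2}=\bm A\bm k=\bm A_R^0\bm A_R^1\bm k$ with $\bm k\in\mathbb Z^n$, and then observe that the exponent differences $\bm h_0^\mathsf{T}\bm A_R^1\bm k$ and $\bm h_1^\mathsf{T}\bm k$ are integers.
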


\begin{proof}
	Let $\bm y,\bm h_0,\bm h_1\in\mathbb Z^n$ be arbitrary integer representatives of their respective congruence classes. 
	Since $g_1,g_2\in\bar g$, we have
	\[
		\bm M_{\bar g_1} = \bm M_{\bar g_2} = \bm M_{\bar g}, \qquad [\bm z_{g_1}]=[\bm z_{g_2}]\in X_{\bm A}.
	\]
	Thus, there exists $\bm k\in\mathbb Z^n$ such that
	\[
		\bm z_{g_1}-\bm z_{g_2}=\bm A\bm k=\bm A_R^0\bm A_R^1\bm k.
	\]
	Then
	\begin{align*}
		\bm h_0^\mathsf{T}(\bm A_R^0)^{-1}(\bm M_{\bar g_1}\bm y+\bm z_{g_1})
		 & =\bm h_0^\mathsf{T}(\bm A_R^0)^{-1}(\bm M_{\bar g_2}\bm y+\bm z_{g_2})
		+\bm h_0^\mathsf{T}\bm A_R^1\bm k.
	\end{align*}
	Since $\bm h_0\in\mathbb Z^n$ and $\bm A_R^1\bm k\in\mathbb Z^n$, we have $\bm h_0^\mathsf{T}\bm A_R^1\bm k\in\mathbb Z$, which implies
	\[
		e^{-2\pi i \bm h_0^\mathsf{T} (\bm A_R^0)^{-1}(\bm M_{\bar g_1}\bm y+\bm z_{g_1})}
		= e^{-2\pi i \bm h_0^\mathsf{T} (\bm A_R^0)^{-1}(\bm M_{\bar g_2}\bm y+\bm z_{g_2})}.
	\]
	Similarly,
	\begin{align*}
		\bm h_1^\mathsf{T}\bm A^{-1}(\bm M_{\bar g_1}\bm y+\bm z_{g_1})
		 & =\bm h_1^\mathsf{T}\bm A^{-1}(\bm M_{\bar g_2}\bm y+\bm z_{g_2})+\bm h_1^\mathsf{T}\bm k,
	\end{align*}
	and since $\bm h_1^\mathsf{T}\bm k\in\mathbb Z$, we obtain
	\[
	e^{-2\pi i\,\bm h_1^\mathsf{T}\bm A^{-1} (\bm M_{\bar g_1} \bm y + \bm z_{g_1})}= e^{-2\pi i\,\bm h_1^\mathsf{T}\bm A^{-1} (\bm M_{\bar g_2} \bm y + \bm z_{g_2})}.
	\]
\end{proof}

\begin{lemma}\label{lem:bijection}
	In the nontrivial linear-action case, define the mapping $\Phi: \mathcal{I} \to X_{\bm A_R^0}$ as
	\begin{align}
		\Phi(j,c) \coloneqq [\bm M_{\bar g_{jc}} \bm y_j + \bm z_{g_{jc}}].
	\end{align}
	Then $\Phi$ is a bijection.
\end{lemma}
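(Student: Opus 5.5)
The plan is to prove well-definedness, then injectivity, and then surjectivity via a cardinality count. Throughout I use the layer permutation from \cref{prop:layer_permutation} together with the orbit partition \eqref{eq:symmetric-partition}.

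\textbf{Well-definedness.} Fix $(j,c)\in\mathcal I$. By \cref{prop:layer_permutation}, the class $[\bm M_{\bar g_{jc}}\bm y_j+\bm z_{g_{jc}}]\in X_{\bm A_R^0}$ does not depend on the chosen representative $g_{jc}\in\bar g_{jc}$, nor on the integer representative of $[\bm y_j]$. This uses \cref{lem:rep-independence} and the $G_{\bm A}$-compatibility of $\bm A_R^0$. Moreover, \cref{prop:layer_permutation} shows that $\Phi(j,c)$ is exactly the unique index $[\bm y']$ with
\[
\Psi_{\bar g_{jc}}\bigl(X_R^{[\bm y_j]}\bigr)=X_R^{\Phi(j,c)}.
\]
This identification is the key reformulation: $\Phi$ sends $(j,c)$ to the label of the layer $\Psi_{\bar g_{jc}}(X_R^{[\bm y_j]})$. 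The layer labelling $[\bm y]\mapsto X_R^{[\bm y]}$ is itself injective, since the layers are pairwise disjoint and nonempty.

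\textbf{Injectivity.} Suppose $\Phi(j,c)=\Phi(j',c')$. Then $\Psi_{\bar g_{jc}}(X_R^{[\bm y_j]})=\Psi_{\bar g_{j'c'}}(X_R^{[\bm y_{j'}]})$. This layer lies in both $O_{[\bm y_j]}$ and $O_{[\bm y_{j'}]}$. Condition 1 on $Y_{\mathrm{ref}}$ forces $j=j'$, since distinct representatives have disjoint orbits and the lexicographic listing $Y_{\mathrm{ref}}^{\mathrm{lex}}$ enumerates distinct classes.

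With $j=j'$, the operator $\Psi_{\bar g_{jc}}^{-1}\Psi_{\bar g_{jc'}}$ stabilizes $X_R^{[\bm y_j]}$, so it lies in $G_{[\bm y_j]}$. Hence $\Psi_{\bar g_{jc}}$ and $\Psi_{\bar g_{jc'}}$ lie in the same left coset of the stabilizer. Because $\mathcal R([\bm y_j])$ contains exactly one representative per left coset, we get $c=c'$.

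\textbf{Surjectivity.} I would argue by counting rather than directly. By the orbit--stabilizer theorem, $|\mathcal R([\bm y_j])|=|O_{[\bm y_j]}|$. Condition 2 gives $\Theta=\bigsqcup_j O_{[\bm y_j]}$, and $|\Theta|=|X_{\bm A_R^0}|=D_0$. Therefore
\[
|\mathcal I|=\sum_{j}r_j=\sum_j|O_{[\bm y_j]}|=D_0=|X_{\bm A_R^0}|.
\]
An injection between finite sets of equal size is a bijection.

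Alternatively, surjectivity can be shown directly. Any $[\bm y]$ gives a layer $X_R^{[\bm y]}$ lying in some orbit $O_{[\bm y_j]}$, so it equals $\Psi_{\bar g_{jc}}(X_R^{[\bm y_j]})$ for some $c$, as in Step 2 of the derivation. Then $\Phi(j,c)=[\bm y]$ by injectivity of the layer labelling.

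\textbf{Main obstacle.} The main obstacle is bookkeeping rather than depth. One must carefully justify that $\Phi(j,c)$ is the layer label, so that equality of classes in $X_{\bm A_R^0}$ is equivalent to equality of layers. One must also note that distinct $j$ correspond to distinct orbits, which requires the $\bm y_j$ to be distinct standard representatives. Both points follow from \cref{prop:layer_permutation} and the definitions of $Y_{\mathrm{ref}}$ and $\mathcal R$.
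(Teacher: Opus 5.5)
Your proof is correct and follows essentially the same route as the paper. Both arguments identify $\Phi(j,c)$ as the label of the layer $\Psi_{\bar g_{jc}}(X_R^{[\bm y_j]})$ via the layer-permutation proposition, then combine the orbit/coset disjoint-union decomposition of $\Theta$ with injectivity of $[\bm y]\mapsto X_R^{[\bm y]}$. The differences are minor: you spell out why that decomposition is disjoint (orbit disjointness forces $j=j'$, and one representative per left coset of $G_{[\bm y_j]}$ forces $c=c'$), which the paper takes directly from the orbit decomposition. Also, your main surjectivity argument is a cardinality count, whereas the paper uses the direct argument you give as your alternative.
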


\begin{proof}
In the nontrivial linear-action case, we have $\Theta=\Theta_R$ and $X^{[\bm y]}=X_R^{[\bm y]}$. The unique decomposition in \cref{eq:x_AR_decomp} implies that the mapping
\[
\iota:\ X_{\bm A_R^0}\to\Theta,\qquad [\bm y]\mapsto X^{[\bm y]},
\]
is a bijection.

From the orbit decomposition in \cref{subsec:orbit_decomposition} and the fixed system of left coset representatives $\mathcal{R}([\bm y_j])$ for each orbit representative $[\bm y_j]\in Y_{\mathrm{ref}}$, we have the disjoint union decomposition
\[
\Theta = \bigsqcup_{j=0}^{\ell-1} \bigsqcup_{c=0}^{r_j-1} \{ \Psi_{\bar g_{jc}}(X^{[\bm y_j]}) \}.
\]
Furthermore, by \cref{prop:layer_permutation},
\[
\Psi_{\bar g_{jc}}(X^{[\bm y_j]}) = X^{[\bm M_{\bar g_{jc}}\bm y_j+\bm z_{g_{jc}}]} = X^{\Phi(j,c)}.
\]
Thus,
\[
\Theta = \bigsqcup_{(j,c)\in\mathcal I} \{X^{\Phi(j,c)}\}.
\]

\textbf{Injectivity}. If $\Phi(j,c)=\Phi(j',c')$, then $X^{\Phi(j,c)}=X^{\Phi(j',c')}$. Combined with the disjoint union decomposition above, it must hold that $(j,c)=(j',c')$. Hence, $\Phi$ is injective.

\textbf{Surjectivity}. For any $[\bm y]\in X_{\bm A_R^0}$, we have $X^{[\bm y]}\in\Theta$. By the above disjoint union decomposition, there exists some $(j,c)\in\mathcal I$ such that $X^{[\bm y]}=X^{\Phi(j,c)}$. Since $\iota$ is injective, we obtain $[\bm y]=\Phi(j,c)$, making $\Phi$ surjective.

In conclusion, $\Phi$ is a bijection.
\end{proof}

Let $X_{\bm A_R^0}^{\mathrm{lex}} = (\bm h_0^{(0)},\dots,\bm h_0^{(D_0-1)})$ denote its lexicographical arrangement. 

\begin{definition}\label{def:linear_action_matrix}
	For a fixed $\bm h_1 \in \mathcal D_{\bm A_R^{1}}$, define the following vectors and matrices.
	\begin{enumerate}
		\item The global Fourier coefficient vector $\hat{\bm F}(\bm h_1)\in\mathbb{C}^{D_0}$, whose $p$-th component is given by
		    \[
			(\hat{\bm F}(\bm h_1))_p \coloneqq \hat{f}_{\bm A,\bm s}([\bm A_R^1 \bm h_0^{(p)} + \bm h_1]).
			\]
		\item The local transform vector $\hat{\bm V}(\bm h_1)\in\mathbb{C}^{D_0}$, whose $v$-th component is given by
		    \[
		    (\hat{\bm V}(\bm h_1))_v \coloneqq \hat{V}_{\bm y_{j_v}}([\bm R_{\bar g_{j_v c_v}}^\mathsf{T} \bm h_1]).
			\] 
		\item The constant matrix $\bm T_{c,r}\in\mathbb{C}^{D_0\times D_0}$, with entries
		      \[
		      (\bm T_{c,r})_{p,v} = e^{-2\pi i (\bm h_0^{(p)})^\mathsf{T} (\bm A_R^0)^{-1} (\bm M_{\bar g_{j_v c_v}} \bm y_{j_v} + \bm z_{g_{j_v c_v}})}.
		      \]
		\item The phase diagonal matrix $\bm T_{s,r}(\bm h_1)\in\mathbb{C}^{D_0\times D_0}$, with diagonal entries
		      \[
		      (\bm T_{s,r}(\bm h_1))_{v,v} = e^{-2\pi i \bm h_1^\mathsf{T} \bm A^{-1} (\bm M_{\bar g_{j_v c_v}} \bm y_{j_v} + \bm z_{g_{j_v c_v}})}.
		      \]
	\end{enumerate}
\end{definition}

The entries of these vectors and matrices are well defined, since by \cref{lem:phase_invariance} the phase factors do not depend on the choice of the coset representatives. This leads to the matrix relation
\begin{equation}\label{eq:core_matrix_relation_linear_action}
	\hat{\bm F}(\bm h_1) = \bm T_{c,r}\,\bm T_{s,r}(\bm h_1)\,\hat{\bm V}(\bm h_1).
\end{equation}

\begin{theorem}
	\label{thm:Tc_inv}
	The row vectors of the matrix $\bm T_{c,r}$ are mutually orthogonal, and it satisfies 
	\[
		\bm T_{c,r} \bm T_{c,r}^* = D_0 \bm I_{D_0}.
	\]
	Consequently, 
	\[
	\bm T_{c,r}^{-1} = \frac{1}{D_0} \bm T_{c,r}^*.
	\]
\end{theorem}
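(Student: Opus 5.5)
The plan is to recognize $\bm T_{c,r}$ as the character table of the finite abelian group $X_{\bm A_R^0}$ with its columns permuted. Then the claimed identity reduces to the standard orthogonality of DFT characters.

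By \cref{lem:bijection}, the map $v\mapsto \Phi(j_v,c_v)=[\bm M_{\bar g_{j_vc_v}}\bm y_{j_v}+\bm z_{g_{j_vc_v}}]$ is a bijection from $\{0,\dots,D_0-1\}$ onto $X_{\bm A_R^0}$. Write $\bm w_v$ for any integer representative of $\Phi(j_v,c_v)$. I will first check that the entry $(\bm T_{c,r})_{p,v}$ depends only on the class $[\bm w_v]\in X_{\bm A_R^0}$. Replacing $\bm w_v$ by $\bm w_v+\bm A_R^0\bm k$ multiplies the entry by $e^{-2\pi i (\bm h_0^{(p)})^\mathsf T\bm k}=1$. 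Independence of the choice of coset representative $g$ is already given by \cref{lem:phase_invariance}. Hence
\[
	(\bm T_{c,r})_{p,v}=e^{-2\pi i (\bm h_0^{(p)})^\mathsf T(\bm A_R^0)^{-1}\bm w_v},
\]
so $\bm T_{c,r}=\bm F_{0}\bm \Pi$. Here $\bm F_0$ is the $D_0\times D_0$ DFT matrix on $X_{\bm A_R^0}$, with both rows and columns in lexicographic order, and $\bm \Pi$ is the permutation matrix encoding $\Phi$.

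Next I compute the inner product of rows $p$ and $q$:
\[
	(\bm T_{c,r}\bm T_{c,r}^*)_{p,q}=\sum_{v}e^{-2\pi i (\bm h_0^{(p)}-\bm h_0^{(q)})^\mathsf T(\bm A_R^0)^{-1}\bm w_v}
	=\sum_{[\bm w]\in X_{\bm A_R^0}}e^{-2\pi i (\bm h_0^{(p)}-\bm h_0^{(q)})^\mathsf T(\bm A_R^0)^{-1}\bm w}.
\]
The second equality reindexes the sum by the bijection $\Phi$. Since $\bm A_R^0=\operatorname{diag}(N_{0,1},\dots,N_{0,n})$, this sum factors into a product over coordinates of one-dimensional geometric sums $\sum_{w=0}^{N_{0,j}-1}e^{-2\pi i d_j w/N_{0,j}}$. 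Each such sum equals $N_{0,j}$ if $N_{0,j}\mid d_j$ and $0$ otherwise. The differences $d_j$ of standard representatives lie strictly between $-N_{0,j}$ and $N_{0,j}$, so every factor is nonzero only when $p=q$. This gives $\bm T_{c,r}\bm T_{c,r}^*=D_0\bm I_{D_0}$, which is exactly the row orthogonality.

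Finally, since $\bm T_{c,r}$ is square, the identity $\bm T_{c,r}(\tfrac1{D_0}\bm T_{c,r}^*)=\bm I$ shows that $\bm T_{c,r}$ is invertible with $\bm T_{c,r}^{-1}=\frac1{D_0}\bm T_{c,r}^*$.

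The only nonroutine point is the reindexing step. It needs both the bijectivity of $\Phi$ and the well-definedness of the entries on classes, and I will state that well-definedness explicitly before invoking \cref{lem:bijection}. The rest is the standard character orthogonality computation.
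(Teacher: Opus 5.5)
Your proof is correct and follows the paper's route. Like the paper, you use \cref{lem:phase_invariance} and the bijection $\Phi$ of \cref{lem:bijection} to turn $(\bm T_{c,r}\bm T_{c,r}^*)_{p,q}$ into a full character sum over $X_{\bm A_R^0}$ equal to $D_0\delta_{pq}$. The only difference is how that sum is evaluated: you factor it into one-dimensional geometric series using the diagonal form of $\bm A_R^0$ and $|d_j|<N_{0,j}$, whereas the paper shows the character is nontrivial and uses the shift identity $S=\chi_{p_1,p_2}([\bm y_*])\,S$. Your explicit check that the entries depend only on classes is a detail the paper leaves implicit when it passes to the representatives $\bm\eta_v$.
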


\begin{proof}
	For any row indices $p_1,p_2\in\{0,\dots,D_0-1\}$,
	\begin{align*}
		  & (\bm T_{c,r}\bm T_{c,r}^*)_{p_1,p_2}\\
		= & \sum_{v=0}^{D_0-1} e^{-2\pi i (\bm h_0^{(p_1)})^\mathsf{T}(\bm A_R^0)^{-1}(\bm M_{\bar g_{j_v c_v}}\bm y_{j_v}+\bm z_{g_{j_v c_v}})}\, e^{2\pi i (\bm h_0^{(p_2)})^\mathsf{T}(\bm A_R^0)^{-1}(\bm M_{\bar g_{j_v c_v}}\bm y_{j_v}+\bm z_{g_{j_v c_v}})} \\
		= & \sum_{v=0}^{D_0-1} e^{-2\pi i (\bm h_0^{(p_1)}-\bm h_0^{(p_2)})^\mathsf{T}(\bm A_R^0)^{-1}(\bm M_{\bar g_{j_v c_v}}\bm y_{j_v}+\bm z_{g_{j_v c_v}})}.
	\end{align*}
	Let $\bm \eta_v\in\mathcal D_{\bm A_R^0}$ be the standard integer representative of $\Phi(j_v,c_v)=[\bm M_{\bar g_{j_v c_v}}\bm y_{j_v}+\bm z_{g_{j_v c_v}}]\in X_{\bm A_R^0}$. The above equation can be rewritten as
	\[
		(\bm T_{c,r}\bm T_{c,r}^*)_{p_1,p_2}=\sum_{v=0}^{D_0-1} e^{-2\pi i (\bm h_0^{(p_1)}-\bm h_0^{(p_2)})^\mathsf{T}(\bm A_R^0)^{-1}\bm \eta_v}.
	\]
	By \cref{lem:bijection}, we have
	\[
		(\bm T_{c,r}\bm T_{c,r}^*)_{p_1,p_2}=\sum_{\bm y\in\mathcal D_{\bm A_R^0}} e^{-2\pi i (\bm h_0^{(p_1)}-\bm h_0^{(p_2)})^\mathsf{T}(\bm A_R^0)^{-1}\bm y}.
	\]
	Utilizing the one-to-one correspondence between $\mathcal D_{\bm A_R^0}$ and $X_{\bm A_R^0}$, we denote
	\[
		\chi_{p_1,p_2}([\bm y])
		\coloneqq
		e^{-2\pi i (\bm h_0^{(p_1)}-\bm h_0^{(p_2)})^\mathsf{T}(\bm A_R^0)^{-1}\bm y},
		\quad [\bm y]\in X_{\bm A_R^0}.
	\]
	Since $\bm h_0^{(p_1)}-\bm h_0^{(p_2)}\in\mathbb Z^n$, replacing $\bm y$ by $\bm y+\bm A_R^0\bm k$ ($\bm k\in\mathbb Z^n$) changes the right-hand side by the factor $e^{-2\pi i(\bm h_0^{(p_1)}-\bm h_0^{(p_2)})^\mathsf{T}\bm k}=1$. Hence $\chi_{p_1,p_2}$ is a well-defined character (group homomorphism) on $X_{\bm A_R^0}$, and in particular $\chi_{p_1,p_2}([\bm y]+[\bm y'])=\chi_{p_1,p_2}([\bm y])\chi_{p_1,p_2}([\bm y'])$.
	Then
	\[
		(\bm T_{c,r}\bm T_{c,r}^*)_{p_1,p_2}
		=
		\sum_{[\bm y]\in X_{\bm A_R^0}}\chi_{p_1,p_2}([\bm y]).
	\]
	When $p_1=p_2$,
	\[
		(\bm T_{c,r}\bm T_{c,r}^*)_{p_1,p_2}=D_0.
	\]
    When $p_1\neq p_2$, we have $[\bm h_0^{(p_1)}]\neq[\bm h_0^{(p_2)}]$ in $X_{\bm A_R^0}$, which implies $\bm w\coloneqq\bm h_0^{(p_1)}-\bm h_0^{(p_2)}\notin \bm A_R^0\mathbb Z^n$. If $\chi_{p_1,p_2}([\bm y])\equiv 1$ for all $[\bm y]\in X_{\bm A_R^0}$, then in particular for each standard basis class $[\bm y]=[\bm e_i]$ ($i=1,\dots,n$) we would have
	\[
		\chi_{p_1,p_2}([\bm e_i])
		= e^{-2\pi i\,\bm w^{\mathsf{T}}(\bm A_R^0)^{-1}\bm e_i}
		= 1,
	\]
	whence $\bm w^{\mathsf{T}}(\bm A_R^0)^{-1}\bm e_i\in\mathbb Z$ for every $i$, i.e., $(\bm A_R^0)^{-1}\bm w\in\mathbb Z^n$, which is equivalent to $\bm w\in\bm A_R^0\mathbb Z^n$, a contradiction. Therefore $\chi_{p_1,p_2}$ is a nontrivial character, and there exists $[\bm y_*]\in X_{\bm A_R^0}$ such that
	$\chi_{p_1,p_2}([\bm y_*])\neq 1$.
	Let
	\[
		S\coloneqq\sum_{[\bm y]\in X_{\bm A_R^0}}\chi_{p_1,p_2}([\bm y]).
	\]
	Since the mapping $[\bm y]\mapsto [\bm y]+[\bm y_*]$ is a bijection on $X_{\bm A_R^0}$, it follows that
	\[
		S
		=\sum_{[\bm y]\in X_{\bm A_R^0}}\chi_{p_1,p_2}([\bm y]+[\bm y_*])
		=\chi_{p_1,p_2}([\bm y_*])\sum_{[\bm y]\in X_{\bm A_R^0}}\chi_{p_1,p_2}([\bm y])
		=\chi_{p_1,p_2}([\bm y_*])S.
	\]
	Thus, $(1-\chi_{p_1,p_2}([\bm y_*]))S=0$. Since $\chi_{p_1,p_2}([\bm y_*])\neq 1$, we obtain $S=0$, i.e., $(\bm T_{c,r}\bm T_{c,r}^*)_{p_1,p_2}=0$.
	Therefore,
	\[
		\bm T_{c,r}\bm T_{c,r}^*=D_0\bm I_{D_0}.
	\]
	Consequently, $\bm T_{c,r}$ is invertible, and
	\[
		\bm T_{c,r}^{-1}=\frac{1}{D_0}\bm T_{c,r}^*.
	\]
\end{proof}

Before applying \cref{alg:SFFT-LIN,alg:SIFFT-LIN}, construct
$G_{\bm A}$ from a composition series and lifted factor generators as in
\cref{thm:hierarchical_group_action}, and form $Y_{\mathrm{ref}}$ and
$\mathcal R([\bm y])$ by the procedure following
\eqref{eq:symmetric-partition}. The discrete affine operations are composed
using
\[
    \bm M_{\overline{gh}}=\bm M_{\bar g}\bm M_{\bar h},\qquad
    \bm z_{gh}\equiv\bm M_{\bar g}\bm z_h+\bm z_g
    \pmod{\bm A\mathbb Z^n}.
\]
These identities follow by composing the induced affine maps on
$X_{\bm A}$. If different words induce the same permutation, retain one
representative of that operation. The resulting operations associated with
$\mathcal R([\bm y])$ supply the pairs
$(\bm M_{\bar g_{jc}},\bm z_{g_{jc}})$ in the algorithm inputs.
This preprocessing depends on the symmetry group, grid and shift, and can
be reused for transforms with the same setting. Based on the matrix
representation above, the SFFT and SIFFT algorithms for the nontrivial
linear-action case are presented below.

\begin{algorithm}[H]
	\renewcommand{\algorithmicrequire}{\textbf{Input}}
	\renewcommand{\algorithmicensure}{\textbf{Output}}
	\caption{SFFT for the nontrivial linear-action case}
	\label{alg:SFFT-LIN}
	\begin{algorithmic}[1]
		\REQUIRE $\bm{A}_R^0$, $\bm{A}_R^1$, $\bm{A}$, $Y_{\mathrm{ref}}^{\mathrm{lex}}$, discrete function values $\{f_{\bm A,\bm s}([\bm A_R^0 \bm x_1 + \bm y_j])\}_{j=0}^{\ell-1}$ on the orbit representatives $\bm y_j\in Y_{\mathrm{ref}}^{\mathrm{lex}}$, set of symmetry operations $\{(\bm M_{\bar g_{jc}}, \bm z_{g_{jc}})\}_{(j,c)\in\mathcal I^{\mathrm{lex}}}$.
		\ENSURE $\hat{\bm F}(\bm h_1)$ for all $\bm h_1\in\mathcal{D}_{\bm A_R^1}$.
% 		\STATE Precompute all $\bm M_{\bar g_{jc}}$ and $\bm z_{g_{jc}}$.
		\STATE Precompute the constant matrix $\bm T_{c,r}$.
		\FOR{each $\bm y_j \in Y_{\mathrm{ref}}^{\mathrm{lex}}$}
			\STATE $\hat{V}_{\bm y_j}([\bm h_1]) \leftarrow \operatorname{FFT}_{\bm x_1}\bigl[f_{\bm A,\bm s}([\bm A_R^0 \bm x_1 + \bm y_j])\bigr],\ \forall \bm h_1$
		\ENDFOR
		\FOR{each $\bm h_1 \in \mathcal{D}_{\bm A_R^1}$}
			\STATE Compute the diagonal matrix $\bm T_{s,r}(\bm h_1)$
			\STATE Assemble $\hat{\bm V}(\bm h_1)$ according to the double indices $(j_v,c_v)$
			\STATE $\hat{\bm F}(\bm h_1) \leftarrow \bm T_{c,r} \, \bm T_{s,r}(\bm h_1) \, \hat{\bm V}(\bm h_1)$
		\ENDFOR
	\end{algorithmic}
\end{algorithm}

\begin{algorithm}[H]
	\renewcommand{\algorithmicrequire}{\textbf{Input}}
	\renewcommand{\algorithmicensure}{\textbf{Output}}
	\caption{SIFFT for the nontrivial linear-action case}
	\label{alg:SIFFT-LIN}
	\begin{algorithmic}[1]
		\REQUIRE $\bm A_R^0$, $\bm A_R^1$, $\bm A$, $Y_{\mathrm{ref}}^{\mathrm{lex}}$, Fourier coefficients $\hat{\bm F}(\bm h_1)$, set of symmetry operations $\{(\bm M_{\bar g_{jc}}, \bm z_{g_{jc}})\}_{(j,c)\in\mathcal I^{\mathrm{lex}}}$.
		\ENSURE $f_{\bm A,\bm s}([\bm A_R^0 \bm x_1 + \bm y_j])$ for all $\bm y_j\in Y_{\mathrm{ref}}^{\mathrm{lex}}$.
		\STATE Precompute $\bm T_{c,r}^*$ and the index sequence $\nu_j = \sum_{k=0}^{j-1} r_k$.
		\FOR{each $\bm h_1 \in \mathcal{D}_{\bm A_R^1}$}
			\STATE Compute the diagonal matrix $\bm T_{s,r}^*(\bm h_1)$
			\FOR{each $j=0,\dots,\ell-1$}
				\STATE 
				\[
					\hat{V}_{\bm y_j}([\bm h_1]) \leftarrow \frac{1}{D_0} \bigl(\bm T_{s,r}^*(\bm h_1)\bigr)_{\nu_j,\nu_j} 
					\cdot \bigl[(\bm T_{c,r}^*)_{\nu_j,:} \cdot \hat{\bm F}(\bm h_1)\bigr]
				\]
			\ENDFOR
		\ENDFOR
		\FOR{each $\bm y_j \in Y_{\mathrm{ref}}^{\mathrm{lex}}$}
			\STATE $f_{\bm A,\bm s}([\bm A_R^0 \bm x_1 + \bm y_j]) \leftarrow \operatorname{IFFT}_{\bm h_1}\bigl[\hat{V}_{\bm y_j}([\bm h_1])\bigr]$
		\ENDFOR
	\end{algorithmic}
\end{algorithm}

In Step 1 of \cref{alg:SIFFT-LIN}, the sequence $\nu_j=\sum_{k=0}^{j-1}r_k$ gives the first position of orbit $j$ in $\mathcal I^{\mathrm{lex}}$ under the indexing conventions of \cref{sec:algorithm_matrix}: each orbit occupies a contiguous block of $r_j$ positions, and its coset representatives are ordered so that the first one is $\mathrm{id}$. Consequently the $\nu_j$-th component of $\hat{\bm V}(\bm h_1)$ is exactly $\hat V_{\bm y_j}([\bm h_1])$.

\subsection{Algorithm derivation for the pure translation case}
\label{sec:translation_derivation}

This section derives the DFT translation symmetry reduction formula for the SFFT under the setting of \cref{case:pure_translation}. The derivation proceeds in three steps: (i) obtaining a hierarchical summation form via the Cooley--Tukey decomposition; (ii) decoupling the inner summation into the product of a local term and a sum of phase factors using orbit decomposition and translation symmetry; and (iii) filtering effective frequencies by analyzing the behavior of the phase factor sum, which ultimately yields the final DFT reduction formula.

Let the decomposition of the discretization matrix be $\bm A = \bm A_T^0 \bm A_T^1$. Based on this decomposition, any frequency index $[\bm h]\in X_{\bm A}$ can be uniquely decomposed as
\[
[\bm h]=[\bm A_T^1\bm h_0+\bm h_1], \qquad [\bm h_0]\in X_{\bm A_T^0}, \quad [\bm h_1]\in X_{\bm A_T^1}.
\]
This is the same frequency-index decomposition as in \cref{eq:cooley-tukey-decomp}, with the identifications $\bm A_0\leftrightarrow\bm A_T^0$ and $\bm A_1\leftrightarrow\bm A_T^1$.

\paragraph{Step 1: hierarchical expansion via the Cooley--Tukey decomposition}
Substituting the notation from \cref{case:pure_translation} into the multidimensional Cooley--Tukey decomposition formula \cref{eq:cooley-tukey-decomp} and rearranging the order of summation yields
\begin{align}\label{eq:translation_reduce}
	\hat{f}_{\bm A,\bm s}([\bm{h}])
	 & = \sum_{[\bm{x}_1] \in X_{\bm A_T^1}}\sum_{[\bm{x}_0] \in X_{\bm A_T^0}} f_{\bm{A},\bm s}([\bm{A}_T^0\bm{x}_1 + \bm{x}_0]) e^{-2\pi i\bm{h}_1^{\mathsf{T}}\bm{A}^{-1}(\bm{A}_T^0\bm{x}_1+\bm{x}_0)} e^{-2\pi i\bm{h}_0^\mathsf{T}(\bm{A}_T^0)^{-1}\bm{x}_0} \notag                                            \\
	 & = \sum_{[\bm{x}_0] \in X_{\bm A_T^0}}e^{-2\pi i\bm{h}_0^\mathsf{T}(\bm{A}_T^0)^{-1}\bm{x}_0} \left[\sum_{[\bm{y}]\in X_{\bm A_T^1}} f_{\bm{A},\bm s}([\bm{A}_T^0\bm{y} + \bm{x}_0]) e^{-2\pi i\bm{h}_1^\mathsf{T}\bm{A}^{-1}(\bm{A}_T^0\bm{y} + \bm{x}_0)}\right].
\end{align}
Let the inner summation be denoted as
\begin{equation}\label{eq:zeta}
	\zeta_{\bm h_1}(\bm x_0) \coloneqq \sum_{[\bm{y}]\in X_{\bm A_T^1}} f_{\bm{A},\bm s}([\bm{A}_T^0\bm{y} + \bm{x}_0]) e^{-2\pi i\bm{h}_1^\mathsf{T}\bm{A}^{-1}(\bm{A}_T^0\bm{y} + \bm{x}_0)},
\end{equation}
where $\bm x_0\in\mathcal D_{\bm A_T^0}$ is the standard integer representative of the congruence class $[\bm x_0]\in X_{\bm A_T^0}$.

\paragraph{Step 2: orbit decomposition and decoupling of the inner summation}
Combining \cref{subsec:orbit_decomposition} and \cref{prop:layer_permutation_translation}, for any $[\bm y]\in X_{\bm A_T^1}$, there exists a unique orbit representative $[\bm y_{\mathrm{ref}}]\in Y_{\mathrm{ref}}$ and $\Psi_{\bar g}\in\mathcal{R}([\bm y_{\mathrm{ref}}])$ such that
\[
	X_T^{[\bm y]} = \Psi_{\bar g}\bigl(X_T^{[\bm y_{\mathrm{ref}}]}\bigr),
	\qquad
	[\bm y]=[\bm y_{\mathrm{ref}}+\bm A_T^1\bm t_g]\in X_{\bm A_T^1},
\]
For a fixed $[\bm x_0]\in X_{\bm A_T^0}$ and $[\bm y_{\mathrm{ref}}]\in Y_{\mathrm{ref}}$, let $\bm x_0\in\mathcal D_{\bm A_T^0}$ be the standard integer representative of $[\bm x_0]$, let $\bm y_{\mathrm{ref}}\in\mathcal D_{\bm A_T^1}$ be the standard integer representative of $[\bm y_{\mathrm{ref}}]$, and select the integer representative $\bm x_{\mathrm{ref}}\coloneqq \bm A_T^0\bm y_{\mathrm{ref}}+\bm x_0$ of $[\bm x_{\mathrm{ref}}]\in X_T^{[\bm y_{\mathrm{ref}}]}$. For each element $\Psi_{\bar g}\in \mathcal{R}([\bm y_{\mathrm{ref}}])$, we take the integer representative of its translation part as $\bm z_g = \bm A\bm t_g$.
Since $\Psi_{\bar g}([\bm x_{\mathrm{ref}}]) = [\bm x_{\mathrm{ref}} + \bm z_g] = [\bm A_T^0\bm y+\bm x_0] \in X_T^{[\bm y]}$, we can reparameterize the inner summation based on the orbit decomposition \cref{eq:symmetric-partition} as
\[
	\zeta_{\bm h_1}(\bm x_0)
	= \sum_{[\bm y_{\mathrm{ref}}]\in Y_{\mathrm{ref}}} \sum_{\Psi_{\bar g}\in \mathcal{R}([\bm y_{\mathrm{ref}}])} 
	f_{\bm A,\bm s}([\bm x_{\mathrm{ref}} + \bm z_g]) 
	e^{-2\pi i \bm h_1^\mathsf{T} \bm A^{-1} (\bm x_{\mathrm{ref}} + \bm z_g)}.
\]
From \cref{prop:symmetry_inheritance}, we have $f_{\bm A,\bm s}([\bm x_{\mathrm{ref}} + \bm z_g]) = f_{\bm A,\bm s}([\bm x_{\mathrm{ref}}])$. Expanding the exponential term gives
\[
	\zeta_{\bm h_1}(\bm x_0)
	= \sum_{[\bm y_{\mathrm{ref}}]\in Y_{\mathrm{ref}}} \sum_{\Psi_{\bar g}\in \mathcal{R}([\bm y_{\mathrm{ref}}])} 
	f_{\bm A,\bm s}([\bm x_{\mathrm{ref}}]) 
	e^{-2\pi i \bm h_1^\mathsf{T} \bm A^{-1} \bm x_{\mathrm{ref}}} 
	e^{-2\pi i \bm h_1^\mathsf{T} \bm A^{-1} \bm z_g}.
\]
As established in \cref{prop:translation_free}, the stabilizer subgroups are all trivial in the pure translation case. Thus, for any $[\bm y_{\mathrm{ref}}]\in Y_{\mathrm{ref}}$, the system of coset representatives $\mathcal{R}([\bm y_{\mathrm{ref}}])$ can be chosen as the entire group $G_{\bm A}$. In this case, the only factor in the summand that depends on $\Psi_{\bar g}$ is $e^{-2\pi i\bm{h}_1^\mathsf{T}\bm{A}^{-1}\bm z_g}$. Consequently, the double summation decouples into the product of two independent sums.
\begin{equation}
	\label{eq:trans_W}
	\zeta_{\bm h_1}(\bm x_0)
	= V_{\bm h_1}(\bm x_0) \cdot \chi([\bm h_1]),
\end{equation}
where
\begin{align}
	V_{\bm h_1}(\bm x_0) 
	&\coloneqq \sum_{[\bm y_{\mathrm{ref}}]\in Y_{\mathrm{ref}}} f_{\bm A,\bm s}([\bm A_T^0 \bm y_{\mathrm{ref}} + \bm x_0]) 
	e^{-2\pi i \bm h_1^\mathsf{T} \bm A^{-1} (\bm A_T^0 \bm y_{\mathrm{ref}} + \bm x_0)}, \label{eq:V_t} \\
	\chi([\bm h_1]) 
	&\coloneqq \sum_{\Psi_{\bar g}\in G_{\bm A}} e^{-2\pi i \bm h_1^\mathsf{T} \bm A^{-1} \bm z_g}. \label{eq:Chi_t}
\end{align}

\begin{remark}
	It should be noted that although a specific integer representative $\bm z_g = \bm A\bm t_g$ of the congruence class $\bm z_{\bar g}$ is selected in the above derivation, the difference between any two representatives is a vector in $\bm A\mathbb{Z}^n$. Therefore, its inner product with $\bm h_1^\mathsf{T}\bm A^{-1}$ is guaranteed to be an integer. 	As a result, the phase factor $e^{-2\pi i \bm h_1^\mathsf{T} \bm A^{-1} \bm z_g}$ is independent of the choice of representative. Moreover, in the pure translation case $\bm z_g = \bm A\bm t_g \in \bm A_T^0\mathbb{Z}^n$ since $\bm A_T^1\bm t_g\in\mathbb{Z}^n$; hence replacing $\bm h_1$ by $\bm h_1 + \bm A_T^1\bm k$ (with $\bm k\in\mathbb{Z}^n$) leaves every term---and therefore $\chi$---unchanged, because $\bm k^\mathsf{T}(\bm A_T^0)^{-1}\bm z_g\in\mathbb{Z}$. Consequently, $\chi$ is in fact a class function of $[\bm h_1]\in X_{\bm A_T^1}$, so the bracketed notation $\chi([\bm h_1])$ in \cref{eq:Chi_t} is well defined.
\end{remark}

\paragraph{Step 3: effective frequency filtering}
If $\chi([\bm{h}_1]) = 0$, then the corresponding Fourier coefficient is $\hat{f}_{\bm A,\bm s}([\bm A_T^1\bm h_0 + \bm h_1]) = 0$. Hence, it is only necessary to compute for frequencies that satisfy $\chi([\bm h_1]) \neq 0$. We define the set of effective frequency indices as
\[
	H_{\chi} \coloneqq \{[\bm{h}_1] \in X_{\bm A_T^{1}} : \chi([\bm{h}_1]) \neq 0\}.
\]

\begin{theorem}\label{thm:zeta-count}
	In the pure translation case, assume $\bm A_T^1$ satisfies \cref{def:translation_compatibility}. Let $M \coloneqq |G_{\bm A}|$ and $D_1 \coloneqq |X_{\bm A_T^1}|$. Then,
	\begin{enumerate}
		\item For any $[\bm h_1]\in X_{\bm A_T^{1}}$, $\chi([\bm h_1])\in\{0,M\}$.
		\item $|H_\chi| = D_1 / M$.
	\end{enumerate}
\end{theorem}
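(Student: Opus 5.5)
We will treat $\chi$ as a character sum over the finite abelian group $G_{\bm A}$ and apply the standard orthogonality argument, mirroring the proof of \cref{thm:Tc_inv}.

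\textbf{Step 1: $G_{\bm A}$ is abelian and embeds in $X_{\bm A_T^1}$.} In the pure translation case each $\Psi_{\bar g}$ is the translation $[\bm x]\mapsto[\bm x]+\bm z_{\bar g}$, so $G_{\bm A}$ is abelian. Since $\bm z_g=\bm A\bm t_g=\bm A_T^0(\bm A_T^1\bm t_g)$ with $\bm A_T^1\bm t_g\in\mathbb Z^n$ by \cref{def:translation_compatibility}, we may define
\[
\rho:G_{\bm A}\to X_{\bm A_T^1},\qquad \Psi_{\bar g}\mapsto[\bm A_T^1\bm t_g].
\]
This map is well defined by the remark after \cref{prop:layer_permutation_translation}. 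It is a homomorphism because composition of translations adds the $\bm t_g$ modulo $\mathbb Z^n$. It is injective by the argument of \cref{prop:translation_free}: if $\bm A_T^1\bm t_g\in\bm A_T^1\mathbb Z^n$, then $\bm t_g\in\mathbb Z^n$ and hence $\Psi_{\bar g}=\mathrm{id}$. Let $H\coloneqq\rho(G_{\bm A})\le X_{\bm A_T^1}$, so $|H|=M$.

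\textbf{Step 2: $\chi$ takes values in $\{0,M\}$.} For fixed $\bm h_1$, set
\[
\psi_{\bm h_1}(\Psi_{\bar g})\coloneqq e^{-2\pi i\bm h_1^\mathsf T(\bm A_T^1)^{-1}(\bm A_T^1\bm t_g)}=e^{-2\pi i\bm h_1^\mathsf T\bm A^{-1}\bm z_g}.
\]
This depends only on the class of $\bm A_T^1\bm t_g$ in $X_{\bm A_T^1}$, so $\psi_{\bm h_1}$ is a character of $G_{\bm A}$ and $\chi([\bm h_1])=\sum_{\Psi\in G_{\bm A}}\psi_{\bm h_1}(\Psi)$.

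If $\psi_{\bm h_1}$ is trivial, the sum equals $M$. Otherwise, pick $\Psi_*$ with $\psi_{\bm h_1}(\Psi_*)\neq1$. Reindexing the sum by $\Psi\mapsto\Psi\Psi_*$ gives $\chi=\psi_{\bm h_1}(\Psi_*)\chi$, hence $\chi=0$. This is the same shift trick used in \cref{thm:Tc_inv}.

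\textbf{Step 3: counting the effective frequencies.} By Step 2, $H_\chi$ is exactly the set of $[\bm h_1]\in X_{\bm A_T^1}$ whose pairing character $\chi_{\bm h_1}:[\bm y]\mapsto e^{-2\pi i\bm h_1^\mathsf T(\bm A_T^1)^{-1}\bm y}$ on $X_{\bm A_T^1}$ is trivial on $H$. This set is the annihilator $H^\perp$, so we must show $|H^\perp|=D_1/|H|$.

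The map $[\bm h_1]\mapsto\chi_{\bm h_1}$ is an isomorphism from $X_{\bm A_T^1}$ onto its character group $\widehat{X_{\bm A_T^1}}$. Injectivity is exactly the nontriviality argument in \cref{thm:Tc_inv}: a character trivial on all classes $[\bm e_i]$ forces $\bm w\in\bm A_T^1\mathbb Z^n$. Surjectivity then follows by cardinality, using that a finite abelian group has as many characters as elements.

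Under this isomorphism $H^\perp$ corresponds to the characters of $X_{\bm A_T^1}$ trivial on $H$, that is, to $\widehat{X_{\bm A_T^1}/H}$. Its order is $|X_{\bm A_T^1}/H|=D_1/M$.

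\textbf{Main obstacle.} The delicate step is the count $|\widehat{X_{\bm A_T^1}/H}|=|X_{\bm A_T^1}/H|$, which needs duality for finite abelian groups. To avoid invoking it, we will instead count directly. Summing Step 2 over all frequencies gives
\[
\sum_{[\bm h_1]}\chi([\bm h_1])=M\,|H_\chi|.
\]
Exchanging the order of summation gives
\[
\sum_{[\bm h_1]}\chi([\bm h_1])=\sum_{\Psi\in G_{\bm A}}\ \sum_{[\bm h_1]\in X_{\bm A_T^1}}\psi_{\bm h_1}(\Psi).
\]
The inner sum is the character sum on $X_{\bm A_T^1}$ with variable $\bm h_1$ and fixed $\bm A_T^1\bm t_g$. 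By the orthogonality argument of \cref{thm:Tc_inv}, it equals $D_1$ when $\rho(\Psi)=0$ and $0$ otherwise. By the injectivity of $\rho$, only $\Psi=\mathrm{id}$ contributes, so the double sum equals $D_1$. Therefore $M|H_\chi|=D_1$, i.e. $|H_\chi|=D_1/M$, which completes the proof.
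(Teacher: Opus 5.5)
Your proposal is correct and, once you set aside the duality detour in favor of the direct count, it follows essentially the paper's proof. Part~1 uses the same character-plus-shift argument, and part~2 uses the same double counting $\sum_{[\bm h_1]}\chi([\bm h_1])=M|H_\chi|=D_1$; the differences are cosmetic (you phrase the key fact as injectivity of $\Psi_{\bar g}\mapsto[\bm A_T^1\bm t_g]$, and you evaluate the inner sum by the orthogonality argument of \cref{thm:Tc_inv} instead of the explicit product of one-dimensional geometric series).
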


\begin{proof}
	\begin{enumerate}
		\item Fix any $[\bm h_1]\in X_{\bm A_T^{1}}$. For each $\Psi_{\bar g}\in G_{\bm A}$, choose its corresponding translation integer representative $\bm z_g = \bm A \bm t_g$. Define the mapping
			\[
				\rho_{[\bm h_1]}:G_{\bm A}\to\mathbb C^*,\qquad \rho_{[\bm h_1]}(\Psi_{\bar g})\coloneqq e^{-2\pi i\,\bm h_1^{\mathsf T}\bm A^{-1}\bm z_g}.
			\]
			It is straightforward to verify that $\rho_{[\bm h_1]}$ is well-defined.

			For any $\Psi_{\bar g_1},\Psi_{\bar g_2}\in G_{\bm A}$, take their corresponding translation integer representatives $\bm z_{g_1}, \bm z_{g_2}$. For any integer representative $\bm x\in\mathbb Z^n$, we have
			\[
				(\Psi_{\bar g_1}\Psi_{\bar g_2})([\bm x])
				=\Psi_{\bar g_1}([\bm x+\bm z_{g_2}])
				=[\bm x+\bm z_{g_2}+\bm z_{g_1}].
			\]
			Therefore,
			\[
				\rho_{[\bm h_1]}(\Psi_{\bar g_1}\Psi_{\bar g_2})
				=e^{-2\pi i\,\bm h_1^{\mathsf T}\bm A^{-1}(\bm z_{g_1}+\bm z_{g_2})}
				=\rho_{[\bm h_1]}(\Psi_{\bar g_1})\,\rho_{[\bm h_1]}(\Psi_{\bar g_2}),
			\]
			which means $\rho_{[\bm h_1]}$ is a group homomorphism. Thus,
			\[
				\chi([\bm h_1])=\sum_{\Psi_{\bar g}\in G_{\bm A}}\rho_{[\bm h_1]}(\Psi_{\bar g}).
			\]
			If $\rho_{[\bm h_1]}$ is trivial, then $\rho_{[\bm h_1]}(\Psi_{\bar g})=1$ for all $\Psi_{\bar g}$, yielding $\chi([\bm h_1])=|G_{\bm A}|=M$.
			If $\rho_{[\bm h_1]}$ is nontrivial, there exists some $\Psi_{\bar g_0}\in G_{\bm A}$ such that $\rho_{[\bm h_1]}(\Psi_{\bar g_0})\neq 1$.
			Let $S\coloneqq\sum_{\Psi_{\bar g}\in G_{\bm A}}\rho_{[\bm h_1]}(\Psi_{\bar g})$. By the properties of a group homomorphism,
			\[
				\rho_{[\bm h_1]}(\Psi_{\bar g_0})S
				=\sum_{\Psi_{\bar g}\in G_{\bm A}}\rho_{[\bm h_1]}(\Psi_{\bar g_0}\Psi_{\bar g}).
			\]
			Since the mapping $\Psi_{\bar g}\mapsto\Psi_{\bar g_0}\Psi_{\bar g}$ is a bijection on $G_{\bm A}$, the summation on the right-hand side is simply a rearrangement of $S$. Hence $\rho_{[\bm h_1]}(\Psi_{\bar g_0})S=S$, or $(\rho_{[\bm h_1]}(\Psi_{\bar g_0})-1)S=0$.
			Since $\rho_{[\bm h_1]}(\Psi_{\bar g_0})\neq 1$, we must have $S=0$, meaning $\chi([\bm h_1])=0$.
			In summary, $\chi([\bm h_1])\in\{0,M\}$.

	\item Let
			\[
				S\coloneqq\sum_{[\bm h_1]\in X_{\bm A_T^1}}\chi([\bm h_1]).
			\]
			Exchanging the order of summation and substituting \cref{eq:Chi_t} yields
			\[
				S=\sum_{\Psi_{\bar g}\in G_{\bm A}}\sum_{[\bm h_1]\in X_{\bm A_T^1}}e^{-2\pi i\,\bm h_1^{\mathsf T}\bm A^{-1}\bm z_g}.
			\]
			where $\bm z_g \in \mathbb Z^n$ is the integer representative corresponding to $\Psi_{\bar g}$. Let $\bm k_{g}\coloneqq (\bm A_T^0)^{-1}\bm z_g$. By the translation compatibility \cref{def:translation_compatibility} and $\bm z_g=\bm A\bm t_g$, it follows that $\bm k_{g}=\bm A_T^1\bm t_g\in\mathbb Z^n$. Since $\bm A_T^1$ is a positive integer diagonal matrix, let $\bm A_T^1=\operatorname{diag}(N_{1,1},\dots,N_{1,n})$. By utilizing the set of standard representatives of $X_{\bm A_T^1}$ to expand the inner summation, we obtain
			\[
				\sum_{[\bm h_1]\in X_{\bm A_T^1}}e^{-2\pi i\,\bm h_1^{\mathsf T}(\bm A_T^1)^{-1}\bm k_{g}}
				=\prod_{j=1}^n\left(\sum_{m=0}^{N_{1,j}-1}e^{-2\pi i\,m\,(\bm k_{g})_j/N_{1,j}}\right).
			\]
			Consequently, the inner summation evaluates to $D_1$ if and only if for all $j$, $(\bm k_{g})_j$ is an integer multiple of $N_{1,j}$, i.e., $\bm k_{g}\in\bm A_T^1\mathbb Z^n$. Otherwise, there exists at least one $j$ for which the one-dimensional geometric series sums to $0$, causing the inner summation to vanish. Notice that
			$\bm k_{g}\in\bm A_T^1\mathbb Z^n\iff \bm z_g=\bm A_T^0\bm k_{ g}\in\bm A\mathbb Z^n\iff \bm z_{\bar g}=[\bm 0]\in X_{\bm A}\iff \Psi_{\bar g}=\mathrm{id}$.
			Therefore, the inner summation evaluates to $D_1$ exclusively for $\Psi_{\bar g}=\mathrm{id}$, and to $0$ otherwise, resulting in $S=D_1$.

			On the other hand, from conclusion (1), $\chi([\bm h_1])$ is constantly $M$ on $H_\chi$ and $0$ elsewhere, giving
			\[
				S=\sum_{[\bm h_1]\in H_\chi}M=|H_\chi|\,M.
			\]
			Comparing the two expressions yields $|H_\chi|\,M=D_1$, or $|H_\chi|=D_1/M$.
		\end{enumerate}
\end{proof}

By the above theorem, for $\bm{h}_1 \in H_{\chi}$, \cref{eq:translation_reduce} simplifies to
\begin{align}\label{eq:trans_dft_step}
	\hat{f}_{\bm A,\bm s}([\bm{A}_T^{1}\bm{h}_0 + \bm{h}_1])
	= M \cdot \sum_{[\bm{x}_0] \in X_{\bm A_T^0}} V_{\bm{h}_1}(\bm{x}_0) e^{-2\pi i \bm{h}_0^\mathsf{T} (\bm{A}_T^0)^{-1} \bm{x}_0}.
\end{align}
Synthesizing the above three steps of the derivation yields the DFT symmetry reduction formula in the pure translation case
\begin{equation}
	\hat{f}_{\bm A,\bm s}\left([\bm h]\right)
	= \chi([\bm h_1])\sum_{[\bm x_0]\in X_{\bm A_T^0}} V_{\bm h_1}(\bm x_0)
	e^{-2\pi i\bm h_0^\mathsf{T}(\bm A_T^0)^{-1}\bm x_0}.
\end{equation}
In particular, when $[\bm h_1]\notin H_{\chi}$, $\hat f_{\bm A,\bm s}([\bm h])=0$, and when $[\bm h_1]\in H_{\chi}$, the above equation reduces to \cref{eq:trans_dft_step}.

\begin{remark}[Pure translation case in three-dimensional space groups]
	For a three-dimensional space group $\mathcal G$, the finite group of pure translations modulo the conventional-cell integer translations is $T_{\mathcal G}/T$, where $T_{\mathcal G}=\mathcal G\cap\mathbb R^3$ is the full translation subgroup and $T=\{(\bm I_3,\bm z):\bm z\in\mathbb Z^3\}$ is the integer translation subgroup in the chosen conventional basis. Let $T_{\mathrm{cen}}$ be a chosen set of coset representatives. For each $\bm t\in T_{\mathrm{cen}}$, define the pure translation
	\[
		\Psi_{\bm t}:[\bm x]\mapsto [\bm x]+[\bm A\bm t].
	\]
	By the translation compatibility \cref{def:translation_compatibility} we have $\bm A_T^1\bm t\in\mathbb Z^n$; hence the integer representative
	\[
		\bm z_{\bm t}\coloneqq \bm A\bm t = \bm A_T^0(\bm A_T^1\bm t)\in\mathbb Z^n
	\]
	is well defined, and the discrete symmetry group in the pure translation case is
	\[
		G_{\bm A}=\{\Psi_{\bm t}:[\bm x]\mapsto [\bm x]+[\bm A\bm t],\ \bm t\in T_{\mathrm{cen}}\},
		\qquad M=|G_{\bm A}|=|T_{\mathcal G}/T|=|T_{\mathrm{cen}}|.
	\]
	Now fix $[\bm h_1]\in X_{\bm A_T^1}$ and let $\bm h_1=(h,k,l)^\mathsf T$ be its standard integer representative. Substituting $\bm z_{\bm t}=\bm A\bm t$ into the character \cref{eq:Chi_t} yields
	\[
		\chi([\bm h_1])
		=\sum_{\bm t\in T_{\mathrm{cen}}} e^{-2\pi i\,\bm h_1^{\mathsf T}\bm A^{-1}\bm z_{\bm t}}
		=\sum_{\bm t\in T_{\mathrm{cen}}} e^{-2\pi i\,\bm h_1^{\mathsf T}\bm t},
	\]
	from which the criteria in the table below follow directly. The possible centering types and their corresponding criteria for effective frequencies are as follows.
	\begin{table}[htbp]
		\centering
		\caption{Centering types and criteria for effective frequencies}
		\renewcommand{\arraystretch}{1.2}
		\begin{tabular}{c c c c}
			\toprule
			Centering type          & $T_{\mathrm{cen}}$                          & $M=|T_{\mathrm{cen}}|$ & Condition for $\chi([\bm h_1])\neq 0$           \\
			\midrule
			$P$                     & $\{(0,0,0)\}$                               & 1                      & None                                            \\
			$A$                     & $\{(0,0,0),(0,\frac12,\frac12)\}$           & 2                      & $k+l\equiv 0 \pmod 2$                           \\
			$B$                     & $\{(0,0,0),(\frac12,0,\frac12)\}$           & 2                      & $h+l\equiv 0 \pmod 2$                           \\
			$C$                     & $\{(0,0,0),(\frac12,\frac12,0)\}$           & 2                      & $h+k\equiv 0 \pmod 2$                           \\
			$I$                     & $\{(0,0,0),(\frac12,\frac12,\frac12)\}$     & 2                      & $h+k+l\equiv 0 \pmod 2$                         \\
			\multirow[t]{2}{*}{$F$} & $\{(0,0,0),(0,\frac12,\frac12),$            & \multirow[t]{2}{*}{4}  & \multirow[t]{2}{*}{$h\equiv k\equiv l \pmod 2$} \\
			                        & $(\frac12,0,\frac12),(\frac12,\frac12,0)\}$ &                        &                                                 \\
			\multirow[t]{2}{*}{$R$} & $\{(0,0,0),(\frac23,\frac13,\frac13),$      & \multirow[t]{2}{*}{3}  & \multirow[t]{2}{*}{$-h+k+l\equiv 0 \pmod 3$}    \\
			                        & $(\frac13,\frac23,\frac23)\}$               &                        &                                                 \\
			\bottomrule
		\end{tabular}
	\end{table}

	Here, the congruence condition corresponding to the $R$ centering is given with respect to hexagonal axes in the obverse setting. Therefore, for all 230 three-dimensional space groups, the only pure translation types that need to be considered in the pure translation reduction are those listed in the table above. For these centering types, $\chi([\bm h_1])$ is constantly equal to $M$ at frequencies satisfying the criteria in the table, and constantly $0$ at all other frequencies. In other words, the table above provides the specific characterization of the effective frequency set $H_{\chi}$ in three-dimensional crystallography.
\end{remark}

\subsubsection{Matrix representation in the pure translation case}
\label{sec:algorithm_matrix_translation}

To formulate the vector and matrix representations for the pure translation case, the effective frequency set $H_\chi$ and the orbit representative set $Y_{\mathrm{ref}}$ are ordered lexicographically as in the preceding discussion. 

\begin{lemma}
	\label{lem:space_freq_duality}
	In the pure translation case, $|Y_{\mathrm{ref}}| = |H_{\chi}| = D_1 / M \coloneqq d$. 
\end{lemma}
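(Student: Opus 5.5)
The lemma has two halves: $|H_\chi| = D_1/M$ and $|Y_{\mathrm{ref}}| = D_1/M$. The first half is exactly part (2) of \cref{thm:zeta-count}, so it can be quoted with no further work, provided the standing hypotheses hold: the pure translation case, with $\bm A_T^1$ translation-compatible. What remains is to count orbit representatives and show $|Y_{\mathrm{ref}}| = D_1/M$.

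For this count I use the orbit decomposition of \cref{subsec:orbit_decomposition}. Here $\Theta=\Theta_T$ is indexed by $X_{\bm A_T^1}$, and the map $[\bm y]\mapsto X_T^{[\bm y]}$ is a bijection onto $\Theta_T$ by the uniqueness of the decomposition \eqref{eq:x_AT_decomp}. Hence $|\Theta_T| = D_1$. By \cref{prop:layer_permutation_translation}, $G_{\bm A}$ permutes $\Theta_T$. By \cref{prop:translation_free}, every stabilizer $G_{[\bm y]}$ is trivial. The orbit--stabilizer formula then gives $|O_{[\bm y]}| = |G_{\bm A}|/|G_{[\bm y]}| = M$ for every $[\bm y]$.

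Since $\Theta_T$ is the disjoint union of the orbits $O_{[\bm y]}$ over $[\bm y]\in Y_{\mathrm{ref}}$, counting elements gives
\[
D_1 = \sum_{[\bm y]\in Y_{\mathrm{ref}}} |O_{[\bm y]}| = M\,|Y_{\mathrm{ref}}|.
\]
Therefore $|Y_{\mathrm{ref}}| = D_1/M$. In particular, $M$ divides $D_1$, so $d$ is a positive integer.

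There is no real obstacle. The only point needing care is that distinct indices $[\bm y]$ give distinct layers, so that counting layers is the same as counting indices; this follows from the pairwise disjointness of the sets in $\Theta_T$, already noted in the paper. Combining the two counts gives $|Y_{\mathrm{ref}}| = |H_\chi| = d$.
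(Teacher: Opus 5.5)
Your proof is correct and follows essentially the same route as the paper: $|\Theta_T| = D_1$ via the bijection $[\bm y]\mapsto X_T^{[\bm y]}$, trivial stabilizers from \cref{prop:translation_free}, orbit size $M$ by orbit--stabilizer, then counting over the disjoint orbit decomposition to get $|Y_{\mathrm{ref}}| = D_1/M$. Like the paper, you take $|H_\chi| = D_1/M$ directly from \cref{thm:zeta-count}.
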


\begin{proof}
	In the pure translation case, we have $\Theta=\Theta_T=\{X_T^{[\bm y]}:[\bm y]\in X_{\bm A_T^1}\}$. By definition, the mapping
	\[
		X_{\bm A_T^1}\to\Theta,
		\qquad
		[\bm y]\mapsto X_T^{[\bm y]}
	\]
	is surjective. Furthermore, by the uniqueness of the decomposition \cref{eq:x_AT_decomp}, distinct elements $[\bm y]\in X_{\bm A_T^1}$ yield distinct subsets $X_T^{[\bm y]}$. Thus, the mapping is also injective, and consequently bijective. Therefore,
	\[
		|\Theta|=|X_{\bm A_T^1}|=D_1.
	\]

	On the other hand, from \cref{prop:translation_free}, the stabilizer subgroup $G_{[\bm y]}$ of $X_T^{[\bm y]}$ is trivial for any $[\bm y]\in X_{\bm A_T^1}$. Therefore, by the orbit-stabilizer theorem,
	\[
		|O_{[\bm y]}|=\frac{|G_{\bm A}|}{|G_{[\bm y]}|}=|G_{\bm A}|=M.
	\]
	Given the definition of $Y_{\mathrm{ref}}$ as a set of orbit representatives, $\Theta$ can be expressed as the disjoint union 
	$\Theta=\bigsqcup_{[\bm y]\in Y_{\mathrm{ref}}}O_{[\bm y]}$. Hence,
	\[
		|\Theta|=\sum_{[\bm y]\in Y_{\mathrm{ref}}}|O_{[\bm y]}|=|Y_{\mathrm{ref}}|\,M.
	\]
	Substituting $|\Theta|=D_1$ yields $|Y_{\mathrm{ref}}|=D_1/M$.

	Finally, it is known from \cref{thm:zeta-count} that $|H_{\chi}|=D_1/M$, which implies $|Y_{\mathrm{ref}}|=|H_{\chi}|=D_1/M\coloneqq d$.
\end{proof}

By \cref{lem:space_freq_duality}, the two sets have the same cardinality, $|Y_{\mathrm{ref}}|=|H_{\chi}|=d$. Sorting the effective frequency set $H_{\chi}$ and the orbit representative set $Y_{\mathrm{ref}}$ lexicographically, we obtain
\[
	H_{\chi}^{\mathrm{lex}} = (\bm h_1^{(0)}, \dots, \bm h_1^{(d-1)}),\qquad
	Y_{\mathrm{ref}}^{\mathrm{lex}} = (\bm y^{(0)}, \dots, \bm y^{(d-1)}).
\]

\begin{definition}
	\label{def:trans_matrix}
	For a fixed $\bm x_0 \in \mathcal{D}_{\bm A_T^0}$, we define the following vectors and matrices, where the indices range over $v,p=0,\dots,d-1$ and $\bm y^{(v)}$, $\bm h_1^{(p)}$ are the lexicographically ordered standard integer representatives of $Y_{\mathrm{ref}}$ and $H_\chi$, respectively,
	\begin{enumerate}
		\item The function value vector $\bm F(\bm x_0) \in \mathbb{C}^d$, whose $v$-th component is given by
		    \[
		    (\bm F(\bm x_0))_v \coloneqq f_{\bm A,\bm s}([\bm A_T^0 \bm y^{(v)} + \bm x_0]).
			\]
		\item The local transformation vector $\bm V(\bm x_0) \in \mathbb{C}^d$, whose $p$-th component is given by
		    \[
		    (\bm V(\bm x_0))_p \coloneqq V_{\bm h_1^{(p)}}(\bm x_0).
			\]
		\item The constant matrix $\bm T_{c,t} \in \mathbb{C}^{d\times d}$, with entries
		      \[
			      (\bm T_{c,t})_{p,v} = e^{-2\pi i (\bm h_1^{(p)})^\mathsf{T} (\bm A_T^1)^{-1} \bm y^{(v)}}.
		      \]
		\item The phase diagonal matrix $\bm T_{s,t}(\bm x_0) \in \mathbb{C}^{d\times d}$, with diagonal entries
		      \[
			      (\bm T_{s,t}(\bm x_0))_{p,p} = e^{-2\pi i (\bm h_1^{(p)})^\mathsf{T} \bm A^{-1} \bm x_0}.
		      \]
	\end{enumerate}
\end{definition}

This leads to the matrix relation
\begin{equation}\label{eq:core_matrix_relation_translation}
	\bm V(\bm x_0) = \bm T_{s,t}(\bm x_0) \, \bm T_{c,t} \, \bm F(\bm x_0).
\end{equation}

\begin{theorem}
	\label{thm:Tc_inv_trans}
	The matrix $\bm T_{c,t}$ satisfies $\bm T_{c,t}^* \bm T_{c,t} = d \bm I_d$, and its inverse is
	\[
		\bm T_{c,t}^{-1} = \frac{1}{d} \bm T_{c,t}^*.
	\]
\end{theorem}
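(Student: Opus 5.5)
The plan is to compute the entries of $\bm T_{c,t}^*\bm T_{c,t}$ directly and read each one as a character sum over the effective frequency set $H_\chi$. For column indices $v_1,v_2$ we have
\[
(\bm T_{c,t}^*\bm T_{c,t})_{v_1,v_2}=\sum_{p=0}^{d-1} e^{2\pi i (\bm h_1^{(p)})^{\mathsf T}(\bm A_T^1)^{-1}(\bm y^{(v_1)}-\bm y^{(v_2)})}.
\]
On $X_{\bm A_T^1}$ we use the symmetric pairing $\beta([\bm h],[\bm y])\coloneqq e^{2\pi i\,\bm h^{\mathsf T}(\bm A_T^1)^{-1}\bm y}$. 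It is well defined and bimultiplicative because $\bm A_T^1$ is diagonal, and it is nondegenerate by the same basis-vector argument used in the proof of \cref{thm:Tc_inv}. For a subgroup $S\le X_{\bm A_T^1}$, write $S^\perp$ for its annihilator under $\beta$. The diagonal entries are $d$, because every summand equals $1$ and $|H_\chi|=d$ by \cref{lem:space_freq_duality}.

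\textbf{Step 1: identify $H_\chi$ as an annihilator.} Let $K\coloneqq\{[\bm A_T^1\bm t_g]:\Psi_{\bar g}\in G_{\bm A}\}\le X_{\bm A_T^1}$. This is a subgroup because translations compose additively. Since $\bm z_g=\bm A_T^0\bm A_T^1\bm t_g$, we get $\rho_{[\bm h_1]}(\Psi_{\bar g})=\overline{\beta([\bm h_1],[\bm A_T^1\bm t_g])}$. The proof of \cref{thm:zeta-count} shows that $\chi([\bm h_1])\neq0$ exactly when $\rho_{[\bm h_1]}$ is trivial. Hence $H_\chi=K^\perp$, and in particular $H_\chi$ is a subgroup. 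The map $\Psi_{\bar g}\mapsto[\bm A_T^1\bm t_g]$ is injective, by the argument of \cref{prop:translation_free}, so $|K|=M$.

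\textbf{Step 2: double annihilator.} I want to show $H_\chi^\perp=K$. The inclusion $K\subseteq (K^\perp)^\perp$ is immediate. For cardinalities, the standard fact is $|S^\perp|=D_1/|S|$ for any subgroup $S$. It can be proved exactly as part 2 of \cref{thm:zeta-count}: sum $\sum_{s\in S}\beta(\bm h,s)$ over all $\bm h$, and use nondegeneracy together with orthogonality of characters. This gives $|H_\chi^\perp|=D_1/(D_1/M)=M=|K|$, so $H_\chi^\perp=K$. I expect this duality step to be the only real obstacle, and the counting argument above is what I would try first.

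\textbf{Step 3: conclude.} The map $[\bm h]\mapsto\beta([\bm h],[\bm w])$ is a character of $H_\chi$. Its sum over $H_\chi$ is therefore $d$ if the character is trivial, which happens exactly when $[\bm w]\in H_\chi^\perp=K$, and $0$ otherwise. This is the same shifting argument as in \cref{thm:Tc_inv}. By \cref{prop:layer_permutation_translation}, the orbits in $\Theta_T$ correspond to the cosets $[\bm y]+K$. So for $v_1\neq v_2$ the distinct orbit representatives satisfy $[\bm y^{(v_1)}-\bm y^{(v_2)}]\notin K$, and the off-diagonal entries vanish. Hence $\bm T_{c,t}^*\bm T_{c,t}=d\bm I_d$. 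Since $\bm T_{c,t}$ is square, it follows that $\bm T_{c,t}^{-1}=\frac1d\bm T_{c,t}^*$.
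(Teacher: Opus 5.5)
Your proof is correct, but it is organized differently from the paper's.

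\textbf{The paper's route.} It never shows that $H_\chi$ is a subgroup. It uses part~1 of \cref{thm:zeta-count} in the form $\mathbf{1}_{H_\chi}=\chi/M$ to rewrite the off-diagonal sum over $H_\chi$ as a $\chi$-weighted sum over all of $X_{\bm A_T^1}$. It then expands $\chi$ as a sum over $G_{\bm A}$, interchanges the two sums, and evaluates each full geometric sum over $X_{\bm A_T^1}$. Such a sum is nonzero only if $[\bm y^{(u)}]=[\bm y^{(v)}+\bm A_T^1\bm t_g]$ for some $g$, i.e.\ only if the two representative layers lie in one orbit. That single interchange is in effect a direct proof of the Poisson-summation identity $\sum_{h\in K^\perp}\beta(h,w)=|K^\perp|\,\mathbf{1}_K(w)$, with $K$ your translation subgroup. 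It therefore sidesteps your Step~2, the double annihilator $H_\chi^\perp=K$, and needs nothing beyond \cref{thm:zeta-count}.

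\textbf{What your route buys.} It costs that extra duality step, but it exposes the structure. It shows that $H_\chi=K^\perp$ is a subgroup and that $Y_{\mathrm{ref}}$ is a transversal of $X_{\bm A_T^1}/K$. Hence $\bm T_{c,t}$ is the character table (DFT matrix) of the abelian group $X_{\bm A_T^1}/K$ of order $d$. This is exactly the fact the paper asserts without proof in \cref{sec:complexity}, when it charges products with $\bm T_{c,t}$ at fast-transform cost rather than $d^2$.
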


\begin{proof}
	The matrix entries are defined as
	$(\bm T_{c,t})_{p,v} = e^{-2\pi i (\bm h_1^{(p)})^\mathsf{T} (\bm A_T^1)^{-1} \bm y^{(v)}}$,
	where $\bm h_1^{(p)}$ and $\bm y^{(v)}$ are the standard integer representatives of the congruence classes $[\bm h_1^{(p)}] \in H_\chi$ and $[\bm y^{(v)}] \in Y_{\mathrm{ref}}$, respectively.
	Let $\bm K = \bm T_{c,t}^* \bm T_{c,t}$. Then its $(u,v)$-th entry is
	\begin{align}
		(\bm K)_{u,v}
		 & = \sum_{p=0}^{d-1} \overline{(\bm T_{c,t})_{p,u}} (\bm T_{c,t})_{p,v} \notag                                                                                                                                           \\
		 & = \sum_{p=0}^{d-1} e^{-2\pi i (\bm h_1^{(p)})^\mathsf{T} (\bm A_T^1)^{-1} (\bm y^{(v)} - \bm y^{(u)})} \notag \\
		 & = \sum_{[\bm h_1] \in H_\chi} e^{-2\pi i \bm h_1^\mathsf{T} (\bm A_T^1)^{-1} (\bm y^{(v)} - \bm y^{(u)})}.
	\end{align}
	Here the $d$ standard integer representatives $\bm h_1^{(p)}$ enumerate $H_\chi$, so the sum over $p$ coincides with the sum over $[\bm h_1]\in H_\chi$.

	When $u=v$, $\bm y^{(v)} - \bm y^{(u)} = \bm 0$, and the above expression evaluates to $|H_\chi| = d$, i.e., $(\bm K)_{u,u} = d$.

	When $u \neq v$, by \cref{thm:zeta-count}, we have $\chi([\bm h_1]) = M$ if $[\bm h_1] \in H_\chi$, and $\chi([\bm h_1]) = 0$ otherwise. Thus, the expression can be rewritten as a weighted sum over the finite group $X_{\bm A_T^1}$. For each $\Psi_{\bar g}\in G_{\bm A}$, taking its corresponding translation integer representative $\bm z_g = \bm A \bm t_g$, we obtain
	\begin{align}
		(\bm K)_{u,v}
		 & = \frac{1}{M} \sum_{[\bm h_1] \in X_{\bm A_T^1}} \chi([\bm h_1]) e^{-2\pi i \bm h_1^\mathsf{T} (\bm A_T^1)^{-1} (\bm y^{(v)} - \bm y^{(u)})} \notag                                                        \\
		 & = \frac{1}{M} \sum_{\Psi_{\bar g}\in G_{\bm A}} \sum_{[\bm h_1] \in X_{\bm A_T^1}} e^{-2\pi i\,\bm h_1^\mathsf{T} (\bm A_T^1)^{-1}\bigl((\bm A_T^0)^{-1}\bm z_g + \bm y^{(v)} - \bm y^{(u)}\bigr)}.
	\end{align}

	For a fixed $\Psi_{\bar g}\in G_{\bm A}$, the translation compatibility condition \cref{def:translation_compatibility} implies
	$(\bm A_T^0)^{-1}\bm z_g=\bm A_T^1\bm t_g\in\mathbb Z^n$.
	Let
	$\bm w_{g,u,v}\coloneqq \bm A_T^1\bm t_g + \bm y^{(v)} - \bm y^{(u)}\in\mathbb Z^n$. Denote
	\[
		S(\bm w)\coloneqq \sum_{[\bm h_1]\in X_{\bm A_T^1}} e^{-2\pi i\,\bm h_1^\mathsf{T}(\bm A_T^1)^{-1}\bm w}.
	\]
	Let $\bm A_T^1 = \operatorname{diag}(N_{1,1},\dots,N_{1,n})$, and expanding the summation using the set of standard representatives of $X_{\bm A_T^1}$ yields
	\[
		S(\bm w)=\prod_{j=1}^n\left(\sum_{m=0}^{N_{1,j}-1}e^{-2\pi i\,m\,w_j/N_{1,j}}\right).
	\]
	From the finite geometric series sum formula, if there exists a $j$ such that $N_{1,j}\nmid w_j$, the corresponding one-dimensional sum is
	$\sum_{m=0}^{N_{1,j}-1}e^{-2\pi i\,m\,w_j/N_{1,j}}=0$, resulting in $S(\bm w)=0$. If $N_{1,j}\mid w_j$ for all $j$, then $\sum_{m=0}^{N_{1,j}-1}e^{-2\pi i\,m\,w_j/N_{1,j}}=N_{1,j}$. Consequently, $S(\bm w)=\prod_{j=1}^n N_{1,j}=|X_{\bm A_T^1}|=D_1$.
	In other words,
	\[
		S(\bm w)\neq 0\ \Longleftrightarrow\ \bm w\in \bm A_T^1\mathbb Z^n.
	\]
	Therefore, for a fixed $\Psi_{\bar g}\in G_{\bm A}$, $S(\bm w)$ is non-zero if and only if
	\begin{align}
		[\bm A_T^1\bm t_g + \bm y^{(v)} - \bm y^{(u)}] = [\bm 0] \in X_{\bm A_T^1}
		\iff [\bm y^{(u)}] = [\bm y^{(v)} + \bm A_T^1\bm t_g].
	\end{align}
	According to \cref{prop:layer_permutation_translation}, the above equation is equivalent to
	\begin{align}\label{eq:orbit_eq_uv}
		X_T^{[\bm y^{(u)}]} = \Psi_{\bar g}(X_T^{[\bm y^{(v)}]}). 
	\end{align}
	Note that when $u \neq v$, $[\bm y^{(u)}]$ and $[\bm y^{(v)}]$ are distinct representatives in the orbit representative set $Y_{\mathrm{ref}}$, so their corresponding orbits are strictly disjoint. Hence, for any group element $\Psi_{\bar g}\in G_{\bm A}$, \cref{eq:orbit_eq_uv} cannot hold. Thus $S(\bm w) = 0$, which implies $(\bm K)_{u,v}=0$.

	In conclusion, combining this with the result for $u=v$, we have
	\begin{align}
		(\bm K)_{u,v} = \delta_{uv} d.
	\end{align}
	Therefore, $\bm T_{c,t}^* \bm T_{c,t} = d \bm I_d$, which implies $\bm T_{c,t}^{-1} = \frac{1}{d} \bm T_{c,t}^*$.
\end{proof}

Based on the above formulation, the SFFT and SIFFT algorithms for the pure translation case are summarized in \cref{alg:trans_SFFT} and \cref{alg:trans_SIFFT}, respectively. 

\begin{algorithm}[H]
	\renewcommand{\algorithmicrequire}{\textbf{Input:}}
	\renewcommand{\algorithmicensure}{\textbf{Output:}}
	\caption{SFFT for pure translation (SFFT-TRN)}
	\label{alg:trans_SFFT}
	\begin{algorithmic}[1]
		\REQUIRE $\bm{A}_T^0$, $\bm{A}_T^1$, $\bm{A}$, $Y_{\mathrm{ref}}^{\mathrm{lex}}$, discrete function values $\{f_{\bm A, \bm s}([\bm A_T^0\bm y^{(v)} + \bm x_0])\}_{v=0}^{d-1}$ on the orbit representatives $[\bm y^{(v)}]\in Y_{\mathrm{ref}}^{\mathrm{lex}}$, discrete translation vector set $\{\bm z_g:\  \bar g\in G\}$.
		\ENSURE $\hat{f}_{\bm A,\bm s}\left([\bm{A}_T^{1} \bm{h}_0 + \bm{h}_1]\right)$, for all $[\bm h_1]\in H_{\chi}$.
		% 		\STATE Precompute all $\bm z_g = \bm A \bm t_g$.
		\STATE Precompute $\chi([\bm{h}_1])$ and construct $H_{\chi}$.
		\STATE Precompute the constant matrix $\bm{T}_{c,t}$.
		\FOR{each $\bm{x}_0 \in \mathcal D_{\bm A_T^0}$}
		\STATE Compute the diagonal matrix $\bm T_{s,t}(\bm x_0)$.
		\STATE Construct $\bm F(\bm x_0)$ from $\{f_{\bm A,\bm s}([\bm A_T^0 \bm y^{(v)} + \bm x_0])\}_{v=0}^{d-1}$
		\STATE $\bm V(\bm x_0) \leftarrow \bm T_{s,t}(\bm x_0) \, \bm T_{c,t} \, \bm F(\bm x_0)$
		\STATE Extract $\{V_{\bm h_1}(\bm x_0)\}_{[\bm h_1]\in H_{\chi}}$
		\ENDFOR
		\FOR{each $[\bm h_1]\in H_{\chi}$}
		\STATE $\hat{f}_{\bm A,\bm s}([\bm A_T^1 \bm h_0 + \bm h_1]) \leftarrow \operatorname{FFT}_{\bm x_0}\bigl[M \cdot V_{\bm h_1}(\bm x_0)\bigr]$.
		\ENDFOR
	\end{algorithmic}
\end{algorithm}

\begin{algorithm}[H]
	\renewcommand{\algorithmicrequire}{\textbf{Input:}}
	\renewcommand{\algorithmicensure}{\textbf{Output:}}
	\caption{SIFFT for pure translation (SIFFT-TRN)}
	\label{alg:trans_SIFFT}
	\begin{algorithmic}[1]
		\REQUIRE $\bm A_T^0$, $\bm A_T^1$, $\bm A$, $Y_{\mathrm{ref}}^{\mathrm{lex}}$, the Fourier coefficients $\hat{f}_{\bm A,\bm s}([\bm A_T^1\bm h_0 + \bm h_1])$ for $[\bm h_1]\in H_\chi$ (as produced by SFFT-TRN), discrete translation vector set $\{\bm z_g:\  \bar g\in G\}$.
		\ENSURE the function values $\{f_{\bm A,\bm s}([\bm A_T^0\bm y^{(v)} + \bm x_0])\}_{v=0}^{d-1}$ on the orbit representatives $[\bm y^{(v)}]\in Y_{\mathrm{ref}}^{\mathrm{lex}}$.
		% 		\STATE Precompute all $\bm z_g = \bm A \bm t_g$.
% 		\STATE Precompute $\chi([\bm h_1])$ and construct $H_{\chi}$.
		\STATE Construct the constant matrix $\bm T_{c,t}^*$.
		\FOR{each $[\bm h_1]\in H_{\chi}$}
		\STATE $\tilde{f}_{[\bm h_1]}(\bm h_0) \leftarrow \hat{f}_{\bm A,\bm s}([\bm A_T^1 \bm h_0 + \bm h_1])$
		\STATE $V_{\bm h_1}(\bm x_0) \leftarrow \frac{1}{M} \operatorname{IFFT}_{\bm h_0}\bigl[\tilde{f}_{[\bm h_1]}(\bm h_0)\bigr]$
		\ENDFOR
		\FOR{each $\bm x_0 \in \mathcal{D}_{\bm A_T^0}$}
		\STATE Compute the diagonal matrix ${\bm T}_{s,t}^*(\bm x_0)$
		\STATE Construct $\bm V(\bm x_0)$ from $\{V_{\bm h_1}(\bm x_0)\}_{[\bm h_1]\in H_{\chi}}$
		\STATE $\bm F(\bm x_0) \leftarrow \frac{1}{d} \bm T_{c,t}^* \bigl( {\bm T}_{s,t}^*(\bm x_0) \, \bm V(\bm x_0) \bigr)$
		\STATE Extract $f_{\bm A,\bm s}([\bm A_T^0 \bm y^{(v)} + \bm x_0]) \leftarrow (\bm F(\bm x_0))_v$
		\ENDFOR
	\end{algorithmic}
\end{algorithm}

\subsection{Recursive symmetry reduction framework}
\label{sec:recursive_framework}

Complex space groups typically contain both nontrivial linear actions and pure translational symmetries, making it difficult to completely eliminate computational redundancy through a single reduction. Therefore, it is necessary to construct a unified framework capable of recursively invoking the two aforementioned reduction strategies.

The core of this framework lies in the fact that the mathematical essence of all types of symmetry reduction is orbit decomposition—utilizing the induced action of finite groups on discrete grids to transform the global DFT into several local DFTs defined on subgrids. These local problems often inherit partial symmetries of the original problem, thereby constituting independent subproblems that are smaller in scale but similar in structure. Recursively processing these subproblems allows their inherited symmetries to be exploited through compatible grid decompositions.

At each recursive node, the current discrete symmetry group determines the available reduction. A nontrivial linear action is treated using a compatible grid decomposition; a nontrivial pure translation group is treated by the translational reduction with a translation-compatible factorization. By \cref{prop:translation_free}, each representative layer has a trivial stabilizer in the current pure translation group. The remaining local DFTs are computed by the standard FFT as in \cref{alg:trans_SFFT,alg:trans_SIFFT}, and this branch terminates.

\subsubsection{Formalization of the SFFT problem}

\begin{definition}
	\label{def:sfft_problem}
	An $n$-dimensional \emph{SFFT problem} $\mathcal{P}$ is specified by a continuous function $f:\mathbb{T}^n\to\mathbb{C}$ possessing $G$-symmetry (\cref{def:G-symmetry}) together with the discretization parameters $(\bm A,\bm s)$. These determine the discrete samples
	\[
		f_{\bm A,\bm s} = f\circ\iota_{\bm A,\bm s}: X_{\bm A}\to\mathbb{C},
		\qquad X_{\bm A} \coloneqq \mathbb{Z}^n/\bm A\mathbb{Z}^n
	\]
	(\cref{subsec:grid_definition}). We record the problem as the 4-tuple
	\[
		\mathcal{P} = \left( f_{\bm A,\bm s},\, \bm A,\, \bm s,\, \mathcal{G} \right),
	\]
	with components
	\begin{enumerate}
		\item $f_{\bm A,\bm s}:X_{\bm A}\to\mathbb{C}$ is the discrete function sampled on the grid, i.e.\ $f_{\bm A,\bm s}([\bm x])=f(\iota_{\bm A,\bm s}([\bm x]))$; it is the discretization of the $G$-symmetric $f$.
		\item $\bm{A} = \operatorname{diag}(N_1, \dots, N_n)$ is the discretization matrix, with $N_j \in \mathbb{Z}_{\geq 1}$.
		\item $\bm{s}\in [0,1)^n$ is the translation parameter.
		\item $\mathcal{G}$ is an $n$-dimensional space group (satisfying \cref{def:space_group}), with finite symmetry quotient $G=\mathcal{G}/T$.
	\end{enumerate}
\end{definition}

Denote the integer translation subgroup as $T = \{(\bm{I}_n, \bm{t}) : \bm{t}\in\mathbb{Z}^n\} \lhd \mathcal{G}$ and its quotient group as $G = \mathcal{G}/T.$

\begin{definition}
	\label{def:well_formed_problem}
	An SFFT problem $\mathcal{P} = (f_{\bm A,\bm s}, \bm A, \bm s, \mathcal{G})$ is said to be \emph{well-formed} when its discrete function is invariant under its discrete symmetry group $G_{\bm A}$, namely
	\begin{equation}\label{eq:wellformed_invariance}
		f_{\bm A,\bm s}\bigl(\Psi_{\bar g}([\bm{x}])\bigr) = f_{\bm A,\bm s}([\bm{x}]),
		\quad \forall\,\Psi_{\bar g}\in G_{\bm A},\; \forall\,[\bm{x}]\in X_{\bm A}.
	\end{equation}
	Here $G_{\bm A}$ is the discrete symmetry group from \cref{def:discrete_action}, which is well-defined precisely when the grid parameters $(\bm A,\bm s)$ are $G$-admissible.
\end{definition}

\begin{remark}
	\label{rem:base_case}
	The base problem $\mathcal{P}^{(0)} = \bigl( f_{\bm{A},\bm{s}},\, \bm{A},\, \bm{s},\, \mathcal{G} \bigr)$ is well-formed as soon as its grid $(\bm A,\bm s)$ is $G$-admissible and the underlying continuous function $f$ possesses $G$-symmetry. By \cref{prop:grid_invariance}, $G$-admissibility makes the discrete action $\Psi_{\bar g}$ (hence $G_{\bm A}$) well-defined; by \cref{prop:symmetry_inheritance}, the $G$-symmetry of $f$ then yields exactly the invariance \eqref{eq:wellformed_invariance}. Thus the base problem satisfies the defining condition of \cref{def:well_formed_problem}.
\end{remark}

\subsubsection{Construction of subproblems}

We proceed by induction on the recursion depth $k$. The base case $k=0$ is well-formed by \cref{rem:base_case}. As the induction hypothesis, let $\mathcal{P}^{(k)}$ denote a well-formed SFFT problem at the $k$-th level---the parent node from which the $(k+1)$-st level subproblems are constructed below---with its discrete symmetry group denoted by $G_{\bm A^{(k)}}$.

\paragraph{Subproblems of the nontrivial linear-action case}

When $G_{\bm A^{(k)}}$ contains a nontrivial linear action, select a $G_{\bm A^{(k)}}$-compatible (in the sense of \cref{def:linear_compatibility}) diagonal decomposition $\bm{A}^{(k)} = \bm{A}_0^{(k)} \bm{A}_1^{(k)}$ (such a decomposition always exists: $\bm A_0^{(k)}=\bm A^{(k)}$ with $\bm A_1^{(k)}=\bm I_n$ is trivially $G_{\bm A^{(k)}}$-compatible, and the uniform choice $\bm A_0^{(k)}=k\bm I_n$, $k\ge 2$ dividing every diagonal entry of $\bm A^{(k)}$, is available whenever $\gcd(N_1^{(k)},\dots,N_n^{(k)})\ge 2$). For each $[\bm y]\in X_{\bm A_0^{(k)}}$, define the subset
\[
	X_R^{(k),[\bm y]} = \{[\bm A_0^{(k)}\bm x_1 + \bm y] : [\bm x_1]\in X_{\bm A_1^{(k)}}\}.
\]
By \cref{prop:layer_permutation}, $G_{\bm A^{(k)}}$ induces a permutation action on the family of subsets
\[
	\Theta_R^{(k)} = \{ X_R^{(k),[\bm y]} : [\bm y]\in X_{\bm A_0^{(k)}} \}.
\]
Choose the set of orbit representatives $Y_{\mathrm{ref}}^{(k)}\subset X_{\bm A_0^{(k)}}$ of the $G_{\bm A^{(k)}}$-action on $\Theta_R^{(k)}$. For each orbit representative $[\bm y]\in Y_{\mathrm{ref}}^{(k)}$, construct the $(k+1)$-th level subproblem $\mathcal{P}_{[\bm y]}^{(k+1)}$ as follows.

\begin{definition}
	\label{def:subproblem_linear_action}
	Fixing $[\bm y]\in Y_{\mathrm{ref}}^{(k)}$, we define
	\[
		\begin{aligned}
			\mathcal{P}_{[\bm y]}^{(k+1)} = \bigl( f_{[\bm y]}^{(k+1)},\, \bm A^{(k+1)},\, \bm s^{(k+1)},\, \mathcal{G}^{(k+1)} \bigr),
		\end{aligned}
	\]
	where
	\begin{enumerate}
		\item $\bm{A}^{(k+1)} \coloneqq \bm{A}_1^{(k)}$, $X_{\bm A^{(k+1)}} \coloneqq X_{\bm A_1^{(k)}}$.
		\item $\bm{s}^{(k+1)} \coloneqq (\bm{A}_0^{(k)})^{-1} (\bm{y} + \bm{s}^{(k)}) \in [0,1)^n$.
		\item $f_{[\bm y]}^{(k+1)}([\bm x_1]) \coloneqq f^{(k)}([\bm A_0^{(k)}\bm x_1 + \bm y])$.
		\item $\mathcal{G}^{(k+1)} \coloneqq \bigl\{ g\in\mathcal{G}^{(k)} : \Psi_{\bar g}^{(k)}(X_R^{(k),[\bm y]}) = X_R^{(k),[\bm y]} \bigr\}$.
	\end{enumerate}
\end{definition}

\begin{lemma}
	\label{lem:subgroup_inheritance}
	$\mathcal{G}^{(k+1)}$ is a subgroup of $\mathcal{G}^{(k)}$, and $T \subseteq \mathcal{G}^{(k+1)}$. Furthermore, if $\mathcal{G}^{(k)}$ satisfies \cref{def:space_group}, then $\mathcal{G}^{(k+1)}$ also satisfies \cref{def:space_group}.
\end{lemma}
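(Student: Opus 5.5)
The plan is to view $\mathcal{G}^{(k+1)}$ as the preimage of a stabilizer. First, it is a subgroup. Consider the composite homomorphism
\[
\mathcal{G}^{(k)}\to G^{(k)}=\mathcal{G}^{(k)}/T\xrightarrow{\ \Psi^{(k)}\ } G_{\bm A^{(k)}},\qquad g\mapsto \Psi^{(k)}_{\bar g};
\]
its second factor is a homomorphism by \cref{prop:G_A_group}. By \cref{prop:layer_permutation}, $G_{\bm A^{(k)}}$ acts on the family $\Theta_R^{(k)}$. The stabilizer $G_{[\bm y]}$ of the layer $X_R^{(k),[\bm y]}$ under this action is a subgroup of $G_{\bm A^{(k)}}$. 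By \cref{def:subproblem_linear_action}, $\mathcal{G}^{(k+1)}$ is exactly the preimage of $G_{[\bm y]}$ under the composite map, and the preimage of a subgroup under a homomorphism is a subgroup. For $T\subseteq\mathcal{G}^{(k+1)}$: any $g\in T$ has $\bar g=e$, so $\Psi^{(k)}_{\bar g}=\mathrm{id}$, and the identity fixes every layer.

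Next I check the three conditions of \cref{def:space_group}, assuming $\mathcal{G}^{(k)}$ satisfies them. Condition 1 (unimodular integral linear part and rational translation part) is inherited elementwise, since $\mathcal{G}^{(k+1)}\subseteq\mathcal{G}^{(k)}$. Condition 2 is the inclusion $T\subseteq\mathcal{G}^{(k+1)}$ just shown. For condition 3, $T\trianglelefteq\mathcal{G}^{(k+1)}$ holds by the remark after \cref{def:space_group}; alternatively, it is normal in the larger group and contained in the subgroup. Hence $\mathcal{G}^{(k+1)}/T$ embeds into $\mathcal{G}^{(k)}/T$ and is finite. Equivalently, $\mathcal{G}^{(k+1)}$ is a union of $T$-cosets, because membership depends only on $\bar g$, and there are only finitely many of them.

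The one subtle point is well-definedness. The condition defining $\mathcal{G}^{(k+1)}$ is stated for $g$ but actually depends only on $\bar g$; this is what makes $\mathcal{G}^{(k+1)}$ a union of $T$-cosets. This follows from \cref{lem:rep-independence}, or directly from the fact that $\Psi^{(k)}_{\bar g}$ is defined on cosets. One must also use that $\Psi^{(k)}$ is defined at all, which requires the $G^{(k)}$-admissibility of $(\bm A^{(k)},\bm s^{(k)})$. This is part of the well-formedness induction hypothesis on $\mathcal{P}^{(k)}$ (\cref{def:well_formed_problem}), so I will invoke it explicitly. I do not expect any genuine obstacle beyond stating these points cleanly.
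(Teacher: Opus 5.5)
Your proposal is correct and follows essentially the same route as the paper. You realize $\mathcal{G}^{(k+1)}$ as the preimage of the layer stabilizer under the composite homomorphism $\mathcal{G}^{(k)}\to\mathcal{G}^{(k)}/T\to G_{\bm A^{(k)}}$, obtain $T\subseteq\mathcal{G}^{(k+1)}$ because $T$ maps to the identity, and verify the three conditions of \cref{def:space_group} by inheritance, the inclusion of $T$, and the injection $\mathcal{G}^{(k+1)}/T\hookrightarrow\mathcal{G}^{(k)}/T$. Your explicit note that $\Psi^{(k)}$ is defined only because the well-formedness hypothesis gives $G^{(k)}$-admissibility of $(\bm A^{(k)},\bm s^{(k)})$ is a point the paper leaves implicit.
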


\begin{proof}
	First, we prove $\mathcal G^{(k+1)}\leqslant\mathcal G^{(k)}$. Let
	\[
		\begin{aligned}
			\operatorname{Stab}_{G_{\bm A^{(k)}}}\bigl(X_R^{(k),[\bm y]}\bigr)
			\coloneqq \bigl\{\Psi_{\bar g}^{(k)}\in G_{\bm A^{(k)}}:\ \Psi_{\bar g}^{(k)}\bigl(X_R^{(k),[\bm y]}\bigr)=X_R^{(k),[\bm y]}\bigr\}
		\end{aligned}
	\]
	be the stabilizer subgroup of the subset $X_R^{(k),[\bm y]}$ in the discrete symmetry group $G_{\bm A^{(k)}}$.

	Denote the natural quotient map by $q^{(k)}:\mathcal G^{(k)}\to G^{(k)}\coloneqq \mathcal G^{(k)}/T$, and denote the $k$-th level discrete induced action homomorphism by
	\[
		\begin{aligned}
			\Psi^{(k)}:G^{(k)}\to \operatorname{Sym}(X_{\bm A^{(k)}}),\qquad \bar g\mapsto \Psi_{\bar g}^{(k)}
		\end{aligned}
	\]
	(its proof as a group homomorphism is identical to that of \cref{prop:G_A_group}). Define $G_{\bm A^{(k)}}\coloneqq \operatorname{Im}(\Psi^{(k)})$, and consider $\Psi^{(k)}$ as a surjective homomorphism onto $G_{\bm A^{(k)}}$. Further define the composite homomorphism
	\[
		\begin{aligned}
			\varpi^{(k)}\coloneqq \Psi^{(k)}\circ q^{(k)}:\mathcal G^{(k)}\to G_{\bm A^{(k)}},\qquad g\mapsto \Psi_{q^{(k)}(g)}^{(k)}.
		\end{aligned}
	\]
	Thus, from the definition of $\mathcal G^{(k+1)}$ in \cref{def:subproblem_linear_action}, we have
	\[
		\begin{aligned}
			\mathcal G^{(k+1)}=(\varpi^{(k)})^{-1}\!\left(\operatorname{Stab}_{G_{\bm A^{(k)}}}\bigl(X_R^{(k),[\bm y]}\bigr)\right).
		\end{aligned}
	\]
	Since $\varpi^{(k)}$ is a group homomorphism, and $\operatorname{Stab}_{G_{\bm A^{(k)}}}(X_R^{(k),[\bm y]})\leqslant G_{\bm A^{(k)}}$ is a subgroup, its inverse image $(\varpi^{(k)})^{-1}(\operatorname{Stab}_{G_{\bm A^{(k)}}}(X_R^{(k),[\bm y]}))$ is a subgroup of $\mathcal G^{(k)}$, i.e., $\mathcal G^{(k+1)}\leqslant\mathcal G^{(k)}$.

	Next, we prove $T\subseteq\mathcal G^{(k+1)}$. For any arbitrary $g\in T$, we have $q^{(k)}(g)=\bar e\in G^{(k)}$, hence
	\[
		\begin{aligned}
			\varpi^{(k)}(g)=\Psi_{\bar e}^{(k)}=\mathrm{id}_{X_{\bm A^{(k)}}}.
		\end{aligned}
	\]
	Therefore, $\varpi^{(k)}(g)\in \operatorname{Stab}_{G_{\bm A^{(k)}}}(X_R^{(k),[\bm y]})$, which means $g\in\mathcal G^{(k+1)}$. Thus, $T\subseteq\mathcal G^{(k+1)}$.

	Finally, we verify that $\mathcal G^{(k+1)}$ satisfies the three conditions of \cref{def:space_group}.
	\begin{enumerate}
		\item Since $\mathcal G^{(k+1)}\leqslant\mathcal G^{(k)}$, and $\mathcal G^{(k)}$ satisfies Condition (1) of \cref{def:space_group}, for any $g=(\bm R_g,\bm t_g)\in\mathcal G^{(k+1)}$, it still holds that $\bm R_g\in\operatorname{GL}^{\pm 1}(n,\mathbb Z)$ and $\bm t_g\in\mathbb Q^n$.
		\item As proven above, $T\subseteq\mathcal G^{(k+1)}$, which means Condition (2) of \cref{def:space_group} holds.
		\item From the description of \cref{def:space_group}, we know $T\triangleleft\mathcal G^{(k)}$, thus $T\triangleleft\mathcal G^{(k+1)}$ (normality is inherited by subgroups). Therefore, the inclusion map $\mathcal G^{(k+1)}\hookrightarrow\mathcal G^{(k)}$ induces a natural injection $\mathcal G^{(k+1)}/T\hookrightarrow \mathcal G^{(k)}/T$. Since $\mathcal G^{(k)}/T$ is finite, $\mathcal G^{(k+1)}/T$ is also finite, meaning Condition (3) of \cref{def:space_group} holds.
	\end{enumerate}

	In summary, $\mathcal G^{(k+1)}$ satisfies all conditions of \cref{def:space_group} and is therefore an $n$-dimensional space group.
\end{proof}

The finite symmetry group of a recursive subproblem can therefore be treated by the
same construction. Its stabilizer is a subgroup of the current discrete
symmetry group, and its action on the selected layer is transferred to the
local index group by the bijection
$[\bm x_1]\mapsto[\bm A_0^{(k)}\bm x_1+\bm y]$.
The resulting local symmetry group is a homomorphic image of this
stabilizer. In general dimensions, a composition series of this finite
node group gives at most two lifted generators per factor by
\cref{thm:hierarchical_group_action}; in three dimensions,
\cref{cor:three_dimensional_reconstruction} gives one per factor at every
recursive node. The grid compatibility and symmetry inheritance required
for the Fourier reduction are established separately by
\cref{prop:layer_permutation,prop:well_formed_inheritance}.

\begin{proposition}
	\label{prop:well_formed_inheritance}
	The subproblem $\mathcal{P}_{[\bm y]}^{(k+1)}$ constructed by \cref{def:subproblem_linear_action} is well-formed in the sense of \cref{def:well_formed_problem}, i.e.\ its discrete function satisfies \eqref{eq:wellformed_invariance}.
\end{proposition}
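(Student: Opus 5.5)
We need to show that for every $\Psi^{(k+1)}_{\bar g}\in G_{\bm A^{(k+1)}}$ and every $[\bm x_1]\in X_{\bm A^{(k+1)}}$ we have $f^{(k+1)}_{[\bm y]}(\Psi^{(k+1)}_{\bar g}([\bm x_1]))=f^{(k+1)}_{[\bm y]}([\bm x_1])$. The plan is to transport the local action back to the parent grid through the layer bijection $\beta:[\bm x_1]\mapsto[\bm A_0^{(k)}\bm x_1+\bm y]$ from $X_{\bm A_1^{(k)}}$ onto $X_R^{(k),[\bm y]}$. On the parent grid the induction hypothesis supplies the needed invariance.

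\textbf{Step 1: admissibility of the child grid.} We check that $(\bm A^{(k+1)},\bm s^{(k+1)})$ is $G^{(k+1)}$-admissible, so that $G_{\bm A^{(k+1)}}$ is defined. A direct computation gives
\[
[(\bm A^{(k+1)})^{-1}(\bm x_1+\bm s^{(k+1)})]=[(\bm A^{(k)})^{-1}(\bm A_0^{(k)}\bm x_1+\bm y+\bm s^{(k)})].
\]
In words, $\iota_{\bm A^{(k+1)},\bm s^{(k+1)}}=\iota_{\bm A^{(k)},\bm s^{(k)}}\circ\beta$. The torus grid of the child is therefore the image of the layer $X_R^{(k),[\bm y]}$ under $\iota_{\bm A^{(k)},\bm s^{(k)}}$. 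Every $g\in\mathcal G^{(k+1)}$ satisfies $\Psi^{(k)}_{\bar g}(X_R^{(k),[\bm y]})=X_R^{(k),[\bm y]}$. By \cref{def:discrete_action}, $\bar g\odot$ then maps this image set onto itself, which is exactly \cref{def:admissible_grid} for the child. One must also confirm $\bm s^{(k+1)}\in[0,1)^n$. This holds because $\bm y$ is the standard representative in $\mathcal D_{\bm A_0^{(k)}}$ and $\bm s^{(k)}\in[0,1)^n$, so $0\le y_j+s_j<N_{0,j}$.

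\textbf{Step 2: conjugation identity.} From the factorization in Step 1 we get
\[
\Psi^{(k+1)}_{\bar g}=\iota_{\bm A^{(k+1)},\bm s^{(k+1)}}^{-1}\circ(\bar g\odot)\circ\iota_{\bm A^{(k+1)},\bm s^{(k+1)}}=\beta^{-1}\circ\Psi^{(k)}_{\bar g}\big|_{X_R^{(k),[\bm y]}}\circ\beta .
\]
This is legitimate because $\Psi^{(k)}_{\bar g}$ preserves the layer. As a consistency check, \cref{prop:layer_permutation} gives the explicit form $[\bm x_1]\mapsto[\bm K_{\bar g}\bm x_1+\bm k]$, which must agree with \cref{prop:discrete_action_formula} for the child data.

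\textbf{Step 3: conclusion.} By Definition \ref{def:subproblem_linear_action}(3), $f^{(k+1)}_{[\bm y]}=f^{(k)}\circ\beta$. Using Step 2 and the invariance \eqref{eq:wellformed_invariance} of $\mathcal P^{(k)}$,
\[
f^{(k+1)}_{[\bm y]}(\Psi^{(k+1)}_{\bar g}[\bm x_1])=f^{(k)}(\Psi^{(k)}_{\bar g}(\beta[\bm x_1]))=f^{(k)}(\beta[\bm x_1])=f^{(k+1)}_{[\bm y]}([\bm x_1]).
\]
Alternatively, since the sampling map is a composite with the same continuous $f$, \cref{prop:symmetry_inheritance} applied to $\mathcal G^{(k+1)}\le\mathcal G$ gives the same result once $f$ is known to be $G^{(k+1)}$-symmetric.

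The main obstacle is Step 1, specifically the bookkeeping showing that the child grid coincides with the image of the parent layer on the torus. Once this identity is in hand, both admissibility and the conjugation identity follow formally. A secondary point is that $\mathcal G^{(k+1)}$ is a space group, but this is already guaranteed by \cref{lem:subgroup_inheritance}.
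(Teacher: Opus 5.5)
Your proof is correct. It has the same three-stage skeleton as the paper: admissibility of the child grid, an intertwining relation between $\Psi^{(k+1)}_{\bar g}$ and $\Psi^{(k)}_{\bar g}$ on the stabilized layer, and then invariance inherited from $\mathcal P^{(k)}$. However, you prove the first two stages by a genuinely different mechanism.

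The paper works through the algebraic criterion of \cref{prop:grid_invariance}. It computes $\bm M^{(k+1)}_{\bar g}=(\bm A_0^{(k)})^{-1}\bm M^{(k)}_{\bar g}\bm A_0^{(k)}$, which is integral by $G_{\bm A^{(k)}}$-compatibility and unimodular by the determinant. It then computes $\bm z^{(k+1)}_g=(\bm A_0^{(k)})^{-1}(\bm M^{(k)}_{\bar g}\bm y+\bm z^{(k)}_g-\bm y)$, which is integral because the layer is stabilized. Finally it checks $\bm y+\bm A_0^{(k)}(\bm M^{(k+1)}_{\bar g}\bm x_1+\bm z^{(k+1)}_g)=\bm M^{(k)}_{\bar g}(\bm A_0^{(k)}\bm x_1+\bm y)+\bm z^{(k)}_g$ by direct algebra.

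You instead use the torus identity $\iota_{\bm A^{(k+1)},\bm s^{(k+1)}}=\iota_{\bm A^{(k)},\bm s^{(k)}}\circ\beta$. It shows the child's torus grid is the image of the stabilized layer, so admissibility falls out of \cref{def:admissible_grid}, and the intertwining relation is pure conjugation.

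Your route buys several things:
\begin{itemize}
\item It is shorter and coordinate-free.
\item It shows that $G_{\bm A^{(k)}}$-compatibility of $\bm A_0^{(k)}$ is not needed for this proposition. Layer stabilization alone forces $\bm M^{(k+1)}_{\bar g}$ to be unimodular and $\bm z^{(k+1)}_g$ integral, via \cref{prop:grid_invariance}, $1\Rightarrow 2$.
\item It justifies $\bm s^{(k+1)}\in[0,1)^n$, which the paper only asserts in the definition.
\item It makes your alternative ending legitimate. By induction $f^{(k+1)}_{[\bm y]}=f\circ\iota_{\bm A^{(k+1)},\bm s^{(k+1)}}$, so \cref{prop:symmetry_inheritance} applies directly with $\mathcal G^{(k+1)}\le\mathcal G$.
\end{itemize}

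The paper's computation, in return, produces the explicit data $(\bm M^{(k+1)}_{\bar g},\bm z^{(k+1)}_g)$ of the child action. The algorithm needs these to set up the next recursion level, as with the generators of $K^{(1)}$ and $H^{(2)}$ in \cref{sec:algorithm_example}. With your approach they would still have to be extracted afterwards, e.g.\ via your Step~2 consistency check.
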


\begin{proof}
	We verify that the subproblem $\mathcal{P}_{[\bm y]}^{(k+1)}$ satisfies \eqref{eq:wellformed_invariance}. The argument proceeds in two steps, first we establish that the subproblem's grid $(\bm A^{(k+1)},\bm s^{(k+1)})$ is $G^{(k+1)}$-admissible (so that its discrete symmetry group $G_{\bm A^{(k+1)}}$ is well-defined), and then we prove the invariance itself.

	First, we establish that the subproblem's grid is $G^{(k+1)}$-admissible. For any arbitrary $g\in\mathcal G^{(k+1)}\subseteq\mathcal G^{(k)}$ (the inclusion is \cref{lem:subgroup_inheritance}), from
	\[
		\begin{aligned}
			\bm A^{(k)}=\bm A_0^{(k)}\bm A_1^{(k)},\qquad \bm A^{(k+1)}=\bm A_1^{(k)}
		\end{aligned}
	\]
	in \cref{def:subproblem_linear_action}, we obtain that the associated matrix in the subproblem satisfies
	\[
		\begin{aligned}
			\bm M_{\bar g}^{(k+1)}=(\bm A_0^{(k)})^{-1}\bm M_{\bar g}^{(k)}\bm A_0^{(k)}.
		\end{aligned}
	\]
	Since $\bm A_0^{(k)}$ is $G_{\bm A^{(k)}}$-compatible, i.e., for any $\bar g\in G^{(k)}$ it holds that
	\[
		\begin{aligned}
			\bm M_{\bar g}^{(k)}\bigl(\bm A_0^{(k)}\mathbb Z^n\bigr)=\bm A_0^{(k)}\mathbb Z^n,
		\end{aligned}
	\]
	it follows that $(\bm A_0^{(k)})^{-1}\bm M_{\bar g}^{(k)}\bm A_0^{(k)}$ leaves $\mathbb Z^n$ invariant. Taking the standard basis vectors $\bm e_j\in\mathbb Z^n$ shows that $\bm M_{\bar g}^{(k+1)}\bm e_j\in\mathbb Z^n$, hence $\bm M_{\bar g}^{(k+1)}$ is an integer matrix.
	Furthermore, since the parent problem $\mathcal P^{(k)}$ is well-formed, its grid is $G^{(k)}$-admissible, so $\bm M_{\bar g}^{(k)}\in \operatorname{GL}^{\pm 1}(n,\mathbb Z)$. Similarity transformations do not alter the determinant; therefore,
	\[
		\begin{aligned}
			\det\bm M_{\bar g}^{(k+1)}=\det\bm M_{\bar g}^{(k)}=\pm 1,
		\end{aligned}
	\]
	which yields
	\[
		\begin{aligned}
			\bm M_{\bar g}^{(k+1)}\in \operatorname{GL}^{\pm 1}(n,\mathbb Z).
		\end{aligned}
	\]

	Next, we consider the associated vector. By definition,
	\[
		\begin{aligned}
			\bm z_g^{(k+1)}
			 & =(\bm M_{\bar g}^{(k+1)}-\bm I_n)\bm s^{(k+1)}+\bm A^{(k+1)}\bm t_g                                                                                       \\
			 & =\left[(\bm A_0^{(k)})^{-1}\bm M_{\bar g}^{(k)}\bm A_0^{(k)}-\bm I_n\right](\bm A_0^{(k)})^{-1}(\bm y+\bm s^{(k)})+(\bm A_0^{(k)})^{-1}\bm A^{(k)}\bm t_g \\
			 & =(\bm A_0^{(k)})^{-1}\left[(\bm M_{\bar g}^{(k)}-\bm I_n)(\bm y+\bm s^{(k)})+\bm A^{(k)}\bm t_g\right]                                                    \\
			 & =(\bm A_0^{(k)})^{-1}\left(\bm M_{\bar g}^{(k)}\bm y+\bm z_g^{(k)}-\bm y\right).
		\end{aligned}
	\]
	On the other hand, since $g\in\mathcal G^{(k+1)}$, by definition we have
	\[
		\begin{aligned}
			\Psi_{\bar g}^{(k)}\bigl(X_R^{(k),[\bm y]}\bigr)=X_R^{(k),[\bm y]}.
		\end{aligned}
	\]
	By \cref{prop:layer_permutation}, the image of the layer under the action of $\Psi_{\bar g}^{(k)}$ is determined by the congruence class
	\[
		\begin{aligned}
			[\bm M_{\bar g}^{(k)}\bm y+\bm z_g^{(k)}]\in X_{\bm A_0^{(k)}}.
		\end{aligned}
	\]
	Since this layer is stabilized, it must be true that
	\[
		\begin{aligned}
			\bm M_{\bar g}^{(k)}\bm y+\bm z_g^{(k)}\equiv \bm y \pmod{\bm A_0^{(k)}\mathbb Z^n}.
		\end{aligned}
	\]
	Therefore, there exists $\bm \ell\in\mathbb Z^n$ such that
	\[
		\begin{aligned}
			\bm M_{\bar g}^{(k)}\bm y+\bm z_g^{(k)}-\bm y=\bm A_0^{(k)}\bm \ell,
		\end{aligned}
	\]
	and substituting this back into the above equation yields
	\[
		\begin{aligned}
			\bm z_g^{(k+1)}=\bm \ell\in\mathbb Z^n.
		\end{aligned}
	\]
	Hence, the subproblem's grid is $G^{(k+1)}$-admissible.

	Next, we prove the invariance \eqref{eq:wellformed_invariance} for the subproblem, i.e., that its function remains invariant under the action of its discrete symmetry group. For any arbitrary $\Psi_{\bar g}^{(k+1)}\in G_{\bm A^{(k+1)}}$ and $[\bm x_1]\in X_{\bm A^{(k+1)}}$, let
	\[
		\begin{aligned}
			[\bm x]=[\bm A_0^{(k)}\bm x_1+\bm y]\in X_R^{(k),[\bm y]}.
		\end{aligned}
	\]
	We first prove that the discrete actions of the parent problem and the subproblem satisfy
	\[
		\begin{aligned}
		\Psi_{\bar g}^{(k)}([\bm x])=[\bm y+\bm A_0^{(k)}\bm x_1'],
	\end{aligned}
	\]
	where $\bm x_1'\in\mathcal D_{\bm A^{(k+1)}}$ is the standard representative of the congruence class $\Psi_{\bar g}^{(k+1)}([\bm x_1])$.
	In fact, from the definition of the discrete action in the subproblem,
	\[
		\begin{aligned}
			\Psi_{\bar g}^{(k+1)}([\bm x_1])=[\bm M_{\bar g}^{(k+1)}\bm x_1+\bm z_g^{(k+1)}].
		\end{aligned}
	\]
	Then, utilizing the previously obtained
	\[
		\begin{aligned}
			\bm M_{\bar g}^{(k+1)}=(\bm A_0^{(k)})^{-1}\bm M_{\bar g}^{(k)}\bm A_0^{(k)},
			\qquad
			\bm z_g^{(k+1)}=(\bm A_0^{(k)})^{-1}(\bm M_{\bar g}^{(k)}\bm y+\bm z_g^{(k)}-\bm y),
		\end{aligned}
	\]
	we can calculate
	\[
		\begin{aligned}
			\bm y+\bm A_0^{(k)}\bigl(\bm M_{\bar g}^{(k+1)}\bm x_1+\bm z_g^{(k+1)}\bigr)
			 & =\bm y+\bm A_0^{(k)}(\bm A_0^{(k)})^{-1}\bm M_{\bar g}^{(k)}\bm A_0^{(k)}\bm x_1        \\
			 & \quad +\bm A_0^{(k)}(\bm A_0^{(k)})^{-1}(\bm M_{\bar g}^{(k)}\bm y+\bm z_g^{(k)}-\bm y) \\
			 & =\bm M_{\bar g}^{(k)}(\bm A_0^{(k)}\bm x_1+\bm y)+\bm z_g^{(k)}.
		\end{aligned}
	\]
	Taking the congruence class yields
	\[
		\begin{aligned}
			[\bm y+\bm A_0^{(k)}\bm x_1']
			= [\bm M_{\bar g}^{(k)}(\bm A_0^{(k)}\bm x_1+\bm y)+\bm z_g^{(k)}]
			= \Psi_{\bar g}^{(k)}([\bm x]).
		\end{aligned}
	\]

	Finally, by the definition of the subproblem's function and the discrete symmetry of the parent problem (\cref{def:well_formed_problem}),
	\[
		\begin{aligned}
			f_{[\bm y]}^{(k+1)}\bigl(\Psi_{\bar g}^{(k+1)}([\bm x_1])\bigr)
			 & = f^{(k)}\bigl([\bm y+\bm A_0^{(k)}\bm x_1']\bigr) \\
			 & = f^{(k)}\bigl(\Psi_{\bar g}^{(k)}([\bm x])\bigr)  \\
			 & = f^{(k)}([\bm x])                                 \\
			 & = f_{[\bm y]}^{(k+1)}([\bm x_1]).
		\end{aligned}
	\]
	Thus, $f^{(k+1)}$ is invariant under the action of $G_{\bm A^{(k+1)}}$, which is precisely \eqref{eq:wellformed_invariance}.

	In conclusion, the subproblem $\mathcal{P}_{[\bm y]}^{(k+1)}$ satisfies \eqref{eq:wellformed_invariance}, and is therefore a well-formed SFFT problem by \cref{def:well_formed_problem}.
\end{proof}

\paragraph{Pure translational subproblems}
If $G_{\bm A^{(k)}}$ is nontrivial and all associated matrices are
$\bm I_n$, apply the pure translation reduction in
\cref{sec:translation_derivation} with a translation-compatible factorization.
By \cref{prop:translation_free}, each representative layer has a trivial
stabilizer in the current discrete symmetry group. The remaining local DFTs
are computed by the standard FFT, and this branch terminates.

\subsubsection{Summary of the recursive framework}

The complete recursive structure of the SFFT algorithm is as follows
\begin{enumerate}
	\item Take the well-formed original problem $\mathcal{P}^{(0)}$ as the initial input.
	\item For each level $k=0,1,2,\ldots$
	      \begin{enumerate}
		      \item If the discrete index group is too small for an effective Cooley--Tukey reduction, directly execute the standard FFT and terminate.
		      \item If $G_{\bm A^{(k)}}$ contains a nontrivial linear action, choose a compatible decomposition $\bm A^{(k)}=\bm A_0^{(k)}\bm A_1^{(k)}$ with $|\det\bm A_0^{(k)}|>1$. Construct the required symmetry operations and layer representatives using \cref{thm:hierarchical_group_action}, then apply the linear reduction to obtain $|Y_{\mathrm{ref}}^{(k)}|$ smaller well-formed subproblems. If no such decomposition is available, execute the standard FFT and terminate.
		      \item If $G_{\bm A^{(k)}}$ is a nontrivial pure translation group, execute the translational symmetry reduction and terminate.
		      \item If $G_{\bm A^{(k)}} = \{\mathrm{id}\}$, directly execute the standard FFT and terminate.
	      \end{enumerate}
\end{enumerate}

Every recursive linear reduction decreases the number of local indices,
since
\[
    |X_{\bm A^{(k+1)}}|
    =\frac{|X_{\bm A^{(k)}}|}{|\det\bm A_0^{(k)}|}
    <|X_{\bm A^{(k)}}|.
\]
Hence each branch contains only finitely many such reductions. Together
with the FFT and pure translation terminal cases, this proves termination
of the recursive procedure.

\section{Computational complexity analysis}
\label{sec:complexity}

This section analyzes the computational complexities of the SFFT-LIN/SIFFT-LIN and SFFT-TRN/SIFFT-TRN algorithms. The symmetry operations and orbit representatives are precomputed for a fixed group, grid and shift; the estimates below describe transform execution with these inputs. We evaluate the algorithm complexity from the following two aspects
\begin{enumerate}
	\item \textbf{Arithmetic complexity}, primarily measured by the number of complex multiplications. 
	\item \textbf{Data access complexity}, measured by the number of non-contiguous memory index accesses (Contiguous accesses are considered low-order overhead). 
\end{enumerate}
On actual hardware, non-contiguous memory access significantly affects performance, but its cost is highly dependent on the specific implementation; therefore, this paper only provides a qualitative discussion. 

The following arithmetic complexity estimations are based on the radix-2 Cooley--Tukey FFT and count only the number of complex multiplications. Accordingly, the arithmetic complexity of a 1D FFT/IFFT of length $D$ is $\frac{1}{2}D\log_2 D$. In the same way, the constant matrices $\bm T_{c,r}$ and $\bm T_{c,t}$ are DFT matrices of finite abelian groups of orders $D_0$ and $d$, respectively (see \cref{def:linear_action_matrix,thm:Tc_inv,thm:Tc_inv_trans}); a full matrix--vector product with either of them is therefore counted at the cost of the corresponding fast transform, rather than at the dense-multiplication cost $D_0^2$ or $d^2$. Let the total grid size be $D=\det\bm A$. For the nontrivial linear reduction, let $D=D_0D_1$, where $D_0=\det\bm A_R^0$ and $D_1=\det\bm A_R^1$. For the pure translational reduction, similarly let $D=D_0D_1$, where $D_0=\det\bm A_T^0$ and $D_1=\det\bm A_T^1$. 

\subsection{Computational complexity of nontrivial linear reduction}

\subsubsection{SFFT-LIN}

The computational complexity of SFFT-LIN can be estimated through the following three steps:
\begin{enumerate}
	\item Performing a local FFT with respect to $\bm x_1$ for each $f_{\bm A, \bm s}([\bm A_R^0\bm x_1+\bm y])$ yields an arithmetic complexity of $|Y_{\mathrm{ref}}|\cdot \frac12 D_1\log_2 D_1$ and a non-contiguous access complexity of $0$. 
	\item Assembling the local transform vector $\hat{\bm V}(\bm h_1)$ yields an arithmetic complexity of $0$ and a non-contiguous access complexity of $(D_0-|Y_{\mathrm{ref}}|)D_1$. 
	\item Computing $\hat{\bm F}(\bm h_1)=\bm T_{c,r}\,\bm T_{s,r}(\bm h_1)\,\hat{\bm V}(\bm h_1)$ yields an arithmetic complexity of $\bigl(\frac12 D_0\log_2 D_0 + D_0\bigr)D_1$ and a non-contiguous access complexity of $0$. 
\end{enumerate}
Therefore, the computational complexity of SFFT-LIN is
\begin{itemize}
	\item Arithmetic complexity: $|Y_{\mathrm{ref}}|\cdot \frac12 D_1\log_2 D_1 + \bigl(\frac12 D_0\log_2 D_0 + D_0\bigr)D_1$.
	\item Data access complexity: $(D_0 - |Y_{\mathrm{ref}}|)D_1$.
\end{itemize}

\subsubsection{SIFFT-LIN}

The computational complexity of SIFFT-LIN can be estimated through the following two steps:

\begin{enumerate}
	\item Computing $\hat{V}_{\bm y}([\bm h_1])$ on the orbit representatives from the global Fourier coefficient vector $\hat{\bm F}(\bm h_1)$ yields an arithmetic complexity of $\min\bigl(|Y_{\mathrm{ref}}|(D_0+1),\, \frac12 D_0\log_2 D_0 + D_0\bigr)D_1$, since the required components can be obtained either by evaluating the $|Y_{\mathrm{ref}}|$ inner products directly or by applying the fast inverse transform associated with $\bm T_{c,r}^*$ and retaining the required components, and a data access complexity of $0$. 
	\item Performing a local IFFT with respect to $\bm h_1$ for each $\hat{V}_{\bm y}([\bm h_1])$ yields an arithmetic complexity of $|Y_{\mathrm{ref}}|\cdot \frac12 D_1\log_2 D_1$ and a data access complexity of $0$. 
\end{enumerate}
Therefore, the computational complexity of SIFFT-LIN is
\begin{itemize}
	\item Arithmetic complexity: $|Y_{\mathrm{ref}}|\cdot \frac12 D_1\log_2 D_1 + \min\bigl(|Y_{\mathrm{ref}}|(D_0+1),\, \frac12 D_0\log_2 D_0 + D_0\bigr)D_1$.
	\item Data access complexity: $0$.
\end{itemize}
The arithmetic complexity of SIFFT-LIN is no higher than that of SFFT-LIN, and it completely avoids non-contiguous memory accesses.

Next, we analyze its asymptotic speedup ratio. As $D_1\to\infty$ (with $D_0$ fixed, determined by the symmetry), the standard FFT complexity is $\frac12 D\log_2 D\sim \frac12 D_0 D_1\log_2 D_1$, whereas the dominant term for both SFFT-LIN and SIFFT-LIN is $|Y_{\mathrm{ref}}|\cdot \frac12 D_1\log_2 D_1$. Therefore, the theoretical speedup ratio for both is
\[
	\frac{D_0}{|Y_{\mathrm{ref}}|}.
\]

\subsection{Computational complexity of pure translational reduction}

From \cref{thm:zeta-count}, we have $|H_{\chi}| = |Y_{\mathrm{ref}}| = d = D_1/M$. In this scenario, all data accesses are contiguous; hence, the data access complexity is negligible. 

\subsubsection{SFFT-TRN and SIFFT-TRN}

Their arithmetic complexities are identical and can be estimated through the following two steps.
\begin{enumerate}
	\item The matrix-vector multiplication (either $\bm V(\bm x_0) = \bm T_{s,t}(\bm x_0)\bm T_{c,t}\bm F(\bm x_0)$ or its inverse formulation) yields an arithmetic complexity of $\bigl(\frac12 d\log_2 d + d\bigr)D_0$. 
	\item Performing the FFT (or IFFT) with respect to $\bm x_0$ (or $\bm h_0$) for each $V_{\bm h_1}(\bm x_0)$ (or $\hat f_{\bm A,\bm s}\left([\bm A_T^{1}\bm h_0 + \bm h_1]\right)$) yields an arithmetic complexity of $d\cdot \frac12 D_0\log_2 D_0$. 
\end{enumerate}
The total arithmetic complexity is
\[
	d\cdot \frac12 D_0\log_2 D_0 + \bigl(\frac12 d\log_2 d + d\bigr)D_0.
\]

Next, we analyze their asymptotic speedup ratio. As $D_0\to\infty$ (with $D_1$ fixed, determined by the symmetry), the standard FFT complexity is $\frac12 D\log_2 D\sim \frac12 D_0 D_1\log_2 D_0$, whereas the dominant term for SFFT-TRN/SIFFT-TRN is $d\cdot \frac12 D_0\log_2 D_0$. Therefore, the theoretical speedup ratio is
\[
	\frac{D_1}{d} = M = |G_{\bm A}|.
\]
That is, the speedup ratio equals the order of the discrete symmetry group.

\section{Recursive symmetry reduction of the SFFT for \texorpdfstring{$Im\bar{3}m$}{Im3m}}
\label{sec:algorithm_example}

This section takes the 3D space group $Im\bar{3}m$ (No. 229 in the International Tables for Crystallography) as an example to illustrate how the mathematical framework established previously can be specifically applied to the recursive reduction of the SFFT algorithm.

\begin{remark}
	Since the entire derivation focuses on the recursive reduction of the SFFT, the subsequent demonstrations in this section will be organized around the SFFT as the main thread. A completely analogous recursive derivation can be performed for the SIFFT, where each step corresponds to its SFFT counterpart; thus it will not be repeated here.
\end{remark}

Consider the case of $n=3$, where the translation subgroup is $T = \{ (\bm I_3, \bm t) : \bm t \in \mathbb{Z}^3 \}$. Let $\mathcal{S}_c \subset \operatorname{Aff}(\mathbb{R}^3)$ be a finite set consisting of the following six generators.
\[
	\mathcal{S}_c \coloneqq \{ g_{\alpha}, g_{\beta}, g_{\gamma}, g_{\sigma}, g_{\psi}, g_{\tau} \},
\]
where
\begin{itemize}
	\item $g_{\alpha} = (\bm R_{g_\alpha}, \bm 0)$, $\bm R_{g_\alpha} = \operatorname{diag}(-1, 1, 1)$.
	\item $g_{\beta} = (\bm R_{g_\beta}, \bm 0)$, $\bm R_{g_\beta} = \operatorname{diag}(1, -1, 1)$.
	\item $g_{\gamma} = (\bm R_{g_\gamma}, \bm 0)$, $\bm R_{g_\gamma} = \operatorname{diag}(1, 1, -1)$.
	\item $g_{\sigma} = (\bm R_{g_\sigma}, \bm 0)$, $\bm R_{g_\sigma} = \begin{pmatrix} 0 & 1 & 0 \\ 0 & 0 & 1 \\ 1 & 0 & 0 \end{pmatrix}$.
	\item $g_{\psi} = (\bm R_{g_\psi}, \bm 0)$, $\bm R_{g_\psi} = \begin{pmatrix} 1 & 0 & 0 \\ 0 & 0 & 1 \\ 0 & 1 & 0 \end{pmatrix}$.
	\item $g_{\tau} = (\bm I_3, \bm t_{g_\tau})$, $\bm t_{g_\tau} = (\tfrac{1}{2}, \tfrac{1}{2}, \tfrac{1}{2})^\mathsf{T}$.
\end{itemize}
The space group $\mathcal{G} = \langle T \cup \mathcal{S}_c \rangle_{\operatorname{Aff}(\mathbb{R}^3)}$ satisfies all the conditions outlined in \cref{def:space_group} and corresponds to the space group $Im\bar{3}m$ in the International Tables for Crystallography \cite{Aroyo2016ITC_A}.

To apply the discretization framework introduced in \cref{subsec:grid_definition}, let $m\geq 4$ be an integer (to ensure the grid is sufficiently large to fully demonstrate the subsequent four-layer recursive reduction process), the discretization matrix $\bm A = N \bm I_3$ where $N = 2^m$, and the translation vector $\bm s = (\tfrac{1}{2}, \tfrac{1}{2}, \tfrac{1}{2})^\mathsf{T}$. The six generators satisfy the admissibility condition, and the admissible elements are closed under multiplication ($\bm z_{g_1g_2}=\bm M_{g_1}\bm z_{g_2}+\bm z_{g_1}$); hence $(\bm A,\bm s)$ is $G$-admissible.

If $f$ possesses $G$-symmetry, then its discretization $f_{\bm A,\bm s}$ inherits this symmetry; that is, $f_{\bm A,\bm s}(\Psi_{\bar g}([\bm x])) = f_{\bm A,\bm s}([\bm x])$ holds for any $\bar g\in G$. To elucidate the action of the generators of $G_{\bm A}$, let $\bm x = (x_1,x_2,x_3)^\mathsf{T} \in \mathcal{D}_{\bm A}$ be the standard representative, which yields the discrete actions corresponding to the six generators.
\begin{align*}
	\Psi_{\bar g_\alpha}([\bm x]) & = [(-x_1-1,x_2,x_3)^\mathsf{T}],          \\
	\Psi_{\bar g_\beta}([\bm x])  & = [(x_1,-x_2-1,x_3)^\mathsf{T}],          \\
	\Psi_{\bar g_\gamma}([\bm x]) & = [(x_1,x_2,-x_3-1)^\mathsf{T}],          \\
	\Psi_{\bar g_\sigma}([\bm x]) & = [(x_2,x_3,x_1)^\mathsf{T}],             \\
	\Psi_{\bar g_\psi}([\bm x])   & = [(x_1,x_3,x_2)^\mathsf{T}],             \\
	\Psi_{\bar g_\tau}([\bm x])   & = [(x_1+N/2,x_2+N/2,x_3+N/2)^\mathsf{T}].
\end{align*}
Define four key subgroups: $G_1=\langle\Psi_{\bar g_\alpha},\Psi_{\bar g_\beta},\Psi_{\bar g_\gamma}\rangle$ (mirror reflection subgroup), $G_2=\langle\Psi_{\bar g_\sigma}\rangle$ (three-fold rotation subgroup), $G_3=\langle\Psi_{\bar g_\psi}\rangle$ (coordinate permutation subgroup), and $G_4=\langle\Psi_{\bar g_\tau}\rangle$ (body-centered translation subgroup). Then $G_{\bm A}=\langle G_1\cup G_2\cup G_3\cup G_4\rangle$. To simplify the subsequent representation and indexing of standard representatives, we introduce the notation $\bm y_{abc}\coloneqq (a, b, c)^\mathsf{T}\in\mathbb{Z}^3$. To distinguish between spatial and frequency indices, the standard frequency representative is denoted by $\bm k_{abc}\coloneqq (a,b,c)^\mathsf{T}\in\mathbb Z^3$.

\cref{fig:im3m_recursive_flow} gives an overview of the four-layer recursion for the $Im\bar{3}m$ case. When the recursion termination conditions are met, the standard FFT is called directly. Otherwise, the symmetry reduction for the nontrivial linear-action case is performed sequentially (the first three layers, progressively shrinking the residual symmetry group), and the reduction for the pure translation symmetry is conducted in the final layer, thereby yielding several subproblems that can be evaluated directly via FFT.
\begin{figure}[tbp]
	\centering
	\includegraphics[width=0.96\linewidth]{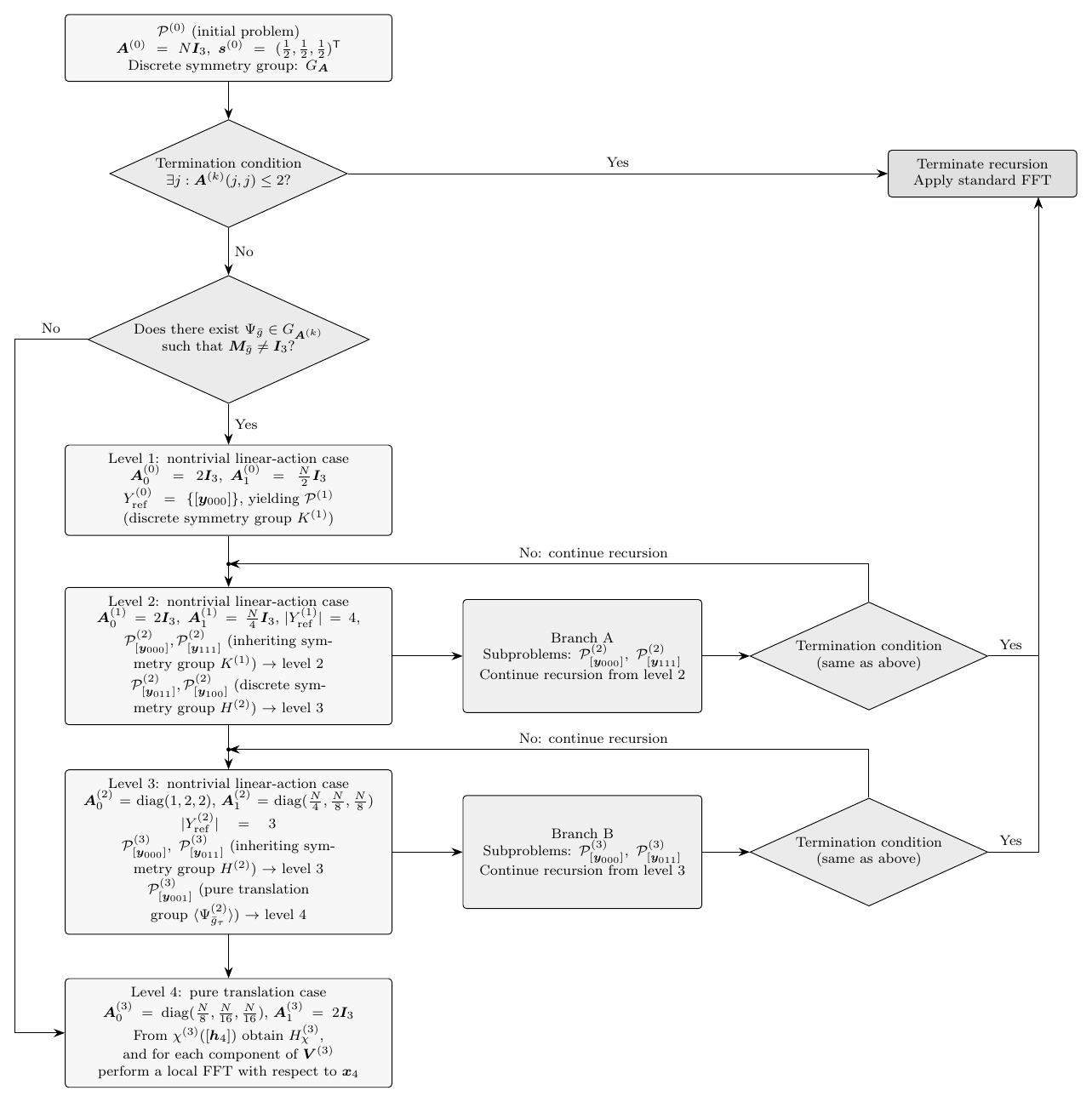}
	\caption{Flowchart of the recursive symmetry reduction of SFFT for the $Im\bar{3}m$ case (four-layer example).}
	\label{fig:im3m_recursive_flow}
\end{figure}

\begin{remark}
	\label{rem:unified_computation}
	In the subsequent layers of symmetry reduction, the cardinality of the quotient group $X_{\bm A_0^{(l)}}$ (or $X_{\bm A_1^{(l)}}$ for the pure translation case) is minimal (e.g., $8$ or $4$). The permutation action of the discrete symmetry group on these finite index sets can be directly computed and determined using the action formulas derived earlier for the corresponding cases.
	Therefore, the subsequent propositions regarding the orbit partitioning and stabilizer subgroups for each layer can be rigorously verified by directly computing the action of the group generators on the finite sets and combining them with the orbit-stabilizer theorem. To avoid repetition, apart from the first layer of reduction where a detailed derivation is provided as an illustrative example, the lengthy modular arithmetic is omitted in the subsequent layers, and only the verification results will be presented directly.
\end{remark}

\subsection{Layer 1 reduction (Case I): symmetry reduction under \texorpdfstring{$G_{\bm A^{(0)}}$}{G\_A(0)}}\label{subsec:G1_draft}

The parameters of the initial problem $\mathcal{P}^{(0)}$ are $\bm A^{(0)}=N\bm I_3$, $\bm s^{(0)}=(\tfrac12,\tfrac12,\tfrac12)^\mathsf{T}$, and the discrete function is $f^{(0)}=f_{\bm A,\bm s}$. Since the initial grid parameters satisfy the admissibility condition and the original function possesses $G$-symmetry, \cref{rem:base_case} indicates that this initial problem is well-formed. Furthermore, the explicit actions of the aforementioned generators show that the linear part of, for instance, $\Psi_{\bar g_\alpha}\in G_{\bm A^{(0)}}=G_{\bm A}$ satisfies $\bm M_{\bar g_\alpha}\neq \bm I_3$; thus, this initial problem falls under Case I. We then choose a $G_{\bm A}$-compatible decomposition $\bm A^{(0)}=\bm A_0^{(0)}\bm A_1^{(0)}$, where $\bm A_0^{(0)}=2\bm I_3$ and $\bm A_1^{(0)}=(N/2)\bm I_3$. Under the above decomposition and grid parameters, we define
\[
	\Theta_R^{(0)}\coloneqq \left\{X_R^{(0),[\bm y]}:[\bm y]\in X_{\bm A_0^{(0)}}\right\},\qquad
	X_R^{(0),[\bm y]}\coloneqq \left\{[\bm A_0^{(0)}\bm x_1+\bm y]:[\bm x_1]\in X_{\bm A_1^{(0)}}\right\}.
\]

\begin{proposition}
	\label{prop:first_layer_orbit_draft}
	The orbit of the representative layer $X_R^{(0),[\bm y_{000}]}$ is
	\[
		O_{[\bm y_{000}]} = \Theta_R^{(0)} = \left\{ X_R^{(0),[\bm y]} : [\bm y]\in X_{\bm A_0^{(0)}} \right\}.
	\]
	Its stabilizer subgroup is $K\coloneqq \langle G_2\cup G_3\cup G_4\rangle$, and a set of left coset representatives with respect to $K$ can be chosen as
	\[
		\mathcal{R}([\bm y_{000}]) = G_1 = \langle\Psi_{\bar g_\alpha},\Psi_{\bar g_\beta},\Psi_{\bar g_\gamma}\rangle,\qquad |\mathcal{R}([\bm y_{000}])|=8.
	\]
\end{proposition}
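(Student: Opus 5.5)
The plan is to compute directly how the six generators permute the eight layers indexed by $X_{\bm A_0^{(0)}}=\mathbb Z^3/2\mathbb Z^3$, and then finish with the orbit--stabilizer theorem. By \cref{prop:layer_permutation}, $\Psi_{\bar g}$ sends $X_R^{(0),[\bm y]}$ to $X_R^{(0),[\bm M_{\bar g}\bm y+\bm z_g]}$, with the class taken modulo $2\mathbb Z^3$. From the explicit discrete actions listed above, the layer-level actions are as follows.
\begin{itemize}
\item $\Psi_{\bar g_\alpha}$ sends $y_1\mapsto -y_1-1\equiv y_1+1 \pmod 2$, so it flips the first coordinate of $\bm y$. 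Likewise $\Psi_{\bar g_\beta}$ and $\Psi_{\bar g_\gamma}$ flip the second and third coordinates.
\item $\Psi_{\bar g_\sigma}$ and $\Psi_{\bar g_\psi}$ permute the coordinates of $\bm y$, so they fix $[\bm y_{000}]$.
\item $\Psi_{\bar g_\tau}$ adds $N/2=2^{m-1}$ to every coordinate. Since $m\ge 4$, this is even, so $\Psi_{\bar g_\tau}$ fixes every layer.
\end{itemize}
Consequently $G_1\cong\mathrm{Cyc}_2^3$ acts on $X_{\bm A_0^{(0)}}$ by all eight coordinate flips. This action is simply transitive, so the orbit of $X_R^{(0),[\bm y_{000}]}$ is all of $\Theta_R^{(0)}$, and no two distinct elements of $G_1$ send $[\bm y_{000}]$ to the same layer.

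Next I determine $|G_{\bm A}|$. The homomorphism $\Psi:G\to G_{\bm A}$ is injective. Indeed, an element of its kernel must have $\bm M_{\bar g}=\bm I_3$, hence $\bm R_g=\bm I_3$, so it lies in $T_{\mathcal G}/T=\{\bar e,\bar g_\tau\}$. But $\Psi_{\bar g_\tau}$ moves points by $N/2\not\equiv 0 \pmod N$, so it is not the identity. Therefore $|G_{\bm A}|=|G|=96$, using the order computed in \cref{sec:Solvable-Series}. The orbit--stabilizer theorem then gives $|G_{[\bm y_{000}]}|=96/8=12$.

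It remains to identify this stabilizer with $K$. The subgroup $\langle\Psi_{\bar g_\sigma},\Psi_{\bar g_\psi}\rangle$ acts by coordinate permutations, so it is isomorphic to $\mathrm{Sym}_3$ and has order $6$. The element $\Psi_{\bar g_\tau}$ commutes with coordinate permutations and is not itself a permutation, since it is a nontrivial translation. Hence $K\cong\mathrm{Sym}_3\times\mathrm{Cyc}_2$ and $|K|=12$. By the layer computation above, every generator of $K$ fixes $X_R^{(0),[\bm y_{000}]}$, so $K\le G_{[\bm y_{000}]}$. Equality of orders then forces $K=G_{[\bm y_{000}]}$.

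Finally, $G_1\cap K=\{\mathrm{id}\}$, because the only element of $G_1$ fixing $[\bm y_{000}]$ is the identity. The eight left cosets $\Psi K$ with $\Psi\in G_1$ are therefore distinct. Since the index is $[G_{\bm A}:K]=8$, they exhaust $G_{\bm A}$, and $G_1$ is a system of left coset representatives with $|\mathcal R([\bm y_{000}])|=8$.

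I expect the main obstacle to be the careful justification of $|G_{\bm A}|=96$, that is, the faithfulness of $\Psi$, and of $|K|=12$. The rest is routine modular arithmetic on an eight-element set. I would handle faithfulness through the linear-part map as sketched above.
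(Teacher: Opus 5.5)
Your proof is correct and follows the same route as the paper. Both arguments compute the induced action of the generators on the eight layers modulo $2\mathbb Z^3$, read off the full orbit from the mirror subgroup $G_1$, check that the generators of $K$ fix $X_R^{(0),[\bm y_{000}]}$, and conclude by the orbit--stabilizer theorem together with equality of orders. Your faithfulness argument for $|G_{\bm A}|=96$ and your direct identification $K\cong\mathrm{Sym}_3\times\mathrm{Cyc}_2$ supply justifications the paper only asserts: it cites $|G_{\bm A}|=96$ from the International Tables and takes $|K|=|G_{\bm A}|/|G_1|$ without proof.
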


\begin{proof}
	As a computational paradigm for \cref{rem:unified_computation}, a detailed derivation is provided here. By \cref{prop:layer_permutation}, the permutation action induced by $\Psi_{\bar g}\in G_{\bm A}$ on the layer set $\Theta_R^{(0)}$ is given by $X_R^{(0),[\bm y]}\mapsto X_R^{(0),[\bm M_{\bar g}\bm y+\bm z_g]}$. In particular, when taking the representative layer $X_R^{(0),[\bm y_{000}]}$, its image $X_R^{(0),[\bm z_g]}$ under the action of $\Psi_{\bar g}$ is entirely determined by the discrete translation term $[\bm z_g] \in X_{2\bm I_3}$.

	\noindent\textbf{(1) Orbit. }
	From \cref{prop:discrete_action_formula}, we have $\bm z_g=(\bm M_g-\bm I_3)\bm s+N\bm t_g$. Since the congruence class $[\bm z_g]$ of this affine discrete translation term in the quotient group $X_{2\bm I_3}$ is independent of the choice of the representative $g$ in $\mathcal{G}$ (see \cref{lem:rep-independence}), for computational convenience, we examine the mirror subgroup $G_1=\langle\Psi_{\bar g_\alpha},\Psi_{\bar g_\beta},\Psi_{\bar g_\gamma}\rangle$ and select representatives $g$ satisfying the translation part $\bm t_g=\bm 0$ for each of its generators $\Psi_{\bar g}$. Substituting $\bm s=(\tfrac12,\tfrac12,\tfrac12)^\mathsf T$, we calculate the discrete translation terms $\bm z_g$ for these three operations as $(-1,0,0)^\mathsf T$, $(0,-1,0)^\mathsf T$, and $(0,0,-1)^\mathsf T$, respectively. As $\Psi_{\bar g}$ traverses the $8$ elements of $G_1$, the generated discrete translation terms $[\bm z_g]$ exactly span all $8$ independent congruence classes $[\bm y_{abc}]$ ($a,b,c\in\{0,1\}$) in the quotient group $X_{2\bm I_3}$. Thus, $O_{[\bm y_{000}]} = \Theta_R^{(0)}$.

	\noindent\textbf{(2) Stabilizer Subgroup. }
	First, we verify that the representative layer $X_R^{(0),[\bm y_{000}]}$ remains invariant under the action of the group $K$. Based on the same representative independence, for the generators $\Psi_{\bar g_\sigma}$ and $\Psi_{\bar g_\psi}$, we similarly choose their representatives $g$ in $\mathcal{G}$ satisfying $\bm t_g=\bm 0$, which yields $(\bm M_g-\bm I_3)\bm s=\bm 0$, and hence $\bm z_{g_\sigma}=\bm z_{g_\psi}=\bm 0$. For the body-centered translation generator $\Psi_{\bar g_\tau}$, we choose its representative such that $\bm M_{g_\tau}=\bm I_3$ accompanied by the fractional translation $\bm t_{g_\tau}=(\tfrac12,\tfrac12,\tfrac12)^\mathsf T$, yielding $\bm z_{g_\tau}=N\bm t_{g_\tau}=(N/2,N/2,N/2)^\mathsf T$. Given the condition $m\ge 4$, halving $N$ gives $N/2=2^{m-1}$, which is an even number; therefore, in the quotient group $X_{2\bm I_3}$, we have $[\bm z_{g_\tau}]=[\bm 0]\pmod{2}$. Thus, $K=\langle\Psi_{\bar g_\sigma},\Psi_{\bar g_\psi},\Psi_{\bar g_\tau}\rangle \subseteq \operatorname{Stab}_{G_{\bm A}}\bigl(X_R^{(0),[\bm y_{000}]}\bigr)$.

	It is known that the order of the discrete symmetry group in this example is $|G_{\bm A}|=96$ \cite{Aroyo2016ITC_A}, and according to (1), the orbit size is $|O_{[\bm y_{000}]}|=8$. By the orbit-stabilizer theorem, the order of its stabilizer subgroup must be $96/8=12$. On the other hand, from the group order relation, we have $|K|=|G_{\bm A}|/|G_1|=12$. Since both have the same order and satisfy the inclusion relation, $\operatorname{Stab}_{G_{\bm A}}\bigl(X_R^{(0),[\bm y_{000}]}\bigr)=K$. Finally, because the $8$ operations of $G_1$ exactly map this representative layer to $8$ distinct layers within the orbit, we can directly take $\mathcal R([\bm y_{000}])=G_1$ as a set of left coset representatives.
\end{proof}

The orbit representative set is $Y_{\mathrm{ref}}^{(0)}=\{[\bm y_{000}]\}$. Lexicographically ordering the row index sequence $X_{\bm A_0^{(0)}}^{\mathrm{lex}}=(\bm y_{000},\bm y_{001},\dots,\bm y_{111})$ and the column index sequence $\mathcal{I}^{\mathrm{lex}}=((0,0),\dots,(0,7))$. Combined with \cref{eq:core_matrix_relation_linear_action}, for any $\bm h_1\in \mathcal{D}_{\bm A_1^{(0)}}$,
\[
	\hat{\bm F}^{(0)}(\bm h_1) = \bm T_{c,r}^{(0)}\,\bm T_{s,r}^{(0)}(\bm h_1)\,\hat{\bm V}^{(0)}(\bm h_1),
\]
where $\bm T_{c,r}^{(0)}$ is an $8\times 8$ constant matrix (see Appendix A for its explicit form), $\bm T_{s,r}^{(0)}(\bm h_1)$ is a phase diagonal matrix, and $\hat{\bm V}^{(0)}$ is assembled by rearranging a single local DFT $\hat{V}_{\bm y_{000}}$ via symmetries. Therefore, this layer only requires $1$ instance of a 3D FFT of size $(N/2)^3$.

This layer of reduction generates a unique subproblem $\mathcal{P}^{(1)}$, with $\bm A^{(1)}=(N/2)\bm I_3$, $\bm s^{(1)}=(\tfrac14,\tfrac14,\tfrac14)^\mathsf{T}$, $\mathcal{G}^{(1)}=\{ g\in\mathcal{G}^{(0)} : \Psi_{\bar g}^{(0)}(X_R^{(0),[\bm y_{000}]})=X_R^{(0),[\bm y_{000}]}\}$, and the new function $f^{(1)}([\bm x_1])=f^{(0)}([2\bm x_1])$. The discrete symmetry group of this subproblem is denoted as
\[
	K^{(1)}\coloneqq G_{\bm A^{(1)}}\coloneqq \left\langle \Psi_{\bar g_\sigma}^{(1)},\,\Psi_{\bar g_\psi}^{(1)},\,\Psi_{\bar g_\tau}^{(1)}\right\rangle.
\]
Here, for the standard representative $\bm x_1=(x_1,x_2,x_3)^\mathsf T$ of any $[\bm x_1]\in X_{\bm A^{(1)}}$, the explicit discrete actions of the three generators are given by
\begin{align*}
	\Psi_{\bar g_\sigma}^{(1)}([\bm x_1]) & =[(x_2,x_3,x_1)^\mathsf T],             \\
	\Psi_{\bar g_\psi}^{(1)}([\bm x_1])   & =[(x_1,x_3,x_2)^\mathsf T],             \\
	\Psi_{\bar g_\tau}^{(1)}([\bm x_1])   & =[(x_1+N/4,x_2+N/4,x_3+N/4)^\mathsf T].
\end{align*}
Since the residual symmetry group $K^{(1)}$ is nontrivial, we can proceed to the next layer of symmetry reduction.

\subsection{Layer 2 reduction (case I): symmetry reduction under \texorpdfstring{$K^{(1)}$}{K(1)}}
\label{subsec:second_layer_draft}

Now consider the well-formed subproblem $\mathcal{P}^{(1)}$ generated in the first layer (whose well-formedness is guaranteed by \cref{prop:well_formed_inheritance}), with its discrete symmetry group being $K^{(1)}$. Since the linear parts of $\Psi_{\bar g_\sigma}^{(1)}$ and $\Psi_{\bar g_\psi}^{(1)}$ are not identity matrices, this layer still falls under Case I. We choose the decomposition
\[
    \bm A^{(1)} \coloneqq \bm A_1^{(0)} = \bm A_0^{(1)}\bm A_1^{(1)},\qquad \bm A_0^{(1)}=2\bm I_3,\quad \bm A_1^{(1)}=(N/4)\bm I_3.
\]
Under the above decomposition and grid parameters, we define
\[
	\Theta_R^{(1)}\coloneqq \left\{X_R^{(1),[\bm y]}:[\bm y]\in X_{\bm A_0^{(1)}}\right\},\qquad
	X_R^{(1),[\bm y]}\coloneqq \left\{[\bm A_0^{(1)}\bm x_2+\bm y]:[\bm x_2]\in X_{\bm A_1^{(1)}}\right\}.
\]

\begin{proposition}
	\label{prop:second_layer_orbits_draft}
	The permutation action induced by the group $K^{(1)}$ on $\Theta_R^{(1)}$ yields four orbits
	\[
		\begin{aligned}
			O_{[\bm y_{000}]} & =\left\{X_R^{(1),[\bm y_{000}]}\right\},                                                     \\
			O_{[\bm y_{011}]} & =\left\{X_R^{(1),[\bm y_{011}]},\,X_R^{(1),[\bm y_{110}]},\,X_R^{(1),[\bm y_{101}]}\right\}, \\
			O_{[\bm y_{100}]} & =\left\{X_R^{(1),[\bm y_{100}]},\,X_R^{(1),[\bm y_{010}]},\,X_R^{(1),[\bm y_{001}]}\right\}, \\
			O_{[\bm y_{111}]} & =\left\{X_R^{(1),[\bm y_{111}]}\right\}.
		\end{aligned}
	\]
	Accordingly, their stabilizer subgroups and sets of left coset representatives can be chosen as follows.
	\begin{itemize}
		\item For both $X_R^{(1),[\bm y_{000}]}$ and $X_R^{(1),[\bm y_{111}]}$, the stabilizer subgroup is $K^{(1)}$, and
		      \[
			      \mathcal R([\bm y_{000}])=\mathcal R([\bm y_{111}])=\{\mathrm{id}\}.
		      \]
		\item For both $X_R^{(1),[\bm y_{011}]}$ and $X_R^{(1),[\bm y_{100}]}$, the stabilizer subgroup is
		      \[
			      \langle \Psi_{\bar g_\psi}^{(1)},\Psi_{\bar g_\tau}^{(1)}\rangle \coloneqq H,
		      \]
		      and
		      \[
			      \mathcal R([\bm y_{011}])=\mathcal R([\bm y_{100}])
			      =\{\mathrm{id},\,\Psi_{\bar g_\sigma}^{(1)},\,(\Psi_{\bar g_\sigma}^{(1)})^2\}.
		      \]
	\end{itemize}
\end{proposition}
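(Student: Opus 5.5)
Following \cref{rem:unified_computation}, the plan is a direct finite computation on the eight layers of $\Theta_R^{(1)}$, indexed by $X_{2\bm I_3}=\{[\bm y_{abc}]:a,b,c\in\{0,1\}\}$, followed by the orbit--stabilizer theorem. By \cref{prop:layer_permutation}, applied to the subproblem $\mathcal P^{(1)}$ (well-formed by \cref{prop:well_formed_inheritance}), each $\Psi^{(1)}_{\bar g}$ acts on layers by $[\bm y]\mapsto[\bm M^{(1)}_{\bar g}\bm y+\bm z^{(1)}_g]$ modulo $2$. The first task is therefore to compute the pair $(\bm M^{(1)},\bm z^{(1)})$ for each of the three generators of $K^{(1)}$.

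From the explicit actions listed after \cref{prop:first_layer_orbit_draft}, the generators $\Psi^{(1)}_{\bar g_\sigma}$ and $\Psi^{(1)}_{\bar g_\psi}$ are pure coordinate permutations with $\bm z=\bm 0$. The generator $\Psi^{(1)}_{\bar g_\tau}$ has $\bm M=\bm I_3$ and $\bm z=(N/4)(1,1,1)^\mathsf T$. Since $m\ge4$, the integer $N/4=2^{m-2}$ is even, so $\Psi^{(1)}_{\bar g_\tau}$ fixes every layer modulo $2$. Consequently, the induced action of $K^{(1)}$ on $\Theta_R^{(1)}$ is just the action of the permutation group $\langle\sigma,\psi\rangle\cong\mathrm{Sym}_3$ on coordinates of $\{0,1\}^3$. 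Its orbits are the sets of binary vectors of fixed Hamming weight $0,1,2,3$. This gives exactly the four listed orbits, of sizes $1,3,3,1$, with the orders inside each orbit obtained by applying $\sigma$: for example, $\bm y_{011}\mapsto\bm y_{110}\mapsto\bm y_{101}$ under $(x_1,x_2,x_3)\mapsto(x_2,x_3,x_1)$.

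For the stabilizers, the weight-$0$ and weight-$3$ layers are fixed by every generator, so their stabilizer is all of $K^{(1)}$ and $\mathcal R=\{\mathrm{id}\}$. For $\bm y_{100}$ and $\bm y_{011}$, the swap $\psi:(x_1,x_2,x_3)\mapsto(x_1,x_3,x_2)$ fixes both vectors, and $\tau$ acts trivially on layers, so $H=\langle\Psi^{(1)}_{\bar g_\psi},\Psi^{(1)}_{\bar g_\tau}\rangle$ is contained in each stabilizer.

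To upgrade this inclusion to equality, I use orders. As in the first layer, $|K^{(1)}|=12$; this follows from the orbit--stabilizer count of \cref{prop:first_layer_orbit_draft}, once one checks that the map from the stabilizer onto $K^{(1)}$ is injective. That check uses \cref{lem:subgroup_inheritance} together with the fact that the local action separates elements: $\sigma,\psi$ generate a faithful $\mathrm{Sym}_3$ on $X_{\bm A^{(1)}}$, and $\tau$ is a nonzero translation since $N/4\not\equiv0\pmod{N/2}$. Moreover $|H|=4$, because $\psi$ and $\tau$ commute and are distinct involutions. Orbit--stabilizer then gives a stabilizer of order $12/3=4$, hence equal to $H$. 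Finally, $\mathrm{id},\sigma,\sigma^2$ send $\bm y_{100}$ to the three distinct layers $\bm y_{100},\bm y_{010},\bm y_{001}$, and similarly send $\bm y_{011}$ to three distinct layers, so $\{\mathrm{id},\Psi^{(1)}_{\bar g_\sigma},(\Psi^{(1)}_{\bar g_\sigma})^2\}$ is a left transversal of $H$.

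The main obstacle is the order bookkeeping: confirming that $|K^{(1)}|=12$ and $|H|=4$ as permutation groups of $X_{\bm A^{(1)}}$, and not just as abstract words in the generators. I would settle this by checking directly that the $12$ maps $\tau^{\epsilon}\pi$, with $\epsilon\in\{0,1\}$ and $\pi\in\mathrm{Sym}_3$, are pairwise distinct on $X_{\bm A^{(1)}}$. This holds because the permutation part is recovered from the linear part, and $\tau$ changes a point by a nonzero class.
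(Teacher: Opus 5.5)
Your proposal is correct and follows the paper's own argument: you compute the induced actions of the three generators on the eight layers indexed by $X_{2\bm I_3}$, observe that $\Psi^{(1)}_{\bar g_\tau}$ fixes every layer because $N/4$ is even, read off the orbits as the Hamming-weight classes, and combine $|K^{(1)}|=12$ with the orbit--stabilizer theorem to identify the stabilizers and transversals. Your direct check that the twelve maps $\tau^{\epsilon}\pi$ are pairwise distinct on $X_{\bm A^{(1)}}$, together with $|H|=4$, supplies the order count that the paper only asserts; the one slip is cosmetic and harmless because orbits are sets, namely that $\Psi^{(1)}_{\bar g_\sigma}$ sends $\bm y_{100}$ to $\bm y_{001}$, not to $\bm y_{010}$.
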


\begin{proof}
	The proof follows a similar idea to that of \cref{prop:first_layer_orbit_draft}. By combining the discrete action formulas of the generators $\Psi_{\bar g_\sigma}^{(1)}$ and $\Psi_{\bar g_\psi}^{(1)}$ and directly computing their induced actions on the finite quotient group $X_{2\bm I_3}$ (which has a cardinality of only $8$), one checks that their actions are equivalent to a 3-cycle permutation and a transposition of the coordinate components, respectively, while $\Psi_{\bar g_\tau}^{(1)}$ degenerates into the identity mapping. By enumerating the action trajectories of the generators on the quotient group, the four aforementioned orbits can be obtained. Furthermore, using $|K^{(1)}|=12$ and the orbit-stabilizer theorem, the order of the stabilizer subgroup corresponding to each orbit can be determined, which in turn allows for the verification and reconstruction of its corresponding generators and left coset representatives. The detailed verification is omitted; see \cref{rem:unified_computation}.
\end{proof}

Let the set of orbit representatives be $Y_{\mathrm{ref}}^{(1)}\coloneqq \{[\bm y_{000}],[\bm y_{011}],[\bm y_{100}],[\bm y_{111}]\}\subset X_{\bm A_0^{(1)}}$. Lexicographically order the sequence of orbit representatives as $Y_{\mathrm{ref}}^{(1),\mathrm{lex}}=(\bm y_{000},\bm y_{011},\bm y_{100},\bm y_{111})$. Lexicographically order the row index sequence
$X_{\bm A_0^{(1)}}^{\mathrm{lex}}=(\bm y_{000},\bm y_{001},\dots,\bm y_{111})$,
and choose the column index sequence as
\[
	\mathcal{I}^{\mathrm{lex}}=((0,0),(1,0),(1,1),(1,2),(2,0),(2,1),(2,2),(3,0)),
\]
where the second component is enumerated according to the order in the corresponding set of coset representatives $\mathcal R([\bm y])$.

Combined with \cref{eq:core_matrix_relation_linear_action}, for any $\bm h_2\in \mathcal{D}_{\bm A_1^{(1)}}$, we have
\[
	\hat{\bm F}^{(1)}(\bm h_2) = \bm T_{c,r}^{(1)}\,\bm T_{s,r}^{(1)}(\bm h_2)\,\hat{\bm V}^{(1)}(\bm h_2),
\]
where $\bm T_{c,r}^{(1)}$ is an $8\times8$ constant matrix, $\bm T_{s,r}^{(1)}(\bm h_2)$ is a phase diagonal matrix, and $\hat{\bm V}^{(1)}$ can be assembled by rearranging $4$ local DFTs $\{\hat{V}_{\bm y}:[\bm y]\in Y_{\mathrm{ref}}^{(1)}\}$ via symmetries. Therefore, this layer only requires $4$ instances of a 3D FFT of size $(N/4)^3$.

This layer of reduction generates four subproblems $\{\mathcal{P}_{[\bm y]}^{(2)}:[\bm y]\in Y_{\mathrm{ref}}^{(1)}\}$. For each subproblem, the parameters are $\bm A^{(2)}=\bm A_1^{(1)}=(N/4)\bm I_3$, $\bm s^{(2)}=(\bm A_0^{(1)})^{-1}(\bm y+\bm s^{(1)})$, $\mathcal{G}^{(2)}=\{ g\in\mathcal{G}^{(1)} : \Psi_{\bar g}^{(1)}(X_R^{(1),[\bm y]})=X_R^{(1),[\bm y]}\}$, and the corresponding new function is $f^{(2)}([\bm x_2])=f^{(1)}([2\bm x_2+\bm y])$.

The subproblems $\mathcal{P}_{[\bm y_{000}]}^{(2)}$ and $\mathcal{P}_{[\bm y_{111}]}^{(2)}$ inherit the symmetry group $K^{(1)}$, and thus can continue to invoke the reduction algorithm in this section. For the other two subproblems $\mathcal{P}_{[\bm y_{011}]}^{(2)}$ and $\mathcal{P}_{[\bm y_{100}]}^{(2)}$, they inherit the new symmetry group $H$. Denote their induced discrete symmetry group as
\[H^{(2)}\coloneqq G_{\bm A^{(2)}}\coloneqq \left\langle \Psi_{\bar g_\psi}^{(2)},\,\Psi_{\bar g_\tau}^{(2)}\right\rangle.\]
For the standard representative $\bm x_2=(x_1,x_2,x_3)^\mathsf T$ of any $[\bm x_2]\in X_{\bm A^{(2)}}$, the explicit discrete actions of these two generators are
\begin{align*}
	\Psi_{\bar g_\psi}^{(2)}([\bm x_2]) & =[(x_1,x_3,x_2)^\mathsf T],             \\
	\Psi_{\bar g_\tau}^{(2)}([\bm x_2]) & =[(x_1+N/8,x_2+N/8,x_3+N/8)^\mathsf T].
\end{align*}
Since the residual symmetry group $H^{(2)}$ is nontrivial, these two subproblems can still undergo the next layer of symmetry reduction with respect to $H^{(2)}$.

\subsection{Layer 3 reduction (case I): symmetry reduction under the \texorpdfstring{$H^{(2)}$}{H(2)} branch}
\label{subsec:third_layer_draft}

Now consider the well-formed subproblem $\mathcal{P}_{[\bm y_{011}]}^{(2)}$ generated in the second layer (its well-formedness is similarly guaranteed by \cref{prop:well_formed_inheritance}, and the case for $\mathcal{P}_{[\bm y_{100}]}^{(2)}$ is completely analogous). Its discrete symmetry group is $H^{(2)}$. Since the linear part of $\Psi_{\bar g_\psi}^{(2)}$ is not an identity matrix, this layer still belongs to Case I. We choose the decomposition
\[\bm A^{(2)}\coloneqq\bm A_1^{(1)} = \bm A_0^{(2)}\bm A_1^{(2)},\qquad \bm A_0^{(2)}=\operatorname{diag}(1,2,2),\quad \bm A_1^{(2)}=\operatorname{diag}(N/4,N/8,N/8).\]
Accordingly, under the above decomposition and grid parameters, for any $[\bm y]\in X_{\bm A_0^{(2)}}$, we define
\[\Theta_R^{(2)}\coloneqq \left\{X_R^{(2),[\bm y]}:[\bm y]\in X_{\bm A_0^{(2)}}\right\},\qquad
	X_R^{(2),[\bm y]}\coloneqq \left\{[\bm A_0^{(2)}\bm x_3+\bm y]:[\bm x_3]\in X_{\bm A_1^{(2)}}\right\}.\]

\begin{proposition}
	\label{prop:third_layer_orbits_draft}
	The permutation action induced by $H^{(2)}$ on $\Theta_R^{(2)}$ yields three orbits
	\[\begin{aligned}
			O_{[\bm y_{000}]} & =\left\{X_R^{(2),[\bm y_{000}]}\right\},                           \\
			O_{[\bm y_{001}]} & =\left\{X_R^{(2),[\bm y_{001}]},\,X_R^{(2),[\bm y_{010}]}\right\}, \\
			O_{[\bm y_{011}]} & =\left\{X_R^{(2),[\bm y_{011}]}\right\}.
		\end{aligned}\]
	Accordingly, their stabilizer subgroups and sets of left coset representatives can be chosen as follows.
	\begin{itemize}
		\item For both $X_R^{(2),[\bm y_{000}]}$ and $X_R^{(2),[\bm y_{011}]}$, the stabilizer subgroup is $H^{(2)}$, and
		      \[
			      \mathcal R([\bm y_{000}])=\mathcal R([\bm y_{011}])=\{\mathrm{id}\}.
		      \]
		\item For $X_R^{(2),[\bm y_{001}]}$, the stabilizer subgroup is
		      \[
			      \langle\Psi_{\bar g_\tau}^{(2)}\rangle,
		      \]
		      and
		      \[
			      \mathcal R([\bm y_{001}])=\{\mathrm{id},\,\Psi_{\bar g_\psi}^{(2)}\}.
		      \]
	\end{itemize}
\end{proposition}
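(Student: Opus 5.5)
The plan is to follow the template of \cref{prop:first_layer_orbit_draft}. By \cref{prop:layer_permutation}, every $\Psi^{(2)}_{\bar g}\in H^{(2)}$ acts on $\Theta_R^{(2)}$ by $X_R^{(2),[\bm y]}\mapsto X_R^{(2),[\bm M^{(2)}_{\bar g}\bm y+\bm z^{(2)}_g]}$, with classes taken in $X_{\bm A_0^{(2)}}$. Since $\bm A_0^{(2)}=\operatorname{diag}(1,2,2)$, the first coordinate is always $0$, and $X_{\bm A_0^{(2)}}$ has only the four classes $[\bm y_{0bc}]$ with $b,c\in\{0,1\}$. The whole statement therefore reduces to a finite computation with the two generators, followed by an orbit--stabilizer count.

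\textbf{Step 0: compute the subproblem shift.} By \cref{def:subproblem_linear_action},
\[
\bm s^{(2)}=(2\bm I_3)^{-1}\bigl(\bm y_{011}+\bm s^{(1)}\bigr)=\bigl(\tfrac18,\tfrac58,\tfrac58\bigr)^{\mathsf T}.
\]

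\textbf{Step 1: check compatibility.} I will verify that $\bm A_0^{(2)}$ is $H^{(2)}$-compatible via \cref{prop:linear_compatibility}. The matrix $\bm M_{\bar g_\psi}$ swaps coordinates $2$ and $3$, and these carry equal diagonal entries $2$. Hence $(\bm A_0^{(2)})^{-1}\bm M_{\bar g_\psi}\bm A_0^{(2)}=\bm M_{\bar g_\psi}$, which is unimodular. The translation $\Psi^{(2)}_{\bar g_\tau}$ has identity linear part, so there is nothing to check for it.

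\textbf{Step 2: compute the discrete translation terms.} For $g_\psi$, take the representative with $\bm t=\bm 0$. Then $\bm z_{g_\psi}=(\bm M_{\bar g_\psi}-\bm I_3)\bm s^{(2)}=\bm 0$, because the second and third entries of $\bm s^{(2)}$ coincide. This agrees with the explicit action displayed before the statement. For $g_\tau$ we get $\bm z_{g_\tau}=(N/8)(1,1,1)^{\mathsf T}$, and $N/8=2^{m-3}$ is even because $m\ge4$. Consequently:
\begin{itemize}
\item $\Psi^{(2)}_{\bar g_\tau}$ acts trivially on $\Theta_R^{(2)}$;
\item $\Psi^{(2)}_{\bar g_\psi}$ acts as $[\bm y_{0bc}]\mapsto[\bm y_{0cb}]$.
\end{itemize}
Enumerating these four classes gives exactly the orbits $\{[\bm y_{000}]\}$, $\{[\bm y_{001}],[\bm y_{010}]\}$ and $\{[\bm y_{011}]\}$.

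\textbf{Step 3: determine $|H^{(2)}|$.} Both generators are involutions on $X_{\bm A^{(2)}}$, where $\bm A^{(2)}=(N/4)\bm I_3$. They commute, since the translation vector $(N/8)(1,1,1)^{\mathsf T}$ is fixed by the coordinate swap. Moreover $\Psi^{(2)}_{\bar g_\tau}\neq\mathrm{id}$, because $N/8\not\equiv0 \pmod{N/4}$. The permutation $\Psi^{(2)}_{\bar g_\psi}$ is not a pure translation, so it differs from both $\mathrm{id}$ and $\Psi^{(2)}_{\bar g_\tau}$. Hence $H^{(2)}\cong\mathrm{Cyc}_2\times\mathrm{Cyc}_2$ and $|H^{(2)}|=4$.

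\textbf{Step 4: stabilizers and coset representatives.} By the orbit--stabilizer theorem:
\begin{itemize}
\item the fixed layers $[\bm y_{000}]$ and $[\bm y_{011}]$ have stabilizer $H^{(2)}$, so $\mathcal R=\{\mathrm{id}\}$;
\item the layer $[\bm y_{001}]$ has a stabilizer of order $2$. It contains $\Psi^{(2)}_{\bar g_\tau}$ by Step 2, so it equals $\langle\Psi^{(2)}_{\bar g_\tau}\rangle$.
\end{itemize}
For the last layer, $\{\mathrm{id},\Psi^{(2)}_{\bar g_\psi}\}$ maps $X_R^{(2),[\bm y_{001}]}$ onto the two distinct layers of its orbit, so it is a valid system of left coset representatives.

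The main obstacle, where errors are most likely, is Steps 2--3. The sign and parity bookkeeping for $\bm z_g$ depends on the precise shift $\bm s^{(2)}$ and on $m\ge4$. Establishing $|H^{(2)}|=4$ also requires showing that $\Psi^{(2)}_{\bar g_\tau}$ remains nontrivial on the full index group $X_{\bm A^{(2)}}$ even though it acts trivially on the layers. For the sibling subproblem $\mathcal P^{(2)}_{[\bm y_{100}]}$, the shift is $(\tfrac58,\tfrac18,\tfrac18)^{\mathsf T}$, whose second and third entries again coincide, so the identical argument applies.
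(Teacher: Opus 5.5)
Your proposal is correct and follows the paper's route: the paper's proof simply asserts that the orbits, stabilizers and coset representatives follow from enumerating the generators' induced actions on the four-element quotient $X_{\bm A_0^{(2)}}$ and applying the orbit--stabilizer theorem. You supply the details the paper omits, namely $\bm s^{(2)}=(\tfrac18,\tfrac58,\tfrac58)^{\mathsf T}$, $\bm z_{g_\psi}=\bm 0$, the evenness of $N/8$, and $|H^{(2)}|=4$, and all of them check out.
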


\begin{proof}
	From the explicit discrete actions of the generators, the conclusions can be obtained via enumerative computations over the finite quotient group $X_{\bm A_0^{(2)}}$ (see \cref{rem:unified_computation} for details).
\end{proof}

Let the set of orbit representatives be $Y_{\mathrm{ref}}^{(2)}\coloneqq \{[\bm y_{000}],[\bm y_{001}],[\bm y_{011}]\}\subset X_{\bm A_0^{(2)}}$. Lexicographically order the sequence of orbit representatives as $Y_{\mathrm{ref}}^{(2),\mathrm{lex}}=(\bm y_{000},\bm y_{001},\bm y_{011})$. Lexicographically order the row index sequence $X_{\bm A_0^{(2)}}^{\mathrm{lex}}=(\bm y_{000},\bm y_{001},\bm y_{010},\bm y_{011})$,
and select the column index sequence as
\[\mathcal{I}^{\mathrm{lex}}=((0,0),(1,0),(1,1),(2,0)),\]
where the second component is enumerated according to the order in the corresponding set of coset representatives $\mathcal R([\bm y])$.

Combined with \cref{eq:core_matrix_relation_linear_action}, for any $\bm h_3\in \mathcal{D}_{\bm A_1^{(2)}}$, we have
\[\hat{\bm F}^{(2)}(\bm h_3) = \bm T_{c,r}^{(2)}\,\bm T_{s,r}^{(2)}(\bm h_3)\,\hat{\bm V}^{(2)}(\bm h_3),\]
where $\bm T_{c,r}^{(2)}$ is a $4 \times 4$ constant matrix, $\bm T_{s,r}^{(2)}(\bm h_3)$ is a phase diagonal matrix, and $\hat{\bm V}^{(2)}$ can be assembled by rearranging $3$ local DFTs $\{\hat{V}_{\bm y}:[\bm y]\in Y_{\mathrm{ref}}^{(2)}\}$ via symmetries. Therefore, this layer only requires $3$ instances of a 3D FFT of size $(N/4)\times(N/8)\times(N/8)$.

Consequently, the third layer reduction generates three subproblems $\{\mathcal{P}_{[\bm y]}^{(3)}:[\bm y]\in Y_{\mathrm{ref}}^{(2)}\}$. Among them, $\mathcal{P}_{[\bm y_{000}]}^{(3)}$ and $\mathcal{P}_{[\bm y_{011}]}^{(3)}$ continue to inherit the symmetry group $H^{(2)}$, thus allowing for the continuation of the same type of symmetry reduction. However, the residual symmetry group of $\mathcal{P}_{[\bm y_{001}]}^{(3)}$ degenerates into $\langle\Psi_{\bar g_\tau}^{(2)}\rangle$, which contains only pure translations. Therefore, the next step directly transitions to the pure translation symmetry reduction procedure of Case II.

\subsection{Layer 4 reduction (case II): pure-translation symmetry reduction}
\label{subsec:fourth_layer_draft}

Now consider the well-formed subproblem with pure translations $\mathcal{P}_{[\bm y_{001}]}^{(3)}$ generated in the third layer (its well-formedness is also guaranteed by \cref{prop:well_formed_inheritance}). Its discrete symmetry group is
\[G_{\bm A^{(3)}}=\left\langle \Psi_{\bar g_\tau}^{(3)}\right\rangle.\]
For the standard representative $\bm x_3=(x_1,x_2,x_3)^\mathsf T$ of any $[\bm x_3]\in X_{\bm A^{(3)}}$, the explicit discrete action of this generator is
\begin{equation}
	\label{eq:psi_tau_3_explicit}
	\Psi_{\bar g_\tau}^{(3)}([\bm x_3])=\bigl[(x_1+N/8,\ x_2+N/16,\ x_3+N/16)^\mathsf T\bigr].
\end{equation}
Hence, this layer falls under Case II, and the pure translation reduction framework presented in \cref{sec:translation_derivation} can be directly applied.

We adopt the following decomposition
\[\bm A^{(3)} \coloneqq \bm A_1^{(2)} = \bm A_0^{(3)}\bm A_1^{(3)},\qquad
    \bm A_0^{(3)}\coloneqq \operatorname{diag}(N/8,N/16,N/16),\quad
    \bm A_1^{(3)}\coloneqq 2\bm I_3.\]
Since we can choose $\bm t_{g_\tau}=(\tfrac12,\tfrac12,\tfrac12)^\mathsf T$ for $g_\tau=(\bm I_3,\bm t_{g_\tau})$, it follows that
\[\bm A_1^{(3)}\bm t_{g_\tau}=(1,1,1)^\mathsf T\in\mathbb Z^3,\]
thereby satisfying the translation compatibility condition in \cref{def:translation_compatibility} for $\bm A_1^{(3)}$. Accordingly, under the above decomposition, we define
\[\Theta_T^{(3)}\coloneqq \left\{X_T^{(3),[\bm y]}:[\bm y]\in X_{\bm A_1^{(3)}}\right\},\qquad
	X_T^{(3),[\bm y]}\coloneqq \left\{[\bm A_0^{(3)}\bm y+\bm x_4]:[\bm x_4]\in X_{\bm A_0^{(3)}}\right\}.\]

\begin{proposition}
	\label{prop:fourth_layer_orbits_draft}
	The permutation action induced by the group $G_{\bm A^{(3)}}=\langle \Psi_{\bar g_\tau}^{(3)}\rangle$ on $\Theta_T^{(3)}$ yields four orbits
	\[\begin{aligned}
			O_{[\bm y_{000}]} & =\left\{X_T^{(3),[\bm y_{000}]},\,X_T^{(3),[\bm y_{111}]}\right\}, \\
			O_{[\bm y_{010}]} & =\left\{X_T^{(3),[\bm y_{010}]},\,X_T^{(3),[\bm y_{101}]}\right\}, \\
			O_{[\bm y_{100}]} & =\left\{X_T^{(3),[\bm y_{100}]},\,X_T^{(3),[\bm y_{011}]}\right\}, \\
			O_{[\bm y_{110}]} & =\left\{X_T^{(3),[\bm y_{110}]},\,X_T^{(3),[\bm y_{001}]}\right\}.
		\end{aligned}\]
	Furthermore, under pure translation actions, the stabilizer subgroup is always the trivial group $\{\mathrm{id}\}$. Thus, for any orbit representative $[\bm y]$, the set of left coset representatives can be chosen as
	\[\mathcal R([\bm y])=\{\mathrm{id},\,\Psi_{\bar g_\tau}^{(3)}\}.\]
\end{proposition}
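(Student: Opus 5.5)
The plan is to compute the induced permutation of $\Theta_T^{(3)}$ directly from \cref{prop:layer_permutation_translation}, and then read off stabilizers from \cref{prop:translation_free}. By \cref{prop:layer_permutation_translation}, the generator $\Psi_{\bar g_\tau}^{(3)}$ sends the layer $X_T^{(3),[\bm y]}$ to $X_T^{(3),[\bm y+\bm A_1^{(3)}\bm t_{g_\tau}]}$. We have $\bm A_1^{(3)}=2\bm I_3$ and $\bm t_{g_\tau}=(\tfrac12,\tfrac12,\tfrac12)^\mathsf T$. Hence the induced action on the index set $X_{2\bm I_3}$ is
\[
[\bm y]\mapsto[\bm y+(1,1,1)^\mathsf T],
\]
which is complementation of each binary coordinate. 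This is consistent with \eqref{eq:psi_tau_3_explicit}: writing $\bm x_3=\bm A_0^{(3)}\bm y+\bm x_4$, the shift $(N/8,N/16,N/16)^\mathsf T$ equals $\bm A_0^{(3)}(1,1,1)^\mathsf T$. I would note this as a sanity check, since it uses $m\ge4$ so that all entries are integers.

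Next I enumerate the eight classes $[\bm y_{abc}]$, $a,b,c\in\{0,1\}$, under complementation $abc\mapsto \bar a\bar b\bar c$. This gives the four pairs $\{000,111\}$, $\{010,101\}$, $\{100,011\}$, $\{110,001\}$, which are exactly the listed orbits. Since the group $\langle\Psi_{\bar g_\tau}^{(3)}\rangle$ is generated by one element, these pairs are the full orbits. Complementation has no fixed points, so every orbit has size $2$. Consequently $|G_{\bm A^{(3)}}|=2$ once we confirm $\Psi_{\bar g_\tau}^{(3)}\neq\mathrm{id}$, which holds because the shift is nonzero modulo $\bm A^{(3)}$.

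For the stabilizers I invoke \cref{prop:translation_free} directly. Its hypotheses are that the case is pure translation (the linear part of $\Psi_{\bar g_\tau}^{(3)}$ is $\bm I_3$) and that $\bm A_1^{(3)}$ is translation-compatible (shown just before the statement). It therefore gives $G_{[\bm y]}=\{\mathrm{id}\}$ for every $[\bm y]$. Alternatively, this follows from the orbit--stabilizer theorem: the orbit size $2$ equals $|G_{\bm A^{(3)}}|$. Hence the left coset representatives are the whole group, $\mathcal R([\bm y])=\{\mathrm{id},\Psi_{\bar g_\tau}^{(3)}\}$.

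The only step needing care is justifying that the inherited group at this node is really generated by $\Psi_{\bar g_\tau}^{(3)}$ with the displayed translation. In other words, we need the residual stabilizer from layer 3 to contain no further elements, for instance products with linear parts that might act trivially on the subgrid. I would take the layer-3 result (\cref{prop:third_layer_orbits_draft}) as given, together with the transfer formula for $\bm z^{(k+1)}_g$ from the proof of \cref{prop:well_formed_inheritance}. Then I would simply verify the displayed shift via $\bm z_g^{(3)}=(\bm A_0^{(2)})^{-1}(\bm z_g^{(2)}+\bm y-\bm y)=\operatorname{diag}(1,\tfrac12,\tfrac12)(N/8,N/8,N/8)^\mathsf T$.
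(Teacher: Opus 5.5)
Your argument is correct and is essentially the paper's: the paper also just enumerates the induced action on the eight classes of $X_{\bm A_1^{(3)}}=X_{2\bm I_3}$, which by \cref{prop:layer_permutation_translation} is $[\bm y]\mapsto[\bm y+(1,1,1)^{\mathsf T}]$, and reads off the trivial stabilizers from \cref{prop:translation_free}. One small repair is needed: fixed-point-freeness on the layers together with $\Psi^{(3)}_{\bar g_\tau}\neq\mathrm{id}$ only gives $|G_{\bm A^{(3)}}|\ge 2$, yet $G_{\bm A^{(3)}}=\{\mathrm{id},\Psi^{(3)}_{\bar g_\tau}\}$ is what makes $\mathcal R([\bm y])$ a full transversal; to get it, note that twice the shift, $(N/4,N/8,N/8)^{\mathsf T}$, lies in $\bm A^{(3)}\mathbb Z^3$ (equivalently $g_\tau^2\in T$), or combine the trivial stabilizers from \cref{prop:translation_free} with orbit size $2$ via orbit--stabilizer, since as written your ``alternatively'' step presupposes $|G_{\bm A^{(3)}}|=2$.
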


\begin{proof}
	From the explicit discrete actions of the generators, the conclusions can be derived via enumerative computations over the finite quotient group $X_{\bm A_1^{(3)}}$ (see \cref{rem:unified_computation} for details).
\end{proof}

Let the set of orbit representatives be $Y_{\mathrm{ref}}^{(3)}\coloneqq \{[\bm y_{000}],[\bm y_{010}],[\bm y_{100}],[\bm y_{110}]\}\subset X_{\bm A_1^{(3)}}$. Lexicographically order the sequence of orbit representatives as $Y_{\mathrm{ref}}^{(3),\mathrm{lex}}=(\bm y_{000},\bm y_{010},\bm y_{100},\bm y_{110})$.

Next, according to \cref{eq:Chi_t,thm:zeta-count}, it is known that $\chi^{(3)}$ depends only on the outer characteristic frequency offset $[\bm h_4] \in X_{\bm A_1^{(3)}}$, and can only take the values $0$ or $2$. For the standard representative $\bm h_4=(h_1,h_2,h_3)^\mathsf T\in\{0,1\}^3$, utilizing $\bm z_{g_\tau}=\bm A^{(3)}\bm t_{g_\tau}=(N/8,N/16,N/16)^\mathsf T$ yields
\[\chi^{(3)}([\bm h_4])
	=1+\exp\!\left(-\pi i(h_1+h_2+h_3)\right)
	= \begin{cases}
		2, & h_1+h_2+h_3 \text{ is even}, \\[1ex]
		0, & h_1+h_2+h_3 \text{ is odd}.
	\end{cases}\]
The lexicographically ordered sequence of valid characteristic frequencies is given by
\[H_{\chi}^{(3),\mathrm{lex}}=(\bm k_{000},\bm k_{011},\bm k_{101},\bm k_{110})\subset X_{\bm A_1^{(3)}}.\]

Combined with \cref{eq:core_matrix_relation_translation}, for any $\bm x_4\in \mathcal{D}_{\bm A_0^{(3)}}$, we have
\[\bm V^{(3)}(\bm x_4) = \bm T_{s,t}^{(3)}(\bm x_4)\, \bm T_{c,t}^{(3)}\, \bm F^{(3)}(\bm x_4),\]
where $\bm T_{c,t}^{(3)}$ is a $4\times4$ constant matrix, $\bm T_{s,t}^{(3)}(\bm x_4)$ is a phase diagonal matrix, and $\bm F^{(3)}(\bm x_4)$ is a column vector constructed from the function values at the subgrid points corresponding to each orbit representative. After obtaining $\bm V^{(3)}$, performing a local 3D FFT with respect to $\bm x_4$ on each of its components reconstructs $\hat{\bm F}^{(3)}(\bm h_4)$. Therefore, this subproblem only requires computing $4$ instances of a 3D FFT of size $(N/8)\times(N/16)\times(N/16)$ on the smaller index group $X_{\bm A_0^{(3)}}$, and the recursion terminates here.

\section{Numerical experiments}
\label{sec:numerical_results}
This section evaluates the computational efficiency and numerical accuracy of the SFFT/SIFFT relative to the standard FFT/IFFT through numerical experiments. Space groups $Fm\bar{3}m$, $Im\bar{3}m$, and $Ia\bar{3}d$ are selected as test cases. All experiments are executed in single-threaded mode on a Linux system equipped with an Intel(R) Xeon(R) Silver 4214 CPU (2.20GHz). The standard FFT/IFFT is implemented using the FFTW C library \cite{frigo1998fftw}.

For the three aforementioned space groups, we uniformly set $\bm{A}=N\bm{I}_3$ and $\bm{s}=(1/2,1/2,1/2)^\mathsf{T}$, and employ the recursive symmetry reduction scheme presented in \cref{sec:algorithm_example}. As $D\to\infty$, the theoretical speedups of $Im\bar{3}m$ and $Ia\bar{3}d$ under the fully recursive reduction scheme both tend to $96$. For space group $Fm\bar{3}m$, the first three layers of reduction are identical to those of $Im\bar{3}m$, but differences emerge in the pure translation reduction. The pure translation reduction for $Fm\bar{3}m$ provides an additional $4$-fold speedup, yielding a total asymptotic theoretical speedup of $192$ under the fully recursive reduction.

We emphasize that the full recursive reduction is by no means out of reach: the framework developed above guarantees that every recursively generated subproblem is well-formed and exactly invertible, so the recursion can in principle be continued down to the leaf subproblems, some of which reduce to blocks as small as $2^3$. However, in actual computations the recursive reduction is worthwhile only down to a certain depth. The purpose of the SFFT is to replace an FFT of size $D$ by FFTs of a smaller size together with a linear combination and a data reshuffling. For large $D$, this replacement reduces the cost; once the recursion has gone deep enough for $D$ to be small, however, the cost of a standard FFT of that size may well be lower than that of the smaller FFTs together with the linear combination and the reshuffling, so that a further reduction is no longer worthwhile. For this reason, the numerical tests in this paper adopt a partial recursion, executing one reduction step for both the second and third layers. Consequently, the SFFT/SIFFT performance reported in the subsequent tables corresponds to this partial reduction configuration. Under this setup, the asymptotic theoretical speedup for $Im\bar{3}m$ and $Ia\bar{3}d$ is $128/3$, whereas for $Fm\bar{3}m$, it is $256/3$.

The reductions used here take $\bm A_0^{(0)}=\bm A_0^{(1)}=2\bm I_3$, $\bm A_0^{(2)}=\operatorname{diag}(1,2,2)$ and $\bm A_1^{(3)}=2\bm I_3$, for which all entries of the constant matrices $\bm T_{c,r}^{(k)}$ and $\bm T_{c,t}^{(3)}$ equal $\pm1$ (see \cref{appendix:matrices}). Their application therefore requires additions only, not complex multiplications; the complex multiplications of the implementation come solely from the phase matrices and the leaf transforms.

Within the range $N\in\{64,128,256,512\}$, the SFFT/SIFFT and standard FFT/IFFT are compared across the three space groups, with the results detailed in \cref{tab:SFFT_SIFFT_PATIENT}. The \texttt{Error} in the table denotes the maximum relative error, defined for SFFT and SIFFT respectively as
\[
	\frac{\max_{\bm h\in\mathcal D_{\bm A}}\bigl\lvert \hat f_{\mathrm{SFFT}}(\bm h)-\hat f_{\mathrm{FFT}}(\bm h)\bigr\rvert}{\max_{\bm h\in\mathcal D_{\bm A}}\bigl\lvert \hat f_{\mathrm{FFT}}(\bm h)\bigr\rvert},
	\qquad
	\frac{\max_{\bm x\in\mathcal D_{\bm A}}\bigl\lvert f_{\mathrm{SIFFT}}(\bm x)-f_{\mathrm{IFFT}}(\bm x)\bigr\rvert}{\max_{\bm x\in\mathcal D_{\bm A}}\bigl\lvert f_{\mathrm{IFFT}}(\bm x)\bigr\rvert}.
\]
Both types of errors are on the order of machine precision, demonstrating that the SFFT/SIFFT algorithms are numerically consistent with the standard FFT/IFFT. In terms of execution time, the minimum speedups for $Im\bar{3}m$ and $Ia\bar{3}d$ are approximately $5\times$ (SFFT, $N=64$) and $12\times$ (SIFFT, $N=128$), respectively. For $Fm\bar{3}m$, the minimum speedups are approximately $10\times$ (SFFT) and $25\times$ (SIFFT), both occurring at $N=128$.

The numerical results reveal that the speedup of the SIFFT is generally higher than that of the SFFT. The arithmetic cost of the recombination step is not lower in the forward direction (see \cref{sec:complexity}), but the forward transform must additionally assemble the local transform vector $\hat{\bm V}(\bm h_1)$, which costs $(D_0-\lvert Y_{\mathrm{ref}}\rvert)D_1$ non-contiguous memory accesses, whereas the inverse transform incurs none. Since $128/3$ and $256/3$ are asymptotic values, the finite-$N$ speedups reported below are not expected to attain them.

\begin{table}[htbp]
	\centering
	\renewcommand{\arraystretch}{1.2} % Control overall row spacing
	\small
	\caption{Computational Performance Comparison between FFT/IFFT and SFFT/SIFFT}
	\label{tab:SFFT_SIFFT_PATIENT}
	\begin{tabular}{|>{\centering\arraybackslash}p{1.0cm}|%
		>{\centering\arraybackslash}p{3.2cm}|%
		>{\centering\arraybackslash}p{1.5cm}|%
		>{\centering\arraybackslash}p{1.5cm}|%
		>{\centering\arraybackslash}p{1.5cm}|
		>{\centering\arraybackslash}p{1.5cm}|}
		\hline
		 & $N$            & 64             & 128            & 256            & 512            \\
		\hline
		\multirow{8}{*}{$Im\bar{3}m$}
		 & FFT Time (ms)   & 2.86           & 25.72          & 371.13         & 3012.99        \\
		 & SFFT Time (ms)  & 0.54           & 4.95           & 47.00          & 452.23         \\
		 & {speedup}        & \textbf{5.30}  & \textbf{5.20}  & \textbf{7.90}  & \textbf{6.66}  \\
		 &  {Error} & $8.8\times10^{-18}$ & $3.5\times10^{-18}$ & $1.4\times10^{-18}$ & $2.2\times10^{-16}$       \\
		 & IFFT Time (ms)  & 2.68           & 27.94          & 293.47         & 3322.11        \\
		 & SIFFT Time (ms) & 0.20           & 2.33           & 20.77          & 201.05         \\
		 & {speedup}        & \textbf{13.40} & \textbf{12.00} & \textbf{14.13} & \textbf{16.52} \\
		 &  {Error} & $1.0\times10^{-15}$ & $9.5\times10^{-16}$ & $1.1\times10^{-15}$ & $1.2\times10^{-15}$       \\
		\hline
		\multirow{8}{*}{$Ia\bar{3}d$}
		 & FFT Time (ms)   & 2.93           & 26.05          & 292.67         & 3213.07        \\
		 & SFFT Time (ms)  & 0.54           & 4.97           & 47.79          & 455.16         \\
		 & {speedup}        & \textbf{5.43}  & \textbf{5.24}  & \textbf{6.12}  & \textbf{7.06}  \\
		 &  {Error} & $3.5\times10^{-17}$ & $3.2\times10^{-17}$ & $2.8\times10^{-17}$ & $2.2\times10^{-17}$       \\
		 & IFFT Time (ms)  & 2.79           & 27.83          & 299.75         & 3437.85        \\
		 & SIFFT Time (ms) & 0.19           & 2.32           & 20.70          & 180.95         \\
		 & {speedup}        & \textbf{14.68} & \textbf{12.00} & \textbf{14.48} & \textbf{19.00} \\
		 &  {Error} & $8.5\times10^{-16}$ & $1.0\times10^{-15}$ & $1.0\times10^{-15}$ & $1.3\times10^{-15}$       \\
		\hline
		\multirow{8}{*}{$Fm\bar{3}m$}
		 & FFT Time (ms)   & 2.69           & 25.78          & 384.02         & 3562.60        \\
		 & SFFT Time (ms)  & 0.26           & 2.55           & 24.07          & 218.98         \\
		 & {speedup}        & \textbf{10.35} & \textbf{10.11} & \textbf{15.95} & \textbf{16.27} \\
		 &  {Error} & $1.6\times10^{-17}$ & $8.4\times10^{-18}$ & $2.9\times10^{-18}$ & $2.2\times10^{-16}$       \\
		 & IFFT Time (ms)  & 2.60           & 29.31          & 301.94         & 2973.67        \\
		 & SIFFT Time (ms) & 0.09           & 1.16           & 10.22          & 100.50         \\
		 & {speedup}        & \textbf{28.89} & \textbf{25.27} & \textbf{29.54} & \textbf{29.59} \\
		 &  {Error} & $7.8\times10^{-16}$ & $1.1\times10^{-15}$ & $1.0\times10^{-15}$ & $1.2\times10^{-15}$       \\
		\hline
	\end{tabular}
\end{table}

\section{Conclusion and outlook}
    \label{sec:conclusion}
    In this paper, we have developed a unified mathematical framework for SFFT/SIFFT for periodic discrete data with finite affine symmetries. Starting from the geometric definition of crystallographic groups, we proved that such groups can be represented, in suitable coordinates, in an arithmetic form with integral linear parts, rational translational parts, and finite quotient groups. For the finite symmetry quotient, the composition series and the small generating sets of its simple factors give a hierarchical construction of all symmetry operations and of the complete orbit of a representative point. In three dimensions, solvability makes every factor cyclic of prime order, so one lifted generator suffices at each composition step. We then introduced $G$-admissible grids and induced discrete actions, converting crystallographic symmetries into permutation actions on finite index groups. Combined with compatible multidimensional Cooley--Tukey decompositions and orbit--stabilizer structures, this construction supplies the operations and representatives needed to reduce global DFTs/IDFTs to local DFTs/IDFTs together with finite-dimensional matrix recombinations.
    
    At the algorithmic level, we derived two fundamental classes of symmetry-reduction formulas, corresponding respectively to nontrivial linear parts and pure translational parts. For each case, we provided the associated matrix formulations, invertibility results, and implementable algorithms. In the nontrivial linear case, the orthogonality of the constant matrices guarantees a stable and reversible conversion between local and global spectral data. In the pure translation case, the characterization of the phase factors and of $\chi$ reveals the intrinsic origin of effective frequency selection, and shows that the asymptotic speedup factor is precisely determined by the order of the discrete symmetry group. Building on these results, we further organized linear reductions, translational reductions, and standard FFT calls into a unified recursive framework by formally defining well-formed SFFT problems and proving that well-formedness is inherited by the resulting subproblems. The space group $Im\bar{3}m$ was used as a representative example to demonstrate the complete transition from continuous symmetry to a concrete recursive implementation.
    
    The complexity analysis and numerical experiments further confirm the effectiveness of the proposed framework. Theoretically, the asymptotic speedup of SFFT-LIN/SIFFT-LIN is governed by $D_0/|Y_{\mathrm{ref}}|$, whereas in the pure translation case the asymptotic speedup equals the order of the discrete symmetry group. Partial recursive implementations for the three space groups $Fm\bar{3}m$, $Im\bar{3}m$, and $Ia\bar{3}d$ show that both SFFT and SIFFT can achieve substantial acceleration over standard FFT/IFFT algorithms while maintaining errors at the level of machine precision. At the same time, the measured performance has not yet reached the theoretical limits. The principal remaining bottleneck is the cache overhead caused by non-contiguous memory access. These observations are consistent with the theoretical framework developed in this paper, while fully realizing its potential will require further engineering optimization.
    
    Future work may proceed in two directions: implementation refinement and theoretical extension. On the implementation side, it would be valuable to develop a more complete and better optimized implementation, including automatic generation of the constant and phase matrices, optimized data layouts, and parallelization strategies, so as to further narrow the gap between theoretical complexity and practical performance. On the theoretical side, the present approach may be extended to more general nonperiodic structures and non-Euclidean geometric settings, such as Fourier analysis for quasicrystals and spherical harmonic transforms on the sphere. Such extensions would broaden the applicability of the proposed framework to a wider range of problems in scientific computing.

\appendix

\section{Constant and phase matrices for symmetry reduction}
\label{appendix:matrices}

This appendix supplements the specific expressions for the constant matrices and phase diagonal matrices that appear in the four-layer example in \cref{sec:algorithm_example}, and clarifies their row/column index sequences. The fixed orbit representative order, coset representative order, and standard representative conventions established in the main text are retained throughout the following.

\subsection{Layer 1 reduction matrices}
In the first layer of reduction, the row index sequence is given by
\begin{align}
	X_{\bm A_0^{(0)}}^{\mathrm{lex}}=(\bm k_{000},\bm k_{001},\bm k_{010},\bm k_{011},\bm k_{100},\bm k_{101},\bm k_{110},\bm k_{111}),
\end{align}
and the column index sequence is given by
\begin{align}
	\mathcal I^{\prec}=((0,0),(0,1),(0,2),(0,3),(0,4),(0,5),(0,6),(0,7)).
\end{align}
The corresponding column representative congruence classes are sequentially
\begin{align}
	[\bm y_{000}],\ [\bm y_{001}],\ [\bm y_{010}],\ [\bm y_{011}],\ [\bm y_{100}],\ [\bm y_{101}],\ [\bm y_{110}],\ [\bm y_{111}].
\end{align}
The constant reduction matrix $\bm{T}_{c,r}^{(0)} \in \mathbb{C}^{8 \times 8}$ is computed as
\begin{align}
	\bm{T}_{c,r}^{(0)}           & =
	\begin{bmatrix}
		1 & 1  & 1  & 1  & 1  & 1  & 1  & 1  \\
		1 & -1 & 1  & -1 & 1  & -1 & 1  & -1 \\
		1 & 1  & -1 & -1 & 1  & 1  & -1 & -1 \\
		1 & -1 & -1 & 1  & 1  & -1 & -1 & 1  \\
		1 & 1  & 1  & 1  & -1 & -1 & -1 & -1 \\
		1 & -1 & 1  & -1 & -1 & 1  & -1 & 1  \\
		1 & 1  & -1 & -1 & -1 & -1 & 1  & 1  \\
		1 & -1 & -1 & 1  & -1 & 1  & 1  & -1
	\end{bmatrix}.
\end{align}
The phase diagonal matrix $\bm{T}_{s,r}^{(0)}(\bm h_1) \in \mathbb{C}^{8 \times 8}$ is computed as
\begin{align}
	\bm{T}_{s,r}^{(0)}(\bm h_1) = \operatorname{diag}\Bigl(
	 & 1,\,
	e^{2\pi i\, \bm h_1^\mathsf{T}\bm{y}_{001}/N},\,
	e^{2\pi i\, \bm h_1^\mathsf{T}\bm{y}_{010}/N},\,
	e^{2\pi i\, \bm h_1^\mathsf{T}\bm{y}_{011}/N},\, \notag \\
	 & e^{2\pi i\, \bm h_1^\mathsf{T}\bm{y}_{100}/N},\,
	e^{2\pi i\, \bm h_1^\mathsf{T}\bm{y}_{101}/N},\,
	e^{2\pi i\, \bm h_1^\mathsf{T}\bm{y}_{110}/N},\,
	e^{2\pi i\, \bm h_1^\mathsf{T}\bm{y}_{111}/N}
	\Bigr).
\end{align}
The above diagonal elements correspond sequentially to the $8$ elements in the column index sequence $\mathcal I^{\mathrm{lex}}$. That is, they follow the same column order as the constant matrix $\bm{T}_{c,r}^{(0)}$, so that the two factors acting on column $v$ of \cref{eq:core_matrix_relation_linear_action} are built from one and the same coset representative $\bar g_{0v}$.

\subsection{Layer 2 reduction matrices}

In the second layer of reduction, the row index sequence is given by
\begin{align}
	X_{\bm A_0^{(1)}}^{\mathrm{lex}}=(\bm k_{000},\bm k_{001},\bm k_{010},\bm k_{011},\bm k_{100},\bm k_{101},\bm k_{110},\bm k_{111}),
\end{align}
and the column index sequence is given by
\begin{align}
	\mathcal I^{\mathrm{lex}}=((0,0),(1,0),(1,1),(1,2),(2,0),(2,1),(2,2),(3,0)).
\end{align}
The corresponding column representative congruence classes are sequentially
\begin{align}
	[\bm y_{000}],\ [\bm y_{011}],\ [\bm y_{110}],\ [\bm y_{101}],\ [\bm y_{100}],\ [\bm y_{001}],\ [\bm y_{010}],\ [\bm y_{111}].
\end{align}

For the second-layer subproblem, the constant reduction matrix $\bm{T}_{c,r}^{(1)} \in \mathbb{C}^{8 \times 8}$ is computed as
\begin{equation}
	\bm{T}_{c,r}^{(1)} =
	\begin{bmatrix}
		1 & 1  & 1  & 1  & 1  & 1  & 1  & 1  \\
		1 & -1 & 1  & -1 & 1  & -1 & 1  & -1 \\
		1 & -1 & -1 & 1  & 1  & 1  & -1 & -1 \\
		1 & 1  & -1 & -1 & 1  & -1 & -1 & 1  \\
		1 & 1  & -1 & -1 & -1 & 1  & 1  & -1 \\
		1 & -1 & -1 & 1  & -1 & -1 & 1  & 1  \\
		1 & -1 & 1  & -1 & -1 & 1  & -1 & 1  \\
		1 & 1  & 1  & 1  & -1 & -1 & -1 & -1
	\end{bmatrix}.
\end{equation}
The phase diagonal matrix $\bm{T}_{s,r}^{(1)}(\bm h_2) \in \mathbb{C}^{8 \times 8}$ is computed as
\begin{align}
	\bm{T}_{s,r}^{(1)}(\bm h_2) = \operatorname{diag}\Bigl(
	 & 1,\,
	e^{-2\pi i\, \bm h_2^\mathsf{T}\bm{y}_{011}/(N/2)},\,
	e^{-2\pi i\, \bm h_2^\mathsf{T}\bm M^{(1)}_{\bar g_{\sigma}}\bm{y}_{011}/(N/2)},\, \notag \\
	 & e^{-2\pi i\, \bm h_2^\mathsf{T}(M^{(1)}_{\bar g_{\sigma}})^2\bm{y}_{011}/(N/2)},\,
	e^{-2\pi i\, \bm h_2^\mathsf{T}\bm{y}_{100}/(N/2)},\,
	e^{-2\pi i\, \bm h_2^\mathsf{T}\bm M^{(1)}_{\bar g_{\sigma}}\bm{y}_{100}/(N/2)},\, \notag \\
	 & e^{-2\pi i\, \bm h_2^\mathsf{T}(M^{(1)}_{\bar g_{\sigma}})^2\bm{y}_{100}/(N/2)},\,
	e^{-2\pi i\, \bm h_2^\mathsf{T} \bm{y}_{111} / (N/2)}
	\Bigr)\\
	=\operatorname{diag}\Bigl(
	 & 1,\,
	e^{-2\pi i\, \bm h_2^\mathsf{T}\bm{y}_{011}/(N/2)},\,
	e^{-2\pi i\, \bm h_2^\mathsf{T}\bm{y}_{110}/(N/2)},\,
	e^{-2\pi i\, \bm h_2^\mathsf{T}\bm{y}_{101}/(N/2)}, \notag \\
	&
	 e^{-2\pi i\, \bm h_2^\mathsf{T}\bm{y}_{100}/(N/2)},\,
	e^{-2\pi i\, \bm h_2^\mathsf{T}\bm{y}_{001}/(N/2)},\,
	e^{-2\pi i\, \bm h_2^\mathsf{T}\bm{y}_{010}/(N/2)}, \notag \\
	&
	e^{-2\pi i\, \bm h_2^\mathsf{T}\bm{y}_{111}/(N/2)}\Bigr),
\end{align}
where $\bm M^{(1)}_{\bar g_{\sigma}}$ corresponds to the linear part of the generator $\Psi_{[g_\sigma]}^{(1)}$, i.e.\ the three-cycle $(x_1,x_2,x_3)\mapsto(x_2,x_3,x_1)$. The above diagonal elements correspond sequentially to the 8 elements in the column index sequence $\mathcal I^{\mathrm{lex}}$.

\subsection{Layer 3 reduction matrices}

In the third layer of reduction, the row index sequence is given by
\begin{align}
	X_{\bm A_0^{(2)}}^{\mathrm{lex}}=(\bm k_{000},\bm k_{001},\bm k_{010},\bm k_{011}),
\end{align}
and the column index sequence is given by
\begin{align}
	\mathcal I^{\mathrm{lex}}=((0,0),(1,0),(1,1),(2,0)).
\end{align}
The corresponding column representative congruence classes are sequentially
\begin{align}
	[\bm y_{000}],\ [\bm y_{001}],\ [\bm y_{010}],\ [\bm y_{011}].
\end{align}

For the third-layer subproblem, the constant rotation reduction matrix $\bm{T}_{c,r}^{(2)} \in \mathbb{C}^{4 \times 4}$ is computed as
\begin{equation}
	\bm{T}_{c,r}^{(2)} =
	\begin{bmatrix}
		1 & 1  & 1  & 1  \\
		1 & -1 & 1  & -1 \\
		1 & 1  & -1 & -1 \\
		1 & -1 & -1 & 1
	\end{bmatrix}
\end{equation}
The phase diagonal matrix $\bm{T}_{s,r}^{(2)}(\bm h_3) \in \mathbb{C}^{4 \times 4}$ is computed as
\begin{align}
	\bm{T}_{s,r}^{(2)}(\bm h_3) = \operatorname{diag}\Bigl(
	 & 1,\,
	e^{-2\pi i\, \bm h_3^\mathsf{T}(\bm A^{(2)})^{-1}\bm{y}_{001}},\, \notag           \\
	 & e^{-2\pi i\, \bm h_3^\mathsf{T}(\bm A^{(2)})^{-1}\bm M^{(2)}_{\bar g_\psi}\bm{y}_{001}},\,
	e^{-2\pi i\, \bm h_3^\mathsf{T}(\bm A^{(2)})^{-1} \bm{y}_{011}}
	\Bigr) \\ 
	=\operatorname{diag}\Bigl(
	 & 1,\,
	e^{-2\pi i\, \bm h_3^\mathsf{T}(\bm A^{(2)})^{-1}\bm{y}_{001}},\,
	e^{-2\pi i\, \bm h_3^\mathsf{T}(\bm A^{(2)})^{-1}\bm{y}_{010}},\,
	e^{-2\pi i\, \bm h_3^\mathsf{T}(\bm A^{(2)})^{-1}\bm{y}_{011}}\Bigr),
\end{align}
where $\bm A^{(2)} = \frac{N}{4}\bm{I}_3$, and $\bm M^{(2)}_{\bar g_\psi}$ corresponds to the linear part of the generator $\Psi_{[g_\psi]}^{(2)}$, i.e.\ the transposition $(x_1,x_2,x_3)\mapsto(x_1,x_3,x_2)$, so that $\bm M^{(2)}_{\bar g_\psi}\bm{y}_{001}=\bm y_{010}$. The diagonal elements correspond sequentially to the $4$ elements in the column index sequence above.

\subsection{Layer 4 (pure translation) reduction matrices}

In the fourth layer of pure translation reduction, the row index sequence is given by
\begin{align}
	H_{\chi}^{(3),\mathrm{lex}}=(\bm k_{000},\bm k_{011},\bm k_{101},\bm k_{110}),
\end{align}
and the column index sequence is given by
\begin{align}
	Y_{\mathrm{ref}}^{(3),\mathrm{lex}}=(\bm y_{000},\bm y_{010},\bm y_{100},\bm y_{110}).
\end{align}

The constant reduction matrix $\bm{T}_{c,t}^{(3)}\in\mathbb{C}^{4\times 4}$ is computed as
\begin{align}
	\bm{T}_{c,t}^{(3)} & =
	\begin{bmatrix}
		1 & 1  & 1  & 1  \\
		1 & -1 & 1  & -1 \\
		1 & 1  & -1 & -1 \\
		1 & -1 & -1 & 1
	\end{bmatrix}.
\end{align}
The phase diagonal matrix $\bm{T}_{s,t}^{(3)}(\bm x_4)\in\mathbb{C}^{4\times4}$ is computed as
\begin{align}
	\bm{T}_{s,t}^{(3)}(\bm x_4) = \operatorname{diag}\Bigl(
	 & 1,\,
	e^{-2\pi i\, \bm k_{011}^\mathsf{T}(\bm A^{(3)})^{-1}\bm x_4},\,
	e^{-2\pi i\, \bm k_{101}^\mathsf{T}(\bm A^{(3)})^{-1}\bm x_4},\,
	e^{-2\pi i\, \bm k_{110}^\mathsf{T}(\bm A^{(3)})^{-1}\bm x_4}
	\Bigr).
\end{align}
where $\bm A^{(3)}=\bm A_1^{(2)}=\operatorname{diag}(N/4,N/8,N/8)$, and the order of the aforementioned diagonal elements is consistent with the row index sequence $H_{\chi}^{(3),\mathrm{lex}}$.

\end{document}